\def\R{\textrm{I\kern-0.21emR}}
\def\N{\textrm{I\kern-0.21emN}}
\newcommand{\C} {\mathbb{C}}
\renewcommand{\geq}{\geqslant}
\renewcommand{\leq}{\leqslant}
\newtheorem{theorem}{Theorem}
\newtheorem*{thmnonumbering}{Theorem}
\newtheorem{proposition}{Proposition}
\newtheorem{corollary}{Corollary}
\newtheorem{lemma}{Lemma}
\theoremstyle{definition}\newtheorem{remark}{Remark}
\title{Optimal observability of the multi-dimensional wave and Schr\"odinger equations in quantum ergodic domains}
\author{Yannick Privat\footnote{IRMAR, ENS Cachan Bretagne, Univ. Rennes 1, CNRS, UEB, av. Robert Schuman, 35170 Bruz, France
		({\tt yannick.privat@bretagne.ens-cachan.fr}).}
	\and Emmanuel Tr\'elat\footnote{Universit\'e Pierre et Marie Curie (Univ. Paris 6) and Institut Universitaire de France,
CNRS UMR 7598, Laboratoire Jacques-Louis Lions, F-75005, Paris, France ({\tt emmanuel.trelat@upmc.fr}).} 
        \and Enrique Zuazua\footnote{BCAM - Basque Center for Applied Mathematics, Mazarredo, 14 E-48009 Bilbao-Basque Country-Spain}\ \footnote{Ikerbasque, Basque Foundation for Science, Alameda Urquijo 36-5, Plaza Bizkaia, 48011, Bilbao-Basque Country-Spain (\texttt{zuazua@bcamath.org}).}}
\date{}
\begin{document}

\maketitle

\begin{abstract}
We consider the wave and Schr\"odinger equations on a bounded open connected subset $\Omega$ of a Riemannian manifold, with Dirichlet, Neumann or Robin boundary conditions whenever its boundary  is nonempty. We observe the restriction of the solutions to a measurable subset $\omega$ of $\Omega$ during a time interval $[0, T]$ with $T>0$. It is well known that, if the pair $(\omega,T)$ satisfies the Geometric Control Condition ($\omega$ being an open set), then an observability inequality holds guaranteeing that the total energy of solutions can be estimated in terms of the energy localized in $\omega \times (0, T)$. 

We address the problem of the optimal location of the observation subset $\omega$ among all possible subsets of a given measure or volume fraction. We solve it in two different situations.
%Let $L\in(0,1)$. 
First, when a specific choice of the initial data is given and therefore we deal with a particular  solution, we show that the problem always admits at least one solution that can be regular or of fractal type depending on the regularity of the initial data.

This first problem of finding the optimal $\omega$ for each initial datum is a mathematical benchmark but, in view of applications, it is important to define a relevant criterion, not depending on the initial conditions and to choose the observation set in an uniform way, independent of the data and solutions under consideration.
Through spectral decompositions, this leads to a second problem which consists of maximizing a spectral functional that can be viewed as a measure of eigenfunction concentration. Roughly speaking, the subset $\omega$ has to be chosen so to maximize the minimal trace of the squares of all eigenfunctions. This spectral criterion can be obtained and interpreted in two ways: on the one hand, it corresponds to a time asymptotic observability constant as the observation time interval tends to infinity, and on the other hand, to a randomized version of the deterministic observability inequality. We also consider the convexified formulation of the problem. We prove a no-gap result between the initial problem and its convexified version, under appropriate quantum ergodicity assumptions on $\Omega$, and compute the optimal value.

We also give several examples in which a classical optimal set exists, although, as it happens in 1D, generically with respect to the manifold $\Omega$ and the volume fraction, one expects relaxation to occur and therefore classical optimal sets not to exist. We then provide spectral approximations and present some numerical simulations that fully confirm the theoretical results in the paper and support our conjectures.

Our results highlight precise connections between optimal observability issues and quantum ergodic properties of the domain under consideration.
\end{abstract}

\noindent\textbf{Keywords:} wave equation, Schr\"odinger equation, observability inequality, optimal design, spectral decomposition, ergodic properties, quantum ergodicity.

\medskip

\noindent\textbf{AMS classification:} 
35P20, % Asymptotic distribution of eigenvalues and eigenfunctions
93B07, % observability
58J51, % Relations between spectral theory and ergodic theory, e.g. quantum unique ergodicity
49K20 % Problems involving partial differential equations
%35L05 % Wave equation

\tableofcontents
\newpage

\section{Introduction}\label{secintro}
\subsection{Presentation of the problems}\label{secobservabilityintro}
Let  $(M,g)$ be a smooth $n$-dimensional Riemannian manifold, $n \ge 1$.
Let $T$ be a positive real number and $\Omega$ be an open bounded connected subset of $M$. 
In this article we consider both the wave equation
\begin{equation}\label{waveEqobs}
\partial_{tt}y=\triangle_g y,
\end{equation}
and the Schr\"odinger equation
\begin{equation}\label{schroEqobs}
i \partial_t y=\triangle_g y,
\end{equation}
in $(0,T)\times\Omega$. 
Here, $\triangle_g$ denotes the usual Laplace-Beltrami operator on $M$ for the metric $g$. 
%In what follows, the Laplace-Beltrami operator with Dirichlet boundary conditions on $\partial\Omega$ is more concisely referred to as the Dirichlet-Laplacian on $\Omega$. 
If the boundary $\partial\Omega$ of $\Omega$ is nonempty, then we consider boundary conditions
\begin{equation}\label{condBC}
By=0\quad\textrm{on}\ (0,T)\times\partial\Omega,
\end{equation}
where $B$ can be either:
\begin{itemize}
\item the usual Dirichlet trace operator, $By=y_{\vert\partial\Omega}$,
\item or Neumann, $By=\frac{\partial y}{\partial n}_{\vert\partial\Omega}$, where $\frac{\partial}{\partial n}$ is the outward normal derivative on the boundary $\partial\Omega$,
\item or mixed Dirichlet-Neumann, $By=\chi_{\Gamma_{0}}y_{\vert\partial\Omega}+\chi_{\Gamma_{1}}\frac{\partial y}{\partial n}_{\vert\partial\Omega}$, where $\partial\Omega=\Gamma_{0}\cup\Gamma_{1}$ with $\Gamma_{0}\cap\Gamma_{1}=\emptyset$, and $\chi_{\Gamma_{i}}$ is the characteristic function of $\Gamma_{i}$, $i=0,1$,
\item or Robin, $By=\frac{\partial y}{\partial n}_{\vert\partial\Omega}+\beta y_{\vert\partial\Omega}$, where $\beta$ is a nonnegative bounded measurable function defined on $\partial\Omega$, such that $\int_{\partial\Omega}\beta>0$.
\end{itemize}
Our study encompasses the case where $\partial\Omega=\emptyset$: in this case, \eqref{condBC} is unnecessary and $\Omega$ is a compact connected $n$-dimensional Riemannian manifold.
The canonical Riemannian volume on $M$ is denoted by $V_g$, inducing the canonical measure $dV_g$. Throughout the paper, measurable sets\footnote{If $M$ is the usual Euclidean space $\R^n$ then $dV_g=dx$ is the usual Lebesgue measure.} are considered with respect to the measure $dV_g$.
%For any measurable subset $\omega$ of $\Omega$ of positive measure, we consider for both equations \eqref{waveEqobs} and \eqref{schroEqobs} the observable variable
%\begin{equation}\label{obsvariable}
%z(t,x) = \chi_\omega(x) \partial_{t }y(t,x),
%\end{equation}

In the boundaryless or in the Neumann case, the Laplace-Beltrami operator is not invertible on $L^2(\Omega,\C)$ but is invertible in 
$$
L^2_{0}(\Omega,\C)=\{y\in L^2 (\Omega,\C) \ \vert \ \int_{\Omega}y(x)\, dV_{g}=0\}.
$$
In what follows, the notation $X$ stands for the space $L^2_{0}(\Omega,\C)$ in the boundaryless or in the Neumann case and for the space $L^2 (\Omega,\C) $ otherwise. We denote by $A=-\triangle_g$ the Laplace operator defined on $
D(A)=\{y\in X\ \vert\ Ay\in X\ \textrm{and}\ By=0 \}$ with one of the above boundary conditions whenever $\partial\Omega\neq\emptyset$. 
Note that $A$ is a selfadjoint positive operator.
For all $(y^0,y^1)\in D(A^{1/2})\times X$, there exists a unique solution $y$ of the wave equation  \eqref{waveEqobs} in the space $C^0(0,T;D(A^{1/2}))\cap C^1(0,T;X)$ such that
$y(0,\cdot)=y^0(\cdot)$ and $\partial_t y(0,\cdot)=y^1(\cdot)$.

Let $\omega$ be an arbitrary measurable subset of $\Omega$ of positive measure. Throughout the paper, the notation $\chi_\omega$ stands for the characteristic function of $\omega$.
The equation \eqref{waveEqobs} is said to be \textit{observable} on $\omega$ in time $T$ if there exists $C_T^{(W)}(\chi_\omega)>0$ such that
\begin{equation}\label{ineqobsw}
C_T^{(W)}(\chi_\omega) \Vert (y^0,y^1)\Vert_{D(A^{1/2})\times X}^2
\leq \int_0^T\int_\omega \vert \partial_t y(t,x)\vert^2 \,dV_g\, dt,
\end{equation}
for all $(y^0,y^1)\in D(A^{1/2})\times X$. This is the so-called \textit{observability inequality}, relevant in inverse problems or in control theory because of its dual equivalence with the property of controllability (see \cite{lions}).
It is well known that within the class of $\mathcal{C}^\infty$ domains $\Omega$, this observability property holds, roughly,  if the pair $(\omega,T)$ satisfies the so-called \textit{Geometric Control Condition (GCC)} in $\Omega$ (see \cite{BardosLebeauRauch,BurqGerard}), according to which every geodesic ray  in $\Omega$ and reflected on its boundary according to the laws of geometrical optics intersects the observation set $\omega$ within time $T$.
In particular, if at least one ray does not reach $\overline{\omega}$ within time $T$ then the observability inequality fails because of the existence of gaussian beam solutions concentrated along the ray and, therefore, away from the observation set.

A similar observability problem can also be formulated for the Schr\"odinger equation \eqref{schroEqobs} : For every $y^0\in D(A)$, there exists a unique solution $y$ of \eqref{schroEqobs} in the space $C^0(0,T;D(A))$ such that $y(0,\cdot)=y^0(\cdot)$. 
The equation \eqref{schroEqobs} is said to be observable on $\omega$ in time $T$ if there exists $C_T^{(S)}(\chi_\omega)>0$ such that
\begin{equation}\label{ineqobss}
C_T^{(S)}(\chi_\omega) \Vert y^0\Vert_{D(A)}^2
\leq \int_0^T\int_\omega \vert \partial_t y(t,x)\vert^2 \,dV_g\, dt,
\end{equation}
for every $y^0\in D(A)$.
It is well known that if there exists $T^*$ such that the pair $(\omega,T^*)$ satisfies the \textit{Geometric Control Condition} then the observability inequality \eqref{ineqobss} holds for every $T>0$ (see \cite{lebeau}). Indeed the Schr\"odinger equation can be viewed as a wave equation with an infinite speed of propagation. We refer to \cite{laurent} for a thorough discussion of the problem of obtaining necessary and sufficient conditions ensuring the observability inequality, which is a widely open problem.

In the sequel, $C_T^{(W)}(\chi_\omega)$ and $C_T^{(S)}(\chi_\omega)$ denote the largest possible nonnegative constants for which the inequalities \eqref{ineqobsw} and \eqref{ineqobss} hold, that is,
\begin{equation}\label{CT}
C_T^{(W)}(\chi_\omega)=\inf\left\{ \frac{  \int_0^T\int_\omega  \vert \partial_{t} y(t,x) \vert ^2\,dV_g\, dt }{\Vert (y^0,y^1)\Vert^2_{D(A^{1/2})\times X}} \ \big\vert\  (y^0,y^1)\in D(A^{1/2})\times X\setminus\{(0,0)\} \right\} ,
\end{equation}
and
\begin{equation}\label{KT}
C_T^{(S)}(\chi_\omega)=\inf\left\{ \frac{  \int_0^T\int_\omega  \vert \partial_{t}y(t,x) \vert ^2\,dV_g \, dt }{\Vert y^0\Vert^2_{D(A)}} \ \big\vert\  y^0\in D(A) \setminus\{0\} \right\}.
\end{equation}
They are the so-called \textit{observability constants}.

\begin{remark}\label{rk1}
These properties can be formulated in different spaces. For instance, the observability inequality \eqref{ineqobsw} is equivalent to 
\begin{equation}\label{obsrk1}
C_T^{(W)}(\chi_\omega) \Vert (y^0,y^1)\Vert_{X\times (D(A^{1/2}))'}^2
\leq \int_0^T\int_\omega \vert y(t,x)\vert^2 \,dV_g\, dt,
\end{equation}
for all $(y^0,y^1)\in X\times (D(A^{1/2}))'$, with the same observability constants. Here the dual is considered with respect to the pivot space $X$.
Similarly, the observability inequality \eqref{ineqobss} is equivalent to
\begin{equation}\label{obsrk1_Sch}
C_T^{(S)}(\chi_\omega) \Vert y^0\Vert_{X}^2\leq \int_0^T\int_\omega \vert y(t,x)\vert^2 \,dV_g\, dt,
\end{equation}
for every $y^0\in X$.
%
%In this paper, for simplicity, we shall only address the problem under consideration in the $L^2$-functional setting presented above.
\end{remark}

Let $(\phi_j)_{j\in\N^*}$ be an orthonormal Hilbertian basis of $X$ consisting of eigenfunctions of $A$ on $\Omega$, associated with the positive\footnote{Note that, in the Neumann case or in the case $\partial\Omega=\emptyset$, one has $X=L^2_{0}(\Omega)$. Otherwise if we would consider $X=L^2(\Omega)$ in those cases, then we would have $\lambda_1=0$ (simple eigenvalue) and $\phi_1=1/\sqrt{V_g(\Omega)}$. The fact that in those cases we define $X=L^2_{0}(\Omega)$ permits to keep a uniform presentation for all boundary conditions considered at the beginning.\label{footnote_Neumann}} eigenvalues $(\lambda^2_j)_{j\in\N^*}$. In particular, $\int_\Omega\phi_j(x)\phi_k(x)\, dV_g$ is equal to $0$ whenever $j\neq k$, and $1$ whenever $j=k$.

\begin{remark}
Let us provide a spectral characterization of the spaces $D(A)$ and $D(A^{1/2})$.
There holds
$$
D(A) = \{ y\in X \ \vert\ \sum_{j=1}^{+\infty}\lambda_{j}^4\langle y,\phi_{j}\rangle_{L^2}^2<+\infty \},
$$
and
$$
D(A^{1/2}) = \{ y\in X \ \vert\ \sum_{j=1}^{+\infty}\lambda_{j}^2\langle y,\phi_{j}\rangle_{L^2}^2<+\infty \}.
$$
In the case of Dirichlet boundary conditions, one has $D(A)=H^2(\Omega,\C)\cap H^1_0(\Omega,\C)$ and $D(A^{1/2})=H^1_0(\Omega,\C)$. For Neumann boundary conditions, one has $D(A)=\{y\in H^2(\Omega,\C) \ \vert \ \frac{\partial y}{\partial n}_{\vert\partial\Omega}=0\ \textrm{and} \ \int_{\Omega}y(x)\, dV_{g}=0\}$ and $D(A^{1/2})=\{y\in H^1(\Omega,\C) \ \vert \ \int_{\Omega}y(x)\, dV_{g}=0\}$. In the mixed Dirichlet-Neumann case (with $\Gamma_{0}\neq \emptyset$), one has $D(A)=\{y\in H^2(\Omega,\C) \ \vert \ y_{\vert\Gamma_{0}}=\frac{\partial y}{\partial n}_{\vert\Gamma_{1}}=0\}$ and $D(A^{1/2})=H^1_{\Gamma_{0}}(\Omega,\C) =\{y\in H^1(\Omega,\C) \ \vert \ y_{\vert\Gamma_{0}}=0\}$ (see e.g. \cite{lt1989}).
\end{remark}
In this article we investigate the two following optimal observability problems.
Let $L\in(0,1)$ be fixed.

\begin{quote}
\noindent{\bf First problem (optimal design for fixed initial data).}
\begin{itemize}
\item \textbf{Wave equation \eqref{waveEqobs}: }\textit{given $(y^0,y^1)\in D(A^{1/2})\times X$, we investigate the problem of maximizing the functional
\begin{equation}\label{quantity1obsW}
G_T(\chi_\omega)=
\int_0^T\int_\omega  \vert \partial_{t}y(t,x) \vert ^2 \, dV_g \, dt ,
\end{equation}
over all possible measurable subsets $\omega$ of $\Omega$ of measure $V_g(\omega)=L V_g( \Omega ) $,
where $y\in C^0(0,T;D(A^{1/2}))\cap C^1(0,T;X)$ is the solution of \eqref{waveEqobs} such that $y(0,\cdot)=y^0(\cdot)$ and $\frac{\partial y}{\partial t}(0,\cdot)=y^1(\cdot)$.}
\item \textbf{Schr\"odinger equation \eqref{schroEqobs}: }\textit{given $y^0\in D(A)$, we investigate the problem of maximizing the functional $G_T$ defined by \eqref{quantity1obsW}
%\begin{equation}\label{quantity1obsS}
%G_T^{(S)}(\chi_\omega)= \int_0^T\int_\omega  \vert y(t,x) \vert ^2 \, dV_g\, dt ,
%\end{equation}
over all possible measurable subsets $\omega$ of $\Omega$ of measure $V_g(\omega)=L V_g(\Omega) $,
where $y\in C^0(0,T;D(A))$ is the solution of \eqref{schroEqobs} such that $y(0,\cdot)=y^0(\cdot)$.}
\end{itemize}
\end{quote}

In the analysis of this first problem, the observability inequalities are not required since we are dealing with  fixed initial data. Accordingly, the optimal set $\omega$, whenever it exists, depends of course on the initial data under consideration. As will be shown, this problem is mathematically challenging and reveals interesting properties. However it is not relevant enough in view of practical applications where the location of the observation or sensors is expected to be uniform with respect to the data and solutions under consideration. 

Consequently, we introduce the following second problem, of a spectral nature, in which, to some extent, all possible solutions are taken into consideration in the optimality criterion.

\begin{quote}
\noindent{\bf Second problem (uniform optimal design)}
\textit{We investigate the problem of maximizing the spectral functional
\begin{equation}\label{defJ}
J(\chi_\omega)=\inf_{j\in\N^*}\int_\omega \phi_j(x)^2 \, dV_g,
\end{equation}
over all possible subsets $\omega$ of $\Omega$ of measure $ V_g(\omega) =L V_g(\Omega) $.
}
\end{quote}

A relevant and natural criterion would certainly consist in maximizing the observability constant over all possible subsets $\omega$ of $\Omega$ of measure $ V_g(\omega) =L V_g(\Omega) $ for a given time $T>0$. Settled as such this problem is however very difficult to handle. Indeed, using an Hilbertian expansion  of the solutions of \eqref{waveEqobs} or \eqref{schroEqobs} in the basis of the eigenfunctions of the Laplacian operator, this leads to inequalities in which the presence of crossed terms makes it difficult to analyze the existence and possible nature of the optimal sets. Furthermore, this criterion depends on the time interval $[0, T]$ while the spectral one above is independent of $T$ and is of diagonal nature, not involving any crossed term. 

The difficulty related with the cross terms already appears in one-dimensional problems (see \cite{PTZObs1}). Actually, this question is very much related with classical problems in non harmonic Fourier analysis, such as the one of determining the best constants in Ingham's inequalities (see \cite{Ingham,JaffardTucsnakZuazua}).

In Section \ref{secMotiv} we describe how the spectral criterion \eqref{defJ} defined above can be derived by various averaging processes applied to the original problem of optimizing the observability constant.
The first one is to perform a time averaging process, leading to interpret the criterion $J(\chi_\omega)$ defined by \eqref{defJ} as a time asymptotic observability constant as $T$ tends to $+\infty$. The second one consists of randomizing the initial data of the wave or Schr\"odinger equation under consideration, which leads to interpret $J(\chi_\omega)$ as a randomized observability constant, corresponding to a randomized observability inequality (see Section \ref{secMotiv} for details).

These notions of time asymptotic or randomized observability inequalities are new and happen to be better fitted to provide a relevant answer to the problem of optimal observability. We provide in Section \ref{secMotiv} precise relations between these new observability constants and their classical deterministic versions.

Note that, when the spectrum of $A$ is not simple, this spectral second problem depends a priori on the choice of the orthonormal basis $(\phi_j)_{j\in\N^*}$ of eigenfunctions. When the spectrum is not simple it is natural to consider an intrinsic variant of the second problem by considering the infimum over all possible normalized eigenfunctions (see Section \ref{sec6.4}).

\subsection{Brief state of the art}
The literature on optimal observation or sensor location problems is abundant in engineering applications (see e.g. \cite{Kumar,Morris,Sigmund,Ucinski,vandeWal} and references therein), but very few mathematical theoretical contributions do exist.
In engineering applications, the aim is to optimize the number, the place and the type of sensors  in order to improve the estimation of the state of the system. Fields of applications are very numerous and concern for example active structural acoustics, piezoelectric actuators, vibration control in mechanical structures, damage detection and chemical reactions, just to name a few of them. In most of these applications however the method consists in approximating appropriately the problem by selecting a finite number of possible optimal candidates and of recasting the problem as a finite dimensional combinatorial optimization problem.
Among these approaches, the closest one to ours consists of considering truncations of Fourier expansion representations.
Adopting such a Fourier point of view, the authors of \cite{henrot_hebrardSCL,henrot_hebrardSICON} studied optimal stabilization issues of the one-dimensional wave equation and, up to our knowledge, these are the first articles in which one can find rigorous mathematical arguments and proofs to characterize the optimal set whenever it exists, for the problem of determining the best possible shape and position of the damping subdomain of a given measure.
In \cite{BellidoDonoso} the authors investigate the problem modeled in \cite{Sigmund} of finding the best possible distributions of two materials (with different elastic Young modulus and different density) in a rod in order to minimize the vibration energy in the structure. For this optimal design problem in wave propagation, the authors of \cite{BellidoDonoso} prove existence results and provide convexification and optimality conditions.
The authors of \cite{Allaire} also propose a convexification formulation of eigenfrequency optimization problems applied to optimal design.
In \cite{FahrooIto} the authors discuss several possible criteria for optimizing the damping of abstract wave equations in Hilbert spaces, and derive optimality conditions for a certain criterion related to a Lyapunov equation.
In \cite{PTZObs1} we investigated the second problem presented previously in the one-dimensional case. We also quote the article \cite{PTZ_HUM} where we study the related problem of finding the optimal location of the support of the control for the one-dimensional wave equation.

\subsection{Short description of the main results of this article}
In this article we provide a complete mathematical analysis of the two optimal observability problems settled in Section \ref{secobservabilityintro}. The article is structured as follows.

\medskip

Section \ref{sec2} is devoted to spectral considerations and to state and prove results interpreting the second problem in terms of a time averaged or a randomized observability inequality (see Corollary \ref{corCTT} and Theorem \ref{corCTT}). In particular it is shown how the time averaging or the randomization with respect to initial data permit to rule out crossed terms and lead to the spectral criterion \eqref{defJ} considered in the second problem.

\medskip

In Section \ref{solvingpb1obs}, we solve the first problem, that is the optimal design problem for fixed initial data.
The main result of this section is Theorem \ref{thmobsopt}, which provides a sufficient condition ensuring existence and uniqueness of a solution of the first problem (see the more precise statement in Section \ref{sec3.1}).

\begin{thmnonumbering}
If the initial data under consideration belong to a suitable class of analytic functions, then the first problem has a unique solution $\omega$, which has a finite number of connected components.
\end{thmnonumbering}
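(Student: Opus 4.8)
The plan is to reduce the space-time maximization to a purely spatial one, and then to invoke a bathtub-type argument, the whole difficulty being concentrated in establishing the analyticity of the relevant spatial density. The argument is identical for both equations, since in each case the functional involves the time-integral of $\vert\partial_t y\vert^2$.

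First I would introduce the time-integrated density
\[ \varphi(x)=\int_0^T \vert\partial_t y(t,x)\vert^2\,dt, \]
so that the functional to be maximized reads $G_T(\chi_\omega)=\int_\Omega \varphi(x)\chi_\omega(x)\,dV_g$, which is \emph{linear} in the design variable $\chi_\omega$. Maximizing such a linear functional over the set of characteristic functions of subsets of prescribed measure $L V_g(\Omega)$ is exactly the setting of the bathtub principle. Its solutions are the super-level sets $\{\varphi>\tau\}$, where the threshold $\tau$ is chosen so that $V_g(\{\varphi>\tau\})= L V_g(\Omega)$, possibly completed by a portion of the level set $\{\varphi=\tau\}$. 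Existence of a maximizer is then immediate, and the key point is that the maximizer is \emph{unique} precisely when the level set $\{\varphi=\tau\}$ has zero measure, in which case there is no freedom in the completion step.

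The heart of the proof is therefore to show that, for initial data in the prescribed class, $\varphi$ is a nonconstant real-analytic function on $\overline\Omega$. Expanding $y^0=\sum_j a_j\phi_j$ and $y^1=\sum_j b_j\phi_j$, the solution of the wave equation satisfies $\partial_t y(t,x)=\sum_j\big(b_j\cos(\lambda_j t)-a_j\lambda_j\sin(\lambda_j t)\big)\phi_j(x)$, so that $\varphi$ is obtained by squaring and integrating in $t$. I would establish analyticity of $\varphi$ by proving that $x\mapsto\partial_t y(t,x)$ is real-analytic on $\overline\Omega$ with analytic estimates uniform in $t\in[0,T]$; integrating $\vert\partial_t y\vert^2$ over the compact interval $[0,T]$ then preserves analyticity. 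Analyticity of the series rests on the fact that, on a real-analytic manifold, each eigenfunction $\phi_j$ extends holomorphically to a complex (Grauert) neighborhood of $\overline\Omega$ with sup-norm controlled by $e^{C\lambda_j}$; the decay of the Fourier coefficients $(a_j),(b_j)$ imposed by the "suitable class" (a Gevrey/analytic-type decay, faster than $e^{-\delta\lambda_j}$ for $\delta$ larger than this $C$) then forces the spectral series, together with all its derivatives, to converge to a holomorphic function on that neighborhood. The main obstacle is exactly this step: obtaining sharp quantitative analytic bounds on eigenfunctions and matching them against the admissible decay rate, so that the term-by-term estimation of the (double) series defining $\varphi$ closes up; the fact that these estimates must hold up to $\partial\Omega$ is where the boundary conditions and the analyticity of $\partial\Omega$ enter.

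Finally I would conclude. A nonconstant real-analytic function on the connected set $\Omega$ has level sets of zero measure, since the zero set of a nonzero analytic function is negligible; combined with the bathtub dichotomy above, this yields uniqueness of $\omega=\{\varphi>\tau\}$. For the finiteness of the number of connected components, I would invoke the Łojasiewicz structure theory of semianalytic sets: since $\varphi$ is analytic on the compact set $\overline\Omega$, the super-level set $\{\varphi>\tau\}$ is semianalytic and hence has finitely many connected components. It remains only to discard the degenerate case where $\varphi$ is constant, which I expect the definition of the admissible class of (nontrivial) data to rule out.
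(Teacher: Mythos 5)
Your overall architecture coincides with the paper's: write $G_T(\chi_\omega)=\int_\omega\varphi\,dV_g$ with $\varphi(x)=\int_0^T|\partial_t y(t,x)|^2dt$, apply the bathtub principle, prove $\varphi$ is analytic and nonconstant so that level sets are negligible (uniqueness), and invoke semi-analytic structure theory for the finiteness of connected components. Where you genuinely diverge is the analyticity step. You propose holomorphic (Grauert-tube) extensions of the eigenfunctions with $e^{C\lambda_j}$ bounds, matched against exponential decay of the Fourier coefficients; this works but, as you yourself note, forces you to assume $\partial\Omega$ analytic in order to control the extension up to the boundary. The paper instead observes that $\Vert A^{j/2}y(t,\cdot)\Vert_{L^2}^2+\Vert A^{(j-1)/2}\partial_t y(t,\cdot)\Vert_{L^2}^2$ is conserved along the flow, so the hypothesis \eqref{condanalwave} propagates to all times; Sobolev embedding then yields $\Vert \partial_t^{k}y(t,\cdot)\Vert_\infty\leq C\,(2n+k)!/R^{2n+k}$, and since $\partial_t^{2k}y=\triangle_g^k y$ this gives factorial control of all derivatives and hence (via analytic hypoellipticity in the interior) analyticity of $y$, and of $\varphi$, on $\Omega$. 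This more elementary route buys two things: it only requires $\partial\Omega$ of class $C^\infty$ (interior analyticity suffices for the measure-zero level-set argument), and it avoids any quantitative eigenfunction estimates. Your approach, in exchange, would give analyticity up to the boundary when $\partial\Omega$ is analytic, which is stronger than needed.

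One small point you leave dangling: the exclusion of the degenerate case $\varphi\equiv\mathrm{const}$ is \emph{not} encoded in the admissible class of data, as you suggest, but comes from the boundary conditions — e.g.\ for Dirichlet data one has $\varphi=0$ on $\partial\Omega$, so a constant $\varphi$ would vanish identically, forcing $\partial_t y\equiv 0$ and hence trivial data. This is precisely why the uniqueness part of the theorem requires $\partial\Omega\neq\emptyset$; your argument as written would not detect that restriction.
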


Here, the optimal set $\omega$ is unique up to some subset of zero measure.
Proposition \ref{propCantor} (Section \ref{sec3.2}) shows that the above sufficient condition is, in some sense, sharp, since there exist initial data of class $C^\infty$ such that the first problem has a unique solution $\omega$, which is a fractal set and thus has an infinite number of connected components. An explicit example is built in Appendix \ref{AppendixCantor}.

\medskip

In Section \ref{solvingpb2obs}, we focus on the second problem (uniform optimal design), which is of a spectral nature and, thus, independent on the initial data. As proved in Section \ref{sec2} this problem corresponds to computing the maximal possible value of the time asymptotic or of the randomized observability constant.
We first provide in Section \ref{solvingpb2obssec1} a convexified version of the problem, by considering the convex closure of the set
$$
\mathcal{U}_L = \{ \chi_\omega\ \vert\ \omega\ \textrm{is a measurable subset of}\ \Omega \ \textrm{of measure}\ V_g(\omega)=L V_g(\Omega) \}
$$
for the $L^\infty$ weak star topology, that is
$$
\overline{\mathcal{U}}_L = \{ a\in L^\infty(\Omega,[0,1])\ \vert\ \int_\Omega a(x)\,dV_g=LV_g(\Omega) \}.
$$
The convexified second problem then consists of maximizing the functional
$$J(a)=\inf_{j\in \N^*}\int_{\Omega}a(x) \phi_j(x)^2\, dV_g$$
over $\overline{\mathcal{U}}_L$.
Our main results are Theorems \ref{thmnogap} and \ref{thmnogap2} (stated in Section \ref{secconvexified2}), whose contents are roughly the following.

\begin{thmnonumbering}
\begin{enumerate}
\item Assume that there exists a subsequence of the sequence of probability measures $\mu_{j}=\phi_j^2\, dV_{g}$ converging vaguely to the uniform measure $\frac{1}{ V_g(\Omega) }\, dV_{g}$ (Weak Quantum Ergodicity assumption), and that the sequence of eigenfunctions ${\phi_j}$ is uniformly bounded in $L^\infty(\Omega)$. Then
\begin{equation*}
\sup_{\chi_\omega\in \mathcal{U}_L} \inf_{j\in \N^*}\int_{\omega} \phi_j(x)^2\, dV_g  = \sup_{a\in \overline{\mathcal{U}}_L} \inf_{j\in \N^*}\int_{\Omega}a(x) \phi_j(x)^2\, dV_g = L,
\end{equation*}
for every $L\in (0,1)$.
In other words, there is no gap between the second problem and its convexified version.
\item Assume that the whole sequence of probability measures $\mu_{j}=\phi_j^2\, dV_{g}$ converges vaguely to the uniform measure $\frac{1}{ V_g(\Omega) }\, dV_{g}$ (Quantum Unique Ergodicity assumption), and that the sequence of eigenfunctions ${\phi_j}$ is uniformly bounded in $L^{2p}(\Omega)$, for some $p\in(1,+\infty]$. Then the supremum of $J$ over the subset of $\mathcal{U}_L$ of all characteristic functions of Jordan measurable subsets is as well equal to $L$.
\end{enumerate}
\end{thmnonumbering}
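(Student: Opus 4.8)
The plan is to treat the convexified problem first and then to transfer the result to the original (and, in the second part, to the Jordan measurable) class of characteristic functions via a relaxation argument. First I would establish the value of the convexified problem, namely $\sup_{a\in\overline{\mathcal U}_L}J(a)=L$. The lower bound is immediate: the constant function $a\equiv L$ belongs to $\overline{\mathcal U}_L$, and since each $\phi_j$ is $L^2$-normalized one has $\int_\Omega L\,\phi_j^2\,dV_g=L$ for every $j$, whence $J(L)=L$. For the upper bound I would use the Weak Quantum Ergodicity assumption: picking a subsequence $(\phi_{j_k})$ with $\phi_{j_k}^2\,dV_g$ converging vaguely to $\frac1{V_g(\Omega)}\,dV_g$, and using that the uniform $L^\infty$ bound makes the densities $\phi_{j_k}^2$ equibounded, I would upgrade the vague convergence from continuous test functions to an arbitrary $a\in L^\infty(\Omega,[0,1])$ by an $L^1$-density approximation. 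This yields $\int_\Omega a\,\phi_{j_k}^2\,dV_g\to\frac1{V_g(\Omega)}\int_\Omega a\,dV_g=L$, hence $J(a)\leq\liminf_k\int_\Omega a\,\phi_{j_k}^2\,dV_g=L$ for every admissible $a$. Since $\mathcal U_L\subset\overline{\mathcal U}_L$ this already gives $\sup_{\chi_\omega\in\mathcal U_L}J(\chi_\omega)\leq\sup_{a}J(a)=L$.

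The substance of the theorem is the reverse inequality $\sup_{\chi_\omega}J(\chi_\omega)\geq L$, i.e. the no-gap statement. Here I would split the spectrum into low and high frequencies. For the finitely many low frequencies $j\leq N$ I would invoke Lyapunov's convexity theorem for non-atomic vector measures: the range of $\omega\mapsto(\int_\omega\phi_1^2,\dots,\int_\omega\phi_N^2)$ over measurable sets of measure $LV_g(\Omega)$ is convex and coincides with its range over $\overline{\mathcal U}_L$, so the value $(L,\dots,L)$ attained by $a\equiv L$ is realized by an actual set; thus $\int_\omega\phi_j^2=L$ can be matched exactly for all $j\leq N$. For the high frequencies I would rely on equidistribution to force $\int_\omega\phi_j^2$ close to $L$, using the uniform $L^\infty$ bound, which (crucially) forbids concentration since $\int_S\phi_j^2\leq\|\phi_j\|_\infty^2\,V_g(S)$, to obtain uniform control of the tail.

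For the second part (Theorem \ref{thmnogap2}) the mechanism is cleaner: under Quantum Unique Ergodicity the whole sequence equidistributes, and the uniform $L^{2p}$ bound gives uniform integrability through Hölder's inequality, $\int_E\phi_j^2\leq\|\phi_j^2\|_{L^p}\,V_g(E)^{1/p'}$, which allows one to upgrade the vague convergence to $\int_\omega\phi_j^2\to L$ for every Jordan measurable $\omega$; the hypothesis that $\partial\omega$ has zero measure is exactly what is needed to pass to the limit against the discontinuous $\chi_\omega$. The same Hölder estimate lets one replace the merely measurable sets produced by Lyapunov's theorem by Jordan measurable ones while changing each $\int_\omega\phi_j^2$ by an arbitrarily small amount.

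I expect the main obstacle to be the uniformity in $j$ when gluing the two regimes: Lyapunov's theorem controls only finitely many eigenfunctions, while quantum ergodicity controls only the asymptotic behaviour, and the threshold $N_0$ beyond which $\int_\omega\phi_j^2\geq L-\varepsilon$ depends on the set $\omega$ just constructed, creating a circular dependence between the number of frequencies matched exactly and the onset of equidistribution. Resolving this requires a careful diagonal construction of a sequence $\omega_n$, and in the first part the additional difficulty that Weak Quantum Ergodicity provides only an equidistributing subsequence, so that the intermediate frequencies must be dominated using the anti-concentration furnished by the uniform $L^\infty$ bound rather than by equidistribution alone. This is the delicate point on which the whole no-gap result hinges.
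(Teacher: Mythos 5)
Your framing is right (the convexified value is $L$, the trivial inequality $\sup_{\chi_\omega}J\le\sup_a J(a)$, and everything hinges on producing actual sets $\omega$ with $\inf_j\int_\omega\phi_j^2$ close to $L$), and your derivation of the convexified upper bound from WQE plus the uniform $L^\infty$ bound is correct and is exactly what the paper does. But the central step --- the lower bound $\sup_{\chi_\omega}J(\chi_\omega)\ge L$ --- is not proved by your low/high frequency splitting, and the obstacle you flag at the end is fatal to the plan rather than a technicality to be cleaned up. Under WQE only a \emph{subsequence} $(\phi_{j_k}^2)$ equidistributes; for the infinitely many large $j$ outside that subsequence you have no equidistribution at all, and the anti-concentration estimate $\int_S\phi_j^2\,dV_g\le A^2V_g(S)$ only yields $\int_\omega\phi_j^2\,dV_g\ge 1-A^2(1-L)V_g(\Omega)$, which is useless for general $L\in(0,1)$. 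Lyapunov's theorem fixes finitely many frequencies and says nothing about the rest, so the three regimes (low, equidistributing subsequence, everything else) cannot be glued. The paper's proof of part 1 avoids the splitting entirely: it views $f(x)=(\phi_j(x)^2)_{j\in\N^*}$ as an element of $L^1(\Omega,\ell^\infty)$ (this is where the uniform $L^\infty$ bound enters), approximates $f$ to within $\varepsilon$ in the Bochner norm by a simple function $f_n=\sum_k\alpha_k\chi_{\Omega_k}$, and takes $\omega=\cup_k\omega_k$ with $V_g(\omega_k)=LV_g(\Omega_k)$; then $\int_\Omega(\chi_\omega-L)f_n\,dV_g=0$ exactly and the remainder is at most $\varepsilon$ \emph{uniformly in $j$}. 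That single approximation controls all frequencies at once, which is precisely what your scheme lacks.

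For part 2 you correctly identify the circular dependence between the set constructed to match the low frequencies and the threshold $j_0(\omega)$ at which QUE takes effect, but ``a careful diagonal construction'' is not an argument, and a naive diagonal along a sequence $\omega_n$ collides with the fact that $J$ is only upper semicontinuous for weak-star limits. The paper breaks the circularity quantitatively: starting from a Lipschitz $\omega_0$, it performs a homogenization-type perturbation (deleting small balls of total measure $\varepsilon^n(1-L)LV_g(\Omega)$ from $\omega_0$ and inserting balls of the same total measure in the complement, centred at Lebesgue points of the $\phi_j^2$) which raises each low-frequency $I_j$ by about $\varepsilon^n\bigl(L-I_j(\omega_0)\bigr)$, while H\"older's inequality with the $L^{2p}$ bound shows that \emph{every} $I_j$ moves by at most $\bigl(2A^{2q}\varepsilon^nL(1-L)V_g(\Omega)\bigr)^{1/q}$; choosing $\varepsilon$ as an explicit function of $L-J(\omega_0)$ then yields a definite gain $J(\omega_{k+1})\ge J(\omega_k)+\tfrac{C_1}{2}\min\bigl(C_2,L-J(\omega_k)\bigr)\bigl(L-J(\omega_k)\bigr)$, and iteration forces $J(\omega_k)\to L$. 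If you wish to keep the Lyapunov-based outline, you must supply an analogous uniform-in-$j$ stability estimate for the modification step; without it the proof does not close.
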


The assumptions of the above result are sufficient but not necessary to derive such a no-gap statement, as it is shown when $\Omega$ is a two-dimensional disk with Dirichlet boundary conditions (see Proposition \ref{propnogapnoQUE}), in spite of the fact that the eigenfunctions do not equidistribute as the eigenfrequencies increase, as illustrated by the well known whispering galleries effect (see discussion in Section \ref{secconvexified2}).

In Section \ref{sec_comments}, we comment on these quantum ergodicity assumptions, that are well known in mathematical physics to be related with concentration phenomena of eigenfunctions (scarring phenomena). We briefly survey the main results of this theory and show their intimate relations with the optimization problems under consideration in the present article.

In Section \ref{sec4.3} we provide some results on the existence of an optimal set achieving the supremum in the above problem, and formulate some open problems. 

Note that Theorems \ref{thmnogap} and \ref{thmnogap2} cannot be inferred from usual $\Gamma$-convergence results. Theorem \ref{thmnogap} is proved in Section \ref{secproofthmnogap}. The proof of Theorem \ref{thmnogap2} in Section \ref{secproofthmnogap2} is of a completely different nature. It is a constructive proof of a maximizing sequence of optimal sets which permits to establish that it is possible to increase the values of $J$ by considering subsets having an increasing number of connected components.

In Section \ref{sec6.4}, we define and study an intrinsic version of our second problem, which does not depend on the choice of an orthonormal basis of eigenfunctions. 
We consider the problem of maximizing the quantity $\inf_{\phi\in \mathcal{E}}\int_\omega \phi(x)^2 \, dV_g$ over $\mathcal{U}_L$, where $\mathcal{E}$ denotes the set of all normalized eigenfunctions of $A$.
For this problem we have a result similar to the one above (Theorems \ref{thmnogap_intrinsic} and \ref{thmnogap_intrinsic2}), and moreover in this intrinsic problem we are able to provide an explicit example where a gap occurs between the problem and its convexified formulation (Proposition \ref{remgapintrinsic}), by considering the unit sphere in $\R^3$ or the unit half-sphere with Dirichlet boundary conditions, and certain quantum limits of a Dirac type.

These results constitute the main contributions of this article. They show precise connections between optimal observability issues and quantum ergodicity properties of $\Omega$.
Such a relation was suggested in the early work \cite{CFNS_1991} concerning the  exponential decay properties of dissipative wave equations.

\medskip

Section \ref{sectrunc2} is devoted to the study of a finite-dimensional spectral approximation of the second problem, namely the problem of maximizing the functional
$$J_N(\chi_\omega)=\min_{1\leq j\leq N}\int_\omega \phi_j(x)^2\,dV_g$$
over $\mathcal{U}_L$.
In Theorem \ref{thm3} we derive a $\Gamma$-convergence property of $J_N$ towards $J$ for the weak star topology of $L^\infty$. In particular, the sets optimizing $J_N$ constitute a maximizing sequence for the convexified version of the maximization problem for $J$, and this, without geometric or ergodicity assumptions on $\Omega$. Of course, then, under the assumptions of the above theorem, these sets constitute a maximizing sequence for the original optimization problem as well, without convexification. We also prove the existence and uniqueness of an optimal set $\omega^N$, which is shown to have a finite number of connected components for every integer $N$.
We also present several numerical simulations showing the shapes and complexity of these sets.

\medskip

In Section \ref{sec_furthercomments}, we provide further comments. First, in Section \ref{sec6.1}, for the second problem \eqref{reducedsecondpb} we analyze possible ways of restricting the classes of domains under consideration to ensure compactness properties. But then, of course, the maximal value of $J$ diminishes. In Section \ref{sec6.2} we extend our main results (for the second problem) to a natural variant of observability inequality for Neumann boundary conditions or in the boundaryless case. Section \ref{sec6.3} is devoted to the analysis of a variant of observability inequality for Dirichlet, mixed Dirichlet-Neumann and Robin boundary conditions, involving a $H^1$ norm. We show that the time averaging or the randomization lead to a slightly different spectral criterion (Theorem \ref{thmconstantRobin}). In contrast to the previous results, in this new situation the uniform optimal design problem has an optimal solution whenever the volume fraction is large enough,  the optimal set being determined by taking only into account a finite number of low frequency modes (see Theorem \ref{thmnogapRobin} for a precise statement). Numerical simulations illustrate this result.
Finally, in Section \ref{sec6.5} we show that the problem of maximizing the observability constant, studied along the paper, is by a classical duality argument equivalent to the optimal design of the control problem of determining the optimal location of internal controllers, for the wave and Schr\"odinger equations.

%%%%%%%%%%%%%%%%%%%%%%%%%%%%%%%%%%%%%%%%%%%%%%%%%
%%%%%%%%%%%%%%%%%%%%%%%%%%%%%%%%%%%%%%%%%%%%%%%%%

\section{Preliminaries: spectral considerations}\label{sec2}
\subsection{Spectral expansion of the functional $G_T(\chi_\omega)$}\label{pb1obsnewexpr}
We recall that we have fixed a Hilbertian basis $(\phi_j)_{j\in \N^*}$ of $X$ consisting of eigenfunctions of $A$, associated with the real eigenvalues $(\lambda_j^2)_{j\in\N^*}$. 
Using a series expansion of the solutions of the wave or Schr\"odinger equation in this Hilbertian basis, our objective is to write the functional $G_T$ defined by \eqref{quantity1obsW} in a more suitable way for our mathematical analysis. 

\begin{paragraph}{Wave equation \eqref{waveEqobs}.}
For all initial data $(y^0,y^1)\in D(A^{1/2})\times X$, the solution $y\in C^0(0,T;D(A^{1/2}))\cap C^1(0,T;X)$ of \eqref{waveEqobs} such that
$y(0,\cdot)=y^0(\cdot)$ and $\partial_t y(0,\cdot)=y^1(\cdot)$ can be expanded as
\begin{equation}\label{yDecomp}
y(t,x)=\sum_{j=1}^{+\infty}\left(a_je^{i\lambda_jt}+b_je^{-i\lambda_jt}\right)\phi_j (x),
\end{equation}
where the sequences $(\lambda_{j }a_j)_{j\in\N^*}$ and $(\lambda_{j}b_j)_{j\in\N^*}$ belong to $\ell^2(\C)$ and are determined in terms of the initial data $(y^0,y^1)$ by
\begin{equation}\label{defajbj}
\begin{split}
a_j &= \frac{1}{2}\left(\int_\Omega y^0(x) \phi_j (x)\, dV_g-\frac{i}{\lambda_j} \int_\Omega y^1(x) \phi_j (x)\, dV_g\right),\\
 b_j &= \frac{1}{2}\left(\int_\Omega y^0(x) \phi_j (x)\, dV_g+\frac{i}{\lambda_j} \int_\Omega y^1(x) \phi_j (x)\, dV_g\right).
\end{split}
\end{equation}
for every $j\in\N^*$. Moreover, 
\begin{equation}\label{notice7}
\Vert (y^0,y^1)\Vert_{D(A^{1/2})\times X}^2 = 2\sum_{j=1}^{+\infty} \lambda_{j}^2( \vert a_j \vert ^2+ \vert b_j \vert ^2).
\end{equation}
Plugging \eqref{yDecomp} into \eqref{quantity1obsW} leads to
\begin{eqnarray}\label{GT1}
G_T(\chi_\omega) 
& = &\int_0^T  \int_\omega\left \vert \sum_{j=1}^{+\infty}\lambda_{j}\left(a_je^{i\lambda_jt}-b_je^{-i\lambda_jt}\right)\phi_j(x)  \right \vert ^2\, dV_g\, dt \nonumber \\
& = & \sum_{j,k=1}^{+\infty}\lambda_{j}\lambda_{k}\alpha_{jk} \int_\omega \phi_i(x) \phi_j(x) \, dV_g ,
\end{eqnarray}
where
\begin{equation}\label{defalphaij}
\alpha_{jk} = \int_0^T (a_je^{i\lambda_jt}-b_je^{-i\lambda_jt})(\bar a_ke^{-i\lambda_kt}-\bar b_ke^{i\lambda_kt}) \, dt .
\end{equation}
The coefficients $\alpha_{jk}$, $(j,k)\in(\N^*)^2$, depend only on the initial data $(y^0,y^1)$, and their precise expression is given by
\begin{equation}\label{alphaij}
\begin{split}
\alpha_{jk} &= \frac{2a_j\bar a_k}{\lambda_j-\lambda_k}\sin\left((\lambda_j-\lambda_k)\frac{T}{2}\right)e^{i(\lambda_j-\lambda_k)\frac{T}{2}}-\frac{2a_j\bar b_k}{\lambda_j+\lambda_k}\sin\left((\lambda_j+\lambda_k)\frac{T}{2}\right)e^{i(\lambda_j+\lambda_k)\frac{T}{2}} \\
&\quad - \frac{2b_j\bar a_k}{\lambda_j+\lambda_k}\sin\left((\lambda_j+\lambda_k)\frac{T}{2}\right)e^{-i(\lambda_j+\lambda_k)\frac{T}{2}}+\frac{2b_j\bar b_k}{\lambda_j-\lambda_k}\sin\left((\lambda_j-\lambda_k)\frac{T}{2}\right)e^{-i(\lambda_j-\lambda_k)\frac{T}{2}} 
\end{split}
\end{equation}
whenever $\lambda_j\neq \lambda_k$, and
\begin{equation}\label{alphajj}
\alpha_{jk} = T(a_j\bar a_k+b_j\bar b_k)-\frac{\sin (\lambda_jT)}{\lambda_j}(a_j\bar b_ke^{i\lambda_jT}+b_j\bar a_ke^{-i\lambda_jT})
\end{equation}
when $\lambda_j=\lambda_k$.
\end{paragraph}

\begin{remark}\label{remTmult2pi}
In dimension one, consider $\Omega=[0,\pi]$ with Dirichlet boundary conditions. Then $\phi_j(x)=\sqrt{\frac{2}{\pi}}\sin(jx)$ and $\lambda_j=j$ for every $j\in\N^*$.
In this one-dimensional case, it can be noticed that when the time $T$ is a multiple of $2\pi$  all nondiagonal terms vanish. Indeed, if $T=2p\pi$ with $p\in\N^*$, then $\alpha_{ij}=0$ whenever $i\neq j$, and
\begin{equation}\label{alphajj2pi}
\alpha_{jj} = p\pi ( \vert a_j \vert ^2+ \vert b_j \vert ^2),
\end{equation}
for all $(i,j)\in(\N^*)^2$, and therefore
\begin{equation}\label{GTmult2pi}
G_{2p\pi}(\chi_\omega) = \sum_{j=1}^{+\infty}\lambda_{j}^2\alpha_{jj}\int_\omega\sin^2(jx)\, dx .
\end{equation}
Hence in that case the functional $G_{2p\pi}$ does not involve any crossed terms. The second problem for this one-dimensional case was studied in detail in \cite{PTZObs1}.
\end{remark}

\begin{paragraph}{Schr\"odinger equation \eqref{schroEqobs}.}
For every $y^0\in D(A)$, the solution $y \in C^0(0,T;D(A))$ of \eqref{schroEqobs} such that $y(0,\cdot)=y^0(\cdot)$ can be expanded as
\begin{equation}\label{psiDecomp}
y(t,x)=\sum_{j=1}^{+\infty}c_je^{i\lambda_j^2t}\phi_j (x),
\end{equation}
where the sequence $(\lambda_{j}^2c_j)_{j\in\N^*}$ belongs to $\ell^2(\C)$ and is determined in terms of $y^0$ by
\begin{equation}\label{defcj}
c_j = \int_\Omega y^0(x) \phi_j (x)\, dV_g
\end{equation}
for every $j\in\N^*$. Moreover,
\begin{equation}\label{notice7psi}
\Vert y^0\Vert_{D(A)}^2 = \sum_{j=1}^{+\infty}  \lambda_{j}^4\vert c_j \vert ^2.
\end{equation}
Plugging \eqref{psiDecomp} into \eqref{quantity1obsW} leads to
\begin{equation}\label{GTpsi}
G_T(\chi_\omega) 
= \int_0^T  \int_\omega\left \vert  \sum_{j=1}^{+\infty}\lambda_{j}^2c_je^{i\lambda_j^2t}\phi_j(x)  \right \vert ^2\, dV_g\, dt 
= \sum_{j,k=1}^{+\infty}\lambda_{j}^2\lambda_{k}^2\alpha_{jk} \int_\omega \phi_j(x) \phi_k(x) \, dV_g  ,
\end{equation}
with
\begin{equation}\label{defalphaijS}
\alpha_{jk} = c_j\bar c_k \int_0^Te^{i(\lambda_j^2-\lambda_k^2)t}\, dt = 
 \frac{2c_j\bar c_k}{\lambda_j^2-\lambda_k^2}\sin\left((\lambda_j^2-\lambda_k^2)\frac{T}{2}\right)e^{i(\lambda_j^2-\lambda_k^2)\frac{T}{2}},
\end{equation}
whenever $j\neq k$, and $\alpha_{jj}=\vert c_j\vert^2 T$ whenever $j=k$.
\end{paragraph}

\subsection{Two motivations for studying the second problem}\label{secMotiv}
In this section, as announced at the end of Section \ref{secobservabilityintro}, we provide two motivations for studying the second problem, consisting of maximizing the functional $J$ defined by \eqref{defJ} over the set $\mathcal{U}_L$.

\medskip

\noindent
\textbf{First motivation: averaging in time / time asymptotic observability constant.}\\
First of all, we claim that, for all $(y^0,y^1)\in D(A^{1/2})\times X$, the quantity
$$
\frac{1}{T} \int_0^T\int_\omega  \vert \partial_{t}y(t,x) \vert ^2\, dV_g \, dt ,
$$
where $y\in C^0(0,T;D(A^{1/2}))\cap C^1(0,T;X)$ is the solution of the wave equation \eqref{waveEqobs} such that $y(0,\cdot)=y^0(\cdot)$ and $\partial_t y(0,\cdot)=y^1(\cdot)$, has a limit as $T$ tends to $+\infty$. We refer to lemmas \ref{lemmaGTT} and \ref{lemm1lim} for a proof of this fact.
This leads to define the concept of \textit{time asymptotic observability constant}
\begin{equation}\label{CTinfty}
C_\infty^{(W)}(\chi_\omega)=\inf\left\{ \lim_{T\rightarrow+\infty} \frac{1}{T}  \frac{  \int_0^T\int_\omega  \vert \partial_{t}y(t,x) \vert ^2\, dV_g \, dt }{\Vert (y^0,y^1)\Vert^2_{D(A^{1/2})\times X}} \ \big\vert\  (y^0,y^1)\in D(A^{1/2})\times X \setminus\{(0,0)\} \right\} .
\end{equation}
This constant appears as the largest possible nonnegative constant for which the time asymptotic observability inequality
\begin{equation}\label{ineqobswinfty}
C_\infty^{(W)}(\chi_\omega) \Vert (y^0,y^1)\Vert_{D(A^{1/2})\times X}^2
\leq \lim_{T\rightarrow+\infty} \frac{1}{T} \int_0^T\int_\omega \vert \partial_{t}y(t,x)^2\vert \, dV_g \,dt,
\end{equation}
holds for all $y^0(\cdot)\in D(A^{1/2})$ and $y^1(\cdot)\in X$.

Similarly, for the Schr\"odinger equation, we define
\begin{equation}\label{CSinfty}
C_\infty^{(S)}(\chi_\omega)=\inf\left\{  \lim_{T\rightarrow+\infty} \frac{1}{T} \frac{  \int_0^T\int_\omega  \vert \partial_{t}y(t,x) \vert ^2\, dV_g \, dt }{\Vert y^0\Vert^2_{D(A)}} \ \big\vert\  y^0\in D(A) \setminus\{0\} \right\}.
\end{equation}
This constant is the largest possible nonnegative constant for which the time asymptotic observability inequality
\begin{equation}\label{ineqobssinfty}
C_\infty^{(S)}(\chi_\omega) \Vert y^0\Vert_{D(A)}^2
\leq \lim_{T\rightarrow+\infty} \frac{1}{T} \int_0^T\int_\omega \vert \partial_{t}y(t,x)^2\vert \, dV_g \,dt,
\end{equation}
holds for every $y^0(\cdot)\in D(A)$.

We have the following results.

\begin{theorem}\label{theoCTT}
For every measurable subset $\omega$ of $\Omega$, there holds
$$
2\, C_\infty^{(W)}(\chi_\omega)  = C_\infty^{(S)}(\chi_\omega) 
= \inf \left\{\frac{\int_\omega \sum_{\lambda\in U}\left|\sum_{k\in I(\lambda)}c_k\phi_k(x)\right|^2\, dV_g}{\sum_{k=1}^{+\infty}|c_k|^2}\ \vert\ (c_j)_{j\in \N^*}\in\ell^2(\C)\setminus\{0\} \right\},
$$
where $U$ is the set of all distinct eigenvalues $\lambda_k$ and $I(\lambda)=\{j\in \N^*\ \vert\ \lambda_j=\lambda\}$.
\end{theorem}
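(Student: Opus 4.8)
The plan is to compute explicitly the time-asymptotic limit defining $C_\infty^{(W)}$ and $C_\infty^{(S)}$, to observe that in both cases the limiting quadratic form only retains the contributions of pairs of coinciding eigenvalues, and then to reduce the two resulting minimization problems to a single common spectral Rayleigh quotient by an elementary change of variables and a ratio-minimization (mediant) argument. Throughout I write $\mathcal{I}(\chi_\omega)$ for the infimum appearing in the right-hand side of the statement.

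First I would treat the wave equation. Starting from the expansion \eqref{GT1} together with the explicit coefficients \eqref{alphaij}--\eqref{alphajj}, I divide $G_T(\chi_\omega)$ by $T$ and let $T\to+\infty$. For a pair $(j,k)$ with $\lambda_j\neq\lambda_k$, every term in \eqref{alphaij} is a bounded factor times $\frac{\sin((\lambda_j\pm\lambda_k)T/2)}{\lambda_j\pm\lambda_k}$, hence is $O(1)$, so its contribution to $\frac1T\alpha_{jk}$ vanishes; for $\lambda_j=\lambda_k$, formula \eqref{alphajj} gives $\frac1T\alpha_{jk}\to a_j\bar a_k+b_j\bar b_k$. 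The existence of the limit of $\frac1T G_T(\chi_\omega)$, together with the legitimacy of passing the limit through the infinite double series (the only genuinely delicate point), is guaranteed by lemmas \ref{lemmaGTT} and \ref{lemm1lim}. Grouping the surviving terms by the distinct eigenvalues $\lambda\in U$ and using that the $\phi_j$ may be taken real-valued, I obtain
$$\lim_{T\to+\infty}\frac1T G_T(\chi_\omega)=\sum_{\lambda\in U}\lambda^2\int_\omega\left(\Big|\sum_{j\in I(\lambda)}a_j\phi_j\Big|^2+\Big|\sum_{j\in I(\lambda)}b_j\phi_j\Big|^2\right)dV_g,$$
while \eqref{notice7} yields the denominator $2\sum_{\lambda\in U}\lambda^2\sum_{j\in I(\lambda)}(|a_j|^2+|b_j|^2)$.

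At this stage I set $c_j=\lambda_j a_j$ and $c'_j=\lambda_j b_j$; since $\lambda_j=\lambda$ on $I(\lambda)$, each block satisfies $\lambda^2|\sum_{j\in I(\lambda)}a_j\phi_j|^2=|\sum_{j\in I(\lambda)}c_j\phi_j|^2$, and by positivity of the eigenvalues the two sequences $(c_j),(c'_j)$ range freely over $\ell^2(\C)$. Writing $F(c)=\int_\omega\sum_{\lambda\in U}|\sum_{j\in I(\lambda)}c_j\phi_j|^2\,dV_g$ (the numerator of $\mathcal{I}(\chi_\omega)$), the Rayleigh quotient defining $C_\infty^{(W)}$ becomes $\frac{F(c)+F(c')}{2(\|c\|^2_{\ell^2}+\|c'\|^2_{\ell^2})}$. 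The mediant inequality $\frac{p+q}{r+s}\geq\min(p/r,q/s)$ for $r,s>0$, together with the homogeneous case in which one of the two sequences vanishes, shows that the infimum of this quotient over $(c,c')\neq(0,0)$ equals $\tfrac12\inf_{c\neq0}F(c)/\|c\|^2_{\ell^2}=\tfrac12\,\mathcal{I}(\chi_\omega)$. Hence $2\,C_\infty^{(W)}(\chi_\omega)=\mathcal{I}(\chi_\omega)$.

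Finally the Schr\"odinger case is entirely analogous but simpler, involving a single family of coefficients. From \eqref{GTpsi}--\eqref{defalphaijS} one has $\frac1T\alpha_{jk}=c_j\bar c_k\,\frac1T\int_0^Te^{i(\lambda_j^2-\lambda_k^2)t}\,dt$, which tends to $c_j\bar c_k$ when $\lambda_j=\lambda_k$ and to $0$ otherwise; grouping by $\lambda\in U$ and setting $d_j=\lambda_j^2 c_j$, which ranges freely over $\ell^2(\C)$ by \eqref{notice7psi}, yields directly
$$C_\infty^{(S)}(\chi_\omega)=\inf_{(d_j)\in\ell^2(\C)\setminus\{0\}}\frac{\int_\omega\sum_{\lambda\in U}\big|\sum_{j\in I(\lambda)}d_j\phi_j\big|^2\,dV_g}{\sum_{j=1}^{+\infty}|d_j|^2}=\mathcal{I}(\chi_\omega).$$
This is the desired identity, and combined with the previous paragraph it gives $C_\infty^{(S)}=2\,C_\infty^{(W)}$. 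The only real obstacle in the whole argument is the rigorous interchange of the limit $T\to+\infty$ with the infinite summation in the wave case, which is why I rely on the already-established lemmas \ref{lemmaGTT} and \ref{lemm1lim}; everything else is bookkeeping and the elementary ratio-minimization argument.
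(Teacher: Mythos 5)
Your proposal is correct and follows essentially the same route as the paper: pass to the time average term by term via the explicit coefficients \eqref{alphaij}--\eqref{alphajj} and \eqref{defalphaijS}, keep only the pairs with coinciding eigenvalues, group by $\lambda\in U$, and reduce both Rayleigh quotients to the common spectral infimum by the substitutions $c_j=\lambda_j a_j$, $c_j'=\lambda_j b_j$ (resp.\ $d_j=\lambda_j^2 c_j$) together with the mediant argument, which merely makes explicit a step the paper asserts. Be aware, though, that the one genuinely delicate point you identify --- justifying the interchange of $T\to+\infty$ with the infinite double sum --- is exactly what Lemmas \ref{lemmaGTT} and \ref{lemm1lim} (and their multiple-eigenvalue variant, via the tail and cross-term estimates on $R$ and $\delta$) are proved to handle \emph{inside} the paper's proof of this theorem, so by citing them you are importing the technical core of that proof rather than supplying it.
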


\begin{corollary}\label{corCTT}
There holds
$$
2C_{\infty}^{(W)}(\chi_{\omega})=C_{\infty}^{(S)}(\chi_{\omega})\leq J(\chi_{\omega}),
$$
for every measurable subset $\omega$ of $\Omega$.

If the domain $\Omega$ is such that every eigenvalue of $A$ is simple, then
$$
2\, C_\infty^{(W)}(\chi_\omega)  = C_\infty^{(S)}(\chi_\omega) 
= \inf_{j\in\N^*}\int_\omega \phi_j(x)^2 \, dV_g = J(\chi_\omega) ,
$$
for every measurable subset $\omega$ of $\Omega$.
\end{corollary}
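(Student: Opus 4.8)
The plan is to derive both assertions of Corollary~\ref{corCTT} directly from the variational characterization of $C_\infty^{(S)}(\chi_\omega)$ established in Theorem~\ref{theoCTT}, namely
$$
C_\infty^{(S)}(\chi_\omega) = \inf\left\{ \frac{\int_\omega \sum_{\lambda\in U}\left|\sum_{k\in I(\lambda)}c_k\phi_k(x)\right|^2 \,dV_g}{\sum_{k=1}^{+\infty}|c_k|^2} \ \Big\vert\ (c_j)_{j}\in \ell^2(\C)\setminus\{0\}\right\},
$$
together with the identity $2C_\infty^{(W)}(\chi_\omega)=C_\infty^{(S)}(\chi_\omega)$ that is part of the same theorem. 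Everything then reduces to understanding this Rayleigh-type infimum, and the only role played by the simplicity of the spectrum is whether the inner sum over each eigenspace $I(\lambda)$ collapses to a single term.

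For the inequality in the first assertion, I would feed the infimum a convenient family of test sequences. Fix $j_0\in\N^*$ and take $c_k=\delta_{k,j_0}$, so that $\sum_k|c_k|^2=1$. In the numerator only the eigenvalue $\lambda=\lambda_{j_0}$ contributes, and since $j_0\in I(\lambda_{j_0})$ the inner combination $\sum_{k\in I(\lambda_{j_0})}c_k\phi_k$ reduces to $\phi_{j_0}$; hence the quotient equals $\int_\omega \phi_{j_0}(x)^2\,dV_g$. As $C_\infty^{(S)}(\chi_\omega)$ is the infimum over all admissible sequences, it is bounded above by the infimum of these particular values, i.e. by $\inf_{j_0}\int_\omega\phi_{j_0}^2\,dV_g=J(\chi_\omega)$. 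Combined with $2C_\infty^{(W)}(\chi_\omega)=C_\infty^{(S)}(\chi_\omega)$ this yields the first assertion.

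For the second assertion, suppose every eigenvalue is simple. Then each index set $I(\lambda)$ is a singleton, so the numerator integrand becomes $\sum_{j=1}^{+\infty}|c_j|^2\phi_j(x)^2$. All terms being nonnegative, Tonelli's theorem licenses the interchange of the integral and the sum, turning the quotient into the weighted average $\big(\sum_j|c_j|^2\beta_j\big)/\big(\sum_j|c_j|^2\big)$, where $\beta_j:=\int_\omega\phi_j^2\,dV_g\geq 0$. Bounding each $\beta_j$ below by $\inf_k\beta_k$ shows every such average is at least $\inf_k\beta_k=J(\chi_\omega)$, whence $C_\infty^{(S)}(\chi_\omega)\geq J(\chi_\omega)$; the reverse inequality is the first assertion. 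This gives equality, and with it all the claimed identities.

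The computation is elementary once Theorem~\ref{theoCTT} is granted, so I do not expect a serious obstacle; the one point that deserves care is the interchange of summation and integration, which is justified only because all summands are nonnegative. It is also worth emphasizing why only an inequality survives in the non-simple case: when $\dim I(\lambda)>1$ the integrand $|\sum_{k\in I(\lambda)}c_k\phi_k|^2$ carries cross terms $\int_\omega\phi_k\phi_l\,dV_g$ with $k\neq l$, which need not be nonnegative, so a suitable combination within a single eigenspace may drive the quotient below $\inf_j\int_\omega\phi_j^2\,dV_g$. This is precisely the phenomenon that the simplicity hypothesis rules out, and it explains why the general statement degrades to the inequality $C_\infty^{(S)}(\chi_\omega)\leq J(\chi_\omega)$.
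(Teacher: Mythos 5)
Your proposal is correct and follows essentially the same route as the paper: the paper also reduces the corollary to the observation that the infimum of the weighted averages $\sum_j w_j\int_\omega\phi_j^2\,dV_g$ over nonnegative weights summing to one equals $\inf_j\int_\omega\phi_j^2\,dV_g$, which is exactly your Tonelli-plus-lower-bound step, and your delta-sequence test for the upper bound $C_\infty^{(S)}(\chi_\omega)\leq J(\chi_\omega)$ in the non-simple case is the natural specialization of Theorem~\ref{theoCTT} that the paper leaves implicit. Your closing remark on why cross terms within a multidimensional eigenspace can only lower the quotient is also consistent with the paper's discussion.
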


The proof of these results are done in Section \ref{prooftheoCTT}.
Note that, as is well known, the assumption of the simplicity of the spectrum of the Dirichlet-Laplacian is generic with respect to the domain $\Omega$ (see e.g. \cite{micheletti,ule,hillairet-judge}). The spectrum of the Neumann-Laplacian is also known to consist of simple eigenvalues for many choices of $\Omega$. For instance, it is proved in \cite{hillairet-judge} that this property holds for almost every polygon of $\R^2$ having $N$ vertices.

%\begin{remark}[The case of multiple eigenvalues]
%\label{rmkmultiplicity}
%%The simplicity of the spectrum plays a major role in the proof of this result, which does not hold whenever at least one eigenvalue is not simple.
%%The conclusion of Theorem \ref{theoCTT} is however easy to modify in the case of multiple eigenvalues.
%Let us comment on the role of multiple eigenvalues in the expression of the observability constants $C_\infty^{(W)}(\chi_\omega)$ and $C_\infty^{(S)}(\chi_\omega)$.
%Consider the case of two eigenfunctions $f_1$ and $f_2$ in $L^2(\Omega,\R)$ associated with an eigenvalue $\lambda$. Then, restricted to that subspace, the analogue of the observability constant is
%$$
%C_{f_1,f_2}(\chi_\omega)=\inf_{\substack{(c_1,c_2)\in\mathbb{R}^2\\ c_1^2+c_2^2=1}}\int_\omega \left(c_1f_1(x)+c_2f_2(x)\right)^2\, dV_g,
%$$
%and one gets, using the Courant-Fischer min-max formula,
%\begin{equation*}
%\begin{split}
%C_{f_1,f_2}(\chi_\omega) = & \frac{1}{2}\int_\omega (f_1(x)^2+f_2(x)^2)\, dV_g \\
%& -\frac{1}{2}\left(\left(\int_\omega (f_1(x)^2-f_2(x)^2) \,dV_g\right)^2+4\left(\int_\omega f_1(x)f_2(x) \, dV_g\right)^2\right)^{1/2}.
%\end{split}
%\end{equation*}
%This formula shows that in the case of multiple eigenvalues the time asymptotic observability constant has a much more complicated expression than in the case of simple eigenvalues.
%\end{remark}

\begin{remark}\label{remBZ}
It follows obviously from the definitions of the observability constants that
$$
\limsup_{T\to +\infty} \frac{C_T^{(W)}(\chi_\omega)}{T}\leq C_\infty^{(W)}(\chi_\omega)
\quad\textrm{and}\quad 
\limsup_{T\to +\infty} \frac{C_T^{(S)}(\chi_\omega)}{T}\leq C_\infty^{(S)}(\chi_\omega),
$$
for every measurable subset $\omega$ of $\Omega$.
However, the equalities do not hold in general.
Indeed, consider a set $\Omega$ with a smooth boundary, and a pair $(\omega,T)$ not satisfying the Geometric Control Condition. Then there must hold $C_T^{(W)}(\chi_\omega)=0$. Besides, $J(\chi_\omega)$ may be positive.\\
An example of such a situation for the wave equation is provided by considering $\Omega=[0,\pi]^2$ with Dirichlet boundary conditions and $L=1/2$. It is indeed proved further (see Lemma \ref{lem4} and Remark \ref{rem21}) that the domain $\omega=\{(x,y)\in \Omega\ \vert \ x\leq \pi /2\}$
maximizes $J$ over $\mathcal{U}_L$, and that $J(\chi_\omega)=1/2$. Clearly, such a domain does not satisfy the Geometric Control Condition, and one has $C_T^{(W)}(\chi_\omega)=0$, whereas $C_\infty^{(W)}(\chi_\omega)=1/4$.
\\
Another class of examples for the wave equation is provided by the Bunimovich stadium (shaped at the top right of Figure \ref{ergoShapes} further) with Dirichlet boundary conditions. Setting $\Omega=R\cup W$, where $R$ is the rectangular part and $W$ the circular wings, it is proved in \cite{BurqZworski,BurqZworski2} that, for any open neighborhood $\omega$ of the closure of $W$ (or even, any neighborhood $\omega$ of the vertical intervals between $R$ and $W$) in $\Omega$, there exists $c>0$ such that $\int_\omega \phi_j(x)^2\,dx\geq c$ for every $j\in\N^*$. It follows that $J(\chi_\omega)>0$, whereas $C_T^{(W)}(\chi_\omega)=0$ since $\omega$ does not satisfy the Geometric Control Condition. It can be noted that the result still holds if one replaces the wings $W$ by any other manifold glued along $R$, so that $\Omega$ is a partially rectangular domain.

We are not aware of such kinds of examples for the Schr\"odinger equation, although there exist some configurations for which $C_{T}^{(S)}(\chi_{\omega})=0$. For instance, for $\Omega=S^2$, the unit Euclidean sphere of $\R^3$, it is well known (see for instance \cite{JakobsonZelditch} and Remark \ref{rem_existencegap} further) that, if $(\phi_j)_{j\in\N^*}$ is the usual orthonormal basis of spherical harmonics, then a subsequence of $\phi_j^2$ converges to the Dirac measure along the equator. Therefore, if a subset $\omega$ of $\Omega$ does not contain any neighborhood of this equator, then $C_{T}^{(S)}(\chi_{\omega})=C_{\infty}^{(S)}(\chi_{\omega})=J(\chi_{\omega})=0$ (see Corollary \ref{corCTT}). Note that the same situation occurs in the unit disk of the Euclidean plane, choosing any subset $\omega$ compactly included in the disk, since there exists a subsequence of the squares of the usual Dirichlet-Laplacian eigenfunctions concentrating on the boundary of the disk (see e.g. \cite{Lagnese}, see also Section \ref{sec_proof_propnogapnoQUE} further).
\end{remark}

\medskip

\noindent\textbf{Second motivation: averaging with respect to initial data / randomized observability constant.}\\
The observability constants in \eqref{CT} and \eqref{KT} are defined as an infimum over \textit{all possible} (deterministic) initial data. We are going to modify slightly this definition by randomizing the initial data in some precise sense, and considering an averaged version of the observability inequality with a new (randomized) observability constant.
To make this point precise, we consider spectral expansions of the solutions of the wave and Schr\"odinger equations, and we get
$$
C_T^{(W)}(\chi_\omega)=\frac{1}{2}\inf_{\substack{(\lambda_{j}a_j),(\lambda_{j}b_j)\in\ell^2(\C)\\ \sum_{j=1}^{+\infty}\lambda_{j}^2( \vert a_j \vert ^2+ \vert b_j \vert ^2)=1}}\int_0^T  \int_\omega\left \vert \sum_{j=1}^{+\infty}\lambda_{j}\left(a_je^{i\lambda_jt}-b_je^{-i\lambda_jt}\right)\phi_j(x)  \right \vert ^2\, dV_g\, dt,
$$
and
$$
C_T^{(S)}(\chi_\omega)=\inf_{\substack{(\lambda_{j}^2c_j)\in\ell^2(\C)\\ \sum_{j=1}^{+\infty} \lambda_{j}^4\vert c_j \vert ^2=1}}\int_0^T\int_\omega \left \vert \sum_{j=1}^{+\infty}\lambda_{j}^2c_je^{i\lambda_j^2t}\phi_j(x)\right \vert ^2\, dV_g\, dt.
$$
The coefficients $a_j$, $b_j$ and $c_j$ in the expressions above are the Fourier coefficients of the initial data, defined by \eqref{defajbj} and \eqref{defcj} respectively.

Following the works of N. Burq and N. Tzvetkov on nonlinear partial differential equations with random initial data (see \cite{Burq,BurqTzvetkov1,BurqTzvetkov2,BurqTzvetkov3}) using early ideas of Paley and Zygmund (see \cite{PaleyZygmund}), we randomize these coefficients by multiplying each of them by some well chosen random law. This random selection of all possible initial data for the wave equation \eqref{waveEqobs} consists of replacing $C_T^{(W)}(\chi_\omega)$ by the randomized version
\begin{equation}\label{CTrand}
C_{T,\textrm{rand}}^{(W)}(\chi_\omega)=\frac{1}{2}\inf_{\substack{(\lambda_{j}a_j),(\lambda_{j}b_j)\in\ell^2(\C)\\ \sum_{j=1}^{+\infty}\lambda_{j}^2( \vert a_j \vert ^2+ \vert b_j \vert ^2)=1}}  \mathbb{E}\left( \int_0^T  \int_\omega\left\vert\sum_{j=1}^{+\infty}\lambda_{j}\left(\beta_{1,j}^\nu a_je^{i\lambda_jt}-\beta_{2,j}^\nu b_je^{-i\lambda_jt}\right)\phi_j(x) \right\vert^2\, dV_g \, dt \right),
\end{equation}
where $(\beta_{1,j}^\nu)_{j\in\N^*}$ and $(\beta_{2,j}^\nu)_{j\in\N^*}$ are two sequences of independent Bernoulli random variables on a probability space $(\mathcal{X},\mathcal{A},\mathbb{P})$, satisfying
$$
\mathbb{P}(\beta_{1,j}^\nu=\pm 1)=\mathbb{P}(\beta_{2,j}^\nu=\pm 1)=\frac{1}{2}\hspace{0.5cm} \textrm{ and }\hspace{0.5cm}\mathbb{E}(\beta_{1,j}^\nu\beta_{2,k}^\nu)=0
$$
for every $j$ and $k$ in $\N^*$ and every $\nu\in \mathcal{X}$. Here, the notation $\mathbb{E}$ stands for the expectation over the space $\mathcal{X}$ with respect to the probability measure $\mathbb{P}$.
In other words, instead of considering the deterministic observability inequality \eqref{ineqobsw} for the wave equation \eqref{waveEqobs}, we consider the \textit{randomized observability inequality}
\begin{equation}\label{ineqobswRand}
C_{T,\textrm{rand}}^{(W)}(\chi_\omega) \Vert (y^0,y^1)\Vert_{D(A^{1/2})\times X}^2
\leq \mathbb{E}\left(\int_0^T\int_\omega \vert \partial_{t} y_\nu(t,x)\vert^2 \, dV_g \,dt\right),
\end{equation}
for all $y^0(\cdot)\in D(A^{1/2})$ and $y^1(\cdot)\in X$, where $y_\nu$ denotes the solution of the wave equation with the random initial data $y^0_\nu(\cdot)$ and $y^1_\nu(\cdot)$ determined by their Fourier coefficients $a_j^\nu=\beta_{1,j}^\nu a_j$ and $b_j^\nu=\beta_{2,j}^\nu b_j$ (see \eqref{defajbj} for the explicit relation between the Fourier coefficients and the initial data), that is, 
\begin{equation}\label{defynu}
y_\nu(t,x)=\sum_{j=1}^{+\infty}\left(\beta_{1,j}^\nu a_je^{i\lambda_jt}+\beta_{2,j}^\nu b_je^{-i\lambda_jt}\right)\phi_j(x).
\end{equation}
This new constant $C_{T,\textrm{rand}}^{(W)}(\chi_\omega)$ is called \textit{randomized observability constant}.

Similarly, making a random selection of all possible initial data for the Schr\"odinger equation \eqref{schroEqobs} we replace $C_T^{(S)}(\chi_\omega)$ by
\begin{equation}\label{CSrand}
C_{T,\textrm{rand}}^{(S)}(\chi_\omega)=\inf_{\substack{(\lambda_{j}^2c_j)\in\ell^2(\C)\\ \sum_{j=1}^{+\infty} \lambda_{j}^4\vert c_j \vert ^2=1}}\mathbb{E}\left(  \int_0^T\int_\omega \left\vert\sum_{j=1}^{+\infty}\lambda_{j}^2\beta_j^\nu c_je^{i\lambda_j^2t}\phi_j(x)\right\vert^2\, dV_g\, dt \right),
\end{equation}
where $(\beta_j^\nu)_{j\in\N^*}$ denotes a sequence of independent Bernoulli random variables  on a probability space $(\mathcal{X},\mathcal{A},\mathbb{P})$.
%$$ \mathbb{P}(\beta_j^\nu=\pm 1)=\frac{1}{2}.$$
This corresponds to considering the randomized observability inequality\begin{equation}\label{ineqobssRand}
C_T^{(S)}(\chi_\omega) \Vert y^0\Vert_{D(A)}^2\leq \mathbb{E}\left(\int_0^T\int_\omega  \vert \partial_{t}y_\nu(t,x) \vert ^2 \, dV_g \,dt\right),
\end{equation}
for every $y^0(\cdot)\in D(A)$, where $y_\nu$ denotes the solution of the Schr\"odinger equation with the random initial data $y_\nu^0(\cdot)$ determined by its Fourier coefficients $c_j^\nu=\beta^\nu_jc_j$ (see \eqref{defcj} for the explicit dependence between the Fourier coefficients and the initial data), that is, 
$$
y_\nu(t,x)=\sum_{j=1}^{+\infty}\beta_j^\nu c_je^{i\lambda_j^2t}\phi_j(x).
$$

The following theorem, whose proof is done in Section \ref{proofpropHazardCst}, provides one more motivation of studying the second problem \eqref{defJ}.

\begin{theorem}\label{propHazardCst}
There holds
\begin{equation*}
2\,C_{T,\textrm{rand}}^{(W)}(\chi_\omega) = C_{T,\textrm{rand}}^{(S)}(\chi_\omega)
= T\inf_{j\in\N^*}\int_\omega\phi_j(x)^2 \, dV_g =  T J(\chi_\omega),
\end{equation*}
for every measurable subset $\omega$ of $\Omega$.
\end{theorem}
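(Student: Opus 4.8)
The plan is to establish the two equalities by exploiting the single structural feature that randomization provides: under the stated hypotheses, the family of Bernoulli variables is orthonormal in $L^2(\mathbb{P})$, which turns the expected space-time integral into a purely diagonal sum. Concretely, the independence and unit variance within each family give $\mathbb{E}(\beta_{1,j}^\nu\beta_{1,k}^\nu)=\mathbb{E}(\beta_{2,j}^\nu\beta_{2,k}^\nu)=\delta_{jk}$, and the hypothesis $\mathbb{E}(\beta_{1,j}^\nu\beta_{2,k}^\nu)=0$ is precisely what makes the combined system $\{\beta_{1,j}^\nu\}\cup\{\beta_{2,k}^\nu\}$ orthonormal. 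The workhorse is then the elementary identity $\mathbb{E}\big|\sum_j\beta_j^\nu w_j\big|^2=\sum_j|w_j|^2$ valid for any $(w_j)\in\ell^2(\C)$ and any orthonormal family $(\beta_j^\nu)$ in $L^2(\mathbb{P})$. I would treat the Schr\"odinger case first, the wave case being identical up to the factor $1/2$ built into \eqref{CTrand}.

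For the Schr\"odinger constant \eqref{CSrand}, since the integrand is nonnegative I would use Tonelli to pull the expectation inside the space-time integral, writing $\mathbb{E}\big(\int_0^T\int_\omega|f_\nu|^2\,dV_g\,dt\big)=\int_0^T\int_\omega\mathbb{E}(|f_\nu|^2)\,dV_g\,dt$ with $f_\nu=\sum_j\lambda_j^2\beta_j^\nu c_je^{i\lambda_j^2t}\phi_j(x)$. For fixed $(t,x)$ this is a random series $\sum_j\beta_j^\nu w_j$ with $w_j=\lambda_j^2c_je^{i\lambda_j^2t}\phi_j(x)$, so the orthonormality identity gives $\mathbb{E}(|f_\nu|^2)=\sum_j\lambda_j^4|c_j|^2\phi_j(x)^2$ (the phases $|e^{i\lambda_j^2t}|=1$ drop out). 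Integrating and applying Tonelli again yields $T\sum_j\lambda_j^4|c_j|^2\int_\omega\phi_j(x)^2\,dV_g$. Setting $d_j=\lambda_j^4|c_j|^2$ and $\gamma_j=\int_\omega\phi_j(x)^2\,dV_g$, the constraint becomes $d_j\ge0$, $\sum_jd_j=1$, and $C_{T,\textrm{rand}}^{(S)}(\chi_\omega)=T\inf\{\sum_jd_j\gamma_j\}$ over the probability simplex. Any convex combination satisfies $\sum_jd_j\gamma_j\ge\inf_j\gamma_j$, while data concentrated on a single admissible mode $\phi_{j}$ realizes $\gamma_j$; hence the infimum equals $\inf_j\gamma_j=J(\chi_\omega)$, giving $C_{T,\textrm{rand}}^{(S)}(\chi_\omega)=TJ(\chi_\omega)$.

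For the wave constant \eqref{CTrand} I would run the same argument on $g_\nu=\sum_j\lambda_j(\beta_{1,j}^\nu a_je^{i\lambda_jt}-\beta_{2,j}^\nu b_je^{-i\lambda_jt})\phi_j(x)$, regarded for fixed $(t,x)$ as $\sum_j(\beta_{1,j}^\nu p_j+\beta_{2,j}^\nu q_j)$ with $p_j=\lambda_ja_je^{i\lambda_jt}\phi_j(x)$ and $q_j=-\lambda_jb_je^{-i\lambda_jt}\phi_j(x)$. Orthonormality of the combined family gives $\mathbb{E}(|g_\nu|^2)=\sum_j(|p_j|^2+|q_j|^2)=\sum_j\lambda_j^2(|a_j|^2+|b_j|^2)\phi_j(x)^2$, so $\mathbb{E}\big(\int_0^T\int_\omega|g_\nu|^2\big)=T\sum_j\lambda_j^2(|a_j|^2+|b_j|^2)\gamma_j$. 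With $d_j=\lambda_j^2(|a_j|^2+|b_j|^2)$ one recovers the same simplex infimum, and the prefactor $\tfrac12$ from \eqref{CTrand} yields $2C_{T,\textrm{rand}}^{(W)}(\chi_\omega)=TJ(\chi_\omega)=C_{T,\textrm{rand}}^{(S)}(\chi_\omega)$, which is the claim.

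The genuinely delicate point is not the algebra but the rigorous justification of the orthonormality identity applied to the random series and the two interchanges of expectation with integration. The former requires $(w_j)\in\ell^2(\C)$ for almost every $(t,x)$, which I would verify by integrating $\sum_j|w_j|^2=\sum_j\lambda_j^4|c_j|^2\phi_j(x)^2$ over $\Omega$ to obtain the finite value $\sum_j\lambda_j^4|c_j|^2=1$, so that the series belongs to $L^2(\mathbb{P})$ pointwise a.e.; the interchanges then follow from Tonelli because every integrand involved is nonnegative. One may equivalently argue through finite truncations $y_\nu^N$, where Fubini is trivial and $\mathbb{E}(\int_0^T\int_\omega|\partial_ty_\nu^N|^2)=T\sum_{j\le N}d_j\gamma_j$ is nondecreasing in $N$, passing to the limit by monotone convergence. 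Finally I would record the harmless case in which $\inf_j\gamma_j$ is not attained, approximating it by Dirac masses $d=e_{j_n}$ with $\gamma_{j_n}\to\inf_j\gamma_j$, i.e.\ by data supported on a single mode.
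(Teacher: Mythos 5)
Your proposal is correct and follows essentially the same route as the paper's proof: Fubini/Tonelli to move the expectation inside the space-time integral, the relations $\mathbb{E}(\beta_j^\nu\beta_k^\nu)=\delta_{jk}$ (and $\mathbb{E}(\beta_{1,j}^\nu\beta_{2,k}^\nu)=0$ in the wave case) to kill the cross terms and reduce to the diagonal sum $T\sum_j d_j\int_\omega\phi_j^2\,dV_g$, and finally the observation that the infimum of a convex combination over the probability simplex equals $\inf_j\int_\omega\phi_j^2\,dV_g$. Your added care about the interchange of expectation and integration (via nonnegativity and truncation) is a legitimate elaboration of a step the paper treats as immediate, not a different argument.
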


\begin{remark}
It can be easily checked that Theorem \ref{propHazardCst} still holds true when considering, in the above randomization procedure, more general real random variables that are independent, have mean equal to $0$, variance $1$, and have a super exponential decay. We refer to \cite{Burq,BurqTzvetkov1,BurqTzvetkov2} for more details on these randomization issues.
Bernoulli and Gaussian random variables satisfy such appropriate assumptions. As proved in \cite{BurqTzvetkov3}, for all initial data $(y^0,y^1)\in D(A^{1/2})\times X$, the Bernoulli randomization keeps constant the $D(A^{1/2})\times X$ norm, whereas the Gaussian randomization generates a dense subset of $D(A^{1/2})\times X$ through the mapping $R_{(y^0,y^1)}:\nu\in X\mapsto(y^0_\nu,y^1_\nu)$ provided that all Fourier coefficients of $(y^0,y^1)$ are nonzero and that the measure $\theta$ charges all open sets of $\R$. The measure $\mu_{(y^0,y^1)}$ defined as the image of $\mathcal{P}$ by $R_{(y^0,y^1)}$ strongly depends both on the choice of the random variables and on the choice of the initial data $(y^0,y^1)$. Properties of these measures are established in \cite{BurqTzvetkov3}.
\end{remark}

\begin{remark}
It is easy to see that  
$C_{T,\textrm{rand}}^{(W)}(\chi_\omega)\geq C_{T}^{(W)}(\chi_\omega)$
and
$C_{T,\textrm{rand}}^{(S)}(\chi_\omega)\geq C_{T}^{(S)}(\chi_\omega)$, for every measurable subset $\omega$ of $\Omega$, and every $T>0$.
\end{remark}

\begin{remark}\label{remBZ2}
As mentioned previously, the problem of maximizing the \textit{deterministic} (classical) observability constants $C_T^{(W)}(\chi_\omega)$ and $C_T^{(S)}(\chi_\omega)$ defined by \eqref{CT} and \eqref{KT} respectively, over all possible measurable subsets $\omega$ of $\Omega$ of measure $V_g(\omega)=L V_g(\Omega) $, is open and is probably very difficult. It can however be noticed that, for practical issues, it is actually more natural to consider the problem of maximizing the \textit{randomized} observability constants defined by \eqref{CTrand} and \eqref{CSrand} respectively. Indeed, when considering for instance the practical problem of locating sensors in an optimal way, the optimality should be thought in terms of an average with respect to a large number of experiments.
From this point of view, the deterministic observability constants are expected to be pessimistic with respect to their randomized versions. Indeed, in general it is expected that $C_{T,\textrm{rand}}^{(W)}(\chi_\omega)>C_T^{(W)}(\chi_\omega)$ and $C_{T,\textrm{rand}}^{(S)}(\chi_\omega)>C_T^{(S)}(\chi_\omega)$.\\
In dimension one, with $\Omega=[0,\pi]$ and Dirichlet boundary conditions, it follows from \cite[Proposition 2]{PTZObs1} (where this one-dimensional case is studied in detail) that these strict inequalities hold if and only if $T$ is not an integer multiple of $\pi$ (note that if $T$ is a multiple of $2\pi$ then the equalities follow immediately from Parseval's Theorem). Note that, in the one-dimensional case, the  GCC is satisfied for every $T\geq 2\pi$, and the fact that the deterministic and the randomized observability constants do not coincide is due to crossed Fourier modes in the deterministic case.

In dimension greater than one, there is a further  class of obvious examples where the strict inequality holds. This is particularly the case when one is able to assert that $C_T^{(W)}(\chi_\omega)=0$ whereas $J(\chi_\omega)>0$. Such examples have been given and discussed in Remark \ref{remBZ}. Note however that, in this multi-dimensional case, this is due to the fact that the GCC can fail but the minimal spectral trace over $\omega$ is positive. This fact was also observed in \cite{lebeau2} when characterizing the decay rates for multi-dimensional dissipative wave equations.
\end{remark}

\subsection{Proofs of Theorem \ref{theoCTT} and Corollary \ref{corCTT}}\label{prooftheoCTT}
We prove Theorem \ref{theoCTT} and Corollary \ref{corCTT} only for $C_T^{(W)}(\chi_\omega)$ (wave equation). The proof for $C_T^{(S)}(\chi_\omega)$ (Schr\"odinger equation) follows the same lines. For the convenience of the reader, we first prove Theorem \ref{theoCTT} in the particular case where all the eigenvalues of $\triangle_{g}$ are simple (it corresponds exactly to the proof of Corollary \ref{corCTT}) and we then comment the generalization to the case of multiple eigenvalues.

From \eqref{yDecomp}, we have $y(t,x)=\sum_{j=1}^{+\infty}y_j(t,x)$ with 
\begin{equation}\label{yDecomp2}
y_j(t,x)=(a_je^{i\lambda_jt}+b_je^{-i\lambda_jt})\phi_j(x).
\end{equation}
Without loss of generality, we consider initial data $(y^0,y^1)\in D(A^{1/2})\times X$ such that $\Vert (y^0,y^1)\Vert^2_{D(A^{1/2})\times X}=2$, in other words such that $\sum_{j\in\N^*} \lambda_{j}^2( \vert a_j \vert ^2+ \vert b_j \vert ^2)=1$ (using \eqref{notice7}).

%\begin{paragraph}{First case: all the eigenvalues of $\boldsymbol{\triangle_{g}}$ are simple (Proof of Corollary \ref{corCTT}).}\ \\
Setting
$$
\Sigma_T(a,b)= \frac{1}{T}\frac{ G_T(\chi_\omega) }{\Vert (y^0,y^1)\Vert^2_{D(A^{1/2})\times X}}=\frac{1}{2T} G_T(\chi_\omega),
$$
we write for an arbitrary $N\in \N^*$,
\begin{eqnarray}
%$$
\Sigma_T(a,b)  &
 =  \displaystyle \frac{1}{T} \int_{0}^T \int_\omega &
 \left(\left \vert \sum_{j=1}^N y_j(t,x)\right \vert ^2+\left \vert \sum_{k=N+1}^{+\infty} y_k(t,x)\right \vert ^2 
 \right. \nonumber\\
 & & \left. \quad
 +2\Re e \left(\sum_{j=1}^N y_j(t,x)\sum_{k=N+1}^{+\infty} \bar y_k(t,x)\right) \right)\, dV_g\, dt.\label{RHSSigma}
%$$
 \end{eqnarray}
Using the assumption that the spectrum of $A$ consists of simple eigenvalues, we have the following result. %, which is a weakened version of Lemma \ref{lemm1lim}.

\begin{lemma}\label{lemmaGTT}
With the notations above,
$$
\lim_{T\to +\infty}\frac{1}{T} \int_{0}^T \int_\omega \left \vert \sum_{j=1}^{N} y_j(t,x) \right \vert ^2 dV_g\, dt =\sum_{j=1}^N\lambda_{j}^2( \vert a_j \vert ^2+ \vert b_j \vert ^2)\int_\omega  \phi_j(x)^2\, dV_g.
$$
\end{lemma}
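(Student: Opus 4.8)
The plan is to expand the squared modulus of the finite sum into a double sum, $\left|\sum_{j=1}^N y_j(t,x)\right|^2=\sum_{j,k=1}^N y_j(t,x)\overline{y_k(t,x)}$, and to compute the long-time average of each of the finitely many terms separately. Since only $N$ terms are involved, interchanging the limit $T\to+\infty$ with the (finite) summation and with the spatial integral over $\omega$ raises no difficulty: $\omega$ has finite measure and each term is bounded. Here each $y_j$ is the $j$-th summand of the spectral expansion of $\partial_t y$ entering $G_T$, namely (up to the harmless phase $i$) $\lambda_j\left(a_je^{i\lambda_jt}-b_je^{-i\lambda_jt}\right)\phi_j(x)$, so that $y_j\overline{y_k}$ is, for each fixed $x$, a trigonometric polynomial in $t$ with frequencies among $\pm(\lambda_j-\lambda_k)$ and $\pm(\lambda_j+\lambda_k)$, multiplied by $\lambda_j\lambda_k\phi_j(x)\phi_k(x)$.

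The key computation then reduces to the elementary fact that $\frac{1}{T}\int_0^T e^{i\mu t}\,dt=\frac{e^{i\mu T}-1}{i\mu T}\to 0$ as $T\to+\infty$ whenever $\mu\neq 0$, while the average equals $1$ when $\mu=0$. For a diagonal term $j=k$ one has $|y_j|^2=\lambda_j^2\bigl(|a_j|^2+|b_j|^2-a_j\bar b_je^{2i\lambda_jt}-\bar a_jb_je^{-2i\lambda_jt}\bigr)\phi_j(x)^2$; since $\lambda_j>0$ the oscillating frequency $2\lambda_j$ is nonzero, so the time average converges to $\lambda_j^2(|a_j|^2+|b_j|^2)\phi_j(x)^2$, which integrates over $\omega$ to the desired contribution. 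For an off-diagonal term $j\neq k$, every frequency appearing is of the form $\pm(\lambda_j-\lambda_k)$ or $\pm(\lambda_j+\lambda_k)$; the latter is nonzero because $\lambda_j+\lambda_k>0$, and the former is nonzero precisely because the eigenvalues are simple, hence $\lambda_j\neq\lambda_k$ for $j\neq k$. Thus every off-diagonal time average tends to $0$, and only the diagonal contributions survive in the limit, yielding $\sum_{j=1}^N\lambda_j^2(|a_j|^2+|b_j|^2)\int_\omega\phi_j(x)^2\,dV_g$.

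The step requiring the most care will be the vanishing of the off-diagonal averages, and this is exactly where the simplicity assumption on the spectrum of $A$ is used: distinctness of the eigenvalues guarantees $\lambda_j-\lambda_k\neq 0$, so that no resonant (constant-in-time) cross term can appear. Apart from this, there is no genuine analytic obstacle, since the finiteness of the sum removes any issue of uniformity or of exchanging limits with infinite series — the truncation at level $N$ in \eqref{RHSSigma} was introduced precisely to reduce the computation to this elementary finite bookkeeping. When eigenvalues are multiple, the same computation shows that the cross terms with $\lambda_j=\lambda_k$ but $j\neq k$ no longer oscillate and therefore persist in the limit; this is what forces the grouping of indices by eigenvalue $I(\lambda)$ appearing in Theorem \ref{theoCTT}, and it is the only modification needed to treat the general case.
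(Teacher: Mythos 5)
Your proof is correct and follows essentially the same route as the paper: expand the squared modulus of the finite sum, observe that the diagonal time averages converge to $\lambda_j^2(\vert a_j\vert^2+\vert b_j\vert^2)$, and use the simplicity of the spectrum to ensure that every off-diagonal frequency $\pm(\lambda_j-\lambda_k)$, $\pm(\lambda_j+\lambda_k)$ is nonzero so that the corresponding averages vanish. The paper packages this as the explicit formulas \eqref{alphaij}--\eqref{alphajj} together with the uniform bound \eqref{cjkestimate} on $\vert\alpha_{jk}\vert$, which is exactly your elementary computation $\frac{1}{T}\int_0^T e^{i\mu t}\,dt\to 0$ for $\mu\neq 0$ in different notation; your closing remark about grouping indices by $I(\lambda)$ in the multiple-eigenvalue case also matches the paper's subsequent lemma.
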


\begin{proof}
Since the sum is finite we can invert the infimum (which is a minimum) and the limit. 
Now, we write
\begin{eqnarray*}
\frac{1}{T} \int_{0}^T \int_\omega \left \vert \sum_{j=1}^{N} y_j(t,x) \right \vert ^2 dV_g\, dt & = & \frac{1}{T} \sum_{j=1}^N\lambda_{j}^2\alpha_{jj}\int_\omega\phi_j(x)^2\, dV_g
\\ & & 
+\frac{1}{T}\sum_{j=1}^N\sum_{\substack{k=1\\ k\neq j}}^N\lambda_{j}\lambda_{k}\alpha_{jk}\int_\omega \phi_j(x)\phi_k(x)\, dV_g,
\end{eqnarray*}
where $\alpha_{jk}$ is defined by \eqref{defalphaij}.
Using \eqref{alphaij} and \eqref{alphajj}, we get
$$
\lim_{T\to +\infty}\frac{\alpha_{jj}}{T}= \vert a_j \vert ^2+ \vert b_j \vert ^2,
$$
for every $j\in\N^*$ and, using that the spectrum of $A$ consists of simple eigenvalues, 
\begin{equation}\label{cjkestimate}
\vert\alpha_{jk}\vert\leq \frac{4\max_{1\leq j,k\leq N} (\lambda_j,\lambda_k)}{ \vert \lambda_j^2-\lambda_k^2 \vert },
\end{equation}
whenever $j\neq k$. The conclusion follows easily.
\end{proof}
%As a consequence, the following estimate holds
%\begin{equation}\label{liminf1}
%\frac{1}{T} \int_{0}^T \int_\omega \left \vert \sum_{j=1}^{N} y_j^\varepsilon(t,x) \right \vert ^2 \, dV_g\, dt \geq \left(1-\varepsilon\right)\inf_{1\leq j \leq N}\int_\omega  \phi_j(x)^2\, dV_g
%\end{equation}
Let us now estimate the remaining terms
$$
R =  \frac{1}{T} \int_{0}^T \int_\omega \left \vert \sum_{j=N+1}^{+\infty} y_j(t,x) \right \vert ^2 \, dV_g\, dt
$$
and
$$
\delta = \frac{1}{T}\Re e\left( \int_{0}^T \int_\omega \sum_{j=1}^N y_j(t,x)\sum_{k=N+1}^{+\infty} \bar y_k(t,x) \, dV_g\, dt\right)
$$
of the right-hand side of \eqref{RHSSigma}.

\begin{paragraph}{Estimate of $R$.}
Using the fact that the $\phi_j$'s form a hilbertian basis, we get
\begin{eqnarray*}
R & \leq & \frac{1}{T} \int_{0}^T \int_\Omega \left \vert \sum_{j=N+1}^{+\infty} y_j(t,x) \right \vert ^2 \, dV_g\, dt\\
 & = &  \frac{1}{T} \sum_{j=N+1}^{+\infty}\int_0^T \lambda_{j}^2\vert a_je^{i\lambda_jt}-b_je^{-i\lambda_jt} \vert ^2\, dt\\
 & = &  \frac{1}{T} \sum_{j=N+1}^{+\infty}\lambda_{j}^2\left(T( \vert a_j \vert ^2+ \vert b_j \vert ^2)-\frac{1}{\lambda_j}\Re e \left(a_j\bar b_j\frac{e^{2i\lambda_jT}-1}{i}\right)\right)
\end{eqnarray*}
and finally
\begin{equation}\label{estimR}
R\leq \left(1+\frac{1}{\lambda_NT}\right)\sum_{j=N+1}^{+\infty}\lambda_{j}^2( \vert a_j \vert ^2+ \vert b_j \vert ^2).
\end{equation}
\end{paragraph}
\begin{paragraph}{Estimate of $\delta$.}
Using \eqref{alphaij} and the fact that $\lambda_j\neq \lambda_k$ for every $j\in \{1,\cdots,N\}$ and every $k\geq N+1$, we have
$$
 \vert \delta \vert  \leq \frac{2}{T}(S_1^N+S_2^N+S_3^N+S_4^N) ,
$$
with
\begin{eqnarray*}
S_1^N & = & \left \vert \sum_{j=1}^N\sum_{k=N+1}^{+\infty}\frac{\lambda_{j}\lambda_{k}}{\lambda_j-\lambda_k}a_j\bar a_ke^{i(\lambda_j-\lambda_k)\frac{T}{2}}\sin\left((\lambda_j-\lambda_k)\frac{T}{2}\right)\int_\omega \phi_j(x)\phi_k(x)\, dV_g\right \vert , \\
%\end{eqnarray*}
%\begin{eqnarray*}
S_2^N & = & \left \vert \sum_{j=1}^N\sum_{k=N+1}^{+\infty}\frac{\lambda_{j}\lambda_{k}}{\lambda_j+\lambda_k}a_j\bar b_ke^{i(\lambda_j+\lambda_k)\frac{T}{2}}\sin\left((\lambda_j+\lambda_k)\frac{T}{2}\right)\int_\omega \phi_j(x)\phi_k(x)\, dV_g\right \vert , \\
\end{eqnarray*}
\begin{eqnarray*}
S_3^N & = & \left \vert \sum_{j=1}^N\sum_{k=N+1}^{+\infty}\frac{\lambda_{j}\lambda_{k}}{\lambda_j+\lambda_k}b_j\bar a_ke^{-i(\lambda_j+\lambda_k)\frac{T}{2}}\sin\left((\lambda_j+\lambda_k)\frac{T}{2}\right)\int_\omega \phi_j(x)\phi_k(x)\, dV_g\right \vert , \\
%\end{eqnarray*}
%\begin{eqnarray*}
S_4^N & = & \left \vert \sum_{j=1}^N\sum_{k=N+1}^{+\infty}\frac{\lambda_{j}\lambda_{k}}{\lambda_j-\lambda_k}b_j\bar b_ke^{-i(\lambda_j-\lambda_k)\frac{T}{2}}\sin\left((\lambda_j-\lambda_k)\frac{T}{2}\right)\int_\omega \phi_j(x)\phi_k(x)\, dV_g\right \vert .
\end{eqnarray*}
Let us estimate $S_1^N$. We write
$$
S_1^N = \left \vert \sum_{j=1}^N\lambda_{j}a_j \int_\omega \phi_j(x) \sum_{k=N+1}^{+\infty}\frac{\lambda_{k}\bar a_k}{\lambda_j-\lambda_k}e^{i(\lambda_j-\lambda_k)\frac{T}{2}}\sin\left((\lambda_j-\lambda_k)\frac{T}{2}\right)\phi_k(x)\, dV_g\right \vert ,
$$
and, using the Cauchy-Schwarz inequality and the fact that the integral of a nonnegative function over $\omega$ is lower than the integral of the same function over $\Omega$, one gets
\begin{eqnarray*}
S_1^N & \leq &  \sum_{j=1}^N \lambda_{j}\vert a_j \vert  \left(\int_\Omega \left \vert \sum_{k=N+1}^{+\infty}\frac{\lambda_{k}\bar a_k}{\lambda_j-\lambda_k}e^{i(\lambda_j-\lambda_k)\frac{T}{2}}\sin\left((\lambda_j-\lambda_k)\frac{T}{2}\right)\phi_k(x)\right \vert ^2\, dV_g\right)^{1/2}\\
 & = &  \sum_{j=1}^N \lambda_{j}\vert a_j \vert  \left(\sum_{k=N+1}^{+\infty}\frac{ \lambda_{k}^2\vert a_k \vert ^2}{(\lambda_j-\lambda_k)^2}\sin\left((\lambda_j-\lambda_k)\frac{T}{2}\right)^2\right)^{1/2}.
\end{eqnarray*}
The last equality is established by expanding the square of the sum inside the integral, and by using the fact that the $\phi_k$'s are orthonormal in $L^2(\Omega)$.
Since the spectrum of $A$ consists of simple eigenvalues (assumed to form an increasing sequence), we infer that $\lambda_k-\lambda_j\geq \lambda_{N+1}-\lambda_N$ for all $j\in \{1,\cdots,N\}$ and $k\geq N+1$, and since $\sum_{j=1}^{+\infty}\lambda_{j}^2 \vert a_j \vert ^2\leq 1$, it follows that
$$
S_1^N\leq \frac{1}{\lambda_{N+1}-\lambda_N}\sum_{j=1}^N\lambda_{j} \vert a_j \vert \left(\sum_{k=N+1}^{+\infty} \lambda_{k}^2\vert a_k \vert ^2\right)^{1/2}\leq \frac{N}{\lambda_{N+1}-\lambda_N}.
$$
The same arguments lead to the estimates
$$
S_2^N\leq \frac{N}{\lambda_N}, \qquad S_3^N\leq \frac{N}{\lambda_N}, \qquad S_4^N\leq \frac{N}{\lambda_{N+1}-\lambda_N},
$$
and therefore,
\begin{equation}\label{liminf2}
 \vert \delta \vert  
\leq \frac{4N}{T}\left(\frac{1}{\lambda_N}+\frac{1}{\lambda_{N+1}-\lambda_N}\right).
\end{equation}\ \\
\end{paragraph}

Now, combining Lemma \ref{lemmaGTT} with the estimates \eqref{estimR} and \eqref{liminf2} yields that for every $\varepsilon>0$, there exist $N_\varepsilon\in\N^*$ and $T(\varepsilon,N_\varepsilon) >0$ such that, if $N\geq N_\varepsilon$ and $T\geq T(\varepsilon,N_\varepsilon)$, then
$$
\left|\Sigma_T(a,b)-\sum_{j=1}^N\lambda_{j}^2( \vert a_j \vert ^2+ \vert b_j \vert ^2)\int_\omega  \phi_j(x)^2\, dV_g\right|\leq \varepsilon.
$$
As an immediate consequence, and using the obvious fact that, for every $\eta>0$, there exists $N_\eta\in\mathbb{N}^*$ such that, if $N\geq N_\eta$ then
$$
\left|\sum_{j=1}^{+\infty}\lambda_{j}^2( \vert a_j \vert ^2+ \vert b_j \vert ^2)\int_\omega  \phi_j(x)^2\,dV_g-\sum_{j=1}^N\lambda_{j}^2( \vert a_j \vert ^2+ \vert b_j \vert ^2)\int_\omega  \phi_j(x)^2\,dV_g\right|\leq \eta,
$$
one deduces that
$$
\lim_{T\to +\infty}\Sigma_T(a,b)=\sum_{j=1}^{+\infty}\lambda_{j}^2( \vert a_j \vert ^2+ \vert b_j \vert ^2)\int_\omega  \phi_j(x)^2\, dV_g.
$$
At this step, we have proved the following lemma, which improves the statement of Lemma \ref{lemmaGTT}.

\begin{lemma}\label{lemm1lim}
Denoting by $a_j$ and $b_j$ the Fourier coefficients of $(y^0,y^1)$ defined by \eqref{defajbj}, there holds
$$
\lim_{T\rightarrow+\infty} \frac{1}{T}  \int_0^T\int_\omega  \vert y(t,x) \vert ^2\, dV_g \, dt = 
\sum_{j=1}^{+\infty}\lambda_{j}^2(\vert a_j\vert^2+\vert b_j\vert^2)\int_\omega \phi_j(x)^2\,dV_g .
$$
\end{lemma}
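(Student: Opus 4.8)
The plan is to exploit the decomposition already set up: split the solution into its finite low-frequency part $\sum_{j=1}^N$ and its high-frequency tail $\sum_{j>N}$, so that the time-averaged localized energy $\frac1T\int_0^T\int_\omega|\cdot|^2\,dV_g\,dt$ splits as in \eqref{RHSSigma} into three contributions: the low-frequency self-interaction, the tail self-interaction $R$, and the cross term $\delta$ between the two blocks. The target identity will then follow by showing that, after letting first $T\to+\infty$ and then $N\to+\infty$, only the diagonal low-frequency contribution survives and converges to the full spectral sum $\sum_{j=1}^{+\infty}\lambda_j^2(|a_j|^2+|b_j|^2)\int_\omega\phi_j^2\,dV_g$.

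For the low-frequency block I would invoke Lemma \ref{lemmaGTT}: for each fixed $N$, the time average of the finite sum converges, as $T\to+\infty$, to $\sum_{j=1}^N\lambda_j^2(|a_j|^2+|b_j|^2)\int_\omega\phi_j^2\,dV_g$. The point is that the off-diagonal coefficients $\alpha_{jk}$ with $j\neq k$ carry an oscillatory factor and, thanks to simplicity of the spectrum, are bounded by \eqref{cjkestimate}; dividing by $T$ sends these terms to $0$, while each diagonal term $\alpha_{jj}/T$ tends to $|a_j|^2+|b_j|^2$.

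It then remains to control the two remainders uniformly enough to justify the exchange of limits. For the tail $R$, estimate \eqref{estimR} bounds it by $(1+\frac1{\lambda_N T})\sum_{j>N}\lambda_j^2(|a_j|^2+|b_j|^2)$, i.e. essentially by the tail of a convergent series (convergent precisely because the data lie in $D(A^{1/2})\times X$, so that $\sum_j\lambda_j^2(|a_j|^2+|b_j|^2)<+\infty$); hence $R$ can be made arbitrarily small by choosing $N$ large, and this smallness is uniform in $T\geq 1$. For the cross term $\delta$, estimate \eqref{liminf2} gives $|\delta|\leq \frac{4N}{T}\big(\frac1{\lambda_N}+\frac1{\lambda_{N+1}-\lambda_N}\big)$, which for each fixed $N$ tends to $0$ as $T\to+\infty$.

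The conclusion is then a standard three-$\varepsilon$ argument organizing this double limit: given $\varepsilon>0$, first fix $N$ large enough that both $R$ and the tail of the target series are below $\varepsilon$ uniformly in $T$; for that fixed $N$, let $T\to+\infty$, using Lemma \ref{lemmaGTT} for the main block and \eqref{liminf2} to kill $\delta$. I expect the main obstacle to be exactly this interchange of the limit $T\to+\infty$ with the summation over the infinitely many modes: one cannot simply pass the limit inside the series, and the whole difficulty is concentrated in obtaining a bound on the cross terms $\delta$ that is summable in the low-frequency index and vanishing in $T$. This is where the spectral gap $\lambda_{N+1}-\lambda_N>0$ (guaranteed by simplicity of the eigenvalues) is essential, since it is what makes the oscillatory cross contributions controllable; in the case of multiple eigenvalues one would first have to group the modes by eigenspace before running the same scheme.
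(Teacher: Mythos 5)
Your proposal is correct and follows essentially the same route as the paper: the same splitting \eqref{RHSSigma} into the finite block, the tail $R$, and the cross term $\delta$, controlled respectively by Lemma \ref{lemmaGTT}, \eqref{estimR} and \eqref{liminf2}, followed by the same two-parameter limit argument (choose $N$ large uniformly in $T$, then let $T\to+\infty$). Your closing remark about the role of the spectral gap and the need to group modes by eigenspace in the multiple-eigenvalue case also matches what the paper does immediately afterward.
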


Corollary \ref{corCTT} follows, noting that
$$
\inf_{\substack{(\lambda_{j}a_j), (\lambda_{j}b_j)\in \ell^2(\C)\\ \sum_{j=1}^{+\infty}\lambda_{j}^2 (|a_j|^2+|b_j|^2)=1}}\sum_{j=1}^{+\infty}\lambda_{j}^2( \vert a_j \vert ^2+ \vert b_j \vert ^2)\int_\omega  \phi_j(x)^2\, dV_g
= \inf_{j\in\N^*}\int_\omega \phi_j(x)^2\,dV_g.
$$
%\end{paragraph}

%\begin{paragraph}{Second case: generalization to the case of multiple eigenvalues.}\ \\
%The proof is a simple adaptation of the previous one and in particular of Lemma \ref{lemmaGTT} to the case of multiple eigenvalues. 
To finish the proof, we now explain how the arguments above can be generalized to the case of multiple eigenvalues. In particular, the statement of Lemma 1 is adapted in the following way.
\begin{lemma}
Using the previous notations, one has
$$
\lim_{T\to +\infty}\frac{1}{T} \int_{0}^T \int_\omega \left \vert \sum_{j=1}^{N} y_j(t,x) \right \vert ^2 dV_g\, dt =\sum_{\substack{\lambda\in U\\ \lambda\leq \lambda_N}}\int_\omega \left(\left|\sum_{k\in I(\lambda)} \lambda_{k}a_k\phi_k(x)\right|^2+\left|\sum_{k\in I(\lambda)} \lambda_{k}b_k\phi_k(x)\right|^2\right) \, dV_g.
$$
\end{lemma}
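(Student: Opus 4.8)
The plan is to mimic the proof of Lemma \ref{lemmaGTT}, the only new ingredient being the bookkeeping forced by the degenerate eigenvalues. First I would expand the square and use the definition \eqref{defalphaij} of the coefficients $\alpha_{jk}$ to write
$$\frac{1}{T}\int_0^T\int_\omega\Big|\sum_{j=1}^N y_j(t,x)\Big|^2\,dV_g\,dt = \frac{1}{T}\sum_{j,k=1}^N\lambda_j\lambda_k\,\alpha_{jk}\int_\omega\phi_j(x)\phi_k(x)\,dV_g,$$
which is exactly \eqref{GT1} truncated at order $N$. I would then split this finite double sum according to whether $\lambda_j\neq\lambda_k$ or $\lambda_j=\lambda_k$.

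For the pairs with $\lambda_j\neq\lambda_k$, formula \eqref{alphaij} shows that $\alpha_{jk}$ is a sum of four terms, each bounded in modulus by a fixed multiple of $1/|\lambda_j\pm\lambda_k|$ times a product of Fourier coefficients, since the sines and exponentials have modulus at most $1$; in particular $|\alpha_{jk}|$ is bounded independently of $T$, exactly as in \eqref{cjkestimate}, whose derivation only uses $\lambda_j\neq\lambda_k$ and not the simplicity of the spectrum. As there are only finitely many such pairs with $j,k\le N$, dividing by $T$ and letting $T\to+\infty$ kills every one of these off-diagonal contributions. In contrast with the tail estimates of the full argument, no spectral gap is needed here because the sum is finite.

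For the pairs with $\lambda_j=\lambda_k$ (including the genuinely new case $j\neq k$ with equal eigenvalue), I would use \eqref{alphajj}: the oscillatory term $\frac{\sin(\lambda_jT)}{\lambda_j}(\cdots)$ stays bounded in $T$, so after division by $T$ only the linear term survives, giving $\lim_{T\to+\infty}\alpha_{jk}/T = a_j\bar a_k + b_j\bar b_k$. Hence the limit equals
$$\sum_{\substack{j,k=1\\ \lambda_j=\lambda_k}}^N\lambda_j\lambda_k\,(a_j\bar a_k+b_j\bar b_k)\int_\omega\phi_j(x)\phi_k(x)\,dV_g.$$
Grouping the indices by the common value $\lambda\in U$ of the eigenvalue and using $\lambda_j\lambda_k=\lambda^2$ for $j,k\in I(\lambda)$, each block recombines into a perfect square, $\sum_{j,k\in I(\lambda)}\lambda_j\lambda_k\, a_j\bar a_k\int_\omega\phi_j\phi_k\,dV_g = \int_\omega\big|\sum_{k\in I(\lambda)}\lambda_k a_k\phi_k\big|^2\,dV_g$, and likewise for the $b$'s, which yields precisely the claimed right-hand side.

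The only genuinely delicate point — and the one I would state explicitly — is that the truncation index $N$ might split an eigenspace, so that $I(\lambda)$ for $\lambda=\lambda_N$ is not entirely contained in $\{1,\dots,N\}$; in that case the partial inner sums produced by the finite double sum would fail to match the full-group squares in the stated formula. This is harmless for the intended application, where one eventually lets $N\to+\infty$, and is easily cleaned up by restricting to those $N$ for which $\{1,\dots,N\}$ is a union of complete eigenspaces (equivalently, reading $\lambda_N$ as the largest eigenvalue all of whose eigenfunctions have index $\le N$); with that convention the grouping step is exact, and one checks that it specializes to Lemma \ref{lemmaGTT} when every eigenvalue is simple.
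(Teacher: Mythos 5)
Your proof is correct and follows essentially the same route as the paper: expand the truncated double sum, use \eqref{alphaij} to show $\alpha_{jk}/T\to 0$ when $\lambda_j\neq\lambda_k$ and \eqref{alphajj} to get $\alpha_{jk}/T\to a_j\bar a_k+b_j\bar b_k$ when $\lambda_j=\lambda_k$, then regroup the surviving blocks into the perfect squares over each $I(\lambda)$. Your explicit remark about the truncation index $N$ possibly splitting an eigenspace is a legitimate bookkeeping point that the paper glosses over, and your convention for reading $\lambda_N$ resolves it appropriately.
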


\begin{proof}
Following the proof of Lemma \ref{lemmaGTT}, simple computations show that
\begin{eqnarray*}
\frac{1}{T} \int_{0}^T \int_\omega \left \vert \sum_{j=1}^{N} y_j(t,x) \right \vert ^2 dV_g\, dt  & = & \frac{1}{T}\sum_{\lambda\in U}\sum_{(j,k)\in I(\lambda)^2}\lambda_{j}\lambda_{k}\alpha_{jk}\int_\omega \phi_j(x)\phi_k(x)\, dV_g\\
 & & +\frac{1}{T}\sum_{\substack{(\lambda,\mu)\in U^2\\ \lambda\neq \mu}}\sum_{\substack{j\in I(\lambda)\\ k\in I(\mu)}}\lambda_{j}\lambda_{k}\alpha_{jk}\int_\omega \phi_j(x)\phi_k(x)\, dV_g,
\end{eqnarray*}
where 
\begin{equation*}
\lim_{T\to +\infty}\frac{\alpha_{jk}}{T} = 
\left\{ \begin{array}{ll}
a_j\bar a_k+b_j\bar b_k & \textrm{if}\ (j,k)\in I(\lambda)^2,\\
0 & \textrm{if}\ j\in I(\lambda),\ k\in I(\mu),\ \textrm{with}\ (\lambda,\mu)\in U^2\ \textrm{and}\ \lambda\neq \mu.
\end{array}\right.
\end{equation*}
The conclusion of the lemma follows.
\end{proof}

To derive Theorem \ref{theoCTT}, it suffices to note that the previous estimates on $R$ and $\delta$ are still valid and that
\begin{equation*}
\begin{split}
& \inf_{\substack{(\lambda_{j}a_j), (\lambda_{j}b_j)\in \ell^2(\C)\\ \sum_{j=1}^{+\infty} \lambda_{j}^2(|a_j|^2+|b_j|^2)=1}}\sum_{\substack{\lambda\in U\\ \lambda\leq \lambda_N}}\int_\omega \left(\left|\sum_{k\in I(\lambda)} \lambda_{k}a_k\phi_k(x)\right|^2+\left|\sum_{k\in I(\lambda)} \lambda_{k}b_k\phi_k(x)\right|^2\right) \, dV_g \\
& = \inf_{\substack{(c_k)_{j\in \N^*}\in\ell^2(\C)\\ \sum_{k=1}^{+\infty}|c_k|^2}} \int_\omega \sum_{\lambda\in U}\left|\sum_{k\in I(\lambda)}c_k\phi_k(x)\right|^2\, dV_g.
 \end{split}
\end{equation*}
%\end{paragraph}

\subsection{Proof of Theorem \ref{propHazardCst}}\label{proofpropHazardCst}
From Fubini's theorem, using the fact that the random laws are independent, of zero mean and of variance $1$, we have
\begin{eqnarray*}
C_{T,\textrm{rand}}^{(S)}(\chi_\omega) & = & \inf_{\substack{(\lambda_{j}^2c_j)\in\ell^2(\C)\\ \sum_{j=1}^{+\infty} \lambda_{j}^4\vert c_j \vert ^2=1}}\int_0^T\int_\omega \mathbb{E}\left(\left \vert \sum_{j=1}^{+\infty}\beta_j^\nu \lambda_{j}^2c_je^{i\lambda_{j}^2t}\phi_j(x)\right \vert ^2\right)  dV_g\, dt\\
 & = &  \inf_{\substack{(c_j)\in\ell^2(\C)\\ \sum_{j=1}^{+\infty} \vert c_j \vert ^2=1}}\int_0^T\int_\omega \sum_{j,k=1}^{+\infty}\mathbb{E}(\beta_j^\nu\beta_k^\nu) c_j\bar c_k e^{i(\lambda_{j}^2-\lambda_{k}^2)t}\phi_j(x)\phi_k(x)\, dV_g\, dt\\
 & = &  T\inf_{\substack{(c_j)\in\ell^2(\C)\\ \sum_{j=1}^{+\infty} \vert c_j \vert ^2=1}}\int_\omega \sum_{j=1}^{+\infty} \vert c_j \vert ^2\phi_j(x)^2\, dV_g\\
 & = &  T\inf_{j\in\N^*}\int_\omega\phi_j(x)^2\, dV_g .
\end{eqnarray*}
The proof for $C_{T,\textrm{rand}}^{(W)}(\chi_\omega)$ is similar.
%Regarding now the random observability constant for the wave equation \eqref{waveEqobs}, the same lines yield
%$$
%C_{T,\textrm{rand}}(\chi_\omega)=\frac{1}{2}\inf_{\substack{(a_j),(b_j)\in\ell^2(\C)\\ \sum_{j=1}^{+\infty}( \vert a_j \vert ^2+ \vert b_j \vert ^2)=1}}\sum_{j=1}^{+\infty}T\left( \vert a_j \vert ^2+ \vert b_j \vert ^2\right)\int_\omega \phi_j(x)^2\, dV_g
%$$
%whence the result.

%%%%%%%%%%%%%%%%%%%%%%%%%%%%%%%%%%%%%%%%%%%%%%%%%
%%%%%%%%%%%%%%%%%%%%%%%%%%%%%%%%%%%%%%%%%%%%%%%%%

\section{First problem: optimal design for fixed initial data}\label{solvingpb1obs}
This section is devoted to solving the first problem, that is, the problem of best observation for fixed initial data.
Throughout the section, we fix initial data $(y^0,y^1)\in D(A^{1/2})\times X$ (resp., $y^0\in D(A)$) for the wave equation \eqref{waveEqobs} (resp., for the Schr\"odinger equation \eqref{schroEqobs}), and we consider their associated coefficients $\alpha_{ij}$, $(i,j)\in(\N^*)^2$, defined by \eqref{defalphaij} (resp., by \eqref{defalphaijS}).
The next considerations are valuable for both wave and Schr\"odinger equations.
For every $x\in\Omega$, we define
\begin{equation}\label{defphi}
\varphi(x) = \int_0^T \vert \partial_{t} y(t,x)\vert^2 dt = \sum_{i,j=1}^{+\infty}\lambda_{i}\lambda_{j}\alpha_{ij} \phi_i(x) \phi_j(x),
\end{equation}
where $y$ is defined by \eqref{yDecomp} or \eqref{psiDecomp} and is such that $y\in C^0(0,T;D(A^{1/2}))$, and where the coefficients $\alpha_{ij}$ are defined by \eqref{defalphaij} or \eqref{defalphaijS}. Note that the function $\varphi$ is integrable on $\Omega$.
Then, from \eqref{GT1} or \eqref{GTpsi}, there holds
\begin{equation}\label{GT2}
G_T(\chi_\omega) = \int_\omega \varphi(x) \, dV_g,
\end{equation}
for every measurable subset $\omega$ of $\Omega$.

\subsection{Main result}\label{sec3.1}
%We have the following result.
\begin{theorem}\label{thmobsopt}
There exists at least one measurable subset $\omega$ of $\Omega$, solution of the first problem, characterized as follows. There exists a real number $\lambda$ such that every optimal set $\omega$ is contained in the level set $\{\varphi\geq\lambda\}$, where the function $\varphi$ defined by \eqref{defphi} is integrable on $\Omega$.

Moreover, if $M$ is an analytic Riemannian manifold, if $\Omega$ has a nontrivial boundary of class $C^\infty$ and if there exists $R>0$ such that
\begin{equation}\label{condanalwave}
\sum_{j=0}^{+\infty} \frac{R^j}{j!} \left( \Vert A^{j/2} y^0\Vert_{L^2}^2 + \Vert A^{(j-1)/2} y^1\Vert_{L^2}^2 \right)^{1/2} < +\infty ,
\end{equation}
in the case of the wave equation, and
\begin{equation}\label{condanalSch}
\sum_{j=0}^{+\infty} \frac{R^j}{j!}  \Vert A^{j/2} y^0\Vert_{L^2}  < +\infty ,
\end{equation}
in the case of the Schr\"odinger equation, then  the first problem has a unique\footnote{Similarly to the definition of elements of $L^p$-spaces, the subset $\omega$ is unique within the class of all measurable subsets of $\Omega$ quotiented by the set of all measurable subsets of $\Omega$ of zero measure.} solution $\chi_\omega$, where $\omega$ is a measurable subset of $\Omega$ of measure $L V_g(\Omega) $, satisfying moreover the following properties:
\begin{itemize}
\item $\omega$ is semi-analytic\footnote{A subset $\omega$ of a real analytic finite dimensional manifold $M$ is said to be semi-analytic if it can be written in terms of equalities and inequalities of analytic functions, that is, for every $x\in\omega$, there exists a neighborhood $U$ of $x$ in $M$ and $2pq$ analytic functions $g_{ij}$, $h_{ij}$ (with $1\leq i\leq p$ and $1\leq j\leq q$) such that
$$
\omega\cap U = \bigcup_{i=1}^p \{y\in U\ \vert\ g_{ij}(y)=0\ \textrm{and}\ h_{ij}(y)>0,\ j=1,\ldots,q\}.
$$
We recall that such semi-analytic (and more generally, subanalytic) subsets enjoy nice properties, for instance they are stratifiable in the sense of Whitney (see \cite{Hardt,Hironaka}).
%It means that for every $x$ in $\omega$, there exists a neighborhood $U$ of $x$, $s$ and $\tau$ in $\N^{*}$ such that $\omega\cap U=\bigcup_{i=1}^{s}\bigcap_{j=1}^{\tau}\omega_{i,j}$ with $\omega_{i,j}$ of the form $\{f_{i,j}>0\}$ or $\{f_{i,j}=0\}$ or $\{f_{i,j}<0\}$ and each $f_{i,j}$ is a real analytic function.
\label{footnoteSemiAna}}, and has a finite number of connected components;
\item if $M=\R^n$, if $\Omega$ is symmetric with respect to an hyperplane, if $y^{0}\circ\sigma =y^{0}$ and $y^{1}\circ\sigma=y^{1}$ where $\sigma$ denotes the symmetry operator with respect to this hyperplane, then $\omega$ enjoys the same symmetry property;
\item for Dirichlet boundary conditions, there exists $\eta>0$ such that $d(\omega,\partial\Omega)>\eta$, where $d$ denotes the Riemannian distance on $M$.
\end{itemize}
\end{theorem}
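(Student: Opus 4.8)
The first problem is, by \eqref{GT2}, the maximization of $G_T(\chi_\omega)=\int_\omega\varphi\,dV_g$ over measurable sets $\omega$ with $V_g(\omega)=LV_g(\Omega)$, where $\varphi\geq 0$ is the fixed integrable weight defined in \eqref{defphi}. This is a bathtub-principle problem, and the plan is to obtain existence and the level-set characterization from that principle, and then to lift the uniqueness, regularity, symmetry and boundary conclusions from the real-analyticity of $\varphi$ forced by \eqref{condanalwave}--\eqref{condanalSch}.

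\emph{Existence and characterization.} First I would introduce the distribution function $s\mapsto V_g(\{\varphi>s\})$ and pick the threshold $\lambda$ with $V_g(\{\varphi>\lambda\})\leq LV_g(\Omega)\leq V_g(\{\varphi\geq\lambda\})$. Since $dV_g$ is non-atomic, one can select a subset of $\{\varphi=\lambda\}$ of the right measure to build a candidate $\omega$ with $\{\varphi>\lambda\}\subseteq\omega\subseteq\{\varphi\geq\lambda\}$ and $V_g(\omega)=LV_g(\Omega)$. For any competitor $\omega'$ of the same measure, the exchange inequality, using $\varphi\geq\lambda$ on $\omega\setminus\omega'$, $\varphi\leq\lambda$ on $\omega'\setminus\omega$ and $V_g(\omega\setminus\omega')=V_g(\omega'\setminus\omega)$, yields $G_T(\chi_\omega)\geq G_T(\chi_{\omega'})$; the same computation shows that any optimal set must contain $\{\varphi>\lambda\}$ and be contained in $\{\varphi\geq\lambda\}$, which is the announced characterization.

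\emph{Analyticity of $\varphi$.} The heart of the matter, and what I expect to be the main obstacle, is to prove that \eqref{condanalwave} (resp.\ \eqref{condanalSch}) makes $\varphi$ real-analytic on $\overline\Omega$. I would read these summability conditions as stating that the initial data are analytic vectors of the operator $\sqrt A=\sqrt{-\triangle_g}$. Writing the wave solution through the propagators $\cos(t\sqrt A)$ and $\sin(t\sqrt A)/\sqrt A$ (resp.\ $e^{itA}$ for Schr\"odinger), and using that these bounded operators commute with $A$, one checks that $\partial_t y(t,\cdot)$ remains an analytic vector of $\sqrt A$ uniformly for $t\in[0,T]$, with analytic bounds independent of $t$. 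The delicate point is to identify analytic vectors of $\sqrt A$ with genuine real-analytic functions up to the boundary: this relies on analytic hypoellipticity of $\triangle_g$ and on analytic elliptic regularity up to an analytic boundary for the boundary conditions at hand (Morrey--Nirenberg type estimates). Granting this, $x\mapsto\partial_t y(t,x)$ is real-analytic with a radius of analyticity bounded below uniformly in $t$; hence $|\partial_t y(t,\cdot)|^2$ is real-analytic, and integrating over $t\in[0,T]$ (a family with uniform analytic bounds) shows that $\varphi$ is real-analytic on $\overline\Omega$.

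\emph{Uniqueness, structure, symmetry and boundary gap.} A real-analytic function that is not identically constant has level sets of zero measure, so $V_g(\{\varphi=\lambda\})=0$ (the degenerate case $\varphi\equiv\mathrm{const}$ being excluded, as it would contradict the sought uniqueness); hence $\{\varphi>\lambda\}=\{\varphi\geq\lambda\}$ up to a null set, and the sandwich from the first step pins down $\omega$ uniquely within the class of measurable sets modulo null sets. The optimal set $\{\varphi\geq\lambda\}$ is cut out by a single analytic inequality, hence is semi-analytic, and since $\overline\Omega$ is compact the structure theory of semi-analytic sets gives finitely many connected components. For the symmetry claim, symmetric data produce a symmetric solution and thus a symmetric $\varphi$; since $\sigma(\omega)$ is then also optimal, uniqueness forces $\sigma(\omega)=\omega$. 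Finally, under Dirichlet conditions the eigenfunctions, and therefore $\varphi$, vanish on $\partial\Omega$, while $\lambda>0$ (because $\varphi\not\equiv 0$ and $L<1$ force a positive threshold); continuity of $\varphi$ up to $\partial\Omega$ then yields a boundary collar on which $\varphi<\lambda$, so that $d(\omega,\partial\Omega)\geq\eta$ for some $\eta>0$.
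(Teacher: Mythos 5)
Your proposal is correct and follows essentially the same route as the paper: the bathtub/level-set argument for existence and the characterization via $\{\varphi\geq\lambda\}$, real-analyticity of $\varphi$ deduced from the analytic-vector conditions \eqref{condanalwave}--\eqref{condanalSch} together with elliptic regularity and Sobolev-type bounds, and then uniqueness, semi-analyticity with finitely many connected components, symmetry, and the Dirichlet boundary collar all read off from analyticity exactly as in the paper (your treatment of the boundary collar and of $\lambda>0$ is in fact more explicit than the paper's). The one point to repair is your exclusion of the degenerate case $\varphi\equiv\mathrm{const}$, which as written is circular (``it would contradict the sought uniqueness''); the paper instead rules it out by observing that a constant analytic $\varphi$ would be forced to vanish identically by the boundary conditions, so it can only occur for trivial initial data.
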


The first statement of this theorem covers the case where $\partial\Omega=\emptyset$. In this case, $\Omega$ is a compact connected analytic Riemannian manifold.

\begin{proof}[Proof of Theorem \ref{thmobsopt}]
The existence and the characterization in function of the level sets of $\varphi$ of a set $\omega$ maximizing \eqref{GT2} is obvious since the function $\varphi$ is integrable on $\Omega$. 
Let us prove the second part of the theorem, for the wave equation.
First of all we claim that, under the additional assumption \eqref{condanalwave}, the corresponding solution $y$ of the wave equation is analytic over $\R^+\times\Omega$. Indeed, we first note that the quantity 
$$
\Vert A^{j/2} y(t,\cdot)\Vert_{L^2}^2 + \Vert A^{(j-1)/2} \partial_t y(t,\cdot)\Vert_{L^2}^2
$$
is constant with respect to $t$. Then, since $\Omega$ has a smooth boundary, it follows from \eqref{condanalwave} and from the Sobolev imbedding theorems that there exists $C>0$ such that
$$
\Vert y^{(k)}(t,\cdot)\Vert_\infty \leq C \frac{(2n+k)!}{R^{2n+k}},
$$
for every $t\geq 0$ and every integer $k$. The claim follows. As a consequence, the function $\varphi$ defined by \eqref{defphi} is analytic on $\Omega$. Hence $\varphi$ cannot be constant on a subset of positive measure (otherwise by analyticity it would be constant on $\Omega$ and hence equal to $0$ due to the boundary conditions). This ensures the uniqueness of the optimal set $\omega$.

The first additional property follows from the analyticity properties.
The symmetry property (if $M=\R^n$) follows from the fact that $\varphi\circ \sigma (x)=\varphi(x)$ for every $x\in\Omega$. If $\omega$ were not symmetric with respect to this hyperplane, the uniqueness of the solution of the first problem would fail, which is a contradiction.
For Dirichlet boundary conditions, since $\varphi(x) = \int_0^T \vert \partial_{t}y(t,x)\vert^2 dt=0$ for every $x\in\partial\Omega$, it follows that $\varphi$ reaches its global minimum on the boundary of $\Omega$. 
\end{proof}

\begin{remark}\label{rem5}
The solution of the first problem depends on the initial data under consideration. More specifically, it depends on their associated coefficients $\alpha_{ij}$, defined by \eqref{defalphaij} in the case of the wave equation, and by \eqref{defalphaijS} in the case of the Schr\"odinger equation. Note that there exist an infinite number of initial data $(z^0,z^1)\in D(A^{1/2})\times X$ (resp. $z^0\in D(A)$) having the same coefficients $\alpha_{ij}$ than $(y^0,y^1)$ (resp., $y^0$), and all of them lead to the same solution of the first problem. Similar considerations have been discussed in the one-dimensional case in \cite{PTZObs1}.
\end{remark}

\begin{remark}
We have seen that the optimal solution may not be unique whenever the function $\varphi$ is constant on some subset of $\Omega$ of positive measure.
More precisely, assume that $\varphi$ is constant, equal to $c$, on some subset $I$ of $\Omega$ of positive measure $\vert I\vert$. If $ \vert \{\varphi\geq c \} \vert <  L V_g(\Omega)  < \vert \{\varphi>c \} \vert$ then there exists an infinite number of measurable subsets $\omega$ of $\Omega$ maximizing \eqref{GT2}, all of them containing the subset $\{\varphi>c \}$. The part of $\omega$ lying in $\{\varphi=c \}$ can indeed be chosen arbitrarily.

Note that there is no simple characterization of all initial data %$(y^0,y^1)$ 
for which this non-uniqueness phenomenon occurs, however to get convinced that this may indeed happen it is convenient to consider the one-dimensional case where $T$ is moreover an integer multiple of $2\pi$ (see Remark \ref{remTmult2pi}), with Dirichlet boundary conditions. Indeed in that case the functional $G_T$ does not involve any crossed terms and therefore the corresponding function $\varphi$ reduces to $\varphi(x) = \sum_{j=1}^{+\infty} \lambda_{j}^2\alpha_{jj} \sin^2(jx)$, with $\alpha_{jj}$ given by \eqref{alphajj2pi}. Writing $\sin^2(jx)=\frac{1}{2}-\frac{1}{2}\cos(2jx)$ permits to write $\varphi$ as a Fourier series whose sine Fourier coefficients vanish and cosine coefficients are nonpositive and summable (because of \eqref{alphajj2pi}). Hence, to provide an explicit example where the non-uniqueness phenomenon occurs, it suffices to consider a nonpositive triangle function defined on $[\frac{\pi}{2}-\alpha,\frac{\pi}{2}+\alpha]$, for some $\alpha>0$, equal to $0$ outside. Its Fourier coefficients are the values on integers of the Fourier transform of the nonpositive triangle function, hence are negative and summable. The rest of the construction is obvious. We refer to \cite{PTZObs1} for details on this one-dimensional case.\\
Note that it is easy to generalize such a characterization in a $n$-dimensional hypercube for the Schr\"odinger equation, since in this case the solution remains periodic with period $2\pi$ and $G_T$ does not involve any crossed terms.
\end{remark}

\subsection{Further comments on the complexity of the optimal set}\label{sec3.2}
It is interesting to raise the question of the complexity of the optimal sets solutions of the first problem.
In Theorem \ref{thmobsopt} we prove that, if the initial data belong to some analyticity spaces,
then the (unique) optimal set $\omega$ is the union of a finite number of connected components. 
Hence, analyticity implies finiteness and it is interesting to wonder whether this property still holds true for less regular initial data.

In what follows we show that, in some particular cases (those mentioned above where the periodicity of solutions of the wave and Schr\"odinger equations can be exploited), there exist $C^\infty$ initial data for which the optimal set $\omega$ has a fractal structure and, more precisely, is of Cantor type.

\begin{proposition}\label{propCantor}
For the one-dimensional wave equation on $[0,\pi]$ (resp. for the $n$-dimensional Schr\"odinger equation on the hypercube $[0,\pi]^n$) with Dirichlet boundary conditions, there exist $C^\infty$ initial data $(y^0,y^1)$ (resp. $C^\infty$ initial data $y^0$) for which the first problem has a unique solution $\omega$ with fractal structure and thus, in particular, it has an infinite number of connected components.
\end{proposition}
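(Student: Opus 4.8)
The plan is to exploit the special geometries in which the functional $G_T$ is \emph{diagonal}, i.e.\ free of crossed terms, and then to reduce the whole statement to the construction of a single well-chosen profile. Indeed, for the one-dimensional wave equation on $[0,\pi]$ with $T=2p\pi$, Remark~\ref{remTmult2pi} together with \eqref{GTmult2pi} and \eqref{GT2} gives
\begin{equation*}
\varphi(x)=\sum_{j=1}^{+\infty}d_j\sin^2(jx),\qquad d_j\ \propto\ j^2\bigl(|a_j|^2+|b_j|^2\bigr)\geq 0,
\end{equation*}
and choosing $y^1=0$ yields $|a_j|^2+|b_j|^2=\tfrac12|\langle y^0,\phi_j\rangle|^2$, so the nonnegative sequence $(d_j)$ can be prescribed arbitrarily through $y^0$; in particular any rapidly (super-polynomially) decaying nonnegative sequence $(d_j)$ is realized by a $C^\infty$ datum $y^0$. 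For the $n$-dimensional Schr\"odinger equation on $[0,\pi]^n$ the solution is $2\pi$-periodic in $t$, so that with $T=2\pi$ the coefficients $\alpha_{jk}$ in \eqref{defalphaijS} vanish except between modes carrying the same eigenvalue; choosing $y^0$ to excite only modes with pairwise distinct eigenvalues (for instance modes depending essentially on a single coordinate) brings $\varphi$ back to the same one-dimensional diagonal $\sin^2$-form. Thus in both cases the task reduces to choosing a nonnegative, rapidly decaying sequence $(d_j)$.

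Next I would translate Theorem~\ref{thmobsopt} into a condition on $\varphi$. By \eqref{GT2} any optimal set is a superlevel set $\{\varphi\geq\lambda\}$ of measure $L\pi$, and the non-uniqueness discussion following Theorem~\ref{thmobsopt} shows that such a set is the \emph{unique} maximizer as soon as the threshold level set $\{\varphi=\lambda\}$ has zero measure. Hence it suffices to produce a $C^\infty$ function $\varphi$ of the above form for which, at the level $\lambda$ determined by $|\{\varphi\geq\lambda\}|=L\pi$, the set $\{\varphi\geq\lambda\}$ is a countable disjoint union of nondegenerate intervals whose endpoints accumulate on a Cantor set of zero measure — so that it has infinitely many connected components and a fractal boundary — while $\{\varphi=\lambda\}$ is at most countable, hence of measure zero. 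Such a $\varphi$ is necessarily $C^\infty$ but \emph{not} analytic, in accordance with Theorem~\ref{thmobsopt}, which forces finiteness in the analytic case.

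The heart of the matter is the explicit construction of this profile, and the real difficulty is to reconcile two a priori antagonistic requirements: the \emph{positivity} constraint $d_j\geq 0$ on the cosine coefficients (coming from $\sin^2(jx)=\tfrac12-\tfrac12\cos(2jx)$, whence $\varphi=\tfrac12\sum_j d_j-\tfrac12\sum_j d_j\cos(2jx)$), and the requirement that a level set of $\varphi$ cross the threshold infinitely often with self-similar geometry. To guarantee positivity for free I would represent the oscillatory part as an autocorrelation: writing $\sum_{j\geq 1}d_j\cos(2jx)=(g\star g)(x)$ for a suitable real, even, $\pi$-periodic $C^\infty$ function $g$, so that $d_j=|\hat g_j|^2\geq 0$ automatically and inherits rapid decay from the smoothness of $g$, the problem reduces to designing $g$ whose autocorrelation, near its maximum, displays the desired infinite hierarchy of dips. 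Concretely I would build $g$ by a self-similar superposition of smooth bumps at geometrically refining scales, so that $\varphi=\tfrac{D}{2}-\tfrac12\,g\star g$ has, at the chosen level, a nested family of intervals on which $\varphi\geq\lambda$ separated by gaps on which $\varphi<\lambda$, whose endpoints accumulate on a Cantor set of measure zero.

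Finally I would verify the four properties needed: $\varphi\in C^\infty$ with $(d_j)$ rapidly decaying (hence realizable by $C^\infty$ initial data, via the first step); the self-similar interval structure of $\{\varphi\geq\lambda\}$, yielding infinitely many connected components and fractal type; the calibration $|\{\varphi\geq\lambda\}|=L\pi$, obtained by tuning the scale parameters of $g$ and using the continuity and strict monotonicity of $\lambda\mapsto|\{\varphi\geq\lambda\}|$ (valid once $\varphi$ is locally non-constant); and $|\{\varphi=\lambda\}|=0$, which secures uniqueness. The main obstacle throughout is precisely the tension between Fourier positivity and fractal level sets: the autocorrelation device disposes of the positivity side, but controlling \emph{where} the smooth autocorrelation meets the threshold — so as to obtain a genuinely Cantor-like accumulation of components while keeping the crossing set countable — is the delicate quantitative part, and is presumably where the explicit estimates of Appendix~\ref{AppendixCantor} become indispensable.
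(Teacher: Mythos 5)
Your reduction agrees with the paper's up to the point where the real work begins: take $T$ a multiple of $2\pi$ so that $G_T$ is diagonal (Remark \ref{remTmult2pi}), observe that any nonnegative, rapidly decaying sequence $(d_j)$ is realized by $C^\infty$ data, and characterize the optimal set as a superlevel set of $\varphi=\tfrac12\sum d_j-\tfrac12\sum d_j\cos(2jx)$. But the entire content of the proposition lies in actually producing a $\varphi$ of this constrained form whose optimal superlevel set has infinitely many components, and this you do not do. Moreover, the route you propose diverges from the paper's in two ways that create unaddressed difficulties. First, you place the critical level $\lambda$ in the \emph{interior} of the range of $\varphi$ and require $\{\varphi\ge\lambda\}$ to accumulate on a Cantor set; this forces $\varphi$ to oscillate across the value $\lambda$ infinitely often in a controlled self-similar pattern, under the strong rigidity that all cosine coefficients of $c-\varphi$ are nonnegative (a positive-definiteness constraint). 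Appendix \ref{AppendixCantor} avoids this entirely by putting the level at the \emph{maximum}: it constructs a smooth $f\ge0$ vanishing exactly on the complement of a prescribed union of intervals $C$ accumulating at a point, with all cosine Fourier coefficients strictly positive and summable; then $\varphi$ is an affine function of $-f$, the unique optimal set is $\{\varphi=\max\varphi\}=\{f=0\}=C^c$, and the geometry is fixed by hand while all the work goes into the Fourier positivity (an arithmetic argument with $\alpha=p/q$, $p+q$ even, where the resonance classes $n\in(2q^m\N^*)\setminus(2q^{m+1}\N^*)$ are each dominated by the contribution of the $m$-th peak).

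Second, your device for positivity --- writing the oscillatory part as an autocorrelation $g\star g$ so that $d_j=\hat g_j^{\,2}\ge0$ automatically --- does not dispose of the difficulty, it relocates it. If $g$ is a superposition of bumps $\psi_k$ at geometrically refining scales and locations, then $g\star g$ contains all cross terms $\psi_k\star\psi_l$, which deposit spurious bumps at sums and differences of the locations; before you can claim that the calibrated level set of $D/2-\tfrac12\,g\star g$ is Cantor-like, you must control this combinatorial explosion, and nothing in your sketch does so. Nor can the missing estimates be borrowed from Appendix \ref{AppendixCantor}, which uses no autocorrelation: there the function whose zero set is the optimal set is constructed directly, and positivity of its Fourier coefficients is what is proved. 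As written, the heart of the proof is absent, and the proposed path to it faces a concrete obstruction you have not engaged with. (Two smaller points: your uniqueness criterion via $\vert\{\varphi=\lambda\}\vert=0$ is fine, but note the paper's version gets uniqueness for free since $\{\varphi>\max\varphi\}=\emptyset$; and in the $n$-dimensional Schr\"odinger case, exciting only modes depending on one coordinate yields $\varphi(x)=k(x_2,\dots,x_n)\,h(x_1)$ rather than a genuinely one-dimensional profile, so the superlevel sets are not those of $h$ alone and their component count still has to be checked --- the paper instead tensorizes the one-dimensional fractal set.)
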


The proof of this proposition is quite technical and relies on a careful Fourier analysis construction. It is done in Appendix \ref{AppendixCantor}.

\subsection{Several numerical simulations}
We provide hereafter a numerical illustration of the results presented in this section. 

According to Theorem \ref{thmobsopt}, the optimal domain is characterized as a level set of the  function $\varphi$. Some numerical simulations are provided on Figure \ref{figpb1}, with $\Omega=[0,\pi]^{2}$, $L=0.6$, $T=3$, $y^{1}=0$ and
$$
y^{0}(x)=\sum_{n,k=1}^{N_{0}}a_{n,k}\sin (nx_{1})\sin (kx_{2}),
$$
where $N_{0}\in\N^{*}$ and $(a_{n,k})_{n,k\in \N^{*}}$ are real numbers. The level set is numerically computed using a simple dichotomy procedure.

\begin{figure}[h!]
\begin{center}
\includegraphics[width=7.3cm]{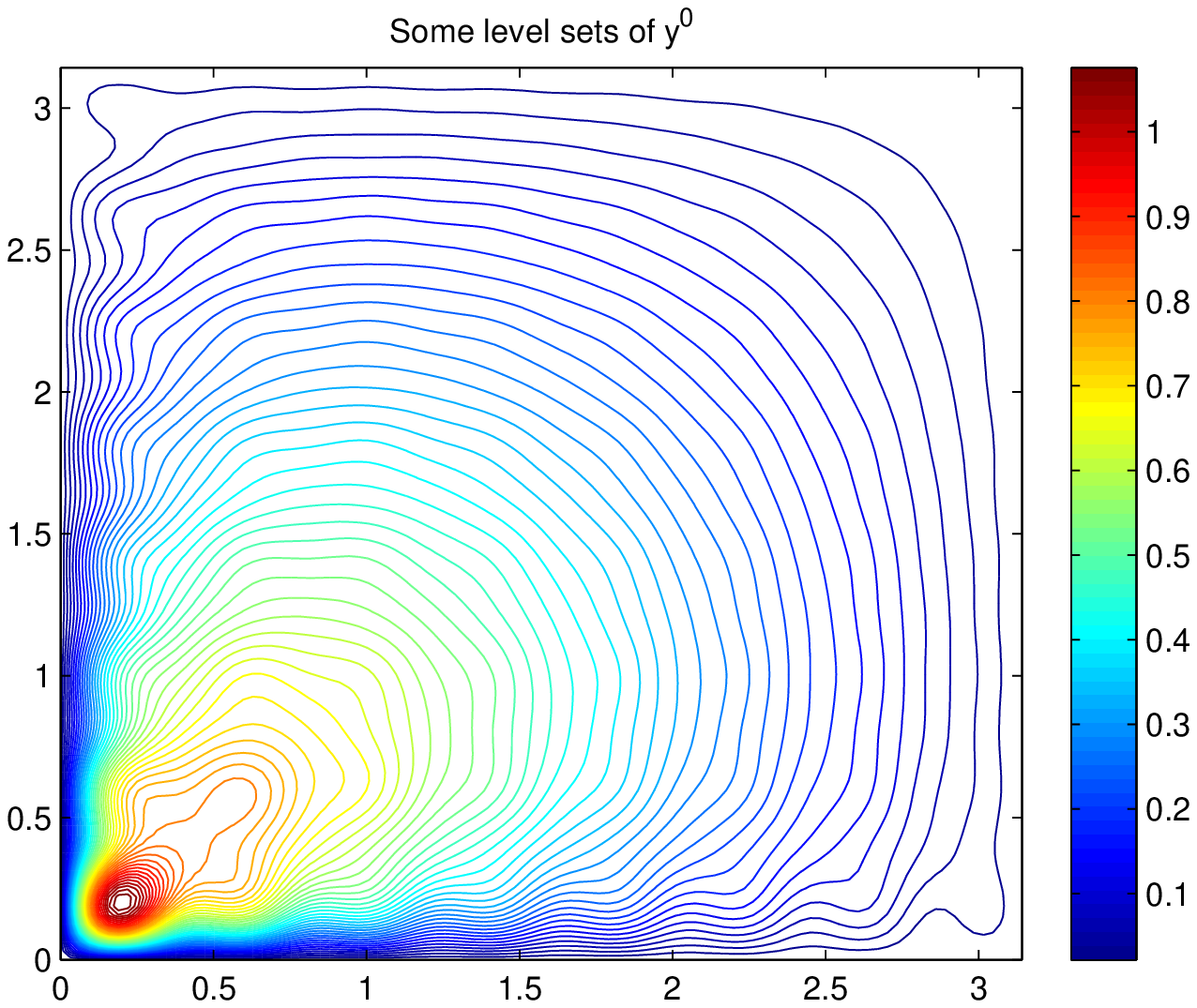}
\includegraphics[width=7.3cm]{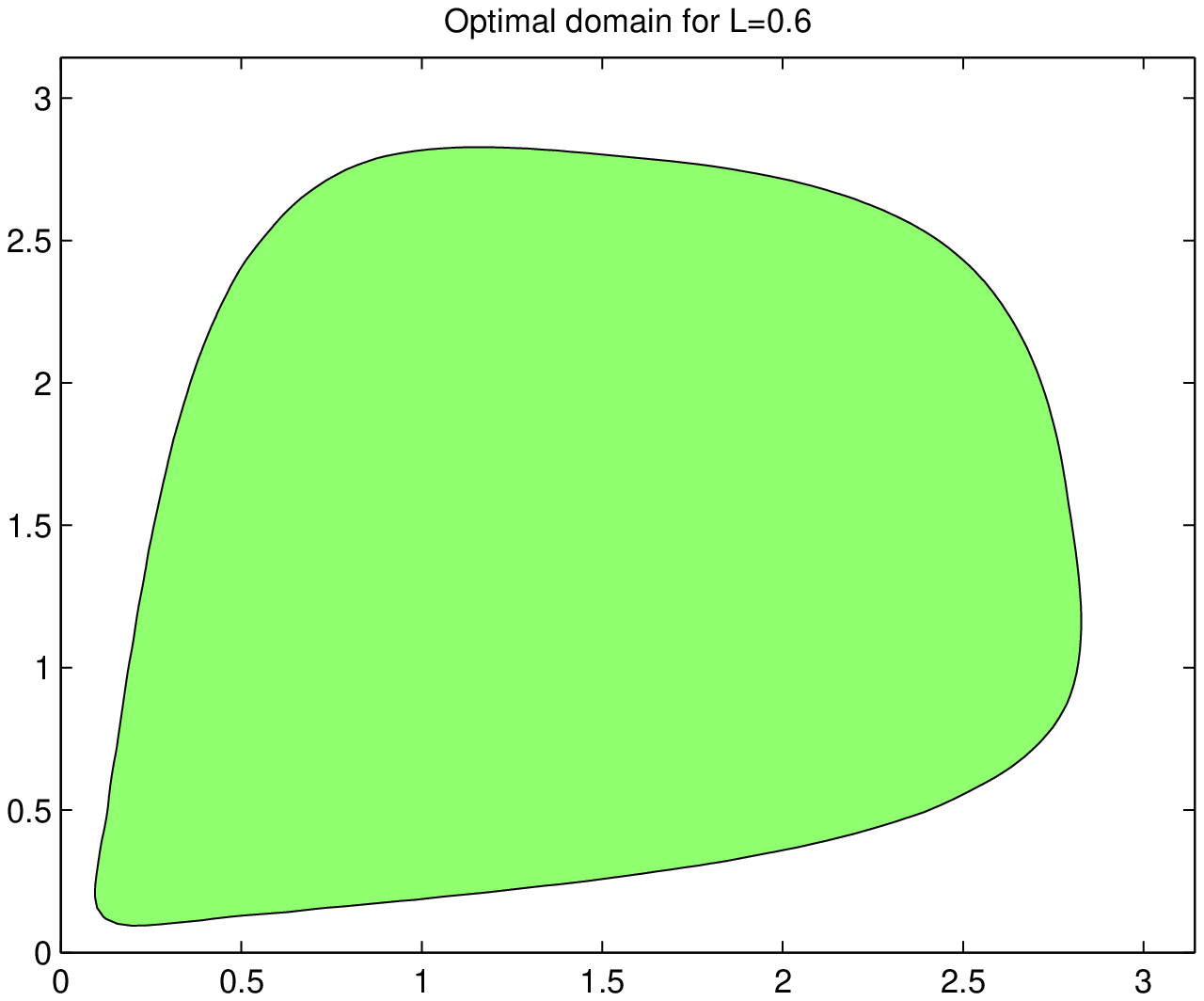}
\includegraphics[width=7.3cm]{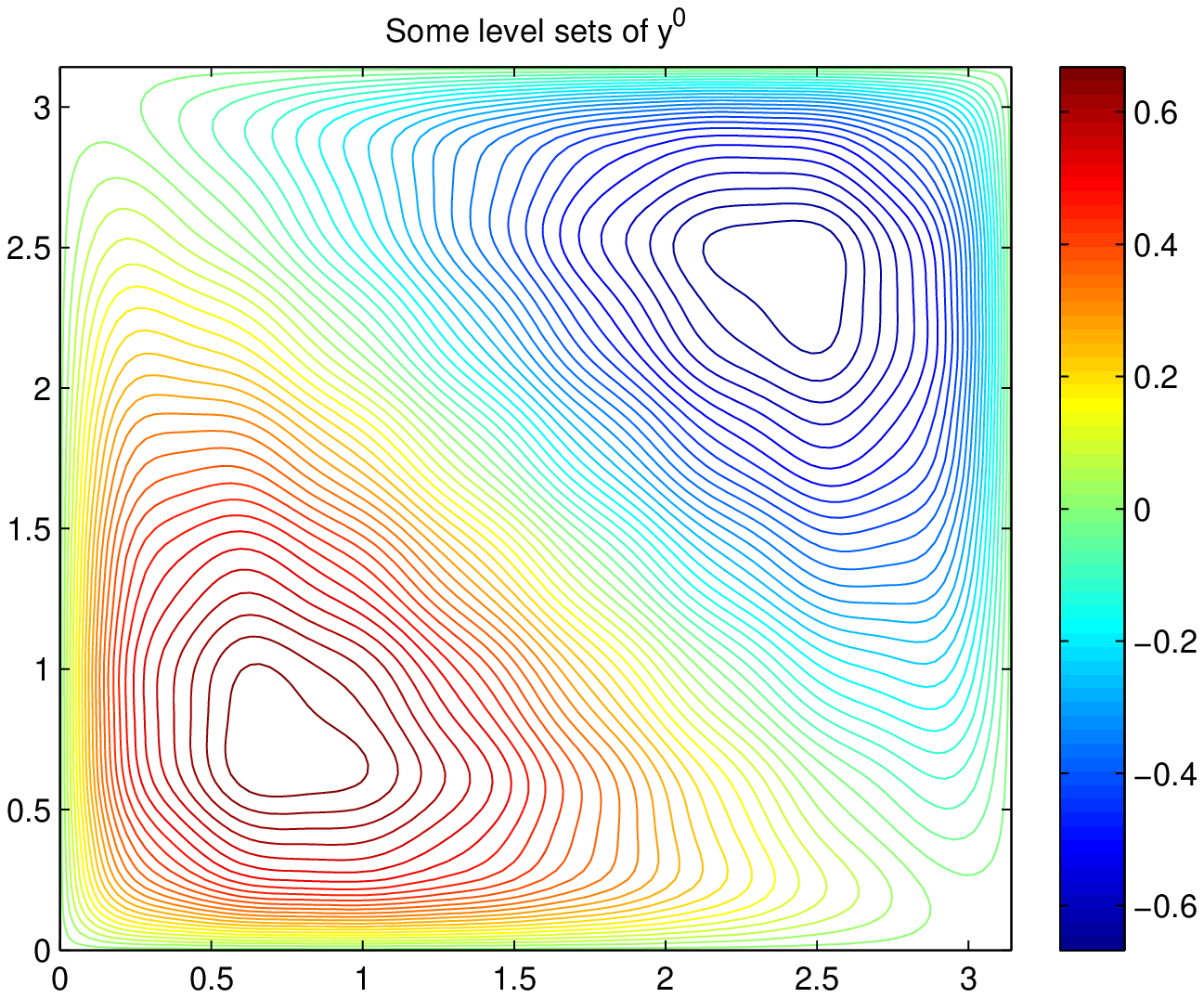}
\includegraphics[width=7.3cm]{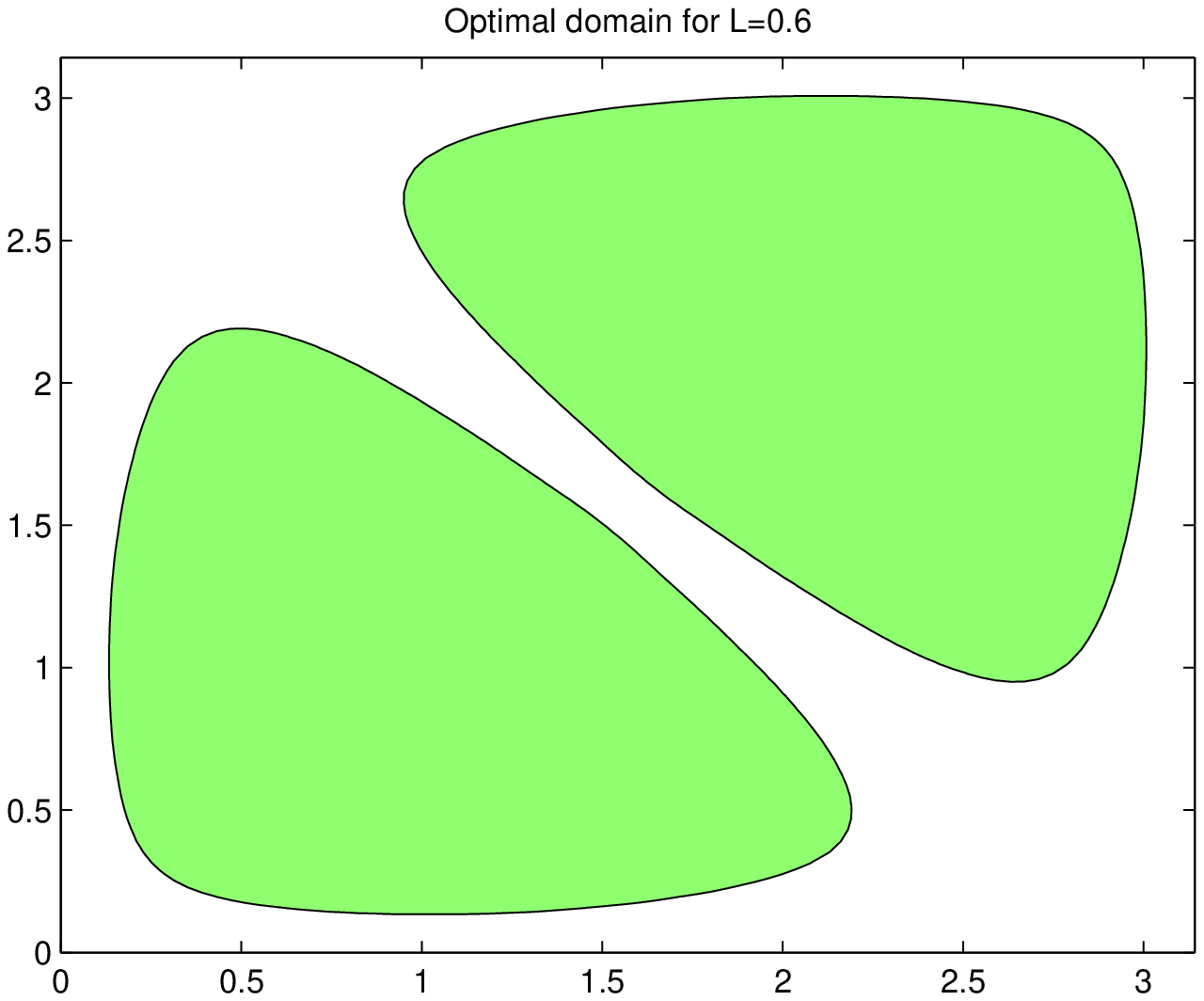}
\caption{On this figure, $\Omega=[0,\pi]^{2}$ with Dirichlet boundary conditions, $L=0.6$, $T=3$ and $y^{1}=0$. 
At the top: $N_{0}=15$ and $a_{n,k}=\frac{1}{n^{2}+k^{2}}$.
At the bottom: $N_{0}=15$ and $a_{n,k}=\frac{1-(-1)^{n+k}}{n^{2}k^{2}}$.
On the left: some level sets of $y^{0}$.
On the right: representation of the optimal domain for the corresponding choice of $y^{0}$.}\label{figpb1}
\end{center}
\end{figure}

%%%%%%%%%%%%%%%%%%%%%%%%%%%%%%%%%%%%%%%%%%%%%%%%%%
%%%%%%%%%%%%%%%%%%%%%%%%%%%%%%%%%%%%%%%%%%%%%%%%%%

\section{Second problem: uniform optimal design}\label{solvingpb2obs}

\subsection{Preliminary remarks}\label{solvingpb2obssec1}
We define the set
\begin{equation}\label{defUL}
\mathcal{U}_L = \{ \chi_\omega\ \vert\ \omega\ \textrm{is a measurable subset of}\ \Omega \ \textrm{of measure}\ V_g(\omega)=L V_g( \Omega ) \}.
\end{equation}
Recall that the second problem \eqref{defJ} is written as
\begin{equation}\label{reducedsecondpb}
\sup_{\chi_\omega\in\mathcal{U}_L}  J(\chi_\omega) ,
\end{equation}
with
$$
J(\chi_\omega)=\inf_{j\in\N^*} \int_\omega \phi_j(x)^2 \, dV_g .
$$
The criterion $J(\chi_\omega)$ can be seen as a spectral energy concentration criterion. For every $j\in\N^*$, the integral $\int_\omega\phi_j(x)^2\,dV_g$ is the energy of the $j^\mathrm{th}$ eigenfunction restricted to $\omega$, and the problem is to maximize the infimum over $j$ of these energies, over all subsets $\omega$ of measure $V_g(\omega)=L V_g( \Omega ) $.

Since the set $\mathcal{U}_L$ does not have compactness properties ensuring the existence of a solution of \eqref{reducedsecondpb}, we consider the convex closure of $\mathcal{U}_L$ for the weak star topology of $L^\infty$, 
\begin{equation}\label{defULbar}
\overline{\mathcal{U}}_L = \left\{ a\in L^\infty(\Omega,[0,1])\ \big\vert\ \int_{\Omega} a(x)\, dV_g=L V_g(\Omega)  \right\}.
\end{equation}
This convexification procedure is standard in shape optimization problems where an optimum may fail to exist  because of hard constraints (see e.g. \cite{BucurButtazzo}).

Replacing $\chi_\omega\in\mathcal{U}_L$ with $a\in\overline{\mathcal{U}}_L$, we define a convexified formulation of the second problem \eqref{reducedsecondpb} by
\begin{equation}\label{quantity2obsAsympconvexified}
\sup_{a\in \overline{\mathcal{U}}_L} J(a) ,
\end{equation}
where
\begin{equation}\label{defJrelax}
J(a)=\inf_{j\in \N^*}\int_{\Omega}a(x) \phi_j(x)^2\, dV_g.
\end{equation}
Since $J(a)$ is defined as the infimum of linear continuous functionals for the weak star topology of $L^\infty$, it is upper semi continuous for this topology. This yields to the following result.

\begin{lemma}\label{lemmaExistConv}
The problem \eqref{quantity2obsAsympconvexified} has at least one solution.
\end{lemma}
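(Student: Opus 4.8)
The plan is to apply the direct method of the calculus of variations, exploiting the compactness of the convexified admissible set $\overline{\mathcal{U}}_L$ together with the upper semicontinuity of $J$ already observed just above the statement. The crucial structural fact is that $L^\infty(\Omega)$ is the topological dual of $L^1(\Omega)$ (using that $\Omega$ is bounded, hence of finite volume $V_g(\Omega)<+\infty$ and $\sigma$-finite for $dV_g$), so that the weak star topology is available and the Banach--Alaoglu theorem applies.

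First I would show that $\overline{\mathcal{U}}_L$ is weak star compact. Since every $a\in\overline{\mathcal{U}}_L$ satisfies $\Vert a\Vert_{L^\infty}\leq 1$, the set $\overline{\mathcal{U}}_L$ is contained in the closed unit ball of $L^\infty(\Omega)$, which is weak star compact by Banach--Alaoglu. It therefore suffices to check that $\overline{\mathcal{U}}_L$ is weak star closed. To this end, consider a sequence $(a_n)$ in $\overline{\mathcal{U}}_L$ converging weak star to some $a\in L^\infty(\Omega)$ (one may work with sequences here, since the ball is metrizable for the weak star topology because the predual $L^1(\Omega)$ is separable). For every nonnegative $\varphi\in L^1(\Omega)$ one has $\int_\Omega a\varphi\, dV_g=\lim_n\int_\Omega a_n\varphi\, dV_g\geq 0$ and likewise $\int_\Omega(1-a)\varphi\, dV_g\geq 0$, whence $0\leq a\leq 1$ almost everywhere. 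Testing against the characteristic function $\chi_\Omega\in L^1(\Omega)$ (integrable since $V_g(\Omega)<+\infty$) gives $\int_\Omega a\, dV_g=\lim_n\int_\Omega a_n\, dV_g=L V_g(\Omega)$. Hence $a\in\overline{\mathcal{U}}_L$, and $\overline{\mathcal{U}}_L$ is weak star compact. It is moreover nonempty, since for instance the constant function $a\equiv L$ belongs to it.

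Next I would invoke the upper semicontinuity of $J$. For each fixed $j\in\N^*$, since $\phi_j\in L^2(\Omega)$ we have $\phi_j^2\in L^1(\Omega)$, so the map $a\mapsto\int_\Omega a\,\phi_j(x)^2\, dV_g$ is weak star continuous on $L^\infty(\Omega)$. As $J$ is the infimum over $j$ of this family of weak star continuous functionals, it is upper semicontinuous for the weak star topology, exactly as recorded above. The conclusion then follows from the classical fact that an upper semicontinuous function attains its supremum on a nonempty compact set: the value $\sup_{a\in\overline{\mathcal{U}}_L}J(a)$ is achieved at some $a^*\in\overline{\mathcal{U}}_L$.

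This argument is essentially routine, and I do not expect any serious obstacle. The only point requiring slight care is the verification of the weak star closedness of the pointwise constraints $0\leq a\leq 1$ and of the integral constraint defining $\overline{\mathcal{U}}_L$; this is precisely where the integrability of the test functions $\varphi\in L^1(\Omega)$, the integrability $\phi_j^2\in L^1(\Omega)$, and the finiteness $V_g(\Omega)<+\infty$ are used.
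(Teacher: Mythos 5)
Your proof is correct and follows essentially the same route as the paper, which simply notes that $J$ is upper semicontinuous for the weak star topology of $L^\infty$ as an infimum of weak star continuous linear functionals and concludes by compactness of $\overline{\mathcal{U}}_L$. You have merely written out in full the Banach--Alaoglu and weak star closedness details that the paper leaves implicit.
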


Obviously, there holds
\begin{equation}\label{ineqGap}
\sup_{\chi_\omega\in \mathcal{U}_L} \inf_{j\in \N^*}\int_{\Omega}\chi_\omega(x) \phi_j(x)^2\, dV_g\ \leq\ \sup_{a\in \overline{\mathcal{U}}_L} \inf_{j\in \N^*}\int_{\Omega}a(x) \phi_j(x)^2\, dV_g.
\end{equation}
Note that, since the constant function $a(\cdot)=L$ belongs to $\overline{\mathcal{U}}_L$, it follows that $\displaystyle \sup_{a\in \overline{\mathcal{U}}_L} J(a) \geq L$.
In the next section, under an additional ergodicity assumption, we compute the optimal value \eqref{quantity2obsAsympconvexified} of this convexified problem and investigate the question of knowing whether the above inequality is strict or not. In other words we investigate whether there is a gap or not between the problem \eqref{reducedsecondpb} and its convexified version \eqref{quantity2obsAsympconvexified}.

\begin{remark}\label{remtopo}{\it Comments on the choice of the topology.}\\
In our study we consider measurable subsets $\omega$ of $\Omega$, and we endow the set $L^\infty(\Omega,\{0,1\})$ of all characteristic functions of measurable subsets with the weak-star topology. Other topologies are used in shape optimization problems, such as the Hausdorff topology. Note however that, although the Hausdorff topology shares nice compactness properties, it cannot be used in our study because of the measure constraint on $\omega$. Indeed, the Hausdorff convergence does not preserve measure, and the class of admissible domains is not closed for this topology.
Topologies associated with convergence in the sense of characteristic functions or in the sense of compact sets (see for instance \cite[Chapter 2]{henrot-pierre}) do not guarantee easily the compactness of minimizing sequences of domains, unless one restricts the class of admissible domains, imposing for example some kind of uniform regularity.
\end{remark}

\begin{remark}\label{rempossiblegap}
We stress that the question of the possible existence of a gap between the original problem and its convexified version is not obvious and cannot be handled with usual $\Gamma$-convergence tools, in particular because the function $J$ defined by \eqref{defJrelax} is is not lower semi-continuous for the weak star topology of $L^\infty$ (it is however upper semi-continuous for that topology, as an infimum of linear functions).
To illustrate this fact, consider the one-dimensional case of Remark \ref{remTmult2pi}. In this specific situation, since $\phi_j(x)=\sqrt{\frac{2}{\pi}}\sin (jx)$ for every $j\in\N^*$, one has
$$
J(a)=\frac{2}{\pi}\inf_{j\in \N^*}\int_0^\pi a(x)\sin^2(jx)\, dx,
$$
for every $a\in \overline{\mathcal{U}}_L$. Since the functions $x\mapsto\sin^2(jx)$ converge weakly to $1/2$, it clearly follows that $J(a)\leq L$ for every $a\in \overline{\mathcal{U}}_L$. Therefore,
$$
\sup_{a\in \overline{\mathcal{U}}_L} J(a)=L,
$$
and the supremum is reached with the constant function $a(\cdot)=L$.
Consider the sequence of subsets $\omega_N$ of $[0,\pi]$ of measure $L\pi$ defined by
$$
\omega_N=\bigcup_{k=1}^N\left[\frac{k\pi}{N+1}-\frac{L\pi}{2N},\frac{k\pi}{N+1}+\frac{L\pi}{2N}\right],
$$
for every $N\in\N^*$. Clearly, the sequence of functions $\chi_{\omega_N}$ converges to the constant function $a(\cdot)=L$ for the weak star topology of $L^\infty$, but nevertheless, an easy computation shows that
\begin{equation*}
\int_{\omega_N}\sin ^2(jx)\, dx 
=
\left\{\begin{array}{ll}
\frac{L\pi}{2} -\frac{N}{2j}\sin\left(\frac{jL\pi}{N}\right) & \textrm{if}\ \ (N+1)\ \vert\ j,\\
\frac{L\pi}{2}+ \frac{1}{2j}\sin\left(\frac{jL\pi}{N}\right) & \textrm{otherwise},
 \end{array}\right.
\end{equation*}
and hence,
$$
\limsup_{N\to +\infty}\frac{2}{\pi}\inf_{j\in\N^*}\int_{\omega_N}\sin ^2(jx)\,dx<L.
$$
This simple example illustrates the difficulty in understanding the limiting behavior of the functional because of the lack of the lower semicontinuity, what makes possible the occurrence of a gap in the convexification procedure.
In Section \ref{secconvexified2}, we will prove that there is no such a gap under an additional geometric spectral assumption. 
\end{remark}

\subsection{Main results}\label{secconvexified2}
In what follows, we make the following assumptions on the basis $(\phi_j^2)_{j\in \N^*}$ of eigenfunctions under consideration.
\begin{quote}
\textit{
\textbf{Weak Quantum Ergodicity on the basis (WQE) property.}
There exists a subsequence of the sequence of probability measures $\mu_{j}=\phi_j^2\, dV_{g}$ converging vaguely to the uniform measure $\frac{1}{ V_g(\Omega) }\, dV_{g}$.
}
\end{quote}
\begin{quote}
\textit{
\textbf{Uniform $L^\infty$-boundedness property.}
There exists $A>0$ such that 
\begin{equation}\label{UBP}
\Vert \phi_{j}\Vert_{L^\infty(\Omega)}\leq A,
\end{equation}
for every $j\in \N^*$.
}
\end{quote}
Note that the two assumptions above imply what we call the \textit{$L^\infty$-Weak Quantum Ergodicity on the base ($L^\infty$-WQE) property}\footnote{The wording used here is motivated and explained further in a series of remarks.}, that is, there exists a subsequence of $(\phi_{j}^2)_{j\in \N^*}$ converging to $\frac{1}{V_{g}(\Omega)}$ for the weak star topology of $L^\infty(\Omega)$.

Obviously, this property implies that
\begin{equation}\label{relvalueL}
\sup_{a\in \overline{\mathcal{U}}_L} \inf_{j\in \N^*}\int_{\Omega}a(x) \phi_j(x)^2\, dV_g = L,
\end{equation}
and moreover the supremum is reached with the constant function $a=L$ on $\Omega$.

\begin{remark}
In general the convexified problem \eqref{quantity2obsAsympconvexified} does not admit a unique solution. Indeed, under symmetry assumptions on $\Omega$ there exists an infinite number of solutions. For example, in dimension one, with $\Omega=[0,\pi]$, all solutions of \eqref{quantity2obsAsympconvexified} are given by all functions of $\overline{\mathcal{U}}_L$ whose Fourier expansion series is of the form
$a(x)= L + \sum_{j=1}^{+\infty} ( a_j \cos(2jx) + b_j \sin(2jx))$
with coefficients $a_j\leq 0$.
\end{remark}

It follows from \eqref{ineqGap} and \eqref{relvalueL} that
\begin{equation*}
\sup_{\chi_\omega\in \mathcal{U}_L} \inf_{j\in \N^*}\int_{\omega}\phi_j(x)^2\, dV_g\ \leq\ L.
\end{equation*}
The next result states that this inequality is actually an equality.

\begin{theorem}\label{thmnogap}
If the WQE and uniform $L^\infty$-boundedness properties hold, then
\begin{equation}\label{eqNoGap}
\sup_{\chi_\omega\in \mathcal{U}_L} \inf_{j\in \N^*}\int_{\omega} \phi_j(x)^2\, dV_g  = L,
\end{equation}
for every $L\in (0,1)$.
In other words, under these assumptions there is no gap between the original problem \eqref{reducedsecondpb} and the convexified one.
\end{theorem}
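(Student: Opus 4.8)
Since $a(\cdot)=L$ belongs to $\overline{\mathcal{U}}_L$ and satisfies $\int_\Omega L\,\phi_j^2\,dV_g=L$ for every $j$, the convexified value is exactly $L$ by \eqref{relvalueL}; combining this with \eqref{ineqGap} (or, more directly, using that along the WQE subsequence $\phi_{j_k}^2\rightharpoonup 1/V_g(\Omega)$, so that $\int_\omega\phi_{j_k}^2\to L$ for every fixed $\omega$, which already forces $J(\chi_\omega)\le L$) yields the upper bound $\sup_{\chi_\omega\in\mathcal{U}_L}J(\chi_\omega)\le L$. The entire content of the theorem is therefore the reverse inequality $\sup_{\chi_\omega\in\mathcal{U}_L}J(\chi_\omega)\ge L$, that is: for every $\varepsilon>0$, the construction of a measurable set $\omega$ with $V_g(\omega)=LV_g(\Omega)$ and $\int_\omega\phi_j^2\,dV_g\ge L-\varepsilon$ for \emph{every} $j\in\N^*$ simultaneously.

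The first structural fact I would isolate is the anti-concentration estimate furnished by the uniform $L^\infty$-bound: since $\phi_j^2\le A^2$, one has $\int_E\phi_j^2\,dV_g\le A^2 V_g(E)$ for every measurable $E$, \emph{uniformly} in $j$. Equivalently, the densities of the measures $\mu_j$ are uniformly bounded, so no eigenfunction can concentrate more than a fixed fraction of its mass on a set of small measure. This is exactly what fails in the adversarial situations and explains why the hypothesis is needed: without it, modes could concentrate on arbitrarily thin sets that a single $\omega$ of fixed measure cannot all meet. I would also record the reformulation $\int_\omega\phi_j^2=1-\int_{\Omega\setminus\omega}\phi_j^2$, so that the target inequality says that the complement, of measure fraction $1-L$, never over-captures a mode.

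I would build $\omega$ in two regimes separated by a cut-off $N$. For the finitely many low modes $j\le N$ I would match the constant $L$ exactly: applying the Lyapunov convexity theorem to the non-atomic vector measure $E\mapsto\big(V_g(E),\int_E\phi_1^2\,dV_g,\dots,\int_E\phi_N^2\,dV_g\big)$, whose range (a zonoid) contains the point $\big(LV_g(\Omega),L,\dots,L\big)$ realized by the density $a\equiv L$, produces a measurable set with $V_g(\omega)=LV_g(\Omega)$ and $\int_\omega\phi_j^2=L$ for all $j\le N$. Within the large freedom left by Lyapunov I would additionally impose a spreading at a small spatial scale $h$, performing the matching cell by cell on a partition of $\Omega$ of mesh $h$ so that $\omega$ captures a fraction close to $L$ of each cell. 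This scale-$h$ equidistribution keeps $\int_\omega\phi_j^2$ close to $L$ for every mode whose wavelength is large compared with $h$ (roughly $\lambda_j\lesssim 1/h$), extending the control well beyond the first $N$ modes, while for the very high modes the WQE subsequence already guarantees $\int_\omega\phi_{j_k}^2\to L$.

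The delicate part, and in my view the genuine obstacle, is the simultaneous control of \emph{all} remaining high modes $j>N$: WQE supplies only an equidistributing \emph{sub}sequence, and an off-subsequence high mode could a priori concentrate (to density at most $A^2$) against the geometry of $\omega$. This is precisely the resonance phenomenon of Remark \ref{rempossiblegap}, where modes whose frequency matches the period of the observation set fall strictly below $L$, and it is why the statement cannot follow from weak-$\star$ or $\Gamma$-convergence arguments. The plan to defeat it is to intertwine the three parameters: use the $L^\infty$ anti-concentration estimate to confine any possible concentration to a fixed scale $\gtrsim 1/A^2$, choose the spreading scale $h$ finer than that scale and irregular enough to avoid resonances, and let $N\to\infty$ and $h\to 0$ along a diagonal so that the low-mode error, the homogenization error at scale $h$, and the tail governed by the equidistributing subsequence all vanish. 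Checking that these errors are uniform in $j$, and not merely controlled for each fixed $j$, is the crux of the argument and the step I expect to absorb essentially all of the work.
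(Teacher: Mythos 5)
Your reduction is correct: the upper bound follows from \eqref{ineqGap} and \eqref{relvalueL} (this is where WQE, upgraded to weak-star $L^\infty$ convergence by the uniform bound, enters), and the entire content of the theorem is the construction, for each $\varepsilon>0$, of a single $\omega\in\mathcal{U}_L$ with $\int_\omega\phi_j(x)^2\,dV_g\geq L-\varepsilon$ for \emph{every} $j$ simultaneously. But your proposal does not close this step, and you say so yourself. A cell-by-cell matching at a spatial scale $h$ controls $\int_\omega\phi_j^2\,dV_g$ only up to an error of order $\lambda_j h$ (via a Bernstein-type gradient bound, which you would also need to justify), so for any fixed $h$ there remain infinitely many uncontrolled modes; and WQE, which furnishes only a subsequence, cannot absorb the off-subsequence high modes. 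Your proposed remedy --- ``confine concentration to a fixed scale $\gtrsim 1/A^2$'' and ``choose $h$ irregular enough to avoid resonances'' --- is not an argument: the $L^\infty$ bound limits the density of $\mu_j$ but says nothing about where its mass sits relative to a set $\omega$ built from a fixed partition. This uniformity-in-$j$ issue is exactly what you call the crux, and leaving it as ``the step I expect to absorb essentially all of the work'' means the theorem is not proved.

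The paper's proof (Section \ref{secproofthmnogap}) dissolves this difficulty by a change of viewpoint. One regards the whole sequence $f(x)=(\phi_j(x)^2)_{j\in\N^*}$ as a single element of $L^1(\Omega,\ell^\infty)$ --- this is where the uniform $L^\infty$ bound is used, and it is the \emph{only} place it is used in the lower bound --- and approximates $f$ by a simple function $f_n=\sum_{k=1}^n\alpha_k\chi_{\Omega_k}$ with $\int_\Omega\Vert f-f_n\Vert_{\ell^\infty}\,dV_g\leq\varepsilon/(L+1)$. The cells $\Omega_k$ are arbitrary measurable sets adapted to the sequence as a whole, not balls of a prescribed radius, so the approximation is uniform over $j$ by the very choice of the $\ell^\infty$ norm; there is no frequency cutoff and hence no tail to control separately. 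Choosing $\omega_k\subset\Omega_k$ with $V_g(\omega_k)=LV_g(\Omega_k)$ and setting $\omega=\cup_{k=1}^n\omega_k$ gives $\int_\Omega(\chi_\omega-L)f_n\,dV_g=0$ exactly, in every coordinate $j$ at once (no Lyapunov convexity theorem is needed, since one only matches the scalar constraint $V_g(\omega_k)=LV_g(\Omega_k)$ cell by cell), and the remaining error $\Vert\int_\Omega(\chi_\omega-L)(f-f_n)\,dV_g\Vert_{\ell^\infty}$ is at most $\varepsilon$. Note finally that WQE plays no role whatsoever in the lower bound; it is needed only for the upper bound you already established, so your plan to invoke it for the very high modes was pointing in the wrong direction.
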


It follows from this result, from Corollary \ref{corCTT} and Theorem \ref{propHazardCst}, that the maximal value of the randomized observability constants $2\,C_{T,\textrm{rand}}^{(W)}(\chi_\omega) = C_{T,\textrm{rand}}^{(S)}(\chi_\omega)$ over the set $\mathcal{U}_L$ is equal to $TL$, and that, if the spectrum of $A$ is simple, the maximal value of the time asymptotic observability constants $2C_{\infty}^{(W)}(\chi_{\omega})=C_{\infty}^{(S)}(\chi_{\omega})$ over the set $\mathcal{U}_L$ is equal to $L$.

The question of knowing whether the supremum in \eqref{eqNoGap} is reached (existence of an optimal set) is investigated in Section \ref{sec4.3}.

Theorem \ref{thmnogap} is established within the class of measurable subsets. We next state a similar (but distinct) result within the class of measurable subsets whose boundary is of  measure zero.
We define the set
\begin{equation}\label{defULb}
\mathcal{U}_L^b = \{ \chi_\omega\in \mathcal{U}_L \ \vert\ V_{g}(\partial\omega)=0 \}.
\end{equation}
This is the set of all characteristic functions of Jordan measurable subsets of $\Omega$ of measure $LV_{g}(\Omega)$.
We make the following assumptions.

\begin{quote}
\textit{
\textbf{Quantum Unique Ergodicity on the base (QUE) property.}
The whole sequence of probability measures $\mu_{j}=\phi_j^2\, dV_{g}$ converges vaguely to the uniform measure $\frac{1}{ V_g(\Omega) }\, dV_{g}$.
}
\end{quote}
\begin{quote}
\textit{
\textbf{Uniform $L^p$-boundedness property.}
There exist $p\in(1,+\infty]$ and $A>0$ such that 
\begin{equation}\label{UBPp}
\Vert \phi_{j}\Vert_{L^{2p}(\Omega)}\leq A,
\end{equation}
for every $j\in \N^*$.
}
\end{quote}

\begin{theorem}\label{thmnogap2}
Assume that $\partial \Omega$ is Lipschitz whenever it is nonempty.
If the QUE and uniform $L^p$-boundedness properties hold, then
\begin{equation}\label{eqNoGap1}
\sup_{\chi_\omega\in \mathcal{U}_L^b} \inf_{j\in \N^*}\int_{\omega} \phi_j(x)^2\, dV_g  = L,
\end{equation}
for every $L\in (0,1)$.
\end{theorem}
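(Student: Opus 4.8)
The plan is to prove the two inequalities separately: the upper bound is a soft consequence of QUE, while the lower bound requires an explicit, carefully desynchronized construction of a maximizing sequence.

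\textbf{Upper bound.} Since $\overline\Omega$ is compact and each $\mu_j=\phi_j^2\,dV_g$ is a probability measure, the QUE hypothesis (vague convergence to the probability measure $\frac{1}{V_g(\Omega)}\,dV_g$) is automatically a weak convergence of probability measures, no mass escaping to the boundary. By the portmanteau theorem, $\mu_j(\omega)\to\frac{V_g(\omega)}{V_g(\Omega)}$ for every set $\omega$ whose boundary is negligible for the limiting measure. For $\chi_\omega\in\mathcal{U}_L^b$ one has $V_g(\partial\omega)=0$, hence $\int_\omega\phi_j^2\,dV_g\to L$ as $j\to+\infty$, so that $\inf_{j\in\N^*}\int_\omega\phi_j^2\,dV_g\le L$ and therefore $\sup_{\chi_\omega\in\mathcal{U}_L^b}\inf_j\int_\omega\phi_j^2\,dV_g\le L$. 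This half uses only QUE, not the $L^p$-bound.

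\textbf{Lower bound, strategy.} It suffices to construct, for every $\varepsilon>0$, a set $\omega\in\mathcal{U}_L^b$ with $\int_\omega\phi_j^2\,dV_g\ge L-\varepsilon$ for \emph{all} $j\in\N^*$; letting $\varepsilon\to0$ then yields a maximizing sequence. The guiding idea is to round the maximizer $a\equiv L$ of the convexified problem into a characteristic function while losing a uniformly small amount against every $\phi_j^2$, i.e.\ making $\bigl|\int_\Omega(\chi_\omega-L)\phi_j^2\,dV_g\bigr|$ small uniformly in $j$. I would build $\omega$ from a fine Jordan partition $\Omega=\bigcup_k C_k$ into cells of small diameter, selecting inside each cell a Jordan subcell with $V_g(\omega\cap C_k)=L\,V_g(C_k)$, so that $\chi_\omega\rightharpoonup L$ weak-$\ast$ as the mesh refines. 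The analysis then splits by frequency. For low modes, the Lipschitz boundary guarantees that the $\phi_j$ below a given threshold are equicontinuous on $\overline\Omega$, so $\phi_j^2$ is nearly constant on each cell and the balancing $V_g(\omega\cap C_k)=L\,V_g(C_k)$ gives $\int_\omega\phi_j^2\,dV_g=L+O\bigl(\sum_k V_g(C_k)\,\mathrm{osc}_{C_k}(\phi_j^2)\bigr)\ge L-\varepsilon$ once the mesh is fine enough relative to that threshold. For high modes, the set being fixed and Jordan measurable, the portmanteau argument above already forces $\int_\omega\phi_j^2\,dV_g\to L$, so these are admissible beyond some index. The uniform $L^{2p}$-bound enters precisely here: via Hölder it yields equi-integrability of $\{\phi_j^2\}$ and, combined with QUE, upgrades the vague convergence to a weak convergence $\phi_j^2\rightharpoonup\frac{1}{V_g(\Omega)}$ in $L^p(\Omega)$, preventing the integrals over $\Omega\setminus\omega$ from concentrating.

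\textbf{Main obstacle.} The delicate point, and the reason the proof must be constructive rather than a $\Gamma$-convergence argument, is the uniform control over all $j$ simultaneously: the low-mode estimate holds only up to a threshold depending on the mesh, while the QUE estimate is effective only beyond an index depending on the already-fixed set, and a priori these two regimes need not overlap. That a naive choice genuinely fails is shown by the resonant example of Remark \ref{rempossiblegap}, where a periodic array of components produces, for each $N$, a mode $j\approx N$ on which $\int_{\omega_N}\phi_j^2$ stays bounded away from $L$. The construction must therefore be non-resonant and multiscale: I would proceed by a diagonal refinement, treating the modes in nested blocks and letting the number of connected components of $\omega$ grow so that each new block of modes is captured by the oscillation estimate before the previous blocks are disturbed, while the equi-integrability from the $L^{2p}$-bound keeps the leftover contributions on the complement negligible. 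Arranging the bookkeeping so that a single set handles the low, transition, and high modes at once is the technical heart of the argument.
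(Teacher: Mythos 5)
Your upper bound is correct and coincides with the paper's: QUE plus the portmanteau theorem gives $\int_\omega\phi_j(x)^2\,dV_g\to L$ for every Jordan measurable $\omega$ of measure $LV_g(\Omega)$, hence $J(\chi_\omega)\le L$. Your diagnosis of the lower bound is also accurate: a single fine partition fails because of resonances in the intermediate modes (exactly the phenomenon of Remark \ref{rempossiblegap}), so the construction must be multiscale, with a growing number of connected components, and with the uniform $L^{2p}$ bound controlling the effect of perturbations on the high and intermediate modes. This is indeed the skeleton of the paper's argument.

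However, the step you defer as ``the bookkeeping'' is precisely the mathematical content of the proof, and your sketch does not supply the mechanism that makes it work. The paper does not try to capture each new block of modes by an oscillation estimate on an ever finer global mesh (re-balancing the whole set on a finer mesh is a perturbation of non-small measure, so it would destroy the previously acquired estimates). Instead it runs a quantitative iteration on the deficit. Starting from a Lipschitz set $\omega_0$ with deficit $d_0=L-J(\omega_0)$, it perturbs $\omega_0$ on a set of relative measure $\varepsilon^n$, removing small balls centred at Lebesgue points inside $\omega_0$ and inserting balls in $\omega_0^c$, calibrated so that the total measure is preserved. The gain on each low mode $j\le j_0$ is $\varepsilon^n\bigl(L-I_j(\omega_0)\bigr)+\mathrm{o}(\cdot)$, i.e.\ proportional to $\varepsilon^n$ \emph{times the deficit of that mode}, while by H\"older and the $L^{2p}$ bound the damage to \emph{every} mode is at most $C\varepsilon^{n/q}$, a \emph{fractional} power of the perturbed measure. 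Choosing $\varepsilon^n\sim C_1\min(C_2,d_0)$ makes the gain dominate the damage and yields $J(\omega_1)\ge J(\omega_0)+\tfrac{C_1}{2}\min(C_2,d_0)\,d_0$ with constants depending only on $L$, $A$ and $\Omega$; iterating, the deficits $d_k$ must tend to $0$, since otherwise the increments would remain bounded below. Your proposal never exhibits this gain-versus-damage trade-off, nor any argument that the cumulative errors of your ``nested blocks'' stay below $\varepsilon$; without it there is no reason the diagonal refinement converges to $L$ rather than to some strictly smaller value. Two secondary points: you do not need equicontinuity of the low eigenfunctions (the paper only uses that the centres of the balls are Lebesgue points of the finitely many functions $\phi_j^2$, $j\le j_0$), and the weak $L^p$ convergence you extract from the $L^{2p}$ bound is not the consequence that is actually used — what is needed is the uniform estimate $\bigl|\int_E\phi_j(x)^2\,dV_g\bigr|\le A^2\,V_g(E)^{1/q}$ for sets $E$ of small measure, uniformly in $j$.
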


Theorems \ref{thmnogap} and \ref{thmnogap2} are proved in Sections \ref{secproofthmnogap} and \ref{secproofthmnogap2} respectively.

\begin{remark}\label{rem_others}
It follows from the proof of Theorem \ref{thmnogap2} that this statement holds true as well whenever the set $\mathcal{U}_L^b$ is replaced with the set of all measurable subsets $\omega$ of $\Omega$, of measure $V_g(\omega)=LV_g(\Omega)$, that are moreover either open with a Lipschitz boundary, or open with a bounded perimeter.
\end{remark}

\begin{remark}\label{remarksquare}
The assumptions made in Theorems \ref{thmnogap} or \ref{thmnogap2} are sufficient conditions implying \eqref{eqNoGap} or \eqref{eqNoGap1}, but they are however not sharp, as proved in the next proposition.
\begin{proposition}\label{propnogapnoQUE}
Consider the Dirichlet-Laplacian on the domain $\Omega$ defined as the unit disk of the Euclidean two-dimensional space.
Then, for every $p\in(1,+\infty]$ and for any basis of eigenfunctions, the uniform $L^p$-boundedness property is not satisfied, and QUE does not hold as well.
However, the equalities \eqref{eqNoGap} and \eqref{eqNoGap1} hold true.
\end{proposition}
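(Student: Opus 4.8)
The plan is to treat the three assertions separately, using the explicit spectral data of the disk. In polar coordinates the Dirichlet eigenfunctions are $\phi_{n,k}(r,\theta)=c_{n,k}J_n(j_{n,k}r)(\alpha\cos n\theta+\beta\sin n\theta)$, where $J_n$ is the Bessel function of order $n\in\N$, $j_{n,k}$ its $k$-th positive zero and $\lambda_{n,k}=j_{n,k}$. By the Bourget hypothesis (Siegel's theorem), $J_n$ and $J_{n'}$ share no positive zero for $n\neq n'$, so every eigenvalue $j_{n,k}^2$ with $n\geq 1$ has multiplicity exactly $2$ and every associated normalized eigenfunction has the form $c_{n,k}J_n(j_{n,k}r)\cos(n\theta-\theta_0)$; in particular its radial profile and the weak-$*$ limit of its square are independent of the chosen basis. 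I would then invoke the whispering-gallery regime: fixing $k$ and letting $n\to+\infty$, the function $r\mapsto J_n(j_{n,k}r)$ is exponentially small below its turning point $r\approx n/j_{n,k}=1-O(n^{-2/3})$ and, after a Langer/Airy rescaling, concentrates in a boundary layer of width $\sim n^{-2/3}$ near $r=1$. The $L^2(\Omega)$-normalization then forces an amplitude $\sim n^{1/3}$ in that layer, whence $\|\phi_{n,k}\|_{L^\infty}\sim n^{1/3}$ and, more precisely, $\|\phi_{n,k}\|_{L^{2p}}\sim n^{(p-1)/(3p)}\to+\infty$ for every $p\in(1,+\infty]$ (the angular factor $|\cos(n\theta-\theta_0)|^{2p}$ averaging to a positive constant independent of $n$). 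This proves the failure of the uniform $L^p$-boundedness property for all $p>1$ and every basis; moreover the measures $\mu_{n,k}=\phi_{n,k}^2\,dV_g$ converge vaguely to $0$ on the open disk (all the mass escapes to $\partial\Omega$), so this subsequence does not converge to the uniform measure and QUE fails.

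For the no-gap equalities I would prove ``$\leq L$'' and ``$\geq L$'' separately, the first being soft and the second being the crux. For the upper bound I would use no ergodicity at all, only the integrated local Weyl law: the spectral function $e_\Lambda(x,x)=\sum_{\lambda_j^2\leq\Lambda}\phi_j(x)^2$ satisfies $\int_\omega e_\Lambda\,dV_g=\frac{\Lambda}{4\pi}V_g(\omega)(1+o(1))$ and $N(\Lambda)=\int_\Omega e_\Lambda\,dV_g=\frac{\Lambda}{4\pi}V_g(\Omega)(1+o(1))$, so that $\frac{1}{N(\Lambda)}\sum_{\lambda_j^2\leq\Lambda}\int_\omega\phi_j^2\,dV_g\to V_g(\omega)/V_g(\Omega)=L$. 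Since an infimum is bounded above by any of its Cesàro averages, this yields $\inf_j\int_\omega\phi_j^2\,dV_g\leq L$ for every measurable $\omega$, hence $\sup_{\mathcal{U}_L}\leq L$ and likewise $\sup_{\mathcal{U}_L^b}\leq L$; equivalently, the convexified value equals $L$. This argument is valid on any bounded domain and is insensitive to the failure of the $L^\infty$-WQE property.

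The remaining and main point is to build, for each $L\in(0,1)$, a maximizing sequence $\omega_m\in\mathcal{U}_L^b$ with $\inf_j\int_{\omega_m}\phi_j^2\,dV_g\to L$; since $J$ is only upper semicontinuous, the mere weak-$*$ convergence $\chi_{\omega_m}\rightharpoonup L$ is not sufficient (cf. Remark \ref{rempossiblegap}), so the sets must be chosen carefully. I would exploit the product structure $\phi_{n,k}^2=c_{n,k}^2J_n(j_{n,k}r)^2\cos^2(n\theta-\theta_0)$ by taking $\omega_m$ to occupy, at almost every radius $r$, an angular set of density exactly $L$: this meets the area constraint uniformly in $r$ and, for each fixed $n$, makes the angular average of $\chi_{\omega_m}$ against $\cos^2(n\theta)$ tend to $L$; integrating against the radial probability density $c_{n,k}^2J_n(j_{n,k}r)^2r$ then gives $\int_{\omega_m}\phi_{n,k}^2\,dV_g\to L$ for each fixed mode. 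Concretely $\omega_m$ is a union of thin angular sectors whose number is refined as $r\to 1$, at a boundary-layer rate of order $(1-r)^{-3/2}$, matched to the whispering-gallery scale $n^{-2/3}$. As $\partial\omega_m$ is a finite union of arcs and radial segments, it has zero measure, so $\omega_m\in\mathcal{U}_L^b$ and the construction would prove \eqref{eqNoGap1}, hence a fortiori \eqref{eqNoGap}.

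The hard part will be the uniformity in $j=(n,k)$ of this convergence, that is, controlling the \emph{resonances} between the angular frequency $n$ of an eigenfunction and the number of sectors of $\omega_m$ at the radii where $\phi_{n,k}^2$ concentrates. A naive choice of equally spaced sectors of total density $L$ fails, because the mode with $n$ equal to half the number of sectors keeps a fixed deficit of order $|\sin(2\pi L)|/2\pi$, independent of the refinement; the whole point of the radius-dependent refinement is to arrange that, for every $n$, the resonant radius falls outside the $O(n^{-2/3})$ boundary layer carrying the mass of the modes of angular momentum $n$, so that the resonant contribution becomes negligible. Making this rigorous requires quantitative Bessel asymptotics of $J_n(j_{n,k}r)$ near $r=1$ that are \emph{uniform in} $k$ (Langer/Olver estimates together with the localization $j_{n,k}\approx n+c_k n^{1/3}$), split into a low-frequency regime (where $\chi_{\omega_m}\rightharpoonup L$ suffices) and a high-frequency whispering-gallery regime handled by resonance avoidance. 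This is the only place where the special geometry of the disk is genuinely used, and it is the principal technical obstacle of the proof.
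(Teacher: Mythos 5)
Your treatment of the first two assertions (failure of uniform $L^p$-boundedness and of QUE via the whispering-gallery concentration of the radial profiles for fixed radial index and large angular momentum) is correct and matches the paper's, and your upper bound $\sup J\leq L$ via the local Weyl law and C\'esaro averaging is a valid alternative to the paper's argument, which instead computes the one-parameter family of radial quantum limits $\mu_s=f_s(r)\,dr$ explicitly and shows by a concavity/Danskin argument that $\sup_{a\in\overline{\mathcal{U}}_L}K(a)=L$; your route is more elementary and works on any bounded domain.

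The gap is in the lower bound. First, the resonance-avoidance mechanism you describe cannot work as stated: you propose to arrange that the radii where the local sector count $N(r)$ resonates with the angular frequency $n$ lie outside the $O(n^{-2/3})$ boundary layer ``carrying the mass of the modes of angular momentum $n$''. But only the modes $\phi_{n,k}$ with bounded radial index $k$ concentrate in that layer; when $k\gg n$ the $k$-th zero of $J_n$ is much larger than $n$ and the radial density $R_{n,k}(r)^2r\,dr$ spreads over essentially all of $[0,1]$ (indeed it converges vaguely to the uniform measure for fixed $n$). For those modes the resonant radii in the interior contribute a non-vanishing deficit -- exactly the fixed $-\sin(2\pi L)/2$ loss exhibited in Remark \ref{rempossiblegap} -- so the construction fails for infinitely many $(n,k)$. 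Second, and more importantly, you have missed the simplification that makes the whole difficulty disappear: if $\omega=\{(r,\theta)\ \vert\ \theta\in\omega_\theta\}$ is a \emph{radius-independent} angular set, then for all $n\geq1$, $k$, $m$,
$$
\int_\omega\phi_{n,k,m}^2\,dV_g=\Bigl(\int_0^1R_{n,k}(r)^2r\,dr\Bigr)\int_{\omega_\theta}Y_{n,m}(\theta)^2\,d\theta=\int_{\omega_\theta}Y_{n,m}(\theta)^2\,d\theta,
$$
with no error term and no dependence on $k$ whatsoever, since the radial factor is exactly $1$ by normalization. The problem thus reduces exactly to the one-dimensional no-gap statement on the circle, $\sup_{\vert\omega_\theta\vert=2L\pi}\inf_{n}\int_{\omega_\theta}\sin^2(n\theta)\,d\theta=L\pi$ (and likewise with cosines), which is already available: Theorem \ref{thmnogap} applies in dimension one because WQE and the uniform $L^\infty$ bound hold there. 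The maximizing angular sets are not equally spaced sectors but the sets produced by that theorem, and this is what kills the resonance. This is the paper's proof; without it, or a genuine repair of your resonance analysis covering the radially equidistributed modes, your argument does not establish \eqref{eqNoGap} or \eqref{eqNoGap1}.
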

To establish this result, in the proof of this proposition (done in Section \ref{sec_proof_propnogapnoQUE}) we use the explicit expression of certain semi-classical measures in the disk (weak limits of the probability measures $\phi_j^2\, dV_g$). Among these quantum limits, one can find the Dirac measure along the boundary which causes the well known phenomenon of whispering galleries. Having in mind this phenomenon, it might be expected that there exists an optimal set, concentrating around the boundary. The calculations show that it is not the case, and \eqref{eqNoGap} and \eqref{eqNoGap1} are proved to hold.
\end{remark}

The next section is devoted to gather some comments on the ergodicity assumptions made in these theorems.

\subsection{Comments on ergodicity assumptions}\label{sec_comments}
This section is organized as a series of remarks.

\begin{remark}\label{rem8}
The assumptions of Theorem \ref{thmnogap} hold true in dimension one. Indeed, it has already been mentioned that the eigenfunctions of the Dirichlet-Laplacian operator on $\Omega=[0,\pi]$ are given by $\phi_j(x)=\sqrt{\frac{2}{\pi}}\sin(jx)$, for every $j\in\N^*$. Therefore clearly the whole sequence (not only a subsequence) $(\phi_j^2)_{j\in \N^*}$ converges weakly to $\frac{1}{\pi}$ for the weak star topology of $L^\infty(0,\pi)$. The same property clearly holds for all other boundary conditions considered in this article.
\end{remark}

\begin{remark}\label{remWQE}
In dimension greater than one the situation is more intricate, but we have the following facts.

Any hypercube (tensorised version of the previous one-dimensional case) or flat torus satisfies the assumptions. Indeed, the whole sequence of eigenfunctions is uniformly bounded and  converges to a constant for the weak star topology of $L^\infty$.

%If $\Omega$ is the unit ball of $\R^n$, then the eigenfunctions of the Dirichlet-Laplacian are described with Bessel functions and spherical harmonics (see e.g. \cite{Lagnese}), and WQE holds true (see Remark ***************** for more details)
%\textbf{\color{red} on r\'ef\`ere ici \`a une remarque ou bien une sous-section plus pr\'ecise \`a venir o\`u on \'enonce le r\'esultat de Burq pour d\'emontrer le no-gap dans le disque; et sinon donner deux ou trois d\'etails: c'est lorsque $k$ tend vers l'infini qu'on retrouve WQE...}
%\\
%In a more general way, WQE is implied by ergodicity properties. Before providing precise results, we recall the following well known property, stronger than WQE.

%Obviously, QE implies WQE\footnote{Note that, up to our knowledge, the notion of WQE is new, whereas the notions of QE and QUE (considered next) are classical issues in mathematical physics.}.
\medskip

Generally speaking, these assumptions are related to ergodicity properties of $\Omega$. Before providing precise results, we recall the following well known definition.
\begin{quote}
\textit{
\textbf{Quantum Ergodicity on the base (QE) property.}
There exists a subsequence of the sequence of probability measures $\mu_j=\phi_j^2\, dV_g$ of density one converging vaguely to the uniform measure $\frac{1}{V_g(\Omega)}dV_g$.
}
\end{quote}
Here, \textit{density one} means that there exists $\mathcal{I}\subset \N^*$ such that
$$
\lim_{N\to+\infty}\frac{\# \{j\in\mathcal{I}\ \vert\  j\leq N\}}{N}=1.
$$
Obviously, QE implies WQE\footnote{Note that, up to our knowledge, the notion of WQE is new, whereas the notions of QE and QUE are classical in mathematical physics.}.
The well known Shnirelman Theorem asserts that the property QE is satisfied on every compact ergodic Riemannian manifolds having no boundary (see \cite{CdV2,Sh1,Sh2,Zel1}). 
For domains having a boundary, it is proved in \cite{gerard} that, if the domain $\Omega$ is a convex ergodic billiard with $W^{2,\infty}$ boundary, then the property QE is satisfied. The result of \cite{gerard} has been extended to arbitrary ergodic manifolds with piecewise smooth boundaries in \cite{ZZ} and \cite{HZ}.

Note that this result lets however open the possibility of having an exceptional subsequence of measures $\mu_j$ converging vaguely to something else. We will come back on this interesting issue later.

Actually these results relating the ergodicity of $\Omega$ (seen as a billiard where the geodesic flow moves at unit speed and bounces at the boundary according to the Geometric Optics laws)   to the QE property are even stronger, for two reasons. Firstly, they are valid for any Hilbertian basis of eigenfunctions of $A$, whereas here we make this kind of assumption only for the specific basis $(\phi_j)_{j\in\N^*}$ that has been fixed at the beginning of the article. Secondly, they establish that a stronger microlocal version of the QE property holds for pseudodifferential operators, in the unit cotangent bundle $S^*\Omega$ of $\Omega$: more precisely it is proved that, under ergodicity assumptions on the manifold, a density one subsequence of the linear functionals $\rho_j(P)=\langle P \phi_j,\phi_j\rangle$, defined on the space of zero-th order pseudo-differential operators $P$, converges vaguely to the uniform Liouville measure. It is a much stronger conclusion since it says that the eigenfunctions become uniformly distributed on the phase space $S^*\Omega$ and not just on the configuration space $\Omega$. Here however we do not need (de)concentration results in the full phase space, but only in the configuration space. This is why, following \cite{Zel3}, we use the wording ``on the base".

Note that the vague convergence of the measures $\mu_j$ is weaker than the convergence of the functions $\phi_j^2$ for the weak star topology of $L^\infty(\Omega)$. Since $\Omega$ is bounded, the property of vague convergence is equivalent to saying that, for a subsequence of density one, $\int_\omega\phi_j(x)^2\, dV_g$ converges to $V_g(\omega)/V_g(\Omega)$ for every measurable subset $\omega$ of $\Omega$ such that $V_g(\partial\omega)=0$ (Portmanteau theorem). In contrast, the property of convergence for the weak star topology of $L^\infty(\Omega)$ is equivalent to saying that, for a subsequence of density one, $\int_\omega\phi_j(x)^2\, dV_g$ converges to $V_g(\omega)/V_g(\Omega)$ for every measurable subset $\omega$ of $\Omega$. Under the assumption that all eigenfunctions are uniformly bounded in $L^\infty(\Omega)$, both notions are equivalent. This is the case for instance in flat tori. But, for instance, if $\Omega$ is a ball or a sphere of any dimension, then the eigenfunctions of the Laplacian are not uniformly bounded.
This is well known to be a delicate issue (see \cite{Zel3}). We refer to \cite{Zel4} where it is conjectured that flat tori are the sole compact manifolds without boundary where the whole family of eigenfunctions is uniformly bounded in $L^\infty$.

Note that the notion of $L^\infty$-QE property, meaning that the above QE property holds for the weak star topology of $L^\infty$, is defined and mentioned in \cite{Zel3} as a delicate open problem. As said above we stress that, under the assumption that all eigenfunctions are uniformly bounded in $L^\infty(\Omega)$, $QE$ and $L^\infty$-QE are equivalent.

To the best of our knowledge, nothing seems to be known on the uniform $L^p$-boundedness property \eqref{UBP}. This property holds for flat tori but does not hold for balls or spheres.
\end{remark}

\begin{remark}\label{remQUE}
Let us comment on the QUE property, which is an important issue in quantum and mathematical physics. Note indeed that the quantity $\int_\omega\phi_j^2(x)\, dV_g$ is interpreted as the probability of finding the quantum state of energy $\lambda_j^2$ in $\omega$. 
We stress again on the fact that, here, we consider a version of QUE in the configuration space only, not in the full phase space. Moreover, we consider the QUE property for the basis $(\phi_j)_{j\in\N^*}$ under consideration, but not necessarily for any such basis of eigenfunctions.

First of all, QUE obviously holds true in the one-dimensional case of Remark \ref{remTmult2pi} (see also Remark \ref{rempossiblegap}) but it does however not hold true for multi-dimensional hypercubes.

In the general multi-dimensional case, many interesting open questions and issues occur.
As in Remark \ref{remWQE}, consider for every $j\in\N^*$ the probability measure $\mu_j=\phi_j^2\, dV_g$, representing in quantum mechanics the probability of being in the state $\phi_j$ (or, probability density of finding a particle of energy $\lambda_j^2$ at $x$). An interesting question is to know whether the supports of these measures tend to equidistribute or can concentrate as $j\rightarrow+\infty$.
As already mentioned, under ergodicity assumptions, the property QE holds true, that is, a subsequence of density one of $(\mu_j)_{j\in\N^*}$ converges vaguely to the uniform measure on $\Omega$. 
But this result lets open the possibility of having an exceptional subsequence converging to some other measure. Typically it may happen that a subsequence of density zero converges to an invariant measure like for instance a measure carried by closed geodesics. These so-called (strong) \textit{scars} result from such energy concentration phenomena, that are allowed in the context of Shnirelman Theorem.
This fascinating question of knowing whether the quantum states of such an ergodic system can concentrate or not on some instable closed orbits or on some invariant tori generated by such geodesics is still widely open in mathematics and physics.
We refer to \cite{BonechiDeBievre,FaureNonnenmacherDeBievre}
for results showing a scarring phenomenon on the periodic orbits of the dynamics of the quantum Arnold's cat map. Note however that, as already mentioned, here we are concerned with concentration results in the configuration space only.

The QUE property on the base, stating that the whole sequence of measures $\mu_j=\phi_j^2\, dV_g$ converges vaguely to the uniform measure, postulates that there is no such concentration phenomenon (see \cite{RudnickSarnak1994}).
%The above-mentioned results show that QUE is related with the ergodic properties of eigenfunctions.

\begin{figure}[h]
\begin{center}
\includegraphics[width=6cm,height=4.2cm]{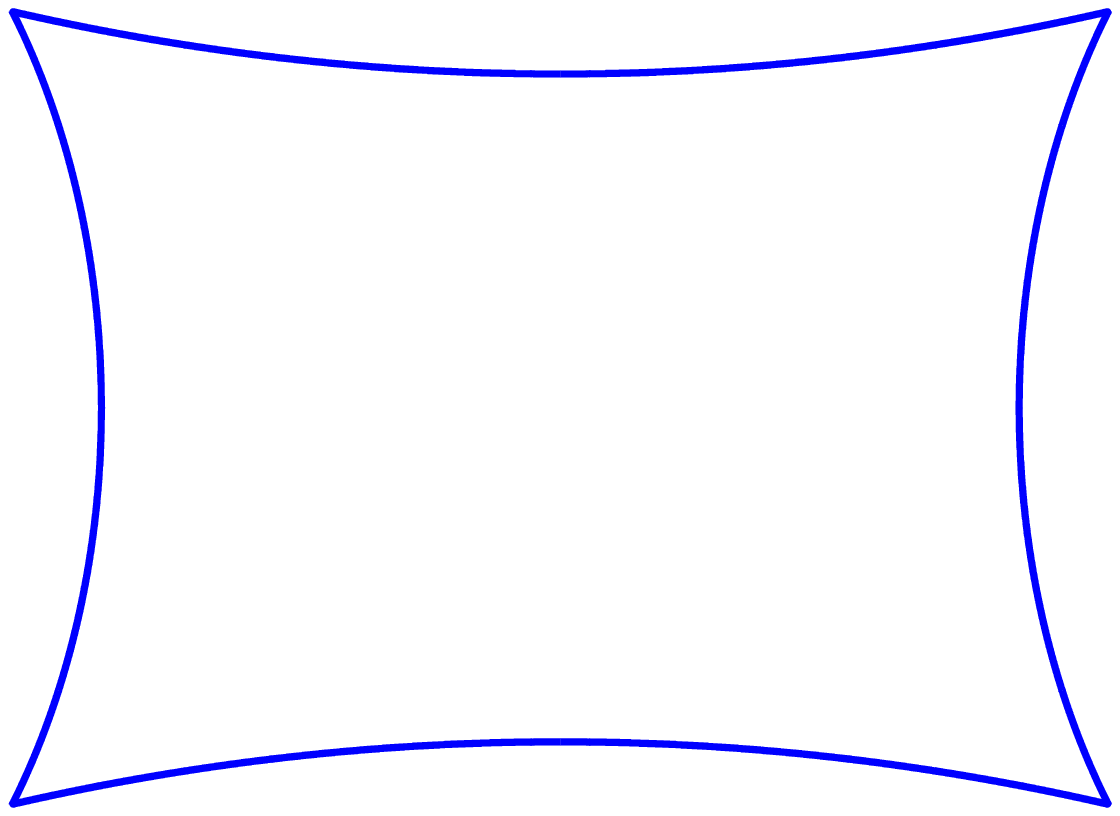}
\includegraphics[width=6cm,height=4.2cm]{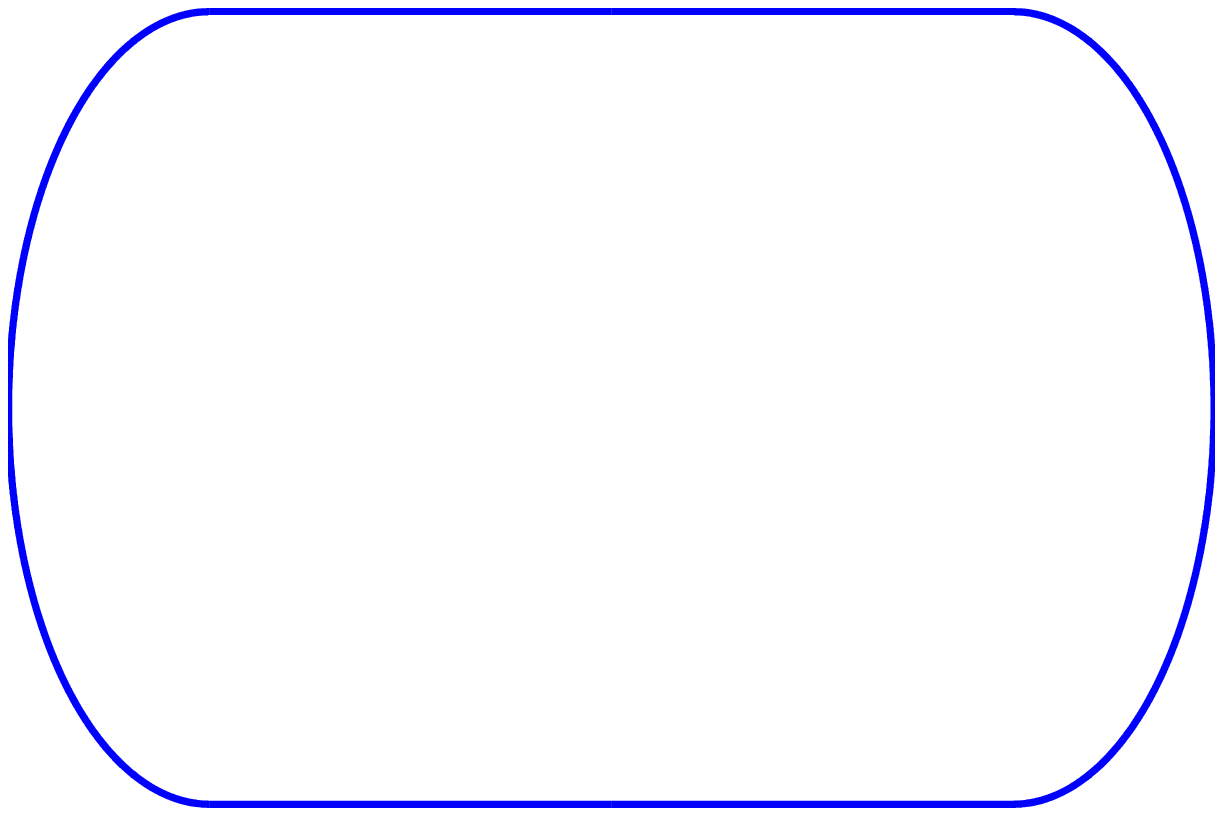}
\includegraphics[width=6cm,height=4.2cm]{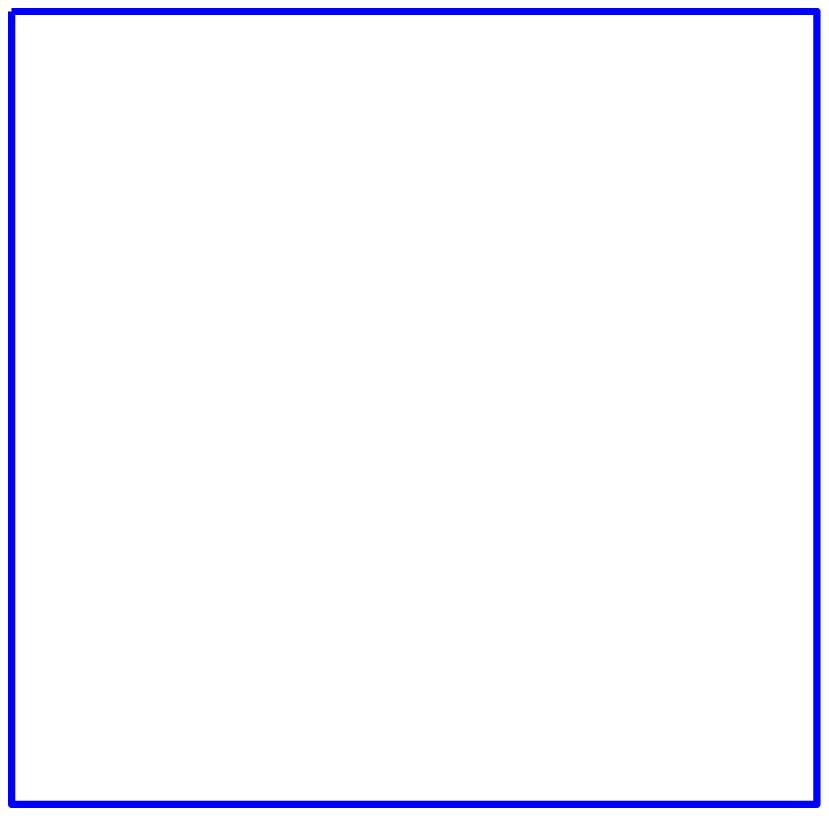}
\includegraphics[width=6cm,height=4.2cm]{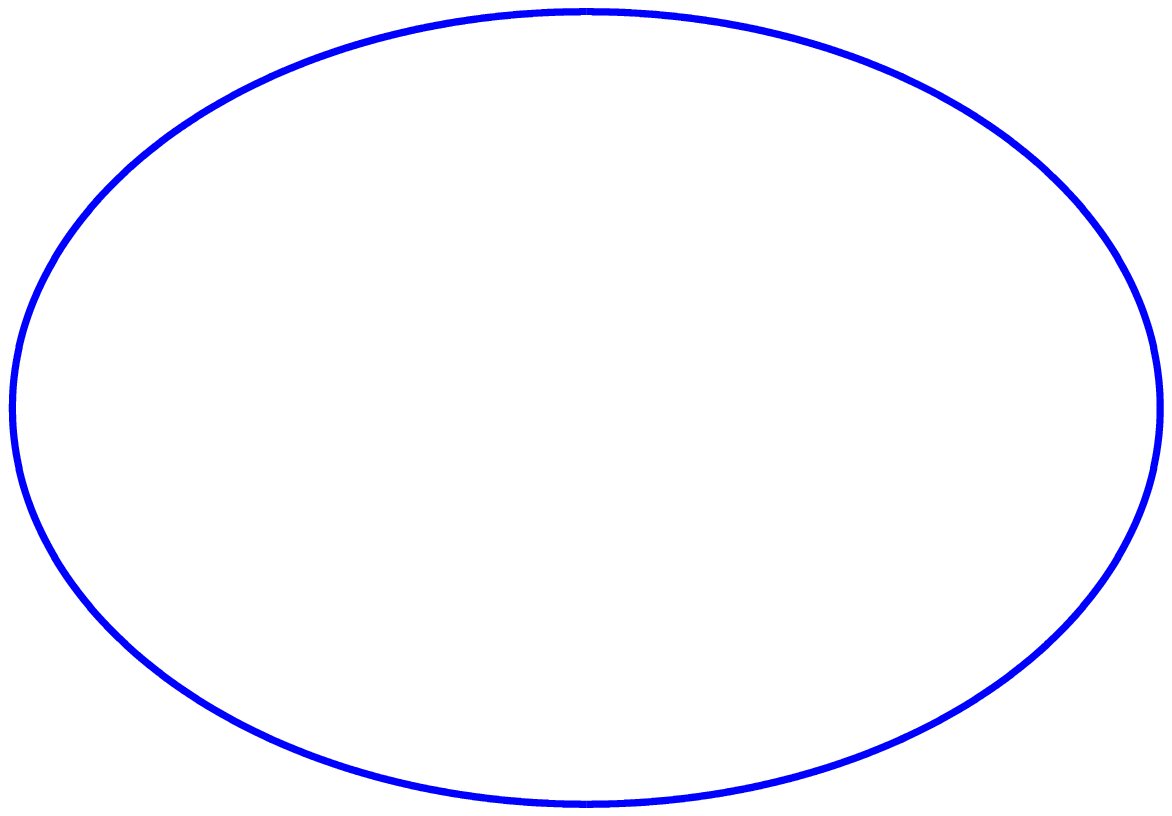}
\caption{Top left: a Barnett billiard, conjectured in \cite{barnett} to satisfy QUE. Top right: a Bunimovich stadium satisfying QE but not QUE for almost every value of the straight edge $t$. Bottom left and right: two shapes with piecewise regular boundary that are not ergodic and thus not quantum ergodic.}\label{ergoShapes}
\begin{picture}(1,1)
\put(90,280){\vector(1,0){48}}
\put(90,280){\vector(-1,0){48}}
\put(90,285){$t$}
\end{picture}
\end{center}
\end{figure}

We recall that what is called the billiard in $\Omega$ is the dynamical system posed on the unit cotangent bundle of $\overline{\Omega}$, representing (almost all) trajectories in $\Omega$ along geodesics with unit speed and reflecting on the boundary of $\Omega$ according to the usual reflection rules.
The ergodicity of $\Omega$ is however just a necessary assumption for QUE to hold. Note that strictly convex billiards whose boundary is $C^6$ are not ergodic in the phase space (see \cite{Lazutkin1973}), and there are sequences of positive density of eigenfunctions which concentrate on caustics.
It has been shown in \cite{KerckhoffMasurSmillie1986} that rational polygonal billiards are not ergodic in the phase space, while polygonal billiards are generically ergodic. In \cite{MarklofRudcnick} the authors prove that quantum ergodicity on the base holds in any rational polygon\footnote{A rational polygon is a planar polygon whose interior is connected and simply connected and whose vertex angles are rational multiples of $\pi$.}.
It has been proved recently in \cite{hassell} that there exist some convex sets satisfying QE but not QUE (independently on the basis of eigenfunctions under consideration). More precisely, in this reference the author studies the particular case of a stadium $S_t$ with straight edge $t$ (see Figure \ref{ergoShapes}, top right). He shows that for almost every $t$ the stadium is not quantum unique ergodic although it is quantum ergodic. He exhibits some particular quantum limits giving a positive mass on the set of bouncing ball trajectories.
Up to now obtaining sufficient conditions on the domain such that QUE holds is a widely open difficult question. It was conjectured in \cite{RudnickSarnak1994} that every compact negatively curved manifold satisfies QUE (see \cite{Sarnak} for a recent survey).
A numerical method has been developed in \cite{barnett} in order to compute the $700 000$ first modes for a (planar) domain analogue of variable negative curvature, and the results confirm numerically the QUE conjecture for such general systems (see Figure \ref{ergoShapes}, top left). 
Note that the QUE property has been proved to hold on arithmetic manifolds in \cite{Lindenstrauss}.
Finally, note that, using a concept of entropy, the authors of \cite{Anantharaman,AnantharamanNonnenmacher} show that on a compact manifold of negative curvature the eigenfunctions cannot concentrate entirely on closed geodesics and at least half of their energy remains chaotic (see also \cite{BurqZworski} for related issues). Up to now, except the case of arithmetic manifolds, there does not exist any example of multi-dimensional domain in which QUE holds, and this is still currently one of the deepest issues in mathematical physics.

\begin{figure}[h]
\begin{center}
\includegraphics[width=4cm,angle=-90]{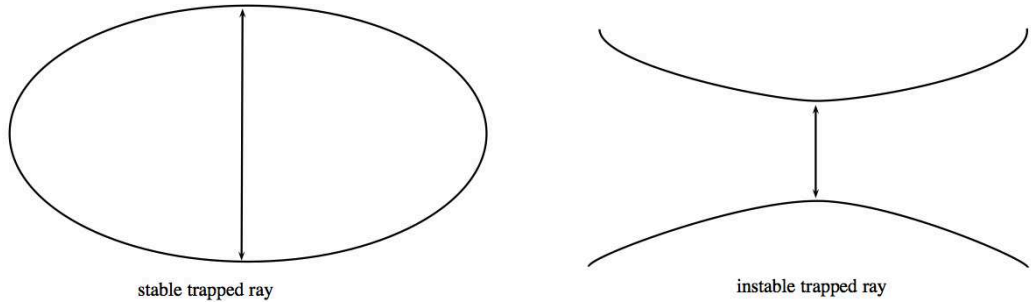}
\caption{Stable and instable trapped rays}\label{frays}
\end{center}
\end{figure}

Finally, having Figure \ref{frays}  in mind , it is not surprising that the stable trapped ray of the left-side figure causes a concentration of eigenfunctions, due to the convexity of the domain $\Omega$. On the right-side figure, we have shaped a neighorhood of a domain $\Omega$ in which negative curvature is suggested by the hyperbolic boundary, and there is a unique trapped ray, which is instable. Due to this instability feature, it might be expected that the energies of eigenfunctions spread away and that QUE holds true. This intuition is however not true.
In \cite{CdVParisse}, the authors build a compact surface of $\R^3$ endowed with a metric of negative curvature, by truncating (for instance) an hyperboloid symmetrically with respect to its center, and considering Dirichlet boundary conditions on both truncated sides. Then, they show the existence of sequences of eigenfunctions of the Dirichlet-Laplacian that concentrate around the equator that is a closed instable geodesic.
This surprising example shows that the QUE property, as well as the QUE conjecture, is definitely a global one, and cannot be inferred from local considerations.

It can however be noticed that, as a consequence of \cite{Anantharaman}, the arc-length measure along a closed geodesic on a negatively curved manifold cannot be a quantum limit (see also \cite{Sarnak}).
\end{remark}

\begin{remark}\label{rem_motivate}
The results Theorems \ref{thmnogap} and \ref{thmnogap2} are similar but distinct. The QUE property assumed in Theorem \ref{thmnogap2} is a very strong one and as said above up to now examples of domains in dimension more than one satisfying QUE are not known.
The proofs of these results, provided in Sections \ref{secproofthmnogap} and \ref{secproofthmnogap2}, are of a completely different nature.
In particular, our proof of Theorem \ref{thmnogap} is short but does not permit to get an insight on the possible theoretical construction of a maximizing sequence of subsets. 
In contrast, our proof of Theorem \ref{thmnogap2} is constructive and provides a theoretical way of building a maximizing sequence of subsets, by implementing a kind of homogenization procedure. Moreover, this proof highlights the following interesting feature:
\begin{quote}
\textit{It is possible to increase the values of $J$ by considering subsets having an increasing number of connected components.}
\end{quote}
Note that another way of building maximizing sequences is provided in Section \ref{sectrunc2}, by considering an appropriate spectral approximation of the problem, suitable for numerical simulations.
\end{remark}

\begin{remark}\label{rem_existencegap}
The question of knowing whether there exists an example where there is a gap between the convexified problem \eqref{quantity2obsAsympconvexified} and the original one \eqref{reducedsecondpb}, is an open problem.
We think that, if such an example exists, then the underlying geodesic flow ought to be completely integrable and have strong concentration properties. 

Note that, according to \cite{JakobsonZelditch}, the set of quantum limits on the unit sphere $S^2$ of $\R^3$ is equal to the whole convex set of invariant probability measures for the geodesic flow that are time-reversal invariant (that is, invariant under the anti-symplectic involution $(x,\xi) \mapsto (x,-\xi)$ on $T^*M$). In particular, the Dirac measure along any great circle $\gamma$ on $S^2$ (defined as an equator, up to a rotation) is the projection of a semi-classical measure. However, as already mentioned in our framework we have fixed a given basis $(\phi_j)_{j\in\N^*}$ of eigenvectors, and we consider only the weak limits of the measures $\phi_j(x)^2 dV_g$, whereas the result of \cite{JakobsonZelditch} holds when one considers the limits over all possible bases. With a fixed given basis, we are not aware of any example having concentration properties strong enough to derive a gap statement. We refer to Section \ref{sec6.4} and in particular to Proposition \ref{remgapintrinsic} for an example of a gap for an intrinsic variant of the second problem.

Note that the usual basis on $S^2$, consisting of spherical harmonics, does not satisfy the QE property.
But, due to the high multiplicity of eigenvalues, there is an infinite dimensional manifold of orthonormal bases of eigenfunctions. Indeed, using the spherical harmonics, the space of orthonormal bases is identified with the infinite product $\prod_{k=0}^{+\infty} U(2k+1)$ of unitary groups, and thus inherits of the corresponding probability Haar measure.
It is then proved in \cite{Zel2} that, on the standard sphere, almost every orthonormal basis of eigenfunctions of the Laplacian satisfies the QE property.
\end{remark}

\begin{remark}
Our results here show that shape optimization problems are intimately related with the ergodicity properties of $\Omega$. Notice that, in the early article \cite{CFNS_1991}, the authors suggested such connections. They analyzed the exponential decay of solutions of damped wave equations. 
Their results reflected that the quantum effects of bouncing balls or whispering galleries play an important role in the success of failure of the exponential decay property. At the end of the article, the authors conjectured that such considerations could be useful in the placement and design of actuators or sensors. Our results of this section provide precise results showing these connections and new perspectives on those intuitions. In our view they are the main contribution of our article, in the sense that they have pointed out the close relations existing between shape optimization and ergodicity, and provide new open problems and directions to domain optimization analysis.
\end{remark}

%%%%%%%%%%%%%%%%%%%%%%%%%%%%%%%%%%%%%%%%%%%%%%%%%%

\subsection{On the existence of an optimal set}\label{sec4.3}
In this section we comment on the problem of knowing whether the supremum in \eqref{eqNoGap} is reached or not, in the framework of Theorem \ref{thmnogap}. This problem remains essentially open except in several particular cases.

For the one-dimensional case already mentioned in Remarks \ref{remTmult2pi}, \ref{rempossiblegap} and \ref{rem8}, we have the following result.

\begin{lemma}\label{lemprelim}
Assume that $\Omega=[0,\pi]$.
Let $L\in (0,1)$. The supremum of $J$ over $\mathcal{U}_L$ (which is equal to $L$) is reached if and only if $L=1/2$. In that case, it is reached for all measurable subsets $\omega\subset[0,\pi]$ of measure $\pi/2$ such that $\omega$ and its symmetric image $\omega'=\pi-\omega$ are disjoint and complementary in $[0,\pi]$.
\end{lemma}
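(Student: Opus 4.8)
The plan is to reduce the statement to a clean analytic characterization of the maximizers and then settle the two implications separately. Since $\phi_j(x)^2=\frac{2}{\pi}\sin^2(jx)=\frac{1}{\pi}\bigl(1-\cos(2jx)\bigr)$, writing $I_j=\int_\omega\cos(2jx)\,dx$ for a set $\omega$ of measure $L\pi$ gives
\[
\int_\omega\phi_j^2\,dx=L-\frac{I_j}{\pi},\qquad\text{hence}\qquad J(\chi_\omega)=L-\frac{1}{\pi}\sup_{j\in\N^*}I_j .
\]
By the Riemann--Lebesgue lemma $I_j\to0$, so $\sup_j I_j\geq\limsup_j I_j=0$, which reproves $J(\chi_\omega)\leq L$ (consistent with Theorem~\ref{thmnogap}). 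Consequently the supremum $L$ is attained at $\omega$ if and only if $\sup_j I_j=0$, that is, if and only if $I_j\leq0$ for every $j\in\N^*$. The whole issue is to decide for which $L$ this system of inequalities admits a solution.

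For the sufficiency ($L=\tfrac12$) I would exploit the reflection $\sigma:x\mapsto\pi-x$. Since $\cos\bigl(2j(\pi-x)\bigr)=\cos(2jx)$ and $\int_0^\pi\cos(2jx)\,dx=0$, and since disjointness and complementarity of $\omega$ and $\omega'=\pi-\omega$ mean $\chi_\omega(\pi-x)=1-\chi_\omega(x)$ a.e., the change of variables $x\mapsto\pi-x$ yields $I_j=\int_0^\pi\bigl(1-\chi_\omega(\pi-x)\bigr)\cos(2jx)\,dx=-I_j$, so $I_j=0\leq0$ for all $j$. Thus every such $\omega$ meets the characterization and attains $J(\chi_\omega)=\tfrac12$.

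The delicate part is the necessity: solvability of $I_j\leq0$ for all $j$ must force $L=\tfrac12$ together with the complementary structure. Here I would symmetrize. Set $\omega'=\pi-\omega$, observe $\int_{\omega'}\cos(2jx)\,dx=I_j$, and introduce $p=\chi_\omega+\chi_{\omega'}\in\{0,1,2\}$, which is symmetric about $\pi/2$ and satisfies $\int_0^\pi p\,\cos(2jx)\,dx=2I_j\leq0$. Passing to the circle via $t=2x$ turns $V=1-p\in\{-1,0,1\}$ into a real, $2\pi$-periodic function symmetric under $t\mapsto 2\pi-t$; the symmetrization is exactly what makes its Fourier coefficients \emph{real}, and one computes $\hat V(0)=1-2L$ and $\hat V(j)\geq0$ for all $j\neq0$, together with $V\leq1$.

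The crux is then a rigidity lemma: an essentially bounded function with $V\leq1$ and $\hat V(j)\geq0$ off the origin must be constant. I would prove it by Abel (Poisson) summation: for $0<r<1$, using that the Poisson kernel satisfies $P_r\geq0$ and $\frac{1}{2\pi}\int_0^{2\pi}P_r=1$, and that $V$ is even,
\[
\sum_{j\in\Z}\hat V(j)\,r^{|j|}=\frac{1}{2\pi}\int_0^{2\pi}P_r(s)\,V(s)\,ds\leq\frac{1}{2\pi}\int_0^{2\pi}P_r(s)\,ds=1 .
\]
All terms with $j\neq0$ being nonnegative, letting $r\uparrow1$ and invoking monotone convergence gives $\sum_{j\neq0}\hat V(j)\leq1-\hat V(0)<+\infty$. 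Hence $V$ has an absolutely convergent Fourier series, so it has a continuous representative; being $\{-1,0,1\}$-valued on the connected circle it is constant, equal to $\hat V(0)=1-2L$. Since $1-2L\in\{-1,0,1\}$ forces $L\in\{0,\tfrac12,1\}$, the only admissible value in $(0,1)$ is $L=\tfrac12$, with $V\equiv0$, i.e. $\chi_\omega(x)+\chi_\omega(\pi-x)=1$ a.e.; this is precisely the disjoint-and-complementary condition. The main obstacle is exactly this passage from a sign condition on the Fourier coefficients to constancy, and the point of the argument is that the boundedness $V\leq1$ bounds the Abel means directly, so one obtains absolute convergence of the series without any appeal to Lebesgue-point convergence at a distinguished point.
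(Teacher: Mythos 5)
Your proof is correct, and it follows the same skeleton as the paper's: reduce attainment of the value $L$ to the system $\int_\omega\cos(2jx)\,dx\leq0$, symmetrize via $\omega'=\pi-\omega$, and exploit a nonnegative summability kernel whose Fourier transform is nonnegative. Where you genuinely diverge is in how the rigidity is extracted at the end. The paper argues by contradiction: if $L\neq1/2$ then $g=L-\frac{1}{2}(\chi_\omega+\chi_{\omega'})$ is discontinuous, hence (implicitly, because an absolutely convergent one-signed cosine series would yield a continuous representative) $\sum_j a_j=-\infty$, and pairing $g$ against the triangle approximate identity $\Delta_n$, whose Fourier transform is nonnegative, bounds $\sum_k a_k\widehat{\Delta}_n(k)$ by $\sup|g|$ and produces the contradiction. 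You run the argument forwards instead: the Abel means of $V=1-\chi_\omega-\chi_{\omega'}$ are bounded by $1$ because the Poisson kernel is a nonnegative unit-mass kernel and $V\leq1$, monotone convergence in $r\uparrow1$ then gives $\sum_{j\neq0}\hat V(j)<+\infty$ directly, and absolute convergence forces a continuous, hence constant, representative of a $\{-1,0,1\}$-valued function on the connected circle, which pins down $L=1/2$ and $\chi_\omega+\chi_{\omega'}=1$ a.e.\ simultaneously. This buys you two things: you never need the paper's asserted-but-unproved step that discontinuity forces divergence of the coefficient sum, and you isolate a clean reusable rigidity lemma (essentially bounded above with nonnegative spectrum off the origin implies constant). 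You also make the sufficiency direction explicit via the reflection identity $I_j=-I_j$, which the paper leaves implicit. All the individual computations (the realness and nonnegativity of the transplanted Fourier coefficients coming from the symmetry of $V$ about $\pi/2$, the value $\hat V(0)=1-2L$, and the intermediate-value argument on the circle) check out.
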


\begin{proof}
Although the proof of that result can be found in \cite{henrot_hebrardSCL} and in \cite{PTZObs1}, we recall it here  shortly since similar arguments will be used  in the proof of the forthcoming Lemma \ref{lem4}.

A subset $\omega\subset[0,\pi]$ of Lebesgue measure $L\pi$ is solution of \eqref{eqNoGap} if and only if $ \int_{\omega}\sin^2(jx)\, dx \geq L\pi/2$ for every $j\in\N^*$, that is,
$ \int_{\omega}\cos(2jx)\, dx\leq 0$. Therefore the Fourier series expansion of $\chi_\omega$ on $[0,\pi]$ must be of the form
$$
L+\sum_{j=1}^{+\infty}(a_j\cos (2jx)+b_j\sin (2jx)),
$$
with coefficients $a_j\leq 0$. Let $\omega'=\pi-\omega$ be the symmetric set of $\omega$ with respect to $\pi /2$. The Fourier series expansion of $\chi_{\omega'}$ is
$$
L+\sum_{j=1}^{+\infty}(a_j\cos (2jx)-b_j\sin (2jx)).
$$
Set $g(x)=L-\frac{1}{2}(\chi_\omega(x)+\chi_{\omega'}(x))$, for almost every $x\in[0,\pi]$. The Fourier series expansion of $g$ is $-\sum_{j=1}^{+\infty}a_j\cos (2jx)$, with $a_j\leq 0$ for every $j\in\N^*$. Assume that $L\neq 1/2$. Then the sets $\omega$ and $\omega'$ are not disjoint and complementary, and hence $g$ is discontinuous. It then follows that $\sum_{j=1}^\infty a_j=-\infty$. Besides, the sum $\sum_{j=1}^\infty a_j$ is also the limit of $\sum_{k=1}^{+\infty}a_k\widehat{\Delta}_n(k)$ as $n\to +\infty$, where $\widehat{\Delta}_n$ is the Fourier transform of the positive function $\Delta_n$ whose graph is the triangle joining the points $(-\frac{1}{n},0)$, $(0,2n)$ and $(\frac{1}{n},0)$ (note that $\Delta_n$ is an approximation of the Dirac measure, with integral equal to $1$). This raises a contradiction with the fact that
$$
\int_0^\pi g(t)\Delta_n(t)dt=\sum_{k=1}^{+\infty}a_k\widehat{\Delta}_n(k),
$$
derived from Plancherel's Theorem.
\end{proof}

For the two-dimensional square $\Omega=[0,\pi]^2$ studied in Proposition \ref{propnogapnoQUE} we are not able to provide a complete answer to the question of the existence. We are however able to characterize the existence of optimal sets that are a Cartesian product.

\begin{lemma}\label{lem4}
Assume that $\Omega=[0,\pi]^2$. Let $L\in (0,1)$. The supremum of $J$ over the class of all possible subsets $\omega=\omega_1\times\omega_2$ of Lebesgue measure $L\pi^2$, where $\omega_1$ and $\omega_2$ are measurable subsets of $[0,\pi]$, is reached if and only if $L\in \{1/4,1/2,3/4\}$. In that case, it is reached for all such sets $\omega$ satisfying
$$
\frac{1}{4}(\chi_{\omega}(x,y)+\chi_{\omega}(\pi-x,y)+\chi_{\omega}(x,\pi-y)+\chi_{\omega}(\pi-x,\pi-y)) = L,
$$
for almost all $(x,y)\in[0,\pi^2]$.
\end{lemma}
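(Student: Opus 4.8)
The plan is to exploit the tensor structure of the Dirichlet eigenfunctions on $\Omega=[0,\pi]^2$, namely $\phi_{m,n}(x,y)=\frac{2}{\pi}\sin(mx)\sin(ny)$, in order to reduce the problem to the one-dimensional Lemma \ref{lemprelim}. For a product set $\omega=\omega_1\times\omega_2$ the integrals split,
$$\int_\omega\phi_{m,n}(x,y)^2\,dx\,dy=u_m\,v_n,\qquad u_m=\frac{2}{\pi}\int_{\omega_1}\sin^2(mx)\,dx,\quad v_n=\frac{2}{\pi}\int_{\omega_2}\sin^2(ny)\,dy,$$
and the factors $u_m,v_n$ are nonnegative. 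The key first step is to observe that the infimum over the product index set then factorizes, $\inf_{m,n}u_mv_n=(\inf_m u_m)(\inf_n v_n)$, which yields the product formula $J(\chi_{\omega_1\times\omega_2})=J_1(\chi_{\omega_1})\,J_1(\chi_{\omega_2})$, where $J_1(\chi_{\omega_i})=\inf_k\frac{2}{\pi}\int_{\omega_i}\sin^2(kx)\,dx$ is the one-dimensional functional on $[0,\pi]$. This identity drives the whole argument.

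Next I would settle the value of the supremum. By the Riemann--Lebesgue lemma $u_m\to L_1:=|\omega_1|/\pi$ and $v_n\to L_2:=|\omega_2|/\pi$, so $\inf_m u_m\le L_1$ and $\inf_n v_n\le L_2$, whence $J(\chi_\omega)\le L_1L_2=L$, consistently with the general bound $J\le L$. Choosing one-dimensional sets with $J_1(\chi_{\omega_i})$ arbitrarily close to $L_i$ (possible since the one-dimensional supremum equals $L_i$) shows the supremum over product sets is exactly $L$. Hence the lemma is really a statement about attainment.

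The heart of the proof is the attainment analysis. Since $0\le J_1(\chi_{\omega_i})\le L_i$, the equality $J_1(\chi_{\omega_1})J_1(\chi_{\omega_2})=L_1L_2$ forces, by positivity, both $J_1(\chi_{\omega_1})=L_1$ and $J_1(\chi_{\omega_2})=L_2$. By Lemma \ref{lemprelim}, supplemented by the trivial boundary case $\omega_i=[0,\pi]$ which it does not cover, the one-dimensional optimum $J_1(\chi_{\omega_i})=L_i$ is attained if and only if $L_i\in\{1/2,1\}$, and then exactly when the symmetrization $\frac12(\chi_{\omega_i}(x)+\chi_{\omega_i}(\pi-x))$ equals the constant $L_i$ almost everywhere. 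The admissible volume fractions are therefore the products $L=L_1L_2$ with $L_1,L_2\in\{1/2,1\}$, i.e. $4L\in\{1,2,4\}$, so that within $(0,1)$ the attainable values are $L\in\{1/4,1/2\}$. To recover the displayed characterization I would then set $p(x)=\chi_{\omega_1}(x)+\chi_{\omega_1}(\pi-x)$ and $q(y)=\chi_{\omega_2}(y)+\chi_{\omega_2}(\pi-y)$, observe that the four-term symmetrized average of $\chi_\omega$ equals $\frac14\,p(x)q(y)$, and note that this is a.e. equal to $L$ precisely when $p$ and $q$ are a.e. constant, which is exactly the conjunction of the two one-dimensional symmetrization conditions. (One might anticipate a further value $L=3/4$ from the complement symmetry $L\leftrightarrow 1-L$, but this does not occur within the product class, since the complement of a product set need not be a product.)

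The main obstacle I anticipate is concentrated in this attainment step. The factorization $\inf_{m,n}u_mv_n=(\inf_mu_m)(\inf_nv_n)$ must be justified with care, as it relies crucially on the nonnegativity of $u_m,v_n$ and would fail without it. More delicate is the ``only if'' direction, which rests entirely on the sharp one-dimensional rigidity of Lemma \ref{lemprelim}: the fact that $J_1(\chi_{\omega_i})=L_i$ is impossible when $L_i\notin\{1/2,1\}$, established there through the Plancherel/Fourier-triangle argument showing that a discontinuous symmetrized profile forces $\sum_j a_j=-\infty$. One must also take care to handle correctly the endpoint $L_i=1$, which lies outside the hypotheses of that lemma but is immediate since all $u_m$ then equal $1$.
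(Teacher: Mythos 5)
Your argument is correct, and it takes a genuinely different route from the paper's. The paper works with the marginals $\ell_x=\int_0^\pi\chi_\omega(x,y)\,dy$ and $\ell_y=\int_0^\pi\chi_\omega(x,y)\,dx$: it lets $j$ or $k$ tend to $+\infty$ in the optimality inequalities to obtain one-dimensional sign conditions on the marginals, applies the Plancherel/triangle argument of Lemma \ref{lemprelim} to force $\ell_x+\ell_{\pi-x}$ and $\ell_y+\ell_{\pi-y}$ to be constant, deduces $\int_\omega\cos(2jx)\cos(2ky)\,dx\,dy\geq 0$ for all $(j,k)$, and then applies the same Plancherel argument a second time to the four-fold symmetrization $F$ to force it to be constant. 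You instead use the multiplicative identity $J(\chi_{\omega_1\times\omega_2})=J_1(\chi_{\omega_1})J_1(\chi_{\omega_2})$ (your justification via nonnegativity of the factors is the right one) and reduce directly to two independent applications of the one-dimensional rigidity. Both proofs rest on the same essential ingredient, namely Lemma \ref{lemprelim}; yours applies it to $\chi_{\omega_1}$ and $\chi_{\omega_2}$ separately, which is shorter and makes the structure of the optimal sets completely transparent.

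There is, however, a substantive discrepancy that you correctly flag in your parenthetical and that deserves emphasis: your proof yields attainment exactly for $L\in\{1/4,1/2\}$, whereas the lemma as stated (and the paper's proof) asserts $L\in\{1/4,1/2,3/4\}$. Your computation appears to be the right one. Attainment forces $J_1(\chi_{\omega_i})=L_i$ for both factors, hence $\chi_{\omega_i}(x)+\chi_{\omega_i}(\pi-x)=2L_i$ a.e., hence $L_i\in\{1/2,1\}$ and $L=L_1L_2\in\{1/4,1/2\}$; equivalently, for a product set one has $F=\frac14 p(x)q(y)$ with $p,q$ valued in $\{0,1,2\}$, so $F$ can only take the values $0,1/4,1/2,1$ and never $3/4$. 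The paper's proof establishes only the necessary condition $F\equiv L$ and then reads off the possible values of $F$ for a \emph{general} measurable subset of the square, overlooking that $3/4$ is not realizable when $\omega$ is a Cartesian product; the ``if'' direction for $L=3/4$ is never verified (and fails, since no product set satisfies $F\equiv 3/4$). The value $3/4$ does reappear as soon as the class is enlarged slightly: for instance the complement of a product $\omega_1\times\omega_2$ with $\vert\omega_i\vert=\pi/2$ and $\omega_i$ symmetric in the sense of Lemma \ref{lemprelim} satisfies $\int_\omega\phi_{jk}^2=1-\frac14=\frac34$ for all $(j,k)$, hence attains $J=L=3/4$, but it is not itself a product. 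So your proof does not establish the lemma verbatim, but that is because the statement is too generous on this point; within the product class your version is the correct one.
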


\begin{proof}
A subset $\omega\subset[0,\pi]^2$ of Lebesgue measure $L\pi^2$ is solution of \eqref{eqNoGap} if and only if
$$ \frac{4}{\pi^2}\int_{\omega}\sin^2(jx)\sin^2(ky)\, dx\, dy \geq L, $$
for all $(j,k)\in (\N^*)^2$, that is,
\begin{equation}\label{eqSquare}
\int_{\omega}\cos(2jx)\cos(2ky)\, dx\, dy \geq \int_{\omega}\cos(2jx)\, dx\, dy+\int_{\omega}\cos(2ky)\, dx\, dy.
\end{equation}
Set $\ell_{x}= \int_0^\pi \chi_{\omega}(x,y)\, dy$ for almost every $x\in [0,\pi]$, and $\ell_{y}= \int_0^\pi \chi_{\omega}(x,y)\, dx$ for almost every $y\in [0,\pi]$.
Letting either $j$ or $k$ tend to $+\infty$ and using Fubini's theorem in \eqref{eqSquare} leads to
$$
 \int_0^\pi \ell_x\cos(2jx)\, dx\leq 0\textrm{ and }\int_0^\pi \ell_y\cos(2ky)\, dy\leq 0,
$$
for every $j\in\N^*$ and every $k\in\N^*$.

Now, if $\omega=\omega_1\times\omega_2$, where $\omega_1$ and $\omega_2$ are measurable subsets of $[0,\pi]$, then the functions $x\mapsto\ell_x$ and $y\mapsto\ell_y$ must be discontinuous. Using similar arguments as in the proof of Lemma \ref{lemprelim}, it follows that the functions $x\mapsto\ell_x+\ell_{\pi-x}$ and $y\mapsto\ell_y+\ell_{\pi-y}$ must be constant on $[0,\pi]$, and hence,
$$
\int_0^\pi \ell_x\cos(2jx)\, dx= 0\textrm{ and }\int_0^\pi \ell_y\cos(2ky)\, dy= 0,
$$
for every $j\in\N^*$ and every $k\in\N^*$. Using \eqref{eqSquare}, it follows that
$$
\int_{\omega}\cos(2jx)\cos(2ky)\, dx\, dy \geq 0,
$$
for all $(j,k)\in(\N^*)^2$.
The function $F$ defined by
$$
F(x,y)=\frac{1}{4}(\chi_{\omega}(x,y)+\chi_{\omega}(\pi-x,y)+\chi_{\omega}(x,\pi-y)+\chi_{\omega}(\pi-x,\pi-y)),
$$
for almost all $(x,y)\in[0,\pi]^2$, can only take the values $0$, $1/4$, $1/2$, $3/4$ and $1$, and its Fourier series is of the form
$$
L+\frac{4}{\pi^2}\sum_{j,k=1}^{+\infty}\left(\int_\omega \cos(2ju)\cos(2kv)\, du\, dv\right)\cos (2jx)\cos (2ky),
$$
and all Fourier coefficients are nonnegative. 
Using once again similar arguments as in the proof of Lemma \ref{lemprelim} (Fourier transform and Plancherel's Theorem), it follows that $F$ must necessarily be continuous on $[0,\pi]^2$ and thus constant. The conclusion follows.
\end{proof}

\begin{remark}
All results of this section can obviously be generalized to multi-dimensional domains $\Omega$ written as $N$ cartesian products of one-dimensional sets. 
\end{remark}

\begin{remark}\label{rem21}
According to Lemma \ref{lem4}, if $L=1/2$ then there exists an infinite number of optimal sets.
Four of them are drawn on Figure \ref{figcarre}.
It is interesting to note that the optimal sets drawn on the left-side of the figure do not satisfy the Geometric Control Condition mentioned in Section \ref{secobservabilityintro}, and that in this configuration the (classical, deterministic) observability constants $C_T^{(W)}(\chi_\omega)$ and $C_T^{(S)}(\chi_\omega)$ are equal to $0$, whereas, according to the previous results, there holds
$$
2\,C_{T,\textrm{rand}}^{(W)}(\chi_\omega) = C_{T,\textrm{rand}}^{(S)}(\chi_\omega)
=  T L.
$$
This fact is in accordance with Remarks \ref{remBZ} and \ref{remBZ2}. 
\begin{figure}[h]
\begin{center}
\includegraphics[width=11cm]{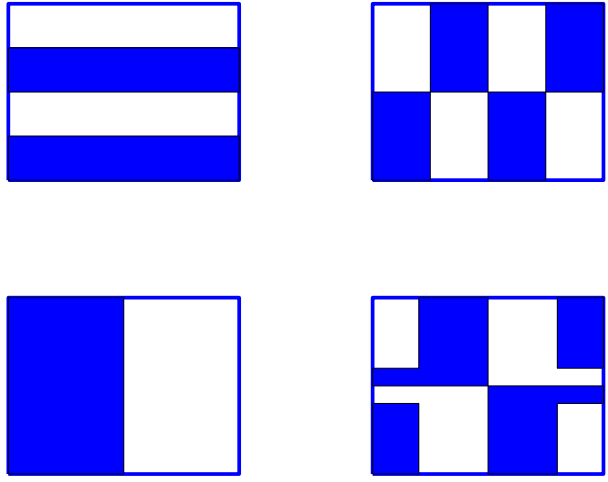}
\caption{In the case $\Omega=[0,\pi]^{2}$, $L=1/2$, representation of four domains (in blue) maximizing $J$ in $\mathcal{U}_L$.}\label{figcarre}
\end{center}
\end{figure}
\end{remark}

\begin{remark}
Similar considerations hold for the two-dimensional unit disk. Actually it easily follows from Lemma \ref{lemprelim} and from the proof of Proposition \ref{propnogapnoQUE} that, for $L=1/2$, the supremum of $J$ over $\mathcal{U}_L$ is reached for every subset $\omega$ of the form
$$\omega = \{ (r,\theta)\in[0,1]\times[0,2\pi] \ \vert\ \theta\in\omega_\theta \},$$
where $\omega_\theta$ is any subset of $[0,2\pi]$ such that $\omega_\theta$ and its symmetric image $\omega'_\theta=2\pi-\omega_\theta$ are disjoint and complementary in $[0,2\pi]$. But we do not know whether or not there are other maximizing subsets.
\end{remark}

\begin{remark}
In view of the results above one could expect that when $\Omega$ is the unit $N$-dimensional hypercube, there exists a finite number of values of $L\in (0,1)$ such that the supremum in \eqref{eqNoGap} is reached. The same result can probably be expected for generic domains $\Omega$. But these issues are open.
\end{remark}

%%%%%%%%%%%%%%%%%%%%%%%%%%%%%%%%%%%%%%%%%%%%%%%%%%

\subsection{Proof of Theorem \ref{thmnogap}}\label{secproofthmnogap}
Since we already have the inequality
$$
\sup_{\chi_\omega\in \mathcal{U}_L} \inf_{j\in \N^*}\int_{\Omega}\chi_\omega(x) \phi_j(x)^2\, dV_g\ \leq\ L,
$$
it suffices to prove that, for every $\varepsilon>0$, there exists $\chi_\omega\in\mathcal{U}_L$ such that
$$
\left\vert \int_\omega \phi_j(x)^2\, dV_g - L  \right\vert \leq\varepsilon,
$$
for every $j\in\N^*$.
To prove this fact, we consider the function $f$ defined by $f(x) = (\phi_j(x)^2)_{j\in\N^*}$, for every $x\in\Omega$.
Using the fact that the eigenfunctions are uniformly bounded in $L^\infty(\Omega)$, it is clear that $f(x)\in\ell^\infty$, for every $x\in\Omega$.
Then, clearly, $f\in L^1(\Omega,\ell^\infty)$ (using the Bochner integral), and $\int_\Omega f\, dV_g$ is the constant sequence of $\ell^\infty$ equal to $1$.
For every $\varepsilon>0$, there exists a partition of $\Omega=\cup_{k=1}^n\Omega_k$, with $\Omega_k$ measurable, such that $\int_\Omega\Vert (f-f_n)\Vert_{\ell^\infty} dV_g\leq\varepsilon/(L+1)$, with $f_n=\sum_{k=1}^n\alpha_k\chi_{\Omega_k}$. For every $k\in\{1,\ldots,n\}$, let $\omega_k$ be a measurable subset of $\Omega_k$ such that $V_g(\omega_k)=LV_g(\Omega_k)$. We set $\omega=\cup_{k=1}^n\omega_k$. Note that, by construction, one has $\chi_\omega\in\mathcal{U}_L$, and
$$
\int_\Omega (\chi_\omega - L) f_n\, dV_g = \sum_{k=1}^n\alpha_k \int_\Omega (\chi_\omega - L)\chi_{\Omega_k}\, dV_g = \sum_{k=1}^n\alpha_k ( V_g(\omega_k)-LV_g(\Omega_k))=0.
$$
Therefore, there holds
\begin{equation*}
\begin{split}
\left\Vert \int_\omega f\, dV_g - L \int_\Omega f\, dV_g  \right\Vert_{\ell^\infty} 
&= \left\Vert \int_\Omega (\chi_\omega - L) f\, dV_g  \right\Vert_{\ell^\infty} \\
&\leq \left\Vert \int_\Omega (\chi_\omega - L) f_n\, dV_g  \right\Vert_{\ell^\infty} +\left\Vert \int_\Omega (\chi_\omega - L) (f-f_n)\, dV_g  \right\Vert_{\ell^\infty}  \\
&\leq \varepsilon
\end{split}
\end{equation*}
and the conclusion follows.

\subsection{Proof of Theorem \ref{thmnogap2}}\label{secproofthmnogap2}
In what follows, for every measurable subset $\omega$ of $\Omega$, we set
$$
I_j(\omega) = \int_\omega\phi_j(x)^2\, dV_g ,
$$
for every $j\in\N^*$. By definition, there holds
$$
J(\omega) = \inf_{j\in\N^*}I_j(\omega) .
$$
Note that it follows from QUE and from the Portmanteau theorem (see Remark \ref{remWQE}) that, for every measurable subset $\omega$ of $\Omega$ such that $V_g(\omega)=LV_g(\Omega)$ and $V_g(\partial\omega)=0$, one has $I_j(\omega)\rightarrow L$ as $j\rightarrow+\infty$, and hence $J(\omega)\leq L$. 

Let $\omega_0$ be an open connected subset of $\Omega$ of measure $LV_g(\Omega)$ having a Lipschitz boundary. In the sequel we assume that $J(\omega_0)<L$, otherwise there is nothing to prove. Using QUE, there exists an integer $j_0$ such that
\begin{equation}\label{highfreq}
I_j(\omega_0) \geq L-\frac{1}{4}(L-J(\omega_0)) ,
\end{equation}
for every $j> j_0$. 
Our proof below consists of implementing a kind of homogenization procedure by constructing a sequence of open subsets $\omega_k$ (starting from $\omega_0$) such that $V_g(\omega_k)=LV_g(\omega_k)$ and $\displaystyle \lim_{k\to +\infty} J(\omega_k)= L$.
Denote by $\overline{\omega_0}$ the closure of $\omega_0$, and by $\omega_0^c$ the complement of $\omega_0$ in $\Omega$.
Since $\Omega$  and $\omega_0$ have a Lipschitz boundary, it follows that $\omega_0$ and $\Omega\backslash \omega_0$ satisfy a $\delta$-cone property\footnote{\label{footnoteEpscone}We recall that an open subset $\Omega$ of $\R^n$ verifies a $\delta$-cone property if, for every $x\in\partial\Omega$, there exists a normalized vector $\xi_x$ such that $C(y,\xi_x,\delta)\subset \Omega$ for every $y\in \overline{\Omega}\cap B(x,\delta)$, where $C(y,\xi_x,\delta)=\{z\in\R^n\ \vert\ \langle z-y,\xi\rangle \geq \cos\delta \Vert z-y\Vert\text{ and }0<\Vert z-y\Vert <\delta\}$. For manifolds, the definition is done accordingly in some charts, for $\delta>0$ small enough.}, for some $\delta>0$ (see \cite[Theorem 2.4.7]{henrot-pierre}).
Consider partitions of $\overline{\omega_0}$ and $\omega_0^c$,
\begin{equation}\label{labsubdivisions}
\overline{\omega}_0=\bigcup_{i=1}^KF_i\quad\textrm{and}\quad\omega_0^c=\bigcup_{i=1}^{\tilde{K}} \widetilde F_i,
\end{equation}
to be chosen later. As a consequence of the $\delta$-cone property, there exists $c_\delta>0$ and a choice of partition $(F_i)_{1\leq i \leq K}$ (resp. $(\widetilde F_i)_{1\leq i \leq \tilde{K}}$) such that, for $V_g(F_i)$ small enough,
\begin{equation}\label{regularMesh}
\forall i\in \{1,\cdots,K\} \ \left(\textrm{resp. }\forall i\in \{1,\cdots,\tilde{K}\} \right), \ \frac{\eta_i}{\operatorname{diam} F_i }\geq c_\delta \ \left(\textrm{resp. }\frac{\widetilde \eta_i}{\operatorname{diam}\widetilde F_i}\geq c_\delta \right),
\end{equation}
where $\eta_i$ (resp., $\widetilde\eta_i$) is the inradius\footnote{In other words, the largest radius of Riemannian balls contained in $F_i$.} of $F_i$ (resp., $\widetilde F_i$), and $\operatorname{diam}F_i$ (resp., $\operatorname{diam}\widetilde F_i$) the Riemannian diameter of $F_i$ (resp., of $\widetilde F_i$). 

It is then clear that, for every $i\in\{1,\ldots,K\}$ (resp., for every $i\in\{1,\ldots,\tilde{K}\}$), there exists $\xi_i\in F_i$ (resp., $\tilde{\xi}_i\in\widetilde F_i$) such that
$B(\xi_i,\eta_i/2)\subset F_i\subset B(\xi_i,\eta_i/c_\delta)$ (resp., $B(\tilde{\xi}_i,\widetilde\eta_i/2)\subset \widetilde F_i\subset B(\tilde{\xi}_i,\widetilde\eta_i/c_\delta)$), where the notation $B(\xi,\eta)$ stands for the open Riemannian ball centered at $\xi$ with radius $\eta$. These features characterize a substantial family of sets (also called nicely shrinking sets), as is well known in measure theory. %Moreover it is possible to choose the points $\xi_i$ and $\tilde{\xi}_i$ so that they are Lebesgue points of the functions $\phi_j^2$, for every $j\leq j_0$. 
By continuity, the points $\xi_i$ and $\tilde{\xi}_i$ are Lebesgue points of the functions $\phi_j^2$, for every $j\leq j_0$. This implies that, for every $j\leq j_0$, there holds
$$
\int_{F_i}\phi_j(x)^2\, dV_g = V_g(F_i)\phi_j(\xi_i)^2+\mathrm{o}(V_g(F_i))\quad\textrm{as}\ \eta_i\rightarrow 0,
$$
for every $i\in\{1,\ldots,K\}$, and 
$$
\int_{\widetilde F_i}\phi_j(x)^2\, dV_g = V_g(\widetilde F_i)\phi_j(\xi_i)^2+\mathrm{o}(V_g(\widetilde F_i))\quad\textrm{as}\ \widetilde \eta_i\rightarrow 0,
$$
for every $i\in\{1,\ldots,\tilde{K}\}$. Setting $\eta=\displaystyle \max\left(\max_{1\leq i \leq K}\operatorname{diam}F_i,\max_{1\leq i \leq \tilde{K}}\operatorname{diam}\widetilde F_i\right)$, it follows that
\begin{equation}\label{quadrature}
\begin{split}
I_j(\omega_0)&=\int_{\omega_0}\phi_j(x)^2\, dV_g = \sum_{i=1}^K V_g(F_i)  \phi_j(\xi_i)^2 +\mathrm{o}(\eta^d)\quad\textrm{as}\ \eta\rightarrow 0,\\
I_j(\omega_0^c)&=\int_{\omega_0^c}\phi_j(x)^2\, dV_g= \sum_{i=1}^{\tilde{K}} V_g(\widetilde F_i)  \phi_j(\tilde{\xi}_i)^2+\mathrm{o}(\eta^d)\quad\textrm{as}\ \eta\rightarrow 0,
\end{split}
\end{equation}
for every $j\leq j_0$.
Note that, since $\omega_0^c$ is the complement of $\omega_0$ in $\Omega$, there holds
\begin{equation}\label{egopp}
I_j(\omega_0)+I_j(\omega_0^c)=\int_{\omega_0}\phi_j(x)^2\, dV_g+\int_{\omega_0^c}\phi_j(x)^2\, dV_g = 1,
\end{equation}
for every $j$. Note also that
$$
\sum_{i=1}^K V_g(F_i) =L V_g(\Omega) \quad\textrm{and}\quad\sum_{i=1}^{\tilde{K}} V_g(\widetilde F_i) =(1-L) V_g(\Omega) .
$$
Set $h_i=(1-L)  V_g(F_i) $ and $\ell_i=L V_g(\widetilde F_i) $. %Set also $h=\max (h_1,\cdots,h_K,\ell_1,\cdots,\ell_M)$.
Then, we infer from \eqref{quadrature} and \eqref{egopp} that
\begin{equation}\label{eggrandO}
\begin{split}
(1-L)\, I_j(\omega_0) &= \sum_{i=1}^Kh_i\phi_j(\xi_i)^2 +\mathrm{o}(\eta^d)\quad\textrm{as}\ \eta\rightarrow 0,\\
L\, I_j(\omega_0) &= L-\sum_{i=1}^{\tilde{K}}\ell_i\phi_j(\tilde{\xi}_i)^2 +\mathrm{o}(\eta^d)\quad\textrm{as}\ \eta\rightarrow 0,
\end{split}
\end{equation}
for every $j\leq j_0$. For $\varepsilon>0$ to be chosen later, define the perturbation $\omega^\varepsilon$ of $\omega_0$ by
$$
\omega^\varepsilon =\left(\omega_0\backslash \bigcup_{i=1}^K \overline{B(\xi_i,\varepsilon_i)}\right)\quad \bigcup\quad \bigcup_{i=1}^{\tilde{K}} B(\tilde{\xi}_i,\widetilde \varepsilon_i),
$$
where $\varepsilon_i=\varepsilon h_i^{1/n}/V_g( B(\xi_i,1))^{1/n}$ and $\widetilde \varepsilon_i=\varepsilon \ell_i^{1/n}/V_g( B(\tilde\xi_i,1))^{1/n}$.
Note that it is possible to define such a perturbation, provided that
$$
0<\varepsilon< \min\left(\min_{1\leq i \leq K}\frac{\eta_i V_g( B(\xi_i,1))^{1/n}}{h_i^{1/n}},\min_{1\leq i \leq \tilde{K}}\frac{\widetilde \eta_i V_g( B(\tilde\xi_i,1))^{1/n}}{\ell_i^{1/n}}\right).
$$
It follows from the well known isodiametric inequality\footnote{The isodiametric inequality states that, for every compact $K$ of the Euclidean space $\R^n$, there holds $\vert K\vert\leq \vert B(0,\operatorname{diam}(K)/2)\vert$.} and from a compactness argument that there exists a constant $V_n>0$ (only depending on $\Omega$) such that $V_g(F_i) \leq V_n({\operatorname{diam}F_i})^n$ for every $i\in \{1,\cdots,K\}$, and $V_g( \tilde F_i) \leq V_n({\operatorname{diam}\tilde F_i})^n$ for every $i\in \{1,\cdots,\tilde K\}$, independently on the partitions considered. Again, by compactness of $\Omega$, there exists $v_n>0$ (only depending on $\Omega$) such that $V_g( B(x,1))\geq v_n$ for every $x\in\Omega$.
Set $\varepsilon_0=\min(1,c_\delta v_n/V_n^{1/n})$. Using \eqref{regularMesh}, we get
$$
\frac{\eta_iV_g( B(\xi_i,1))^{1/n}}{h_i^{1/n}}\geq \frac{v_n}{(1-L)^{1/n}V_n^{1/n}}\frac{\eta_i}{\operatorname{diam}F_i}\geq \varepsilon_0 ,
$$
for every $i\in \{1,\cdots,K\}$, and similarly,
$$
\frac{\widetilde\eta_iV_g( B(\tilde\xi_i,1))^{1/n}}{\ell_i^{1/n}}\geq \varepsilon_0 ,
$$
for every $i\in \{1,\cdots,\tilde{K}\}$. It follows that the previous perturbation is well defined for every $\varepsilon\in (0,\varepsilon_0)$.
Note that, by construction, 
\begin{eqnarray*}
 V_g( \omega^\varepsilon )  & = &  V_g(\omega_0) - \sum_{i=1}^K\varepsilon_i^nV_g( B(\xi_i,1))+\sum_{i=1}^{\tilde{K}}\widetilde \varepsilon_i^nV_g( B(\tilde\xi_i,1)) \\
 & = &   V_g(\omega_0) -\varepsilon^n \sum_{i=1}^Kh_i+\varepsilon^n\sum_{i=1}^{\tilde{K}}\ell_i \\
 & = &  V_g(\omega_0)-\varepsilon^n (1-L)\sum_{i=1}^KV_g(F_i)+\varepsilon^n L \sum_{i=1}^{\tilde{K}}V_g(\tilde F_i) \\
 & = &   V_g(\omega_0) -\varepsilon^n (1-L) LV_g(\Omega) +\varepsilon^n L (1-L)V_g(\Omega) \\
 &=&  V_g(\omega_0) =L V_g(\Omega) .
\end{eqnarray*}
Moreover, one has
\begin{equation*}
I_j(\omega^\varepsilon)   =  \int _{\omega^\varepsilon}\phi_j(x)^2\, dV_g %\\
% & = & 
= I_j(\omega_0)-\sum_{i=1}^K\int_{B(\xi_i,\varepsilon_i)}\phi_j(x)^2\, dV_g+\sum_{i=1}^{\tilde{K}}\int_{B(\tilde{\xi}_i,\widetilde \varepsilon_i)}\phi_j(x)^2\, dV_g %\\
% & = & \mu_j(\omega_0)-\sum_{i=1}^K\varepsilon_i^n \int_{B_d(0,1)}\phi_j(\xi_i+\varepsilon_i u)^2+ \sum_{i=1}^M\widetilde\varepsilon_i^n \int_{B_d(0,1)}\phi_j(\tilde{\xi}_i+\widetilde\varepsilon_i u)^2du
\end{equation*}
and using again the fact that the $\xi_i$ and $\tilde{\xi}_i$ are Lebesgue points of the functions $\phi_j^2$, for every $j\leq j_0$, we infer that
\begin{eqnarray*}
I_j(\omega^\varepsilon)  & = &  I_j(\omega_0)- \sum_{i=1}^K\varepsilon_i^nV_g( B(\xi_i,1)) \phi_j(\xi_i)^2 + \sum_{i=1}^{\tilde{K}}\widetilde\varepsilon_i^nV_g( B(\tilde\xi_i,1)) \phi_j(\tilde{\xi}_i)^2 
+\mathrm{o}(\eta^d)\quad\textrm{as}\ \eta\rightarrow 0\\
&=&  I_j(\omega_0) - \varepsilon^n \left( \sum_{i=1}^K h_i \phi_j(\xi_i)^2 - \sum_{i=1}^{\tilde{K}} \ell_i \phi_j(\tilde{\xi}_i)^2 \right) 
+\mathrm{o}(\eta^d)\quad\textrm{as}\ \eta\rightarrow 0 ,   %\\
%&=&  \mu_j(\omega_0) - V_n\varepsilon^n \left( (1-L)\,\mu_j(\omega_0) +L\, \mu_j(\omega_0) -L + \mathrm{o}_{\eta\rightarrow 0}(\eta^d) \right) +\mathrm{o}_{\eta\rightarrow 0}(\eta^d) 
\end{eqnarray*}
and hence, using \eqref{eggrandO},
\begin{equation*}
I_j(\omega^\varepsilon) =  I_j(\omega_0)+\varepsilon^n \left(  L  -I_j(\omega_0)\right)+\varepsilon^{n} \mathrm{o}(\eta^d)\quad\textrm{as}\ \eta\rightarrow 0,
\end{equation*}
for every $j\leq j_0$ and every $\varepsilon\in(0,\varepsilon_0)$.
Since $\varepsilon_0^n\leq 1$, it then follows that
\begin{equation}\label{lowfreq}
I_j(\omega^\varepsilon) \geq J(\omega_0)+\varepsilon^n (L-J(\omega_0))+\varepsilon^{n} \mathrm{o}(\eta^d)\quad\textrm{as}\ \eta\rightarrow 0 ,
\end{equation}
for every $j\leq j_0$ and every $\varepsilon\in(0,\varepsilon_0)$, where the functional $J$ is defined by \eqref{defJ}.

We now choose the subdivisions \eqref{labsubdivisions} fine enough (that is, $\eta>0$ small enough) so that, for every $j\leq j_0$, the remainder term $\underset{\eta\rightarrow 0}{\mathrm{o}}(\eta^d)$ in \eqref{lowfreq} is bounded by $\frac{1}{2} (L-J(\omega_0))$. It follows from \eqref{lowfreq} that
\begin{equation}\label{in29eps_1}
I_j(\omega^\varepsilon) \geq J(\omega_0)+\frac{\varepsilon^n}{2}  (L-J(\omega_0)),
\end{equation}
for every $j\leq j_0$ and every $\varepsilon\in(0,\varepsilon_0)$.

\medskip

Let us first show that the set $\omega^\varepsilon$ still satisfies an inequality of the type \eqref{highfreq} for $\varepsilon$ small enough. %It follows from $L^\infty$-QUE that the sequence $(\phi_j^2)_{j\in\N^*}$ is bounded by a constant $A>0$ in $L^\infty(\Omega)$. Hence, we get
%To this aim, note that
Using \eqref{UBPp} and H\"older's inequality, we have
\begin{equation*}
\begin{split}
\vert I_j(\omega^\varepsilon)-I_j(\omega_0)\vert & =  \left\vert\int_\Omega \left(\chi_{\omega^\varepsilon}(x)-\chi_{\omega_0}(x)\right)\phi_j(x)^2\,dV_g\right\vert \\
& \leq A^2 \left(\int_{\Omega}\vert\chi_{\omega^{\varepsilon}}(x)-\chi_{\omega_{0}}(x)\vert^q \, dV_{g}\right)^{1/q},
\end{split}
\end{equation*}
for every integer $j$ and every $\varepsilon\in(0,\varepsilon_0)$, where $q$ is defined by $\frac{1}{p}+\frac{1}{q}=1$.
Moreover,
\begin{equation*}
\begin{split}
\int_{\Omega}\vert\chi_{\omega^{\varepsilon}}(x)-\chi_{\omega_{0}}(x)\vert^q \, dV_{g} 
&= \int_{\Omega}\vert\chi_{\omega^{\varepsilon}}(x)-\chi_{\omega_{0}}(x)\vert \, dV_{g} \\
&= \varepsilon^n \left(\sum_{i=1}^Kh_i+\sum_{i=1}^{\tilde{K}}\ell_i\right) \\
&=2\varepsilon^n L(1-L) V_g(\Omega) ,
\end{split}
\end{equation*}
and hence
$$
\vert I_j(\omega^\varepsilon)-I_j(\omega_0)\vert \leq
 \left(2 A^{2q} \varepsilon^n L(1-L) V_g(\Omega)  \right)^{1/q}.
$$
Therefore, setting
$$ \varepsilon_1 = \min\left(\varepsilon_0, \left( \frac{(L-J(\omega_0))^q}{2^{2q+1}A^{2q}L(1-L)V_g(\Omega)}\right)^\frac{1}{n}\right),$$
it follows from \eqref{highfreq} that
\begin{equation}\label{highfreqeps}
I_j(\omega^\varepsilon) \geq L - \frac{1}{2}(L-J(\omega_0))
\end{equation}
for every $j\geq j_0$ and every $\varepsilon\in(0,\varepsilon_1)$.

Now, using the fact that $J(\omega_0)+\frac{\varepsilon^n}{2}  (L-J(\omega_0)) \leq L-\frac{1}{2}(L-J(\omega_0))$ for every $\varepsilon\in(0,\varepsilon_0)$, we infer from \eqref{in29eps_1} and \eqref{highfreqeps} that
\begin{equation}
J(\omega^\varepsilon)\geq J(\omega_0)+\frac{\varepsilon^n}{2}  (L-J(\omega_0)),
\end{equation}
for every $\varepsilon\in(0,\varepsilon_1)$. In particular, this inequality holds for $\varepsilon$ such that $\varepsilon^n = C_1 \min(C_2,L-J(\omega_0) )$,
where the positive constants $C_1$ and $C_2$ are defined by
$$
C_1 = \frac{1}{8AL(1-L)V_g(\Omega)},\qquad C_2 = \frac{1}{2^nC_1} .
$$
For this specific value of $\varepsilon$, we set $\omega_1=\omega^\varepsilon$, and hence we have obtained
\begin{equation}\label{debutiter}
J(\omega_1)\geq J(\omega_0)+\frac{C_1}{2} \min(C_2, L-J(\omega_0))\, (L-J(\omega_0)).
\end{equation}
Note that the constants involved in this inequality depend only on $L$, $A$ and $\Omega$. Note also that by construction $\omega_1$ satisfies a $\delta$-cone property.

If $J(\omega_1)\geq L$ then we are done. Otherwise, we apply all the previous arguments to this new set $\omega_1$: using QUE, there exists an integer still denoted $j_0$ such that \eqref{highfreq} holds with $\omega_0$ replaced with $\omega_1$. This provides a lower bound for highfrequencies. The lower frequencies $j\leq j_0$ are then handled as previously, and we end up with \eqref{in29eps_1} with $\omega_0$ replaced with $\omega_1$. Finally, this leads to the existence of $\omega_2$ such that \eqref{debutiter} holds with $\omega_1$ replaced with $\omega_2$ and $\omega_0$ replaced with $\omega_1$.

By iteration, we construct a sequence of subsets $\omega_k$ of $\Omega$ (satisfying a $\delta$-cone property) of measure $V_g(\omega_k)=LV_g(\Omega)$, as long as $J(\omega_k)<L$, satisfying
$$
J(\omega_{k+1})\geq J(\omega_k)+ \frac{C_1}{2} \min(C_2, L-J(\omega_k))\, (L-J(\omega_k)).
$$
If $J(\omega_k)<L$ for every integer $k$, then clearly the sequence $J(\omega_k)$ is increasing, bounded above by $L$, and converges to $L$.
This finishes the proof. %, and establishes the facts claimed in Remark \ref{rem_motivate}.

\begin{remark}
It can be noted that, in the above construction, the subsets $\omega_k$ are open, Lipschitz and of bounded perimeter. Hence, if the second problem is considered on the class of measurable subsets $\omega$ of $\Omega$, of measure $V_g(\omega)=LV_g(\Omega)$, that are moreover either open with a Lipschitz boundary, or open with a bounded perimeter, then the conclusion holds as well that the supremum is equal to $L$. This proves the contents of Remark \ref{rem_others}.
\end{remark}

\subsection{Proof of Proposition \ref{propnogapnoQUE}}\label{sec_proof_propnogapnoQUE}
Assume that $\Omega$ is the unit (Euclidean) disk of $\R^2$, $\Omega=\{ x\in\R^2\ \vert\ \Vert x\Vert\leq 1\}$. It is well known that the normalized eigenfunctions of the Dirichlet-Laplacian are a triply indexed sequence given by
\begin{equation*}
\phi_{jkm}(r,\theta) = 
\left\{ \begin{array}{ll}
R_{0k}(r) & \ \textrm{if}\ j=0,\\
R_{jk}(r)Y_{jm}(\theta) & \ \textrm{if}\ j\geq 1,
\end{array} \right.
\end{equation*}
for $j\in\N$, $k\in\N^*$ and $m=1,2$, where $(r,\theta)$ are the usual polar coordinates.
The functions $Y_{jm}(\theta)$ are defined by $Y_{j1}(\theta)=\frac{1}{\sqrt{\pi}}\cos(j\theta)$ and $Y_{j2}(\theta)=\frac{1}{\sqrt{\pi}}\sin(j\theta)$, and the functions $R_{jk}$ are defined by
$$
R_{jk}(r) = \sqrt{2}\,\frac{J_j(z_{jk}r)}{\vert J'_{j}(z_{jk}) \vert},
$$
where $J_j$ is the Bessel function of the first kind of order $j$, and $z_{jk}>0$ is the $k^\textrm{th}$-zero of $J_{j}$.
The eigenvalues of the Dirichlet-Laplacian are given by the double sequence of $-z_{jk}^2$ and are of multiplicity $1$ if $j=0$, and $2$ if $j\geq 1$.
Many properties are known on these functions and, in particular (see \cite{Lagnese}):
\begin{itemize}
\item for every $j\in\N$, the sequence of probability measures $r\mapsto  R_{jk}(r)^2rdr$ converges vaguely to $1$ as $k$ tends to $+\infty$,
\item for every $k\in\N^*$, the sequence of probability measures $r\mapsto  R_{jk}(r)^2rdr$ converges vaguely to the Dirac at $r=1$ as $j$ tends to $+\infty$.
\end{itemize}
%for the weak star topology of $L^\infty$. 
These convergence properties permit to identify certain quantum limits, the second property accounting for the well known phenomenon of whispering galleries. Less known is the convergence of the above sequence of measures when the ratio $j/k$ is kept constant. Simple computations (due to \cite{Burqpersonal}) show that, when taking the limit of $R_{jk}(r)^2rdr$ with a fixed ratio $j/k$, and making this ratio vary, we obtain the family of probability measures
$$ \mu_s = f_s(r)\, dr=\frac{ 1} { \sqrt{ 1 - s^2}} \frac{ r } { \sqrt{ r^2 - s^2}} \chi_{(s,1)(r)} \, dr,$$
parametrized by $s\in[0,1)$. We can even extend to $s=1$ by defining $\mu_1$ as the Dirac at $r=1$.
It easily follows that
\begin{equation*}
\sup_{a\in\overline{\mathcal{U}}_L}J(a)
=
\sup_{a\in \overline{\mathcal{U}}_L} \inf_{\stackrel{j\in \N, k\in\N^*}{m\in\{1,2\}}} \int_0^{2\pi}\int_0^1a(r,\theta) \phi_{jkm}(r,\theta)^2\, r drd\theta 
\leq 
\sup_{a\in \overline{\mathcal{U}}_L} K(a) ,
\end{equation*}
where
$$
K(a) = \inf_{s\in[0,1]} \int_0^1 \int_0^{2\pi} a(r,\theta)\, d\theta\, f_s(r)\, dr.
$$

\begin{lemma}\label{lem28aout}
There holds $\displaystyle \sup_{a\in \overline{\mathcal{U}}_L} K(a) = L$, and the supremum is reached with the constant function $a=L$ on $\Omega$.
\end{lemma}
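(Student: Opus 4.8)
The plan is to prove the two inequalities $\sup_{a\in\overline{\mathcal{U}}_L}K(a)\geq L$ and $\sup_{a\in\overline{\mathcal{U}}_L}K(a)\leq L$ separately, the first being elementary and the second resting on an averaging identity for the radial quantum limits $\mu_s$. Throughout I write $\nu_s$ for the probability measure $\frac{1}{2\pi}f_s(r)\,dr\,d\theta$ on $\Omega$ (each radial measure $\mu_s=f_s(r)\,dr$ having unit mass, as follows from the substitution $u=r^2-s^2$), so that $K(a)=\inf_{s\in[0,1]}\int_\Omega a\,d\nu_s$. For the lower bound, I would simply observe that the constant function $a\equiv L$ belongs to $\overline{\mathcal{U}}_L$ and that, since each $\nu_s$ is a probability measure, $\int_\Omega L\,d\nu_s=L$ for every $s$; hence $K(L)=L$ and the supremum is at least $L$.

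The heart of the argument is the reverse inequality, for which I would express the normalized uniform measure $\frac{1}{V_g(\Omega)}\,dV_g$ on the disk as an average of the family $(\nu_s)_{s\in[0,1]}$. Concretely, the aim is to find a probability measure $\rho$ on $[0,1]$ with $\int_0^1\nu_s\,d\rho(s)=\frac{1}{V_g(\Omega)}\,dV_g$. Since every $\nu_s$ has the same uniform angular part, this reduces to the radial identity $\int_0^1 f_s(r)\,d\rho(s)=2r$ for $r\in(0,1)$, the density $2r$ being the radial marginal of $\frac{1}{V_g(\Omega)}\,dV_g$. Writing $d\rho(s)=g(s)\,ds$ and inserting the explicit expression of $f_s$, this is $\int_0^r \frac{g(s)}{\sqrt{1-s^2}\sqrt{r^2-s^2}}\,ds=2$; the substitution $u=s^2$, $v=r^2$ turns it into the Abel integral equation $\int_0^v \frac{G(u)}{\sqrt{v-u}}\,du=2$ with $G(u)=\frac{g(\sqrt u)}{2\sqrt u\,\sqrt{1-u}}$, whose inversion gives $G(u)=\frac{2}{\pi\sqrt u}$ and hence $g(s)=\frac{4}{\pi}\sqrt{1-s^2}$. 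This is indeed a probability density on $[0,1]$ (its mass is $\frac{4}{\pi}\cdot\frac{\pi}{4}=1$), and one checks the identity directly: $\int_0^r f_s(r)g(s)\,ds=\frac{4r}{\pi}\int_0^r \frac{ds}{\sqrt{r^2-s^2}}=\frac{4r}{\pi}\cdot\frac{\pi}{2}=2r$.

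With $\rho$ in hand, the conclusion follows by a one-line averaging. For any $a\in\overline{\mathcal{U}}_L$, Fubini's theorem gives $\int_0^1\big(\int_\Omega a\,d\nu_s\big)\,d\rho(s)=\int_\Omega a\,d\big(\int_0^1\nu_s\,d\rho(s)\big)=\frac{1}{V_g(\Omega)}\int_\Omega a\,dV_g=L$, using the measure constraint $\int_\Omega a\,dV_g=LV_g(\Omega)$. Because $\int_\Omega a\,d\nu_s\geq K(a)$ for every $s$ and $\rho$ is a probability measure, the left-hand side dominates $K(a)$, so $K(a)\leq L$; taking the supremum over $a$ yields $\sup_{a\in\overline{\mathcal{U}}_L}K(a)\leq L$. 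Combining with the lower bound gives equality, attained by $a=L$.

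I expect the main obstacle to be the construction of the averaging measure $\rho$, that is, recognizing the radial identity as an Abel equation and inverting it (or, more cheaply, guessing the semicircle density $\frac{4}{\pi}\sqrt{1-s^2}$ and verifying the identity by the elementary arcsine integral above). Everything else --- the probability normalizations and the final Fubini averaging --- is routine.
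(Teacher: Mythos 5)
Your proof is correct, and it reaches the conclusion by a genuinely different, more elementary route than the paper's. The paper exploits the concavity of $K$ (an infimum of linear functionals) together with Danskin's theorem to reduce the claim to the first-order condition $\langle DK(a=L),h\rangle\leq 0$ for every mean-zero perturbation $h$, and verifies this by contradiction: assuming $\int_\Omega h\,d\nu_s>0$ for all $s$, it integrates over $s\in[0,1]$ with plain Lebesgue measure and uses $\int_0^r (r^2-s^2)^{-1/2}\,ds=\pi/2$ to arrive at $0<\frac{\pi}{2}\int_\Omega h\,dx=0$. You instead exhibit the normalized uniform measure explicitly as a barycenter $\int_0^1\nu_s\,d\rho(s)$ of the family $(\nu_s)_{s\in[0,1]}$, with the semicircle density $d\rho(s)=\frac{4}{\pi}\sqrt{1-s^2}\,ds$, after which the bound $K(a)\leq\int_0^1\bigl(\int_\Omega a\,d\nu_s\bigr)\,d\rho(s)=L$ is immediate because $K(a)$ is an infimum over $s$. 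The two arguments rest on the same arcsine identity --- your $g(s)\,ds$ acting on $f_s$ is, up to the constant $4/\pi$, exactly the paper's $ds$ acting on the unnormalized kernel $r(r^2-s^2)^{-1/2}\chi_{(s,1)}(r)$ --- but yours dispenses with concavity, Danskin's theorem and the contradiction, and yields the upper bound $K(a)\leq L$ directly for every $a\in\overline{\mathcal{U}}_L$. What the paper's route buys is a general recipe (check the subdifferential at the candidate optimum) that does not require guessing an averaging measure; what yours buys is a self-contained verification once $\rho$ is written down, and your explicit computation of $\rho$ via Abel inversion is sound (and double-checked by the direct evaluation $\int_0^r f_s(r)g(s)\,ds=2r$). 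One harmless remark: as literally written in the paper, $K$ lacks the $\frac{1}{2\pi}$ angular normalization that your $\nu_s$ carries, so that $K(a=L)=2\pi L$ rather than $L$; your normalization is the one consistent with the statement of the lemma.
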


\begin{proof}[Proof of Lemma \ref{lem28aout}]
First, note that $K(a=L)=L$ and that the infimum in the definition of $K$ is then reached for every $s\in[0,1]$. Since $K$ is concave (as infimum of linear functions), it suffices to prove that $\langle DK(a=L),h\rangle\leq 0$ (directional derivative), for every function $h$ defined on $\Omega$ such that $\int_\Omega h(x)\, dx=0$. Using Danskin's Theorem (see \cite{Danskin,bernhard}), we have
$$
\langle DK(a=L),h\rangle = \inf_{s\in[0,1]} \int_0^1 \int_0^{2\pi} h(r,\theta)\, d\theta\, f_s(r)\, dr.
$$
By contradiction, let us assume that there exists a function $h$ on $\Omega$ such that $\int_\Omega h(x)\, dx=0$ and such that
$$
\int_0^1 \int_0^{2\pi} h(r,\theta)\, d\theta\, f_s(r)\, dr >0
$$
for every $s\in[0,1]$. Then, it follows that
$$
\int_s^1 \int_0^{2\pi} h(r,\theta)\, d\theta\, \frac{ r } { \sqrt{ r^2 - s^2}} \, dr >0
$$
for every $s\in[0,1]$, and integrating in $s$ over $[0,1]$, we get
\begin{equation*}
\begin{split}
0<\int_0^1 \int_s^1 \int_0^{2\pi} h(r,\theta)\, d\theta\,\frac{ r } { \sqrt{ r^2 - s^2}} \, dr ds
&= \int_0^1 \int_0^r \frac{ r } { \sqrt{ r^2 - s^2}}\, ds \int_0^{2\pi} h(r,\theta)\, d\theta \, dr \\
&= \frac{\pi}{2} \int_0^1 r \int_0^{2\pi} h(r,\theta)\, d\theta \, dr \\
&= \frac{\pi}{2} \int_\Omega h(x)\, dx=0,
\end{split}
\end{equation*}
which is a contradiction. The lemma is proved.
\end{proof}
It follows from this lemma that $\sup_{a\in\overline{\mathcal{U}}_L}J(a)=L$ (note that $a=L$ realizes the maximum), and hence, $\sup_{\chi_\omega\in\mathcal{U}_L}J(\chi_\omega)\leq L$.
To prove the no-gap statement, we use particular (radial) subsets $\omega$, of the form
$$\omega = \{ (r,\theta)\in[0,1]\times[0,2\pi] \ \vert\ \theta\in\omega_\theta \},$$
where $\vert\omega_\theta\vert=2L\pi$, as drawn on Figure \ref{figminseqdisk}.

\begin{figure}[h!]
\begin{center}
\includegraphics[width=16cm]{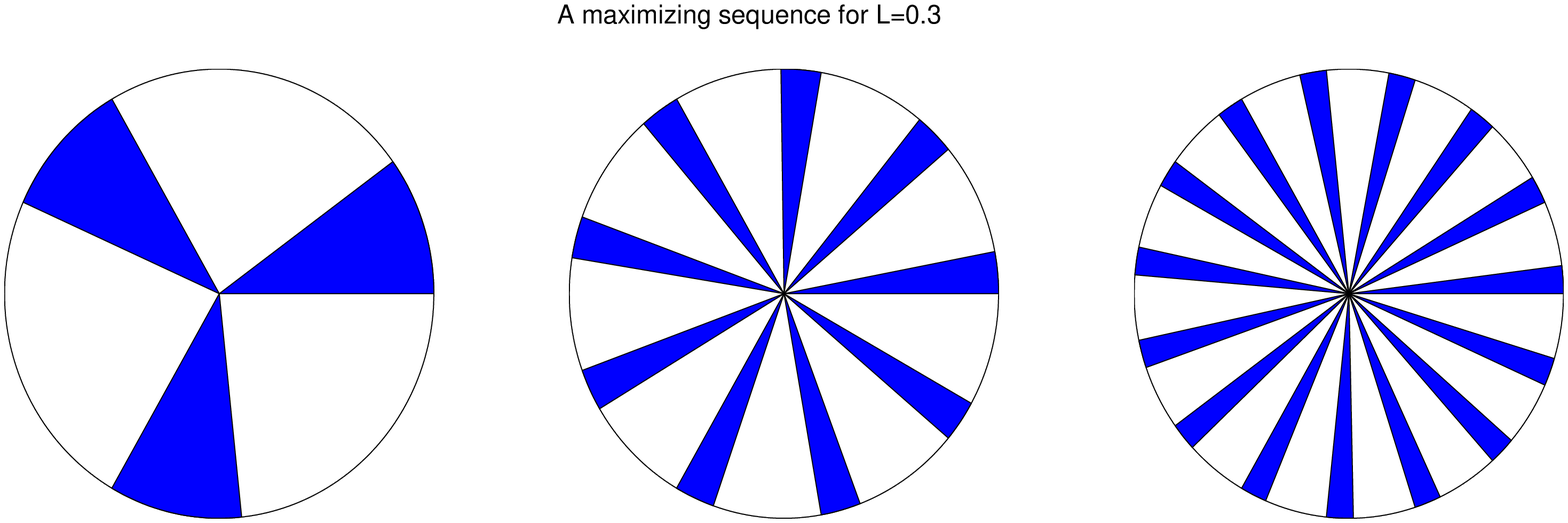}
\caption{Particular radial subsets}\label{figminseqdisk}
\end{center}
\end{figure}

For such a subset $\omega$, one has
$$
\int_\omega \phi_{jkm}(x)^2\, dx = \int_0^1 R_{jk}(r)^2 r\, dr \int_{\omega_\theta}  Y_{jm}(\theta)^2 \, d\theta =  \int_{\omega_\theta}  Y_{jm}(\theta)^2 \, d\theta,
$$
for all $j\in\N^*$, $k\in\N^*$ and $m=1,2$. For $j=0$, there holds
$$
\int_\omega \phi_{0km}(x)^2\, dx =\int_0^1 R_{jk}(r)^2 r\, dr \int_{\omega_\theta}   \, d\theta = \vert\omega_\theta\vert.
$$
Besides, since
$$
L\pi=\vert\Omega\vert= \int_0^1 r\, dr\int_{\omega_\theta}d\theta = \frac{1}{2}\vert\omega_\theta\vert,
$$
it follows that $\vert\omega_\theta\vert=2L\pi$.
By applying the no-gap result in dimension one (clearly, it can be applied as well with the cosine functions), one has
$$
\sup_{\stackrel{\omega_\theta\subset[0,2\pi]}{\vert\omega_\theta\vert=2L\pi}}  \inf_{j\in\N^*} \int_{\omega_\theta} \sin^2(j\theta) \, d\theta = \sup_{\stackrel{\omega_\theta\subset[0,2\pi]}{\vert\omega_\theta\vert=2L\pi}}  \inf_{j\in\N^*} \int_{\omega_\theta} \cos^2(j\theta) \, d\theta = L\pi .
$$
Therefore, we deduce that
$$
\sup_{\chi_\omega\in\mathcal{U}_L} \inf_{\stackrel{j\in \N, k\in\N^*}{m\in\{1,2\}}} \int_\omega \phi_{jkm}(x)^2\, dx =L,
$$
and the conclusion follows.

\subsection{An intrinsic spectral variant of the second problem}\label{sec6.4}
The second problem \eqref{defJ}, defined in Section \ref{secobservabilityintro}, depends a priori on the orthonormal Hilbertian basis $(\phi_j)_{j\in\N^*}$ of $L^2(\Omega)$ under consideration, at least whenever the spectrum of $A$ is not simple.
In this section we assume that the eigenvalues $(\lambda^2_j)_{j\in\N^*}$ of $A$ are multiple, so that the choice of the basis $(\phi_j)_{j\in\N^*}$ enters into play.

We have already seen in Theorem \ref{theoCTT} (see Section \ref{secMotiv}) that, in the case of multiple eigenvalues, the spectral expression for the time-asymptotic observability constant is more intricate and it does not seem that our analysis can be adapted in an easy way to that case.

Besides, recall that the criterion $J$ defined by \eqref{defJ} has been motivated in Section \ref{secMotiv} by means of randomizing initial data, and has been interpreted as a randomized observability constant (see Theorem \ref{propHazardCst}), but then this criterion depends a priori on the preliminary choice of the basis $(\phi_j)_{j\in\N^*}$ of eigenfunctions.

In order to get rid of this dependence, and to deal with a more intrinsic criterion, it makes sense to consider the infimum of the criteria $J$ defined by \eqref{defJ} over all possible choices of orthonormal bases of eigenfunctions.
This leads us to consider the following intrinsic variant of our second problem.

\begin{quote}
\noindent{\bf Intrinsic uniform optimal design problem.}
\textit{We investigate the problem of maximizing the functional
\begin{equation}\label{defJint}
J_\textrm{int}(\chi_\omega)=\inf_{\phi\in \mathcal{E}}\int_\omega \phi(x)^2 \, dV_g,
\end{equation}
over all possible subsets $\omega$ of $\Omega$ of measure $ V_g(\omega) =L V_g(\Omega) $, where $\mathcal{E}$ denotes the set of all normalized eigenfunctions of $A$.
}
\end{quote}

Here, the word intrinsic means that this problem does not depend on the choice of the basis of eigenfunctions of $A$.

As in Theorem \ref{propHazardCst}, the quantity $\frac{T}{2}J_\textrm{int}(\chi_\omega)$ (resp., $TJ_\textrm{int}(\chi_\omega)$) can be interpreted as a constant for which the randomized observability inequality \eqref{ineqobswRand} for the wave equation (resp.,  \eqref{ineqobssRand} for the Schr\"odinger equation) holds, but this constant is less than or equal to $C_{T,\textrm{rand}}^{(W)}(\chi_\omega)$ (resp., $C_{T,\textrm{rand}}^{(S)}(\chi_\omega)$).
Besides, there obviously holds
$C_{T}^{(W)}(\chi_\omega) \leq \frac{T}{2}J_\textrm{int}(\chi_\omega)$ and
$C_{T}^{(S)}(\chi_\omega) \leq T J_\textrm{int}(\chi_\omega)$. Indeed this inequality follows form the deterministic observability inequality applied to the particular solution $y(t,x) = \mathrm{e}^{i\lambda t}\phi(x)$, for every eigenfunction $\phi$.
In brief, there holds
$$
C_{T}^{(W)}(\chi_\omega) \leq \frac{T}{2}J_\textrm{int}(\chi_\omega) \leq C_{T,\textrm{rand}}^{(W)}(\chi_\omega),\quad\textrm{and}\quad
C_{T}^{(S)}(\chi_\omega) \leq T J_\textrm{int}(\chi_\omega) \leq C_{T,\textrm{rand}}^{(S)}(\chi_\omega).
$$
As in Section \ref{solvingpb2obssec1}, the convexified version of the above problem consists of maximizing the functional
$$
J_\textrm{int}(a)=\inf_{\phi\in \mathcal{E}}\int_\Omega a(x) \phi(x)^2 \, dV_g,
$$
over the set $\overline{\mathcal{U}}_L$. This problem obviously has at least one solution, and
$$
\sup_{\chi_\omega\in \mathcal{U}_L} \inf_{\phi\in \mathcal{E}}\int_{\Omega}\chi_\omega(x) \phi(x)^2\, dV_g\ \leq\ \sup_{a\in \overline{\mathcal{U}}_L} \inf_{\phi\in \mathcal{E}}\int_{\Omega}a(x) \phi(x)^2\, dV_g.
$$

\begin{theorem}\label{thmnogap_intrinsic}
Assume that the uniform measure $\frac{1}{ V_g(\Omega) }\, dV_{g}$ is a closure point of the family of probability measures $\mu_{\phi}=\phi^2\, dV_g$, $\phi\in \mathcal{E}$, for the vague  topology, and that the whole family of eigenfunctions in $ \mathcal{E}$ is uniformly bounded in $L^\infty(\Omega)$.
Then
\begin{equation}\label{eqNoGap_intrinsic}
\sup_{\chi_\omega\in \mathcal{U}_L} \inf_{\phi\in \mathcal{E}}\int_\omega \phi(x)^2\, dV_g  = \sup_{a\in \overline{\mathcal{U}}_L} \inf_{\phi\in \mathcal{E}}\int_{\Omega}a(x) \phi(x)^2\, dV_g = L,
\end{equation}
for every $L\in (0,1)$.
In other words, %under QUE 
there is no gap between the intrinsic uniform  optimal design problem and its convexified version.
\end{theorem}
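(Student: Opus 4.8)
The plan is to follow the scheme of the proof of Theorem \ref{thmnogap}, upgrading the target sequence space $\ell^\infty$ to a space of block operators, so as to control the whole quadratic form on each eigenspace uniformly over the (possibly unbounded) multiplicities rather than only its diagonal. I first settle the convexified value $\sup_{a\in\overline{\mathcal U}_L}\inf_{\phi\in\mathcal E}\int_\Omega a\,\phi^2\,dV_g=L$. The inequality $\ge L$ is immediate by testing with $a\equiv L$, since $\int_\Omega L\,\phi^2\,dV_g=L$ for every normalized $\phi$. For the converse, the uniform $L^\infty$ bound places all densities $\phi^2$ inside the ball $B=\{h\in L^\infty(\Omega):\|h\|_{L^\infty}\le A^2\}$, which is weak-star compact and metrizable ($L^1(\Omega)$ being separable); on $B$ the vague and weak-star topologies coincide, because the identity map from $(B,\text{weak-}\ast)$ (compact) to $(B,\text{vague})$ (Hausdorff) is a continuous bijection, hence a homeomorphism. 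The vague closure-point hypothesis thus yields a sequence $\phi_n\in\mathcal E$ with $\phi_n^2\rightharpoonup\frac{1}{V_g(\Omega)}$ weakly-star, whence $\inf_{\phi\in\mathcal E}\int_\Omega a\,\phi^2\,dV_g\le\lim_n\int_\Omega a\,\phi_n^2\,dV_g=\frac{1}{V_g(\Omega)}\int_\Omega a\,dV_g=L$ for every $a\in\overline{\mathcal U}_L$. Since $\mathcal U_L\subset\overline{\mathcal U}_L$, this already gives $\sup_{\chi_\omega\in\mathcal U_L}\inf_\phi\int_\omega\phi^2\,dV_g\le L$.

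It remains to produce, for every $\varepsilon>0$, a set $\chi_\omega\in\mathcal U_L$ with $\int_\omega\phi^2\,dV_g\ge L-\varepsilon$ for \emph{all} $\phi\in\mathcal E$ simultaneously. Writing $U$ for the distinct eigenvalues and $I(\lambda)$ for the finite block of $\lambda$, every $\phi\in\mathcal E$ reads $\phi=\sum_{k\in I(\lambda)}c_k\phi_k$ with $\sum_k|c_k|^2=1$, so that $\int_\omega\phi^2\,dV_g=c^*G^\lambda_\omega c$ with $G^\lambda_\omega=\big(\int_\omega\phi_k\phi_l\,dV_g\big)_{k,l\in I(\lambda)}$, and hence $\inf_{\phi\in\mathcal E}\int_\omega\phi^2\,dV_g=\inf_{\lambda\in U}\lambda_{\min}(G^\lambda_\omega)$. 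The crucial point is that the uniform $L^\infty$ bound on the \emph{whole} family $\mathcal E$ gives $\sum_{k\in I(\lambda)}\phi_k(x)^2\le A^2$ for every $\lambda$ and $x$ (maximize $\phi=\langle c,\Phi_\lambda(x)\rangle$ over unit $c$ at the point $x$), where $\Phi_\lambda(x)=(\phi_k(x))_{k\in I(\lambda)}$; thus the rank-one integrands $\Phi_\lambda(x)\Phi_\lambda(x)^\top$ have operator norm $\le A^2$ uniformly in $\lambda$. I therefore introduce the Banach space $\mathcal M$ of bounded families $M=(M_\lambda)_{\lambda\in U}$ of symmetric matrices, normed by $\|M\|_{\mathcal M}=\sup_{\lambda\in U}\|M_\lambda\|_{op}$, and the bounded map $H\colon\Omega\to\mathcal M$, $H(x)=(\Phi_\lambda(x)\Phi_\lambda(x)^\top)_{\lambda\in U}$, which satisfies $\|H(x)\|_{\mathcal M}\le A^2$ and $\int_\Omega H\,dV_g=(\mathrm{Id}_{I(\lambda)})_{\lambda\in U}$.

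Granting that $H\in L^1(\Omega,\mathcal M)$, I would run verbatim the partition argument of Theorem \ref{thmnogap}: approximate $H$ by a simple function $H_n=\sum_i M^{(i)}\chi_{\Omega_i}$ subordinate to a finite measurable partition $\Omega=\cup_i\Omega_i$ with $\int_\Omega\|H-H_n\|_{\mathcal M}\,dV_g\le\varepsilon$, pick $\omega_i\subset\Omega_i$ with $V_g(\omega_i)=LV_g(\Omega_i)$, and set $\omega=\cup_i\omega_i\in\mathcal U_L$. Since $\int_{\Omega_i}(\chi_\omega-L)\,dV_g=0$ one gets $\int_\Omega(\chi_\omega-L)H_n\,dV_g=0$, whence $\big\|\int_\omega H\,dV_g-L\int_\Omega H\,dV_g\big\|_{\mathcal M}\le\int_\Omega|\chi_\omega-L|\,\|H-H_n\|_{\mathcal M}\,dV_g\le\varepsilon$. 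This reads $\sup_{\lambda\in U}\|G^\lambda_\omega-L\,\mathrm{Id}\|_{op}\le\varepsilon$, so $\lambda_{\min}(G^\lambda_\omega)\ge L-\varepsilon$ for every $\lambda$, i.e. $\inf_{\phi\in\mathcal E}\int_\omega\phi^2\,dV_g\ge L-\varepsilon$. Letting $\varepsilon\to0$ gives $\sup_{\chi_\omega\in\mathcal U_L}\inf_\phi\int_\omega\phi^2\,dV_g\ge L$, and combined with the first paragraph all three quantities equal $L$.

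The main obstacle, exactly as in the proof of Theorem \ref{thmnogap}, is the Bochner integrability (strong measurability) of $H$ into the non-separable space $\mathcal M$; this is the only nontrivial analytic point, and I would dispatch it precisely as there. I emphasize that passing to the operator-norm space $\mathcal M$ is what genuinely distinguishes this intrinsic statement from Theorem \ref{thmnogap}: controlling the entries $\int_\omega\phi_k\phi_l\,dV_g-L\delta_{kl}$ only to order $\varepsilon$ would merely bound $|c^*(G^\lambda_\omega-L\,\mathrm{Id})c|$ by $\varepsilon\,\#I(\lambda)$, which is useless when the multiplicities are unbounded; the uniform bound $\sum_{k\in I(\lambda)}\phi_k^2\le A^2$ is exactly what keeps $\|H\|_{\mathcal M}$ finite and upgrades entrywise control to the required uniform operator-norm control.
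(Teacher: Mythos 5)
Your proof is correct and is essentially the paper's own argument: the paper runs the partition scheme of Theorem \ref{thmnogap} on the map $f(x)=(\phi(x)^2)_{\phi\in\mathcal{E}}$ with values in $L^\infty(\mathcal{E},\R)$, and your block-operator space $\mathcal{M}$ is an exact repackaging of this, since $\Vert H(x)\Vert_{\mathcal{M}}=\sup_{\phi\in\mathcal{E}}\phi(x)^2=\Vert f(x)\Vert_{L^\infty(\mathcal{E})}$ and, $G^\lambda_\omega-L\,\mathrm{Id}$ being symmetric, $\Vert G^\lambda_\omega-L\,\mathrm{Id}\Vert_{op}$ is precisely the supremum of $\vert\int_\omega\phi^2\,dV_g-L\vert$ over normalized eigenfunctions $\phi$ in the $\lambda$-eigenspace, so the operator-norm upgrade you emphasize is already built into the paper's indexing by all of $\mathcal{E}$. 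Both treatments also defer the same delicate point (strong measurability of this map into a non-separable target) to the proof of Theorem \ref{thmnogap}, so you carry exactly the burden the paper does.
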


\begin{proof}
The proof follows the same lines as in Section \ref{secproofthmnogap}, by considering the function $f$ defined by
$f(x) = (\phi(x)^2)_{\phi\in\mathcal{E}}$. Then $f\in L^1(\Omega,X)$ with $X=L^\infty(\mathcal{E},\R)$ which is a Banach manifold that can be seen as an infinite product of spheres of dimension equal to the respective multiplicities of the eigenvalues.
\end{proof}

Similarly, the intrinsic counterpart of Theorem \ref{thmnogap2} is the following. 

\begin{theorem}\label{thmnogap_intrinsic2}
Assume that the uniform measure $\frac{1}{ V_g(\Omega) }\, dV_{g}$ is the unique closure point of the family of probability measures $\mu_{\phi}=\phi^2\, dV_g$, $\phi\in \mathcal{E}$, for the vague topology, and that the whole family of eigenfunctions in $ \mathcal{E}$ is uniformly bounded in $L^{2p}(\Omega)$, for some $p\in(1,+\infty]$.
Then
\begin{equation}\label{eqNoGap_intrinsic2}
\sup_{\chi_\omega\in \mathcal{U}_L^b} \inf_{\phi\in \mathcal{E}}\int_\omega \phi(x)^2\, dV_g = L,
\end{equation}
for every $L\in (0,1)$.
\end{theorem}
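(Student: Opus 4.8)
The plan is to follow closely the constructive homogenization scheme developed in the proof of Theorem \ref{thmnogap2} (Section \ref{secproofthmnogap2}), the only genuinely new difficulty being that the infimum now runs over the whole (generically uncountable) family $\mathcal{E}$ of normalized eigenfunctions rather than over a fixed countable basis. For $\phi\in\mathcal{E}$ I set $I(\omega,\phi)=\int_\omega \phi(x)^2\, dV_g$, so that $J_\textrm{int}(\omega)=\inf_{\phi\in\mathcal{E}} I(\omega,\phi)$. First, since the uniform measure is the unique vague closure point of the family $\{\mu_\phi\}$ and $\overline\Omega$ is compact, the probability measures $\mu_\phi$ are vaguely precompact and every vague accumulation point is uniform; combined with the Portmanteau theorem (as in Remark \ref{remWQE}), this yields the intrinsic counterpart of the high-frequency estimate \eqref{highfreq}: for every $\omega$ with $V_g(\partial\omega)=0$ and every $\varepsilon>0$ there is a threshold $\Lambda_0$ such that $|I(\omega,\phi)-L|\le\varepsilon$ for all $\phi\in\mathcal{E}$ whose eigenvalue exceeds $\Lambda_0$. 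Indeed, were this to fail one could extract a sequence $\phi_n$ with eigenvalues tending to infinity and $|I(\omega,\phi_n)-L|>\varepsilon$, whose measures $\mu_{\phi_n}$ would accumulate at some closure point of the family, necessarily uniform, forcing $I(\omega,\phi_n)\to L$ along a subsequence, a contradiction. In particular $J_\textrm{int}(\omega)\le L$, and we may assume $J_\textrm{int}(\omega_0)<L$ for the starting open connected Lipschitz set $\omega_0$ of measure $LV_g(\Omega)$.

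The core of the argument is then to absorb the low-frequency eigenfunctions, that is those $\phi\in\mathcal{E}$ lying in an eigenspace of eigenvalue at most $\Lambda_0$. These span a finite-dimensional space $V=E_1\oplus\cdots\oplus E_p$; I would fix an orthonormal basis $(e_1,\dots,e_D)$ of $V$ (taken among the $\phi_j$). Any such normalized $\phi$ lies in a single eigenspace and writes $\phi=\sum_m c_m e_m$ with $\sum_m c_m^2=1$, whence $I(\omega,\phi)=\sum_{m,m'} c_m c_{m'} K_{mm'}(\omega)$ with $K_{mm'}(\omega)=\int_\omega e_m e_{m'}\, dV_g$. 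I would now run the homogenization of Section \ref{secproofthmnogap2} verbatim: using the $\delta$-cone property of $\omega_0$ and $\omega_0^c$, partition $\overline{\omega_0}$ and $\omega_0^c$ into nicely shrinking pieces $F_i,\widetilde F_i$ and perturb $\omega_0$ into $\omega^\varepsilon$ by carving out balls $B(\xi_i,\varepsilon_i)$ and inserting balls $B(\widetilde\xi_i,\widetilde\varepsilon_i)$ of volumes proportional to $(1-L)V_g(F_i)$ and $LV_g(\widetilde F_i)$, so that $V_g(\omega^\varepsilon)=LV_g(\Omega)$. Because the points $\xi_i,\widetilde\xi_i$ are simultaneously Lebesgue points of the finitely many products $e_m e_{m'}$, the same Riemann-sum estimates give, uniformly in the finitely many pairs $(m,m')$, that $K_{mm'}(\omega^\varepsilon)=K_{mm'}(\omega_0)+\varepsilon^n\big(L\,\delta_{mm'}-K_{mm'}(\omega_0)\big)+\varepsilon^n\,\mathrm{o}(\eta^d)$, the key input being the orthonormality relation $K_{mm'}(\Omega)=\delta_{mm'}$. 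Summing against $c_m c_{m'}$ and using $\sum_m c_m^2=1$ collapses this into the single convex-combination identity $I(\omega^\varepsilon,\phi)=I(\omega_0,\phi)+\varepsilon^n\big(L-I(\omega_0,\phi)\big)+\varepsilon^n\,\mathrm{o}(\eta^d)$, now valid for every low-frequency $\phi$ with a remainder uniform in $\phi$ (since $|c_m|\le 1$ and there are only $D^2$ products). Choosing $\eta$ small enough then yields $I(\omega^\varepsilon,\phi)\ge J_\textrm{int}(\omega_0)+\tfrac{\varepsilon^n}{2}(L-J_\textrm{int}(\omega_0))$ for all such $\phi$.

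It remains to check that the perturbation does not destroy the high-frequency bound. Here I would invoke the uniform $L^{2p}$-boundedness, which holds over the entire family $\mathcal{E}$: by H\"older's inequality $|I(\omega^\varepsilon,\phi)-I(\omega_0,\phi)|\le A^2\, V_g(\omega^\varepsilon\triangle\omega_0)^{1/q}=A^2\big(2\varepsilon^n L(1-L)V_g(\Omega)\big)^{1/q}$ with $\tfrac1p+\tfrac1q=1$, a bound independent of $\phi$. Hence for $\varepsilon$ below some $\varepsilon_1$ every $\phi$ with eigenvalue exceeding $\Lambda_0$ still satisfies $I(\omega^\varepsilon,\phi)\ge L-\tfrac12(L-J_\textrm{int}(\omega_0))$. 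Combining the two ranges exactly as at the end of Section \ref{secproofthmnogap2} gives $J_\textrm{int}(\omega^\varepsilon)\ge J_\textrm{int}(\omega_0)+\tfrac{\varepsilon^n}{2}(L-J_\textrm{int}(\omega_0))$ for a suitable $\varepsilon$, and iterating produces a sequence $\omega_k\in\mathcal{U}_L^b$ with $J_\textrm{int}(\omega_k)\uparrow L$, which proves \eqref{eqNoGap_intrinsic2}. The main obstacle is exactly the passage from a countable to a continuous family of constraints in the low-frequency regime; I expect it to be overcome by the observation that the relevant data reduce to the finitely many bilinear forms $K_{mm'}$ and that orthonormality makes the homogenization act as a single $\phi$-independent contraction toward $L$.
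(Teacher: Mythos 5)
Your proposal is correct and follows essentially the same route as the paper, which simply refers back to the homogenization proof of Theorem \ref{thmnogap2} and notes that the set of normalized eigenfunctions with eigenvalue below the threshold is compact. Your reduction of the low-frequency uniformity to the finitely many bilinear forms $K_{mm'}$, together with the orthonormality identity $K_{mm'}(\Omega)=\delta_{mm'}$ collapsing the expansion back to the single contraction $I(\omega^\varepsilon,\phi)=I(\omega_0,\phi)+\varepsilon^n\bigl(L-I(\omega_0,\phi)\bigr)+\varepsilon^n\,\mathrm{o}(\eta^d)$, is precisely the concrete mechanism the paper's compactness remark is alluding to, worked out in full.
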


\begin{proof}
The proof follows the same lines as in Section \ref{secproofthmnogap}, replacing the integer index $j$ with the continuous index $\lambda$ (standing for the eigenvalues of $A$). The only thing that has to be noticed is the derivation of the estimate corresponding to \eqref{in29eps_1}. In Section \ref{secproofthmnogap}, to obtain \eqref{in29eps_1} from \eqref{lowfreq}, we used the fact that only a finite number of terms have to be considered. Now the number of terms is infinite, but however one has to consider all possible normalized eigenfunctions associated with an eigenvalue $\vert\lambda\vert\leq\vert\lambda_0\vert$. Since this set is compact for every $\lambda_0$, there is no difficulty to extend our previous proof.
\end{proof}

With respect to Remark \ref{rem_existencegap}, it is interesting to note that, here, we are able to provide examples where there is a gap between the intrinsic second problem \eqref{defJint} and its convexified version.

\begin{proposition}\label{remgapintrinsic}
In any of the two following examples:
\begin{itemize}
\item $\Omega=S^2$, the unit sphere in $\R^3$, endowed with the usual flat metric;
\item $\Omega$ is the unit half-sphere in $\R^3$, endowed with the usual flat metric, and Dirichlet conditions are imposed on the great circle which is the boundary of $\Omega$;
\end{itemize}
if $L$ is close enough to $1$ then $\sup_{\chi_\omega\in\mathcal{U}_L} J(\chi_\omega)<L$, and hence there is a gap between the problem \eqref{defJint} and its convexified version.
\end{proposition}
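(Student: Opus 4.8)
The plan is to combine the abundance of Dirac-type quantum limits available on these highly symmetric manifolds, recalled in Remark~\ref{rem_existencegap} after \cite{JakobsonZelditch}, with a quantitative integral-geometric estimate for sets whose complement has small measure. Consider first $\Omega=S^2$. The starting point is that for every great circle $\gamma\subset S^2$ there is a sequence $(\phi_n)\subset\mathcal{E}$ of normalized eigenfunctions (rotated highest-weight spherical harmonics) such that $\phi_n^2\,dV_g$ converges vaguely to the normalized arc-length probability measure $\sigma_\gamma$ carried by $\gamma$. Feeding this sequence into the definition \eqref{defJint} of the intrinsic functional $J_\textrm{int}$ and using the portmanteau theorem on the closed set $\overline{\omega}$ yields, for every measurable $\omega$ and every great circle $\gamma$,
$$
J_\textrm{int}(\chi_\omega)=\inf_{\phi\in\mathcal{E}}\int_\omega\phi^2\,dV_g\ \le\ \liminf_{n\to+\infty}\int_\omega\phi_n^2\,dV_g\ \le\ \sigma_\gamma(\overline{\omega})=\frac{1}{2\pi}\,\mathrm{length}(\gamma\cap\overline{\omega}).
$$
Modulo a standard regularization of $\omega$ (replacing it by its set of density points and approximating from inside by open sets, so that one may assume $V_g(\partial\omega)=0$), this gives $\sup_{\chi_\omega\in\mathcal{U}_L}J_\textrm{int}(\chi_\omega)\le\sup_{\chi_\omega}\inf_\gamma\tfrac{1}{2\pi}\mathrm{length}(\gamma\cap\omega)$, and the whole statement reduces to the geometric claim: for $L$ close enough to $1$ there is $c>0$ such that every $\omega$ with $V_g(\omega)=4\pi L$ admits a great circle $\gamma$ with $\tfrac{1}{2\pi}\mathrm{length}(\gamma\cap\omega)\le L-c$; equivalently, the complement $\omega^c$, of small area $4\pi(1-L)$, is met by some great circle in relative length at least $(1-L)+c$.

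To establish this I would use the spherical Crofton formula: the rotation-invariant average over great circles of $\tfrac{1}{2\pi}\mathrm{length}(\gamma\cap\omega^c)$ equals $V_g(\omega^c)/V_g(S^2)=1-L$, i.e.\ the Funk transform $\mathcal{R}f(\gamma)=\int_\gamma f\,ds$ satisfies $\mathcal{R}\chi_{\omega^c}$ of mean $2\pi(1-L)$. Since $\mathcal{R}$ annihilates exactly the antipodally odd functions, $\mathcal{R}\chi_{\omega^c}$ is constant if and only if the even part $e(x)=\tfrac12(\chi_{\omega^c}(x)+\chi_{\omega^c}(-x))$ is constant, which for an $\{0,\tfrac12,1\}$-valued function forces $1-L\in\{0,\tfrac12,1\}$. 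Hence for $L\notin\{0,\tfrac12,1\}$ the maximum of $\mathcal{R}\chi_{\omega^c}$ strictly exceeds its mean, which already gives the claim pointwise in $\omega$. The delicate issue is uniformity over $\omega$. When $\omega^c$ is concentrated (for instance contained in a cap of radius $\sim\sqrt{1-L}$) a single Gaussian beam through it produces a gap of order $\sqrt{1-L}$, the dominant effect for $L$ near $1$; the genuine difficulty is to rule out highly dispersed complements (unions of many tiny, far apart pieces) for which every single great circle meets $\omega^c$ in length close to the Crofton average. For those configurations I would not rely on the limiting measures $\sigma_\gamma$ alone but on pre-limit, scale-matched eigenfunctions: choosing the energy level so that the beam width matches the diameter of the pieces of $\omega^c$, one obtains an eigenfunction whose mass on $\omega^c$ exceeds $(1-L)$ by a definite factor. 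I would quantify this through a lower bound on $e-(1-L)$ in a negative Sobolev norm, using that $\|e-(1-L)\|_{L^2}^2\ge (L-\tfrac12)(1-L)\,V_g(S^2)$ for $L>\tfrac12$ (since $e$ is $\{0,\tfrac12,1\}$-valued of mean $1-L$), combined with the half-derivative smoothing of $\mathcal{R}$, to extract a deficit $c=c(L)>0$ for each $L$ sufficiently close to $1$.

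For the unit half-sphere with Dirichlet conditions on its bounding great circle, the eigenfunctions are precisely the spherical harmonics odd under the reflection fixing the boundary; lifting to $S^2$ by odd reflection, their squares are reflection-even and the Dirac-type quantum limits are carried by the geodesics of the half-sphere, namely the half-great-circles reflecting on the boundary according to the optical law. The same reduction and the same Crofton/Funk argument then go through on the half-sphere (or, after lifting, on $S^2$ restricted to reflection-even data), and the conclusion $\sup_{\chi_\omega\in\mathcal{U}_L}J_\textrm{int}(\chi_\omega)<L$ follows for $L$ close to $1$, hence a strict gap with the convexified value $L$. The main obstacle is exactly the uniform geometric estimate of the second paragraph: on dispersed configurations of $\omega^c$ the single-great-circle bound degenerates to the Crofton mean, and one must exploit finite-energy concentration (Gaussian beams matched to the scale of $\omega^c$) rather than the limiting arc-length measures in order to secure a deficit $c(L)>0$ valid simultaneously for all admissible $\omega$.
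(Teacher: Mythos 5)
Your reduction of the proposition to a geometric statement about great circles is the same as the paper's (both rest on the fact, from \cite{JakobsonZelditch}, that the normalized arc-length measure along any great circle is a quantum limit, so $J_\textrm{int}(\chi_\omega)\le\inf_\gamma\mu_\gamma(\omega)$), and your odd-reflection lift for the half-sphere matches the paper's construction $\psi_j=(\phi_j-\phi_j\circ\mathcal{S})/\sqrt{2}$. The gap is in the geometric step, and it is a real one that you yourself flag without closing. Averaging over \emph{all} great circles is self-defeating: the Crofton mean of $\frac{1}{2\pi}\mathrm{length}(\gamma\cap\omega)$ is exactly $L$, the borderline (convexified) value, so everything hinges on a deviation of the Funk transform above its mean that is uniform over all admissible $\omega$. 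Your proposed quantification does not deliver this: the lower bound $\|e-(1-L)\|_{L^2}^2\ge(L-\tfrac12)(1-L)V_g(S^2)$ on the even part is an $L^2$ bound, whereas the half-derivative smoothing of the Funk transform only transfers a lower bound on $\|e-(1-L)\|_{H^{-1/2}}$ to the Funk side; for the highly dispersed complements you worry about, $e-(1-L)$ is large in $L^2$ but can be arbitrarily small in $H^{-1/2}$, so the deficit $c(L)$ extracted this way can degenerate to $0$. The fallback via scale-matched Gaussian beams leaves the quantum-limit framework entirely and would require new estimates you do not supply.

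The paper avoids the uniformity problem altogether by \emph{not} averaging over all great circles. It fixes an antipodal pair $N,S$ and considers only the one-parameter family of great circles $\gamma_\theta$ through them. Writing the area of $\omega$ in spherical coordinates, $4\pi L=\int_0^{2\pi}\int_0^\pi\chi_\omega(\varphi,\theta)\sin\varphi\,d\varphi\,d\theta$, and bounding $\sin\varphi\ge\sin\varepsilon$ away from the poles, one gets $4\pi L\ge\sin\varepsilon\int_0^{2\pi}\bigl(\vert\gamma_\theta\cap\omega\vert-2\varepsilon\bigr)d\theta$; if every $\gamma_\theta$ satisfied $\mu_{\gamma_\theta}(\omega)>3L/4$ this would force $4\pi L>2\pi\sin\varepsilon(3\pi L/2-2\varepsilon)$, which fails for $\varepsilon=\pi/4$ and $L$ near $1$. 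The point is that along this restricted family the relevant average is taken against $d\varphi\,d\theta$ while the area constraint is expressed against $\sin\varphi\,d\varphi\,d\theta$; the mismatch of these two measures near the poles produces a deficit that depends only on $L$, with no reference to how $\omega$ is distributed, so no uniformity issue ever arises. You would need either to adopt this restricted-family averaging or to supply the missing uniform lower bound on the Funk-transform deviation; as written, the proof is incomplete.
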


\begin{proof}
Assume first that $\Omega=S^2$, the unit sphere in $\R^3$.
As mentioned in Remark \ref{rem_existencegap}, in \cite{JakobsonZelditch} it is proved  that the set of semi-classical measures on $S^2$ coincides with the convex set of invariant probability measures for the geodesic flow that are time-reversal invariant. In particular, the Dirac measure $\mu_\gamma$ of any great circle $\gamma$ on $S^2$ (defined as an equator, up to a rotation) is the projection of a semi-classical measure. The measure $\mu_\gamma$ is the arc-length measure defined by
$$
\mu_\gamma(\omega)=\frac{1}{2\pi}\int_{\gamma\cap\omega}ds = \frac{1}{2\pi}\vert\gamma\cap\omega\vert,
$$
for every measurable subset $\omega$ of $S^2$. 
Besides, since the uniform measure is a quantum limit as well, $S^2$ satisfies WQE and hence $\sup_{a\in\overline{\mathcal{U}}_L} J(a)=L$ (and the supremum is reached with the constant function $a=L$).
%Let us prove that, if $L$ is close enough to $1$ then $\sup_{\chi_\omega\in\mathcal{U}_L} J(\chi_\omega)<L$, and hence there is a gap between the problem \eqref{reducedsecondpb}.
Denoting by $\sigma$ the Lebesgue measure of $S^2$, $\mathcal{U}_L$ is the set of all measurable subsets $\omega$ of $S^2$ of measure $\sigma(L)=4\pi L$. 
For every $\omega\in\mathcal{U}_L$, one has
$$
4\pi L = \int_0^{2\pi} \int_0^\pi \chi_\omega(\varphi,\theta)\sin\varphi\,d\varphi d\theta
\geq \sin\varepsilon\int_0^{2\pi} \int_\varepsilon^{\pi-\varepsilon} \chi_\omega(\varphi,\theta)\, d\varphi d\theta \geq \sin\varepsilon \int_0^{2\pi}( \vert\gamma_\theta\cap\omega\vert-2\varepsilon) d\theta,
$$
for every $\varepsilon\in[0,\pi/2]$, where $\gamma_\theta$ denotes the great circle joining the north pole to the south pole at longitude $\theta$ (where a north pole is fixed arbitrarily). By contradiction, assume that $\mu_{\gamma_\theta}(\omega)>3L/4$ for every $\theta\in[0,2\pi]$. Then we infer that
$4\pi L>2\pi\sin\varepsilon(3\pi L/2-2\varepsilon)$, which raises a contradiction when choosing e.g. $\varepsilon=\pi/4$ and $L$ close to $1$.
It then follows that
$$
J(\chi_\omega)=\inf_{j\in\N^*}\int_\omega\phi_j^2\leq \inf_{\theta\in[0,2\pi]} \mu_{\gamma_\theta}(\omega) \leq \frac{3L}{4},
$$
for every $\omega\in\mathcal{U}_L$, whence the gap.

Assume now that $\Omega$ is the unit half-sphere of $\R^3$. As recalled above, for every great circle $C$ of $S^2$ there exists a sequence of squares of eigenfunctions $\phi_j$ whose support concentrates along $C$. Let $\mathcal{S}$ denote the orthogonal symmetry with respect to the hyperplane passing through the origin, cutting $S^2$ into two half-spheres, one of which being $\Omega$. Then, $\psi_j=(\phi_j-\phi_j\circ\mathcal{S})/\sqrt{2}$ is an eigenfunction of the Dirichlet-Laplacian on $\Omega$. Let us prove\footnote{This idea emerged from discussions with Luc Hillairet.} that the support of $\psi_j^2$ concentrates on the union of two symmetric half-circles of $\Omega$, as drawn on Figure \ref{fighalfsphere}.
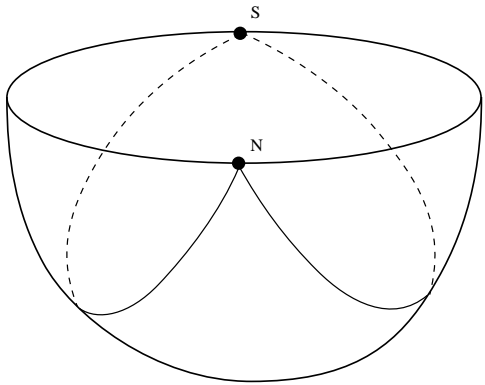
\begin{figure}[H]
\begin{center}
\scalebox{0.55} % Change this value to rescale the drawing.
{
\begin{pspicture}(0,-4.568994)(11.52,4.588994)
\psellipse[linewidth=0.04,dimen=outer](5.75,2.3210058)(5.75,1.61)
\psbezier[linewidth=0.04](0.019493513,2.3310058)(0.019493513,0.8423509)(0.22383018,-0.44645917)(0.9,-1.6889942)(1.5761698,-2.931529)(3.6,-4.548994)(5.9242086,-4.548994)(8.248418,-4.548994)(9.34,-3.768994)(10.14,-2.548994)(10.94,-1.3289942)(11.5,0.13100585)(11.489801,2.3310058)
\psdots[dotsize=0.322](5.62,0.73100585)
\psdots[dotsize=0.322](5.66,3.8710058)
\usefont{T1}{ptm}{m}{n}
\rput(6.058662,1.1610059){\large N}
\usefont{T1}{ptm}{m}{n}
\rput(6.036787,4.381006){\large S}
\psbezier[linewidth=0.03](5.62,0.67100585)(6.16,-0.28899413)(6.74,-1.0889941)(7.52,-1.8689941)(8.3,-2.6489942)(9.34,-3.1689942)(10.18,-2.4689941)
\psbezier[linewidth=0.03](5.62,0.61100584)(5.18,-0.42899415)(4.3897667,-1.4649626)(3.68,-2.2089942)(2.9702332,-2.9530256)(2.22,-3.088994)(1.78,-2.788994)
\psbezier[linewidth=0.03,linestyle=dashed,dash=0.16cm 0.16cm](1.72,-2.7289941)(1.22,-1.4489942)(1.5,-0.40899414)(2.24,0.8310059)(2.98,2.0710058)(4.1,3.0910058)(5.6,3.8110058)
\psbezier[linewidth=0.03,linestyle=dashed,dash=0.16cm 0.16cm](10.24,-2.4489942)(10.72,-0.6089941)(9.74,0.55100584)(9.0,1.4710059)(8.26,2.3910058)(7.0,3.3110058)(5.76,3.8510058)
\end{pspicture} 
}
\end{center}
\caption{The half-sphere.}\label{fighalfsphere}
\end{figure}
Indeed, since $\psi_j^2=\frac{1}{2}(\phi_j^2+\vert\phi_j\circ\mathcal{S}\vert^2+\phi_j\cdot\phi_j\circ\mathcal{S})$, it suffices to prove that for every $a\in L^\infty(\Omega)$, $\int_\Omega a(x)\phi_j(x)\phi_j\circ\mathcal{S}(x)\, d\sigma(x)$ tends to $0$ as $j$ tends to $+\infty$. But this fact is obvious since the measure of the intersection of the corresponding supports tends to $0$.
The following (interesting in itself) fact follows: the Dirac measure along every union of symmetric half-circles on $\Omega$ is the projection of a semi-classical measure. Note however that, in this construction, the half-circles passing through the lowest point of the half-sphere cannot be considered.

Then, the same calculation as before can be led. Indeed, let us fix a point $N$ of the boundary of $\Omega$, and let $S$ be the diametrically symmetric point, as on Figure \ref{fighalfsphere}. If we think of $N$ and $S$ as a north pole and south pole, then any curve consisting of the union of two symmetric half-circles emerging from $N$ and $S$ can be viewed, with evident symmetries, as a great circle $\gamma_\theta$ of $S^2$ as considered previously.
Then, the same argument can be applied and leads to the desired conclusion.
\end{proof}

\section{Spectral approximation of the uniform optimal design problem}\label{sectrunc2}
In this section, we consider a spectral truncation of the functional $J$ defined by \eqref{defJ}, and we define
\begin{equation}\label{defJN}
J_N(\chi_\omega)=\min_{1\leq j\leq N}\int_\omega \phi_j(x)^2\, dV_g,
\end{equation}
for every $N\in\N^*$ and every measurable subset $\omega$ of $\Omega$, and we consider the spectral approximation of the second problem (uniform optimal design problem)
\begin{equation}\label{PbHH2}
\sup_{\chi_\omega\in\mathcal{U}_L}J_N(\chi_\omega).
\end{equation}
As before, the functional $J_N$ is naturally extended to $\overline{\mathcal{U}}_L$ by
$$ J_N(a)=\min_{1\leq j\leq N}\int_\Omega a(x)\phi_j(x)^2\, dV_g,$$
for every $a\in\overline{\mathcal{U}}_L$.

\subsection{Existence, uniqueness and $\Gamma$-convergence properties}
One has the following result.
\begin{theorem}\label{thm3}
\begin{enumerate}
\item For every measurable subset $\omega$ of $\Omega$, the sequence $(J_N(\chi_\omega))_{N\in \N^*}$ is nonincreasing and converges to $J(\chi_\omega)$.
\item There holds
$$
\lim_{N\to +\infty}\ \max_{a\in\overline{\mathcal{U}}_L}J_N(a)=\max_{a\in\overline{\mathcal{U}}_L}J(a).
$$
Moreover, if $(a^N)_{n\in\N^*}$ is a sequence of maximizers of $J_N$ in $\overline{\mathcal{U}}_L$, then up to a subsequence, it converges to a maximizer of $J$ in $\overline{\mathcal{U}}_L$ for the weak star topology of $L^\infty$.
\item 
%Assume that the family $(\phi_j^2)_{j\in\N^*}$ consists of linearly independent functions. 
Assume that $M$ is an analytic Riemannian manifold and that $\Omega$ has a nontrivial boundary. Then, for every $N\in\N^*$, the problem \eqref{PbHH2} has a unique solution $\chi_{\omega^N}$, where $\omega^N\in{\mathcal{U}}_L$. Moreover, $\omega^N$ is semi-analytic (see Footnote \ref{footnoteSemiAna}) and has a finite number of connected components.
\end{enumerate}
\end{theorem}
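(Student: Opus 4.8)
The three assertions are of increasing difficulty, so I would dispatch them in order. For the first, I would simply observe that $J_N(\chi_\omega)$ is a minimum over the index set $\{1,\dots,N\}$, which grows with $N$, so the sequence is nonincreasing; it is bounded below by $J(\chi_\omega)=\inf_j I_j(\omega)$ and hence converges to some $\ell\geq J(\chi_\omega)$. For the reverse inequality, given $\varepsilon>0$ pick $j_0$ with $I_{j_0}(\omega)<J(\chi_\omega)+\varepsilon$; then $J_N(\chi_\omega)\leq I_{j_0}(\omega)<J(\chi_\omega)+\varepsilon$ for every $N\geq j_0$, whence $\ell=J(\chi_\omega)$.

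For the second assertion I would exploit the weak-star compactness of $\overline{\mathcal{U}}_L$ together with the fact that each $\phi_k^2$ is a fixed $L^1$ test function. Since $J_N\geq J_{N+1}\geq J$ pointwise on $\overline{\mathcal{U}}_L$, taking suprema gives that $\max_{\overline{\mathcal{U}}_L}J_N$ is nonincreasing and bounded below by $\max_{\overline{\mathcal{U}}_L}J$, hence converges to some $\ell\geq\max J$. Let $a^N$ maximize $J_N$; extracting a subsequence, $a^N\rightharpoonup a^\ast$ weak-star in $L^\infty$. For fixed $k$ and every $N\geq k$ one has $\int_\Omega a^N\phi_k^2\,dV_g\geq J_N(a^N)\geq\ell$, and passing to the limit against $\phi_k^2\in L^1(\Omega)$ yields $\int_\Omega a^\ast\phi_k^2\,dV_g\geq\ell$ for every $k$, so $J(a^\ast)\geq\ell$. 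Combined with $\max J\geq J(a^\ast)\geq\ell\geq\max J$, this forces $\ell=\max J$ and shows $a^\ast$ is a maximizer of $J$.

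The third assertion is the delicate one. I would first rewrite $J_N(a)=\min_{\alpha\in\Delta_N}\int_\Omega a\,\varphi_\alpha\,dV_g$, with $\varphi_\alpha=\sum_{j=1}^N\alpha_j\phi_j^2$ and $\Delta_N$ the probability simplex on $\{1,\dots,N\}$. The map $(a,\alpha)\mapsto\int_\Omega a\,\varphi_\alpha\,dV_g$ is bilinear and continuous, $\overline{\mathcal{U}}_L$ is convex and weak-star compact, and $\Delta_N$ is convex compact, so Sion's minimax theorem furnishes a saddle point $(a^N,\alpha^N)$; in particular $a^N$ maximizes the linear functional $a\mapsto\int_\Omega a\,\varphi_{\alpha^N}\,dV_g$ over $\overline{\mathcal{U}}_L$, and by the bathtub principle it has the form $a^N=\chi_{\{\varphi_{\alpha^N}>\lambda\}}+\theta\,\chi_{\{\varphi_{\alpha^N}=\lambda\}}$ for a level $\lambda$ fixing the volume. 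Since $M$ is analytic, every eigenfunction is analytic in the interior of $\Omega$, so $\varphi_{\alpha^N}$, a finite sum of squares of analytic functions, is analytic; provided it is nonconstant, each of its level sets has zero measure, the relaxation term $\theta\,\chi_{\{\varphi_{\alpha^N}=\lambda\}}$ disappears, and $a^N=\chi_{\omega^N}$ with $\omega^N=\{\varphi_{\alpha^N}\geq\lambda\}\in\mathcal{U}_L$. This set is cut out by an analytic inequality, hence is semi-analytic, and by the stratification properties of semi-analytic sets (see \cite{Hardt,Hironaka}) has finitely many connected components.

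The main obstacle, and the true key step, is ruling out the degenerate case in which $\varphi_\alpha$ is constant for some $\alpha\in\Delta_N$. If $\varphi_\alpha\equiv c$, integrating gives $c=1/V_g(\Omega)$ and the maximal value collapses to $L$; conversely a short computation shows that for a nonnegative $\varphi_\alpha$ of unit integral the bathtub value equals $L$ precisely when $\varphi_\alpha$ is constant, so the statements \emph{$\max J_N>L$} and \emph{$\varphi_\alpha$ is nonconstant for every $\alpha\in\Delta_N$} are equivalent. Under the boundary hypothesis this degeneracy is excluded: for Dirichlet conditions each $\phi_j$, and hence $\varphi_\alpha$, vanishes on $\partial\Omega$ and so cannot equal the positive constant $1/V_g(\Omega)$, and the remaining boundary conditions are treated through the analogous use of the eigenfunction structure. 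Once nonconstancy is secured for every optimal $\alpha$, uniqueness follows: the set of maximizers of the concave functional $J_N$ is convex, and by the saddle-point structure every maximizer is the bathtub solution associated to some optimal $\alpha$, which by the zero measure of level sets is a \emph{unique} characteristic function; two distinct optimal sets would make their average a maximizer that is not a characteristic function, a contradiction. This yields existence, uniqueness and the claimed structure of $\omega^N$.
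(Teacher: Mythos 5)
Your proposal follows essentially the same route as the paper's proof on all three points: the monotone-limit argument for item 1 (yours is in fact slightly more direct than the paper's, which passes through the representation of $J_N$ and $J$ as infima over the simplex of convex combinations), weak-star compactness plus continuity of each $J_p$ for item 2, and, for item 3, Sion's minimax theorem, the bathtub/level-set characterization of the maximizer of the linearized problem, and analyticity to rule out constancy of $\sum_{j=1}^N\alpha_j^N\phi_j(x)^2$ on a set of positive measure; your uniqueness argument (every maximizer of $J_N$ must maximize the linear functional attached to the saddle weight, hence is the unique bathtub characteristic function, and convexity of the set of maximizers then forbids two distinct ones) is a correct and slightly more explicit version of what the paper leaves implicit. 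The one genuine soft spot is the nonconstancy step for boundary conditions other than Dirichlet: for Neumann or Robin conditions the eigenfunctions do not vanish on $\partial\Omega$, so your Dirichlet contradiction is unavailable, and ``the remaining boundary conditions are treated through the analogous use of the eigenfunction structure'' is an assertion, not an argument. The paper treats the Neumann case separately, by noting that if $\sum_{j=1}^N\alpha_j^N\phi_j^2$ were constant on $\Omega$ then $\triangle_g\bigl(\sum_{j=1}^N\alpha_j^N\phi_j^2\bigr)$ would vanish up to the boundary and deriving a contradiction with $\alpha_j^N\geq 0$, $\sum_{j=1}^N\alpha_j^N=1$; you need to supply an argument of this kind for each admissible boundary condition before the full statement is established.
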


\begin{proof}
For every measurable subset $\omega$ of $\Omega$, the sequence $(J_N(\chi_\omega))_{N\in \N^*}$ is clearly nonincreasing and thus is convergent. Note that
\begin{equation*}
\begin{split}
J_N(\chi_\omega) & =  \inf\left\{\sum_{j=1}^N \alpha_j \int_\omega \phi_j(x)^2\, dV_g\ \Big\vert\ \alpha_j\geq0,\ \sum_{j=1}^N\alpha_j=1\right\},\\
J(\chi_\omega) & =  \inf\left\{\sum_{j\in \N^*} \alpha_j \int_\omega \phi_j(x)^2\, dV_g\ \Big\vert\ \alpha_j\geq0,\ \sum_{j\in\N^*}\alpha_j=1\right\}.
\end{split}
\end{equation*}
Hence, for every $(\alpha_j)_{j\in \N^*}\in \ell^1(\mathbb{R}^+)$, one has
$$
\sum_{j=1}^N\alpha_j \int_\omega \phi_j(x)^2\, dV_g\geq J_N(\chi_\omega)\sum_{j=1}^N\alpha_j,
$$
for every $N\in\N^*$, and letting $N$ tend to $+\infty$ yields
$$
\sum_{j\in\N^*}\alpha_j \int_\omega \phi_j(x)^2\, dV_g\geq \lim_{N\to+\infty}J_N(\omega)\sum_{j\in \N^*}\alpha_j,
$$
and thus $\displaystyle \lim_{N\to +\infty}J_N(\chi_\omega)\leq J(\chi_\omega)$. This proves the first item since there always holds $J_N(\chi_\omega)\geq J(\chi_\omega)$.

Since $J_N$ is upper semi-continuous (and even continuous) for the $L^\infty$ weak star topology and since $\overline{\mathcal{U}}_L$ is compact for this topology, it follows that $J_N$ has at least one maximizer $a^N\in\overline{\mathcal{U}}_L$. Let $\bar a\in \overline{\mathcal{U}}_L$ be a closure point of the sequence $(a^N)_{n\in\N^*}$ in the $L^\infty$ weak star topology. One has, for every $p\leq N$,
$$
\sup_{a\in \overline{\mathcal{U}}_L}J(a)\leq \sup_{a\in\overline{U}_L}J_N(a)=J_N(a^N)\leq J_p(a^N),
$$
and letting $N$ tend to $+\infty$ yields
$$
\sup_{a\in \overline{\mathcal{U}}_L}J(a)\leq \lim_{N\to +\infty}J_N(a^N)\leq \lim_{N\to +\infty}J_p(a^N)=J_p(\bar a),
$$
for every $p\in\N^*$. Since $J_p(\bar a)$ tends to $J(\bar a)\leq \sup_{a\in \overline{\mathcal{U}}_L}J(a)$ as $p$ tends to $+\infty$, it follows that $\bar a$ is a maximizer of $J$ in $\overline{\mathcal{U}_L}$. The second item is proved.

To prove the third item, let us now prove that $J_N$ has a unique maximizer $a^N\in\overline{\mathcal{U}}_L$ of $J_N$, which is moreover a characteristic function. We define the simplex set
$$
\mathcal{A}_N=\{\alpha=(\alpha_j)_{1\leq j\leq N}\ \vert\ \alpha_j\geq0,\ \sum_{j=1}^N\alpha_j=1\}.
$$
Note that
$$
\min_{1\leq j\leq N}\int_{\Omega}a(x) \phi_j(x)^2\, dV_g  = \min_{\alpha\in\mathcal{A}_N}\int_{\Omega}a(x) \sum_{j=1}^N\alpha_j\phi_j(x)^2\, dV_g,
$$
for every $a\in\overline{\mathcal{U}}_L$.
It follows from Sion's minimax theorem (see \cite{Sion}) that there exists $\alpha^N\in \mathcal{A}_N$ such that $(a^N,\alpha^N)$ is a saddle point of the bilinear functional
$$
(a,\alpha)\mapsto \int_{\Omega}a(x) \sum_{j=1}^N\alpha_j\phi_j(x)^2\, dV_g
$$
defined on $\overline{\mathcal{U}}_L\times\mathcal{A}_N$, and
\begin{equation}\label{lasteq17h31}
\begin{split}
& \max_{a\in \overline{\mathcal{U}}_L} \min_{\alpha\in\mathcal{A}_N}\int_{\Omega}a(x) \sum_{j=1}^N\alpha_j\phi_j(x)^2\, dV_g
  =   \min_{\alpha\in\mathcal{A}_N}\max_{a\in \overline{\mathcal{U}}_L}\int_{\Omega}a(x) \sum_{j=1}^N\alpha_j\phi_j(x)^2\, dV_g\\
& =\max_{a\in \overline{\mathcal{U}}_L}\int_{\Omega}a(x) \sum_{j=1}^N\alpha_j^N\phi_j(x)^2\, dV_g
 = \int_{\Omega}a^N(x) \sum_{j=1}^N\alpha_j^N\phi_j(x)^2\, dV_g.
\end{split}
\end{equation}
We claim that the function $x\mapsto \sum_{j=1}^N\alpha_j^N\phi_j(x)^2$ is never constant on any subset of positive measure. 
This fact is proved by contradiction. Indeed otherwise this function would be constant on $\Omega$ (by analyticity).
At this step we have to distinguish between the different boundary conditions under consideration. For Neumann boundary conditions, we infer that $\triangle_g(\sum_{j=1}^N\alpha_j^N\phi_j(x)^2)=0$ on $\partial\Omega$ (by continuity), and therefore $\sum_{j=1}^N\alpha_j^N\lambda_j\phi_j(x)^2=0$ on $\partial\Omega$, whence the contradiction since the coefficients $\alpha_{j}^N$ are nonnegative for every $j\in\{1,\cdots,N\}$ and $\sum_{j=1}^N\alpha_j^N=1$. For the other boundary conditions, we infer that the function $x\mapsto \sum_{j=1}^N\alpha_j^N\phi_j(x)^2$ vanishes on $\bar \Omega$, which is a contradiction.

It follows from this fact and from \eqref{lasteq17h31} that there exists $\lambda^N> 0$ such that
\begin{equation*}
a^N(x) = \left\{ \begin{array}{rl}
1 & \textrm{if}\ \displaystyle\sum_{j=1}^N\alpha_j^N\phi_j(x)^2 \geq \lambda^N , \\
0 & \textrm{otherwise,}
\end{array}\right.
\end{equation*}
for almost every $x\in\Omega$.
Hence there exists $\omega^N\in\mathcal{U}_L$ such that $a^N=\chi_{\omega^N}$.
Since the eigenfunctions $\phi_j$ are analytic in $\Omega$ (by analytic hypoellipticity), it follows that $\omega^N$ is semi-analytic and has a finite number of connected components.
\end{proof}

\begin{remark}
Note that the third item of Theorem \ref{thm3} can be seen as a generalization of \cite[Theorem 3.1]{henrot_hebrardSICON} and \cite[Theorem 3.1]{PriSiga}. We have also provided a shorter proof.
\end{remark}

\begin{remark}
It is proved in \cite{henrot_hebrardSICON,PTZObs1} that, in the one-dimensional case $\Omega=[0,\pi]$ with Dirichlet boundary conditions, the optimal set $\omega_N$ maximizing $J_N$ is the union of $N$ intervals concentrating around equidistant points and that $\omega_N$ is actually the worst possible subset for the problem of maximizing $J_{N+1}$. This is the \textit{spillover phenomenon}.
\end{remark}

\subsection{Numerical simulations: maximizing sequences}\label{sec5.2}
We provide hereafter several numerical simulations based on the modal approximation described previously. 

Assume first that $\Omega=[0,\pi]^2$, the Euclidean two-dimensional square. We consider Dirichlet or Neumann boundary conditions. In the Dirichlet case, the normalized eigenfunctions of $A$ are
$$
\phi_{j,k}(x_{1},x_{2})=\frac{2}{\pi}\sin (jx_{1})\sin (kx_{2}),
$$
for every $(x_{1},x_{2})\in [0,\pi]^{2}$, and in the Neumann case the sine functions are replaced with cosine functions. Let $N\in\N^{*}$. We use an interior point line search filter method to solve the spectral approximation of the second problem
$$
\sup_{\chi_{\omega}\in\mathcal{U}_{L}}J_{N}(\chi_{\omega}),
$$
where
$$
J_{N}(\chi_{\omega})=\min_{1\leq j,k\leq N}\int_{0}^{\pi}\!\int_{0}^{\pi}\chi_{\omega}(x_{1},x_{2})\phi_{j,k}(x_{1},x_{2})^{2}\, dx_{1}\, dx_{2}.
$$
Some results are provided on Figure \ref{figpb2} in the Dirichlet case, and on Figure \ref{figpb2N} in the Neumann case.

\begin{figure}[h!]
\begin{center}
\includegraphics[width=4.9cm]{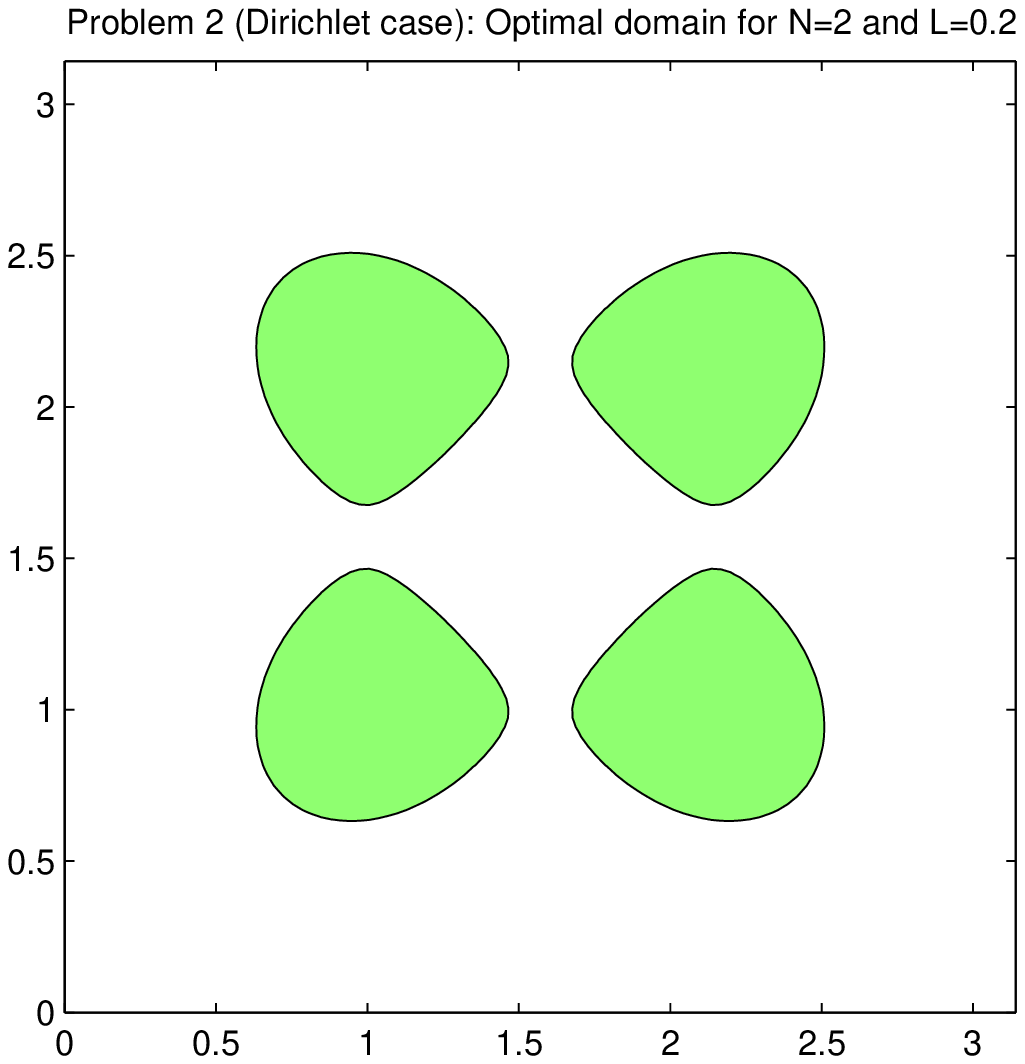}
\includegraphics[width=4.9cm]{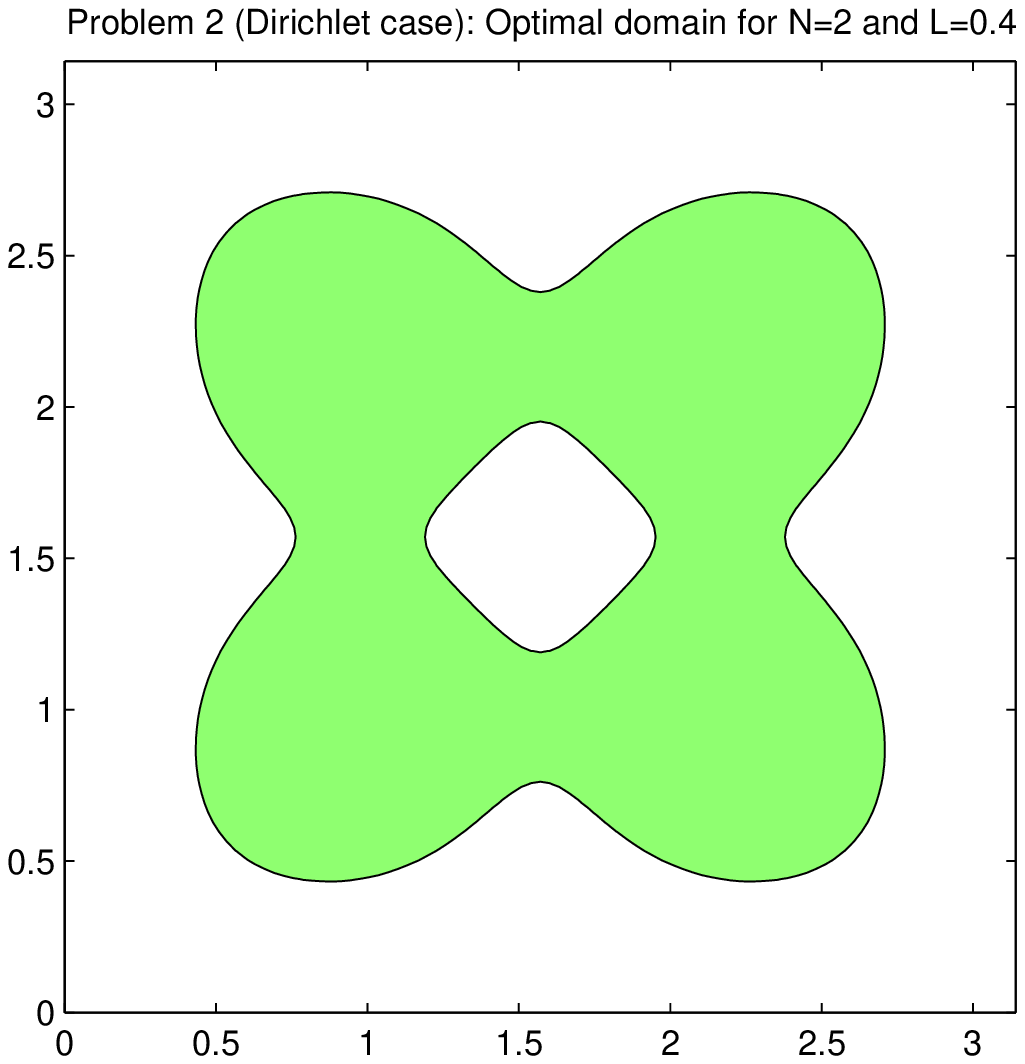}
\includegraphics[width=4.9cm]{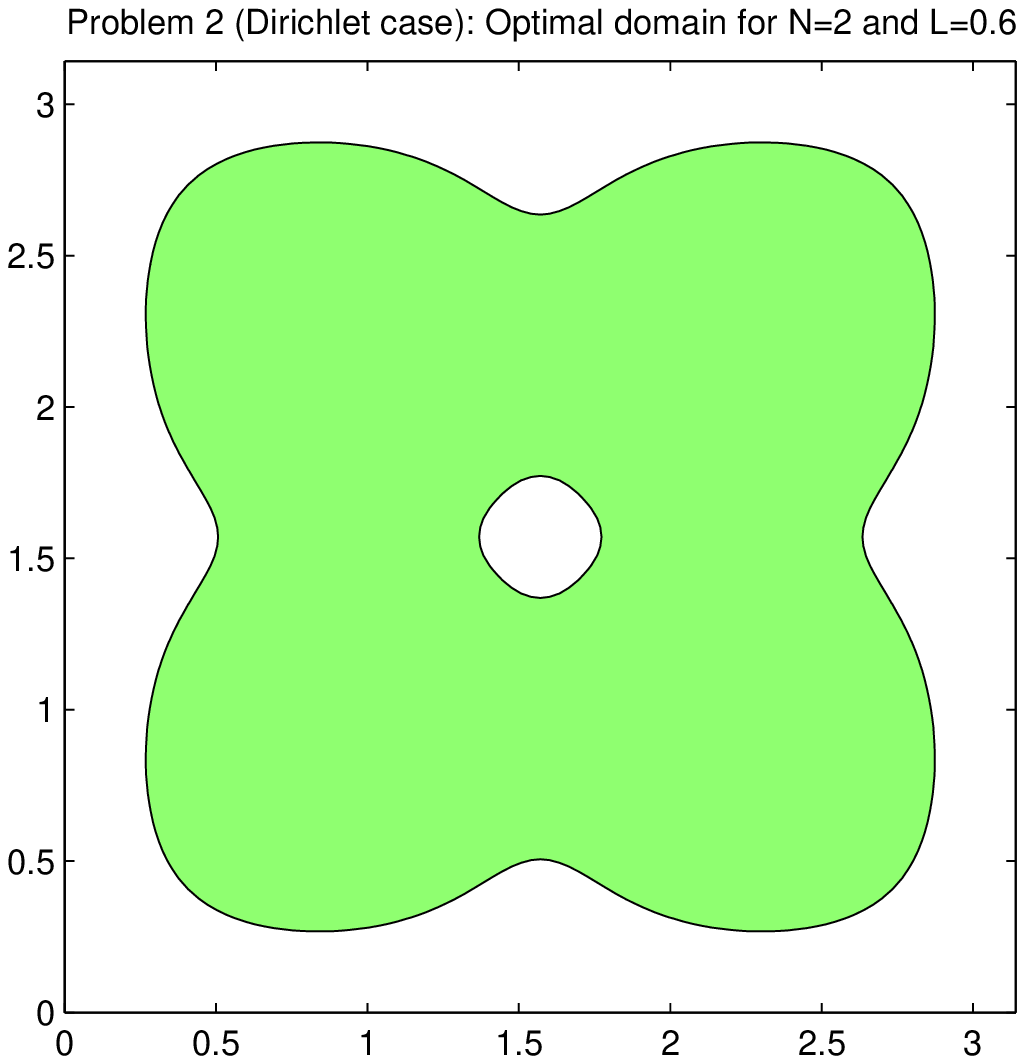}
\includegraphics[width=4.9cm]{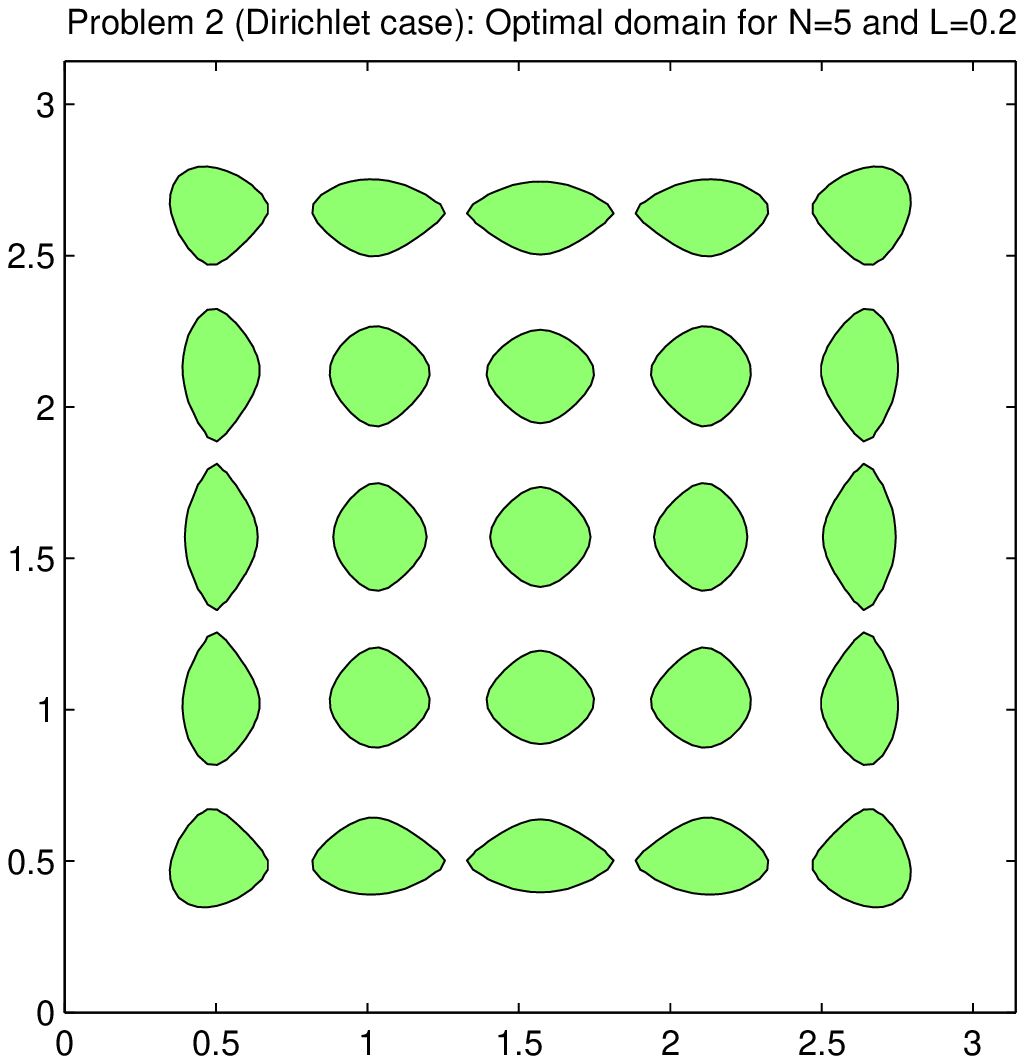}
\includegraphics[width=4.9cm]{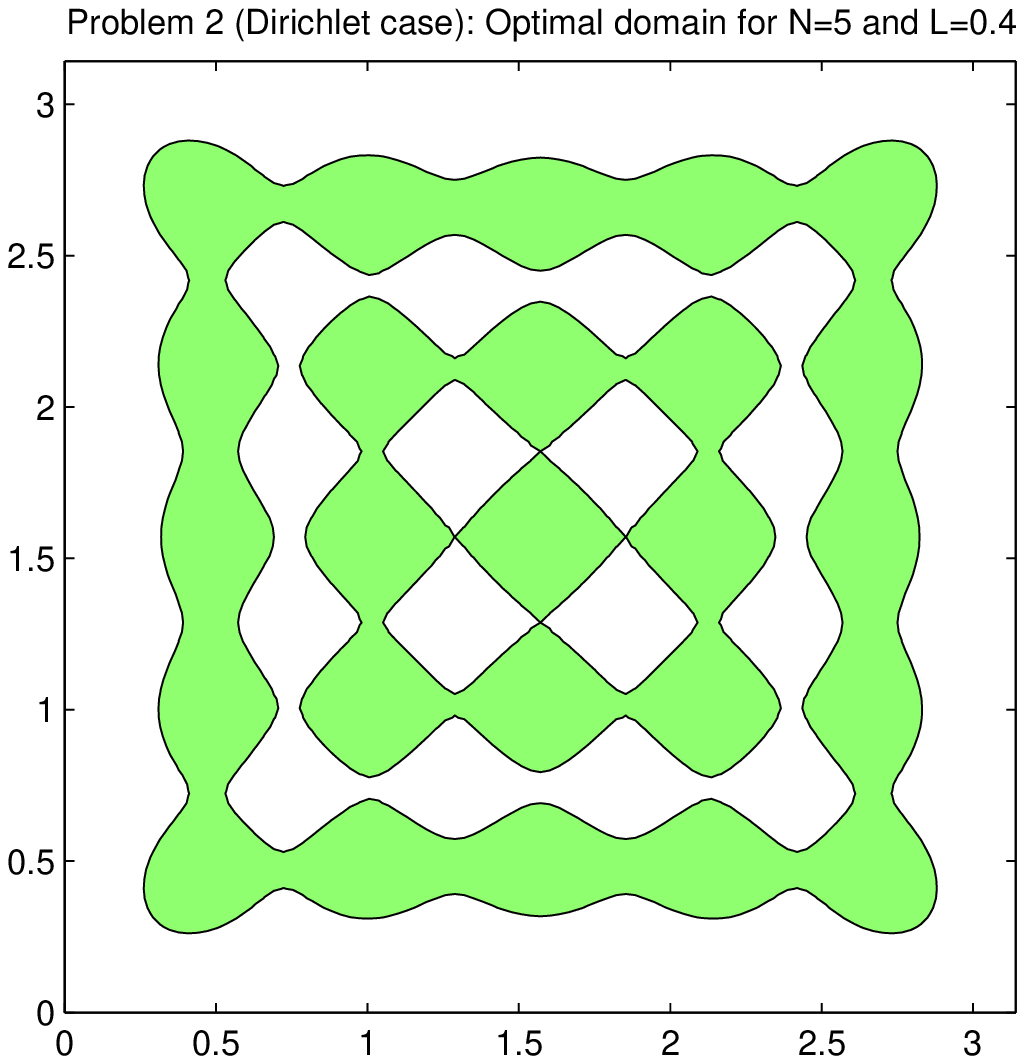}
\includegraphics[width=4.9cm]{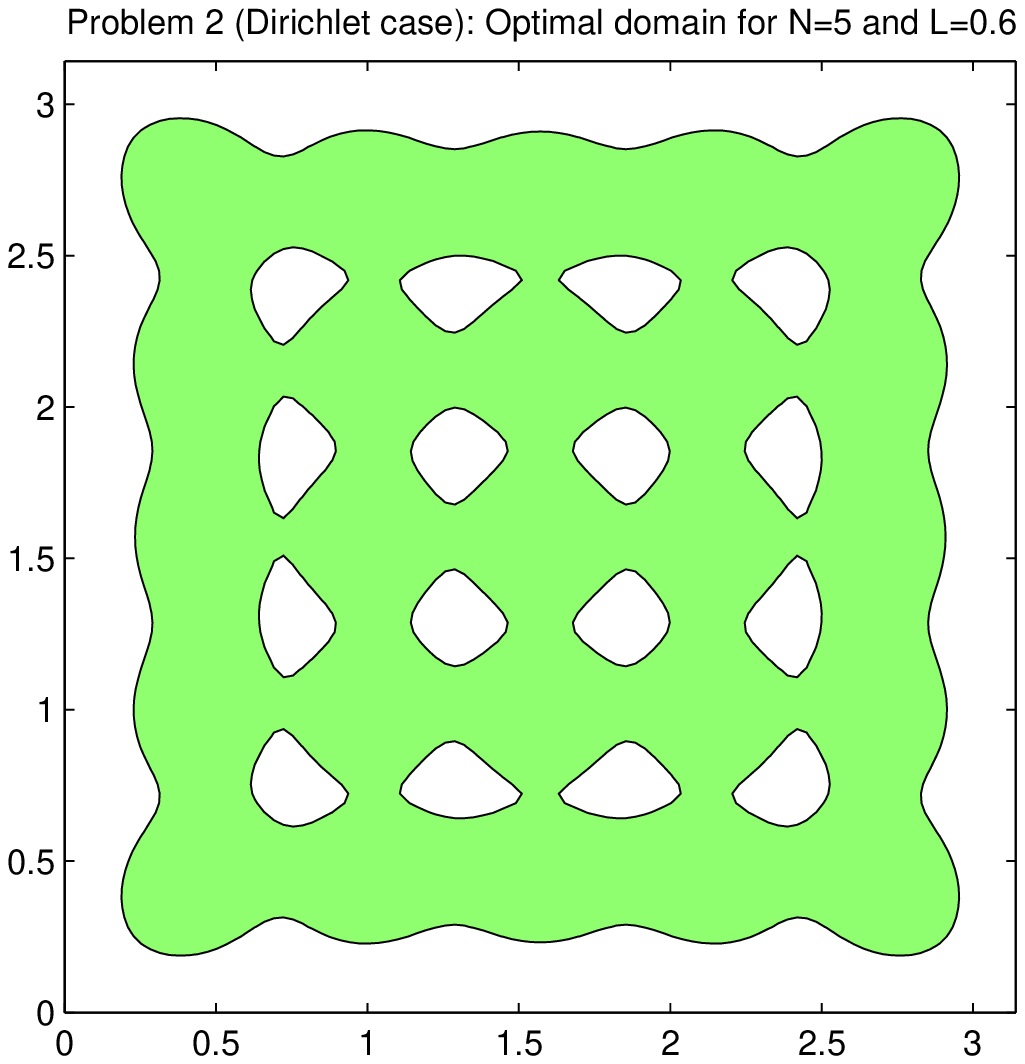}
\includegraphics[width=4.9cm]{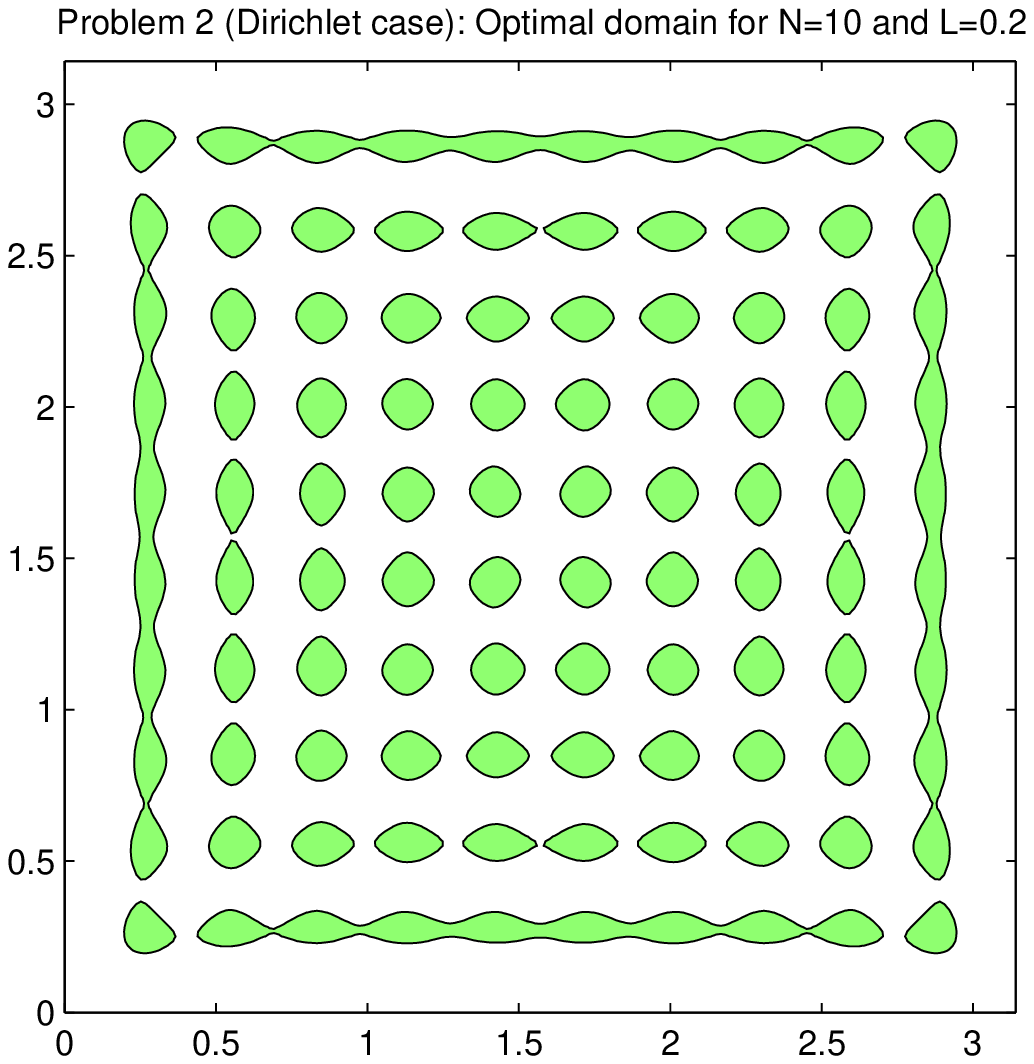}
\includegraphics[width=4.9cm]{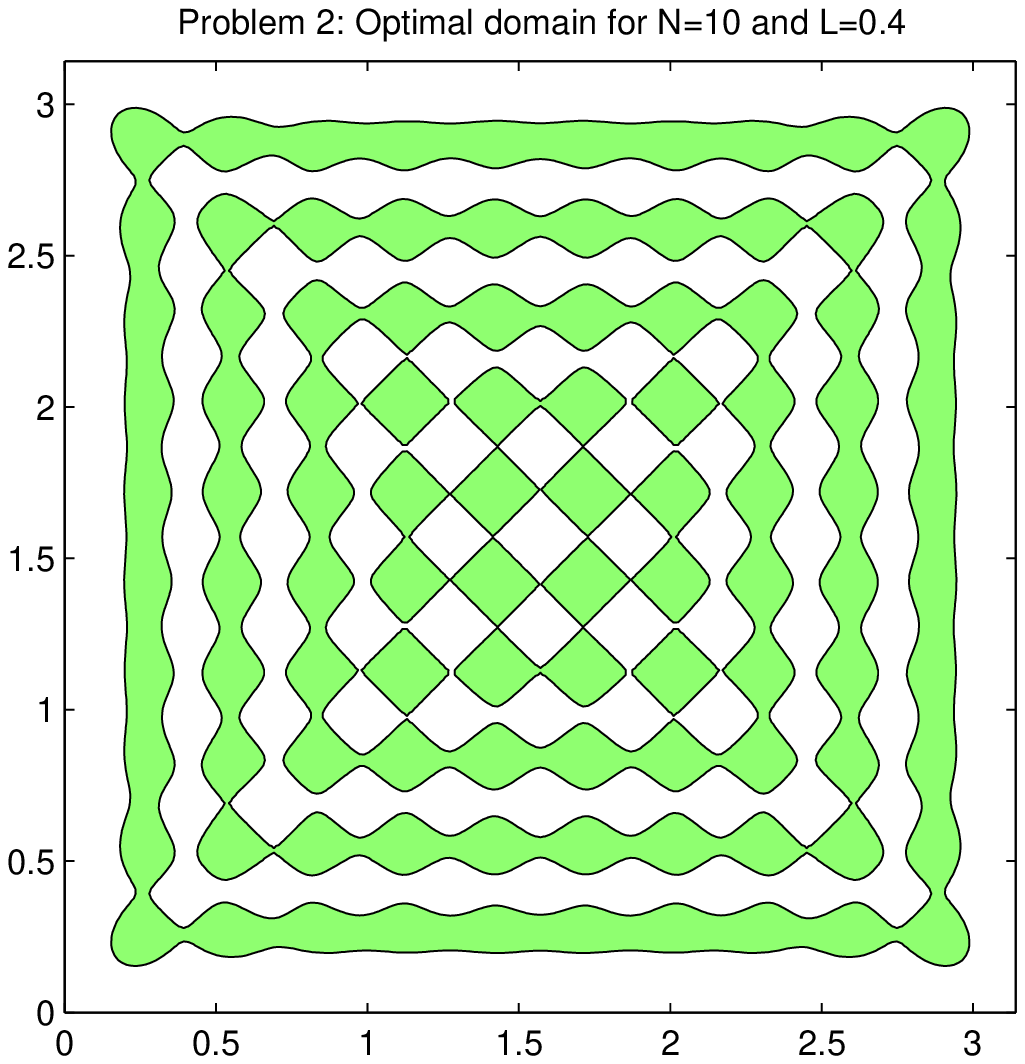}
\includegraphics[width=4.9cm]{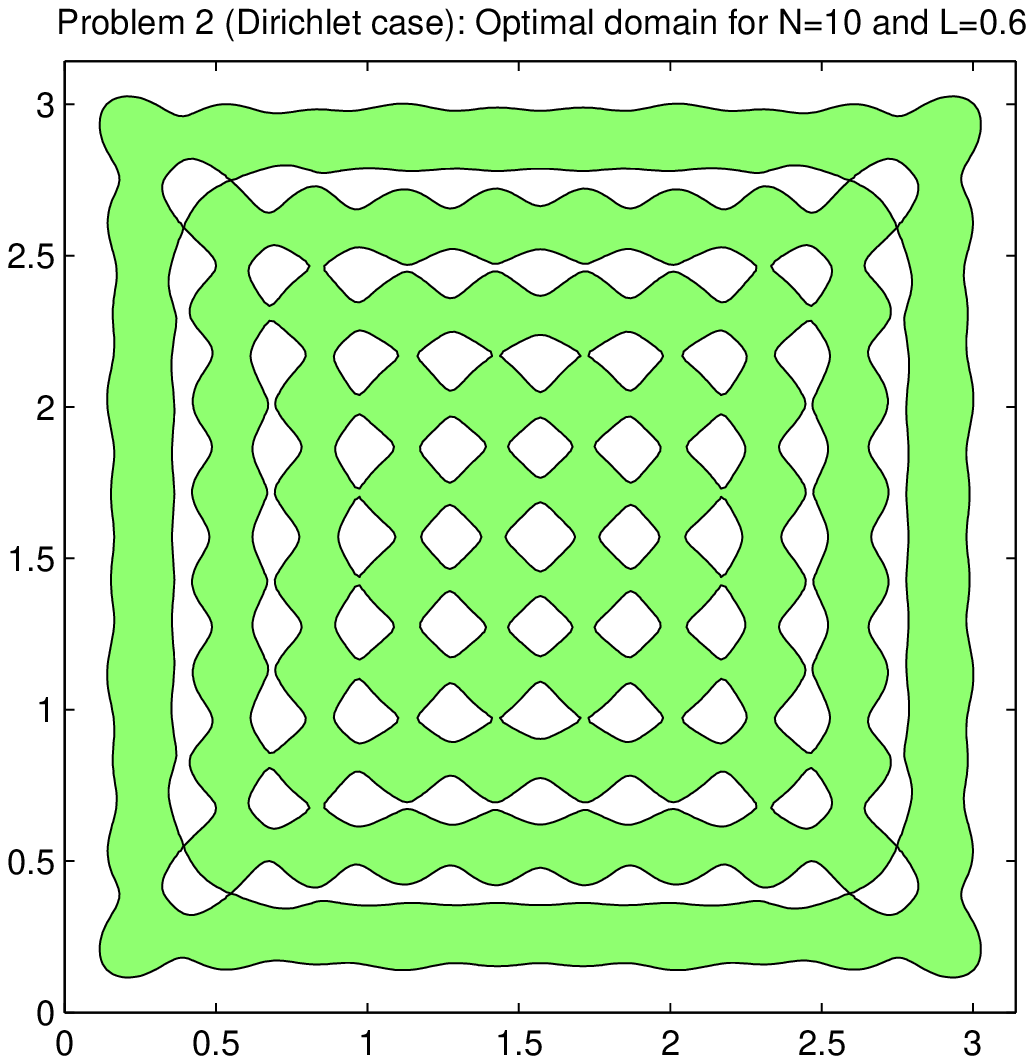}
\includegraphics[width=4.9cm]{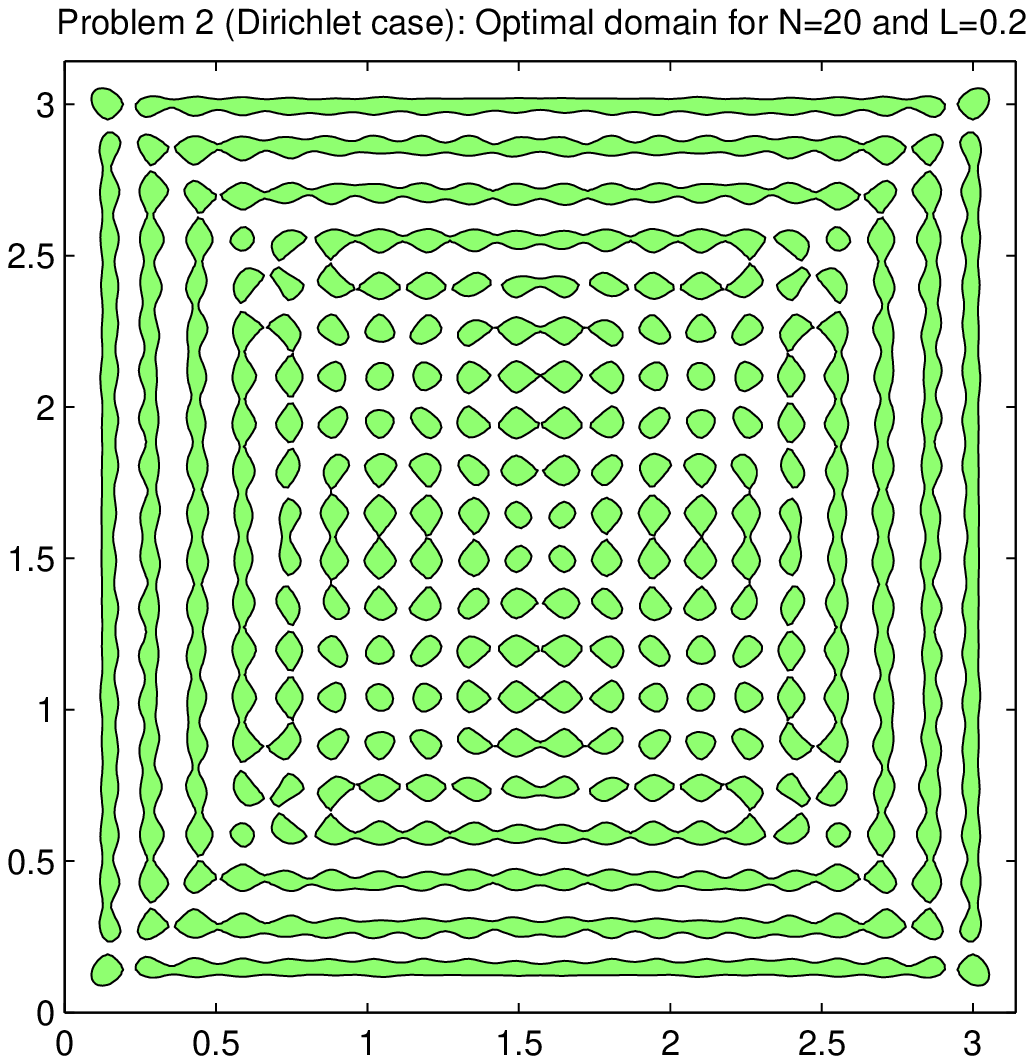}
\includegraphics[width=4.9cm]{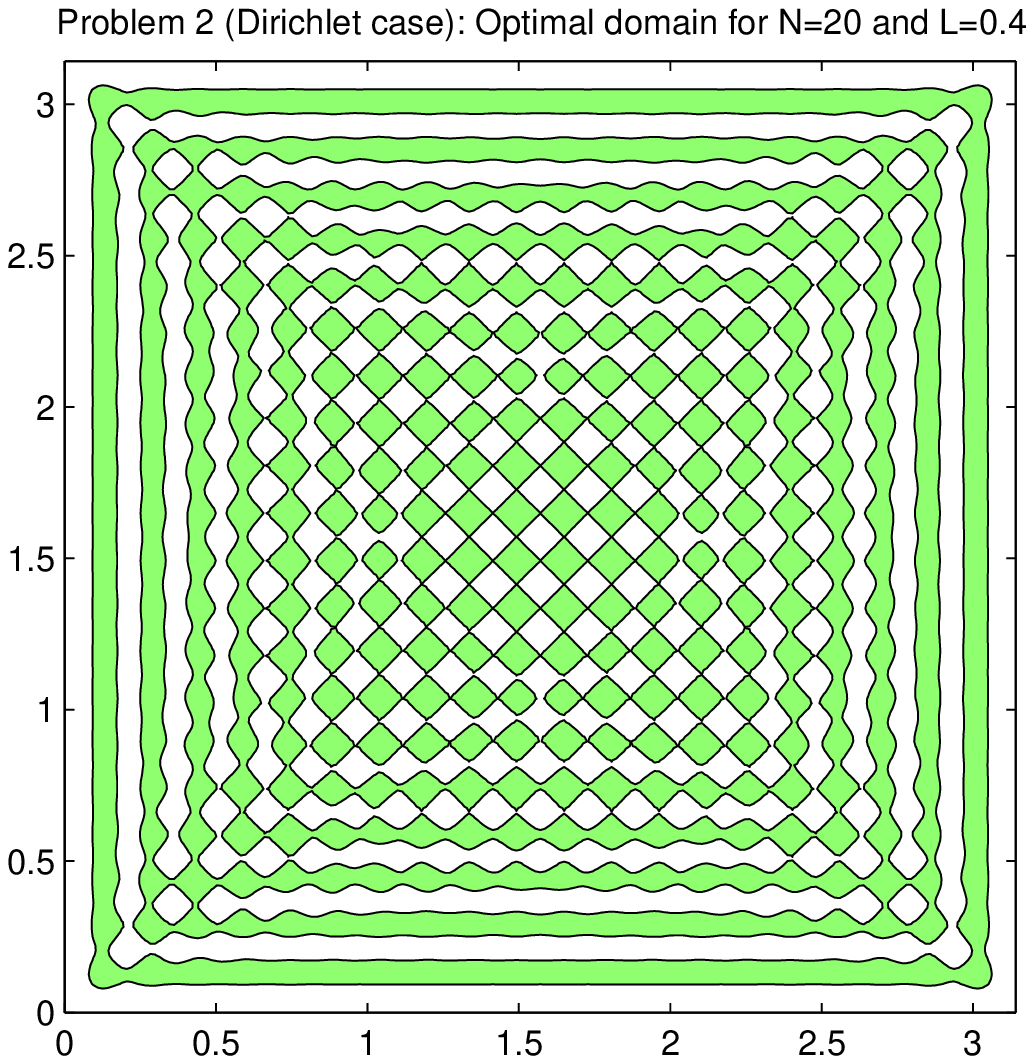}
\includegraphics[width=4.9cm]{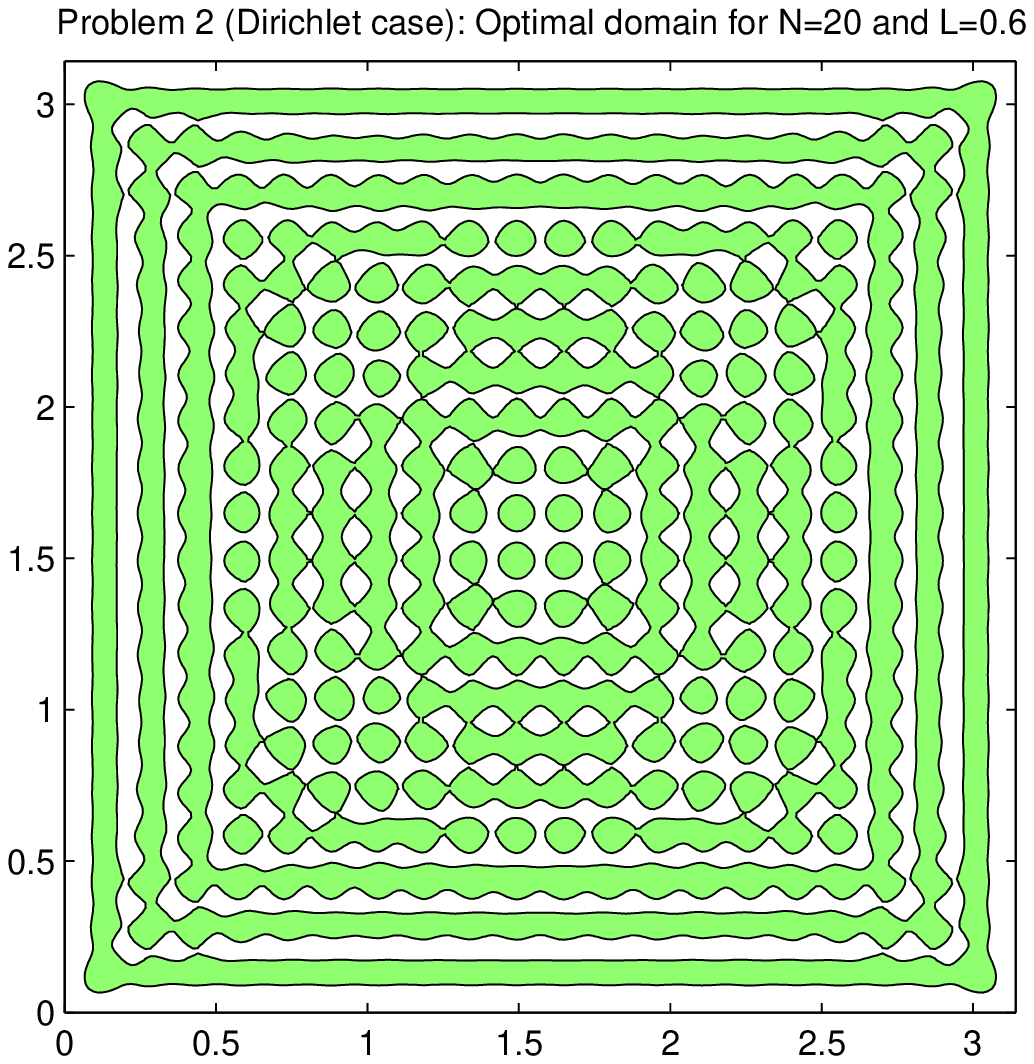}
\caption{On this figure, $\Omega=[0,\pi]^{2}$. Line 1, from left to right: optimal domain (in green) in the Dirichlet case for $N=2$ (4 eigenmodes) and $L\in \{0.2,0.4,0.6\}$. Line 2, from left to right: optimal domain (in green) for $N=5$ (25 eigenmodes) and $L\in \{0.2,0.4,0.6\}$. Line 3, from left to right: optimal domain (in green) for $N=10$ (100 eigenmodes) and $L\in \{0.2,0.4,0.6\}$. Line 4, from left to right: optimal domain (in green) for $N=20$ (400 eigenmodes) and $L\in \{0.2,0.4,0.6\}$}\label{figpb2}
\end{center}
\end{figure}

\begin{figure}[h!]
\begin{center}
\includegraphics[width=4.9cm]{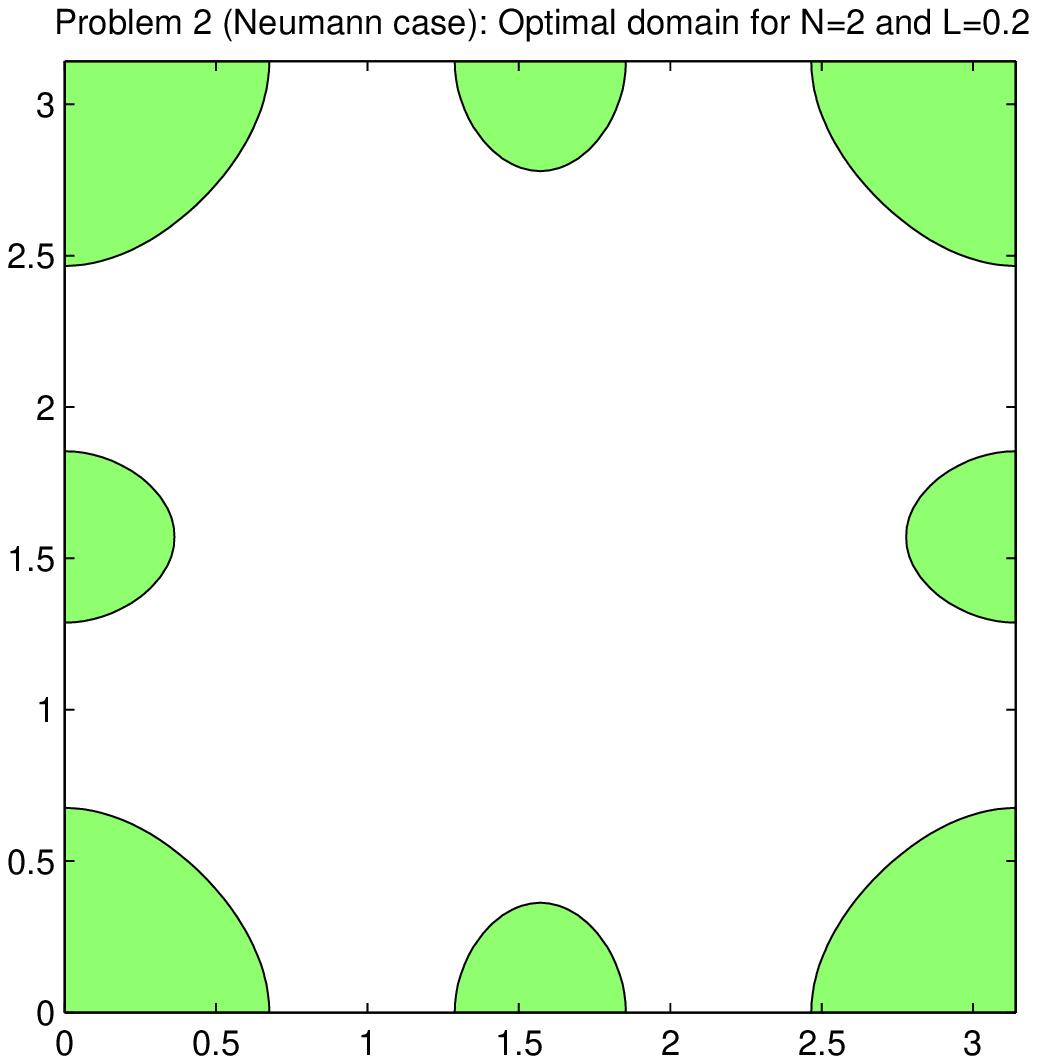}
\includegraphics[width=4.9cm]{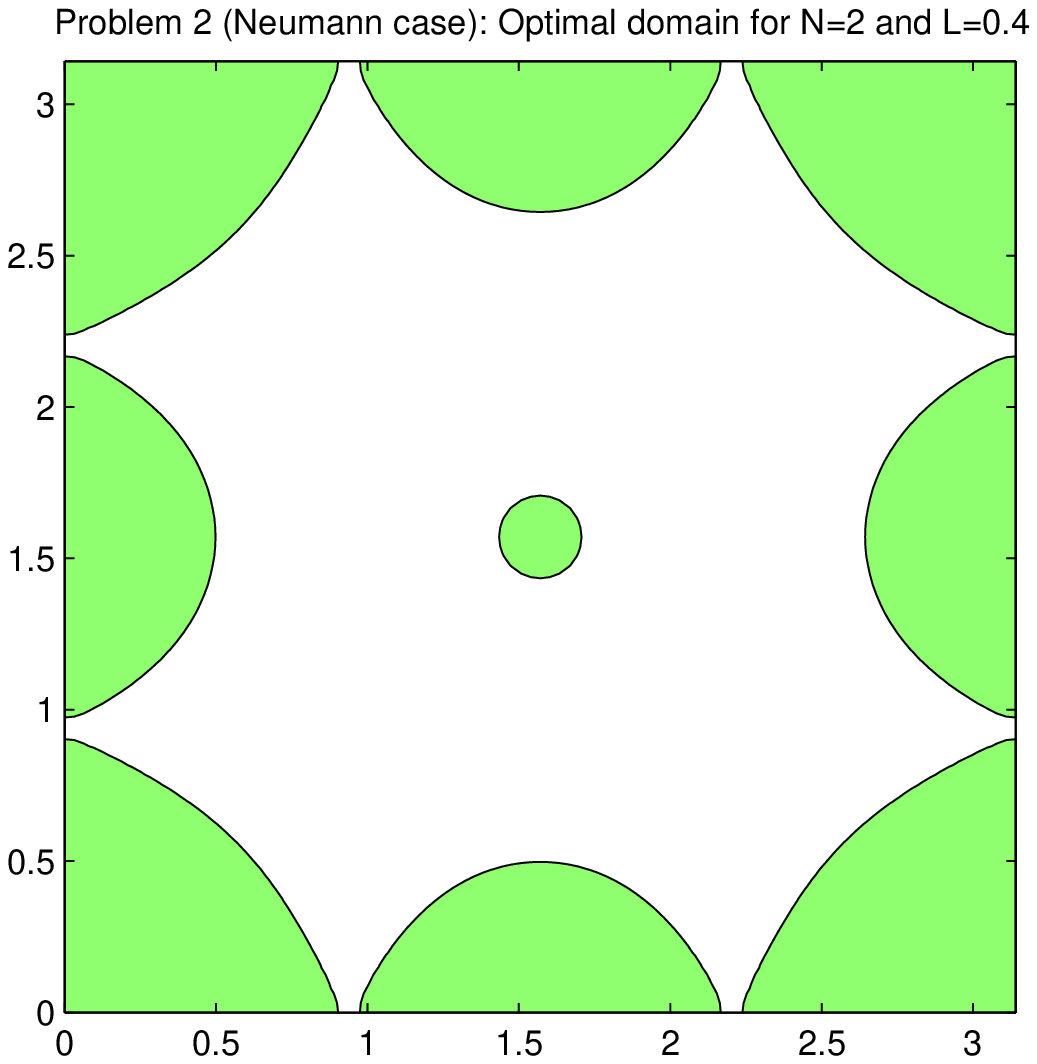}
\includegraphics[width=4.9cm]{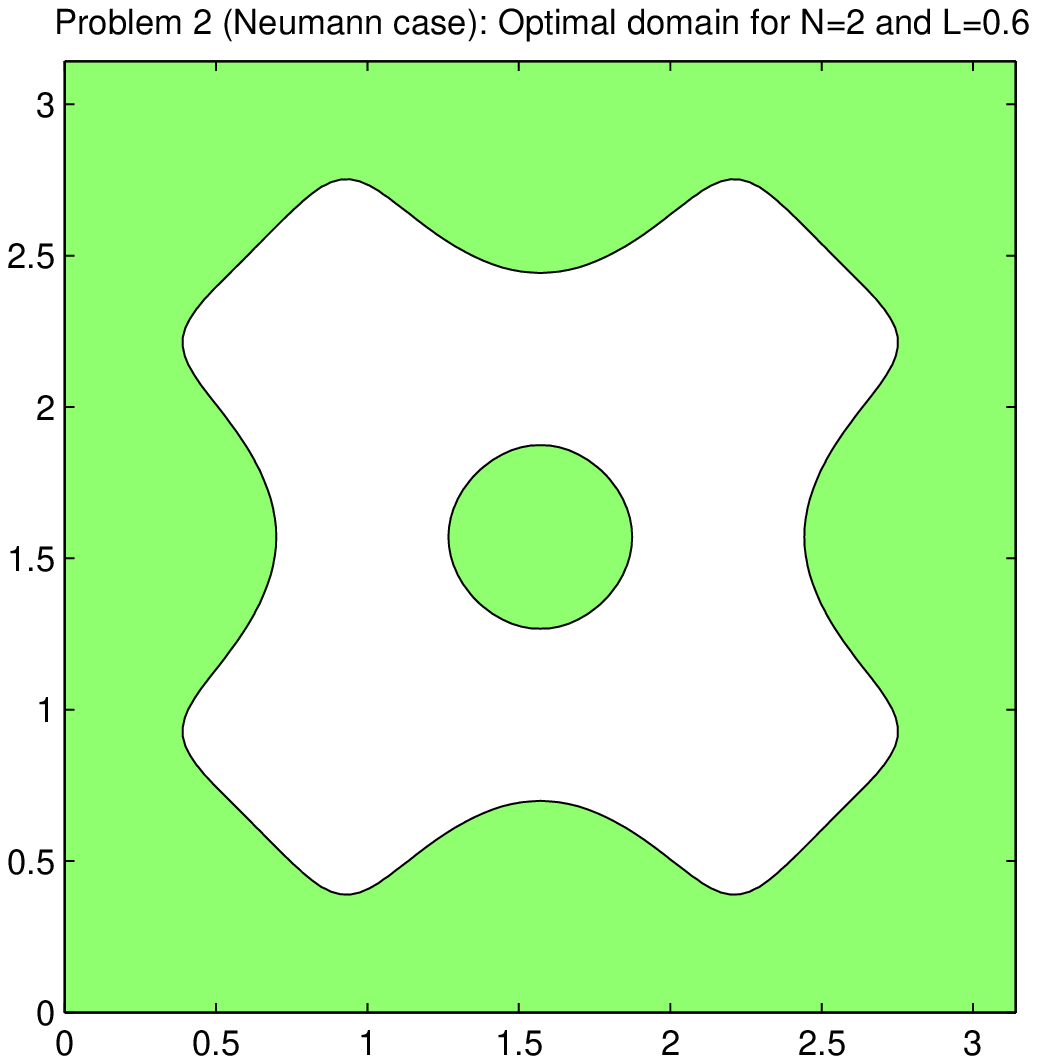}
\includegraphics[width=4.9cm]{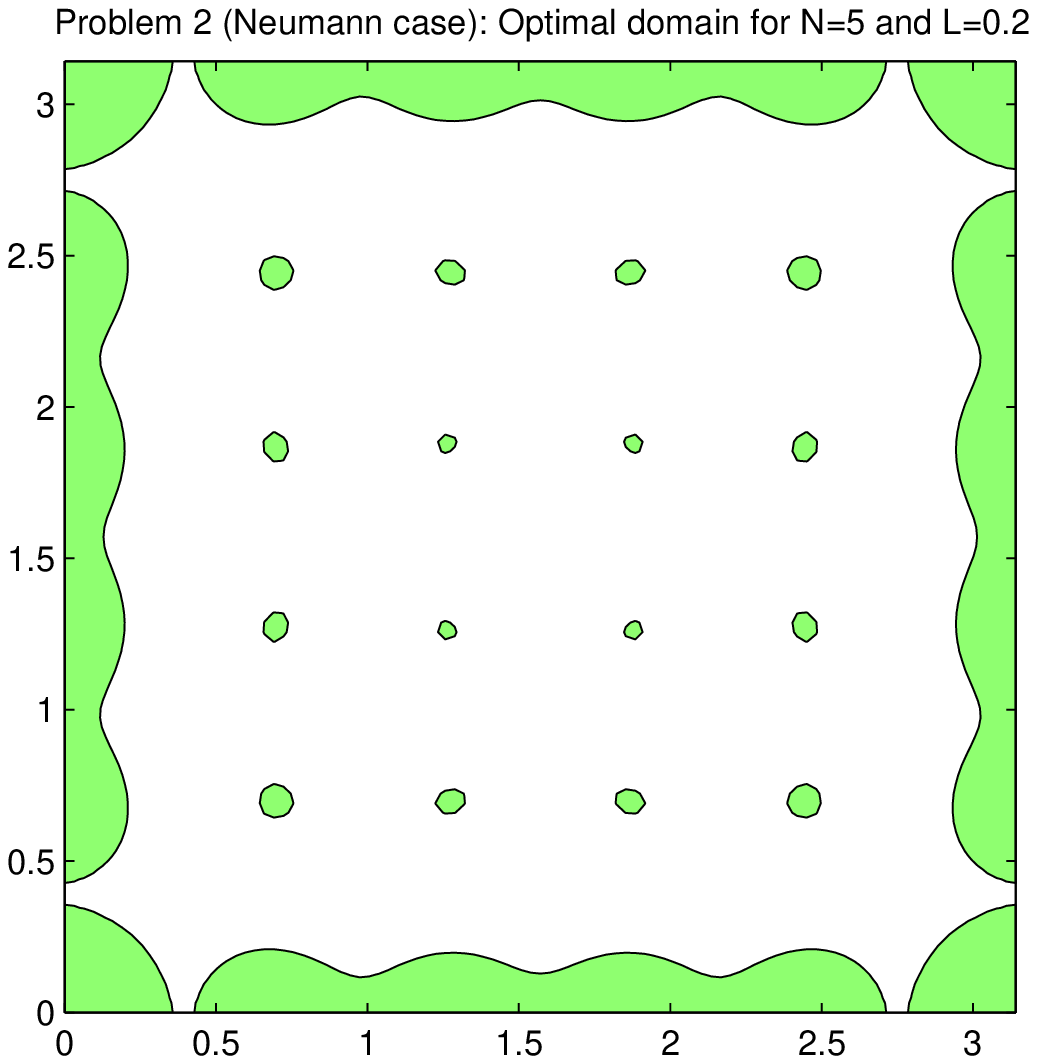}
\includegraphics[width=4.9cm]{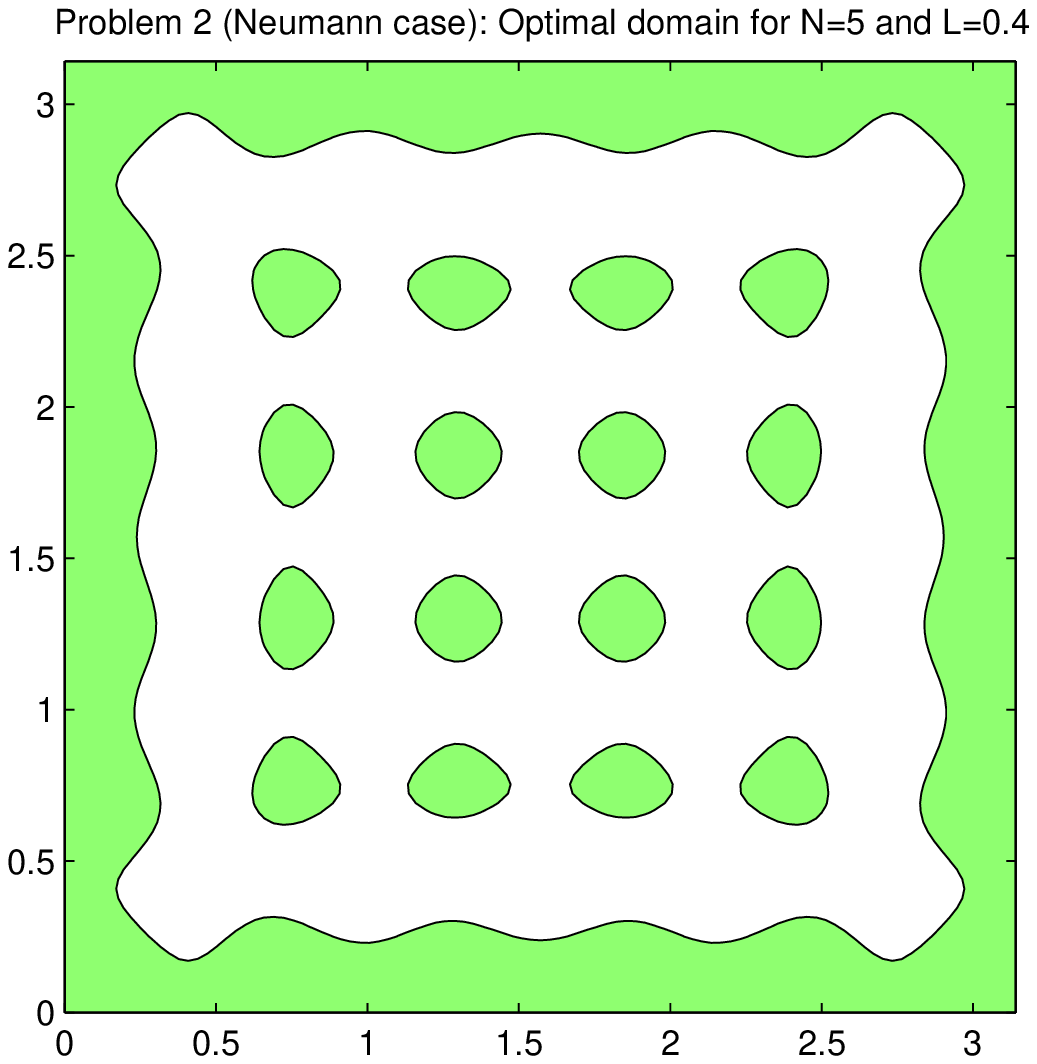}
\includegraphics[width=4.9cm]{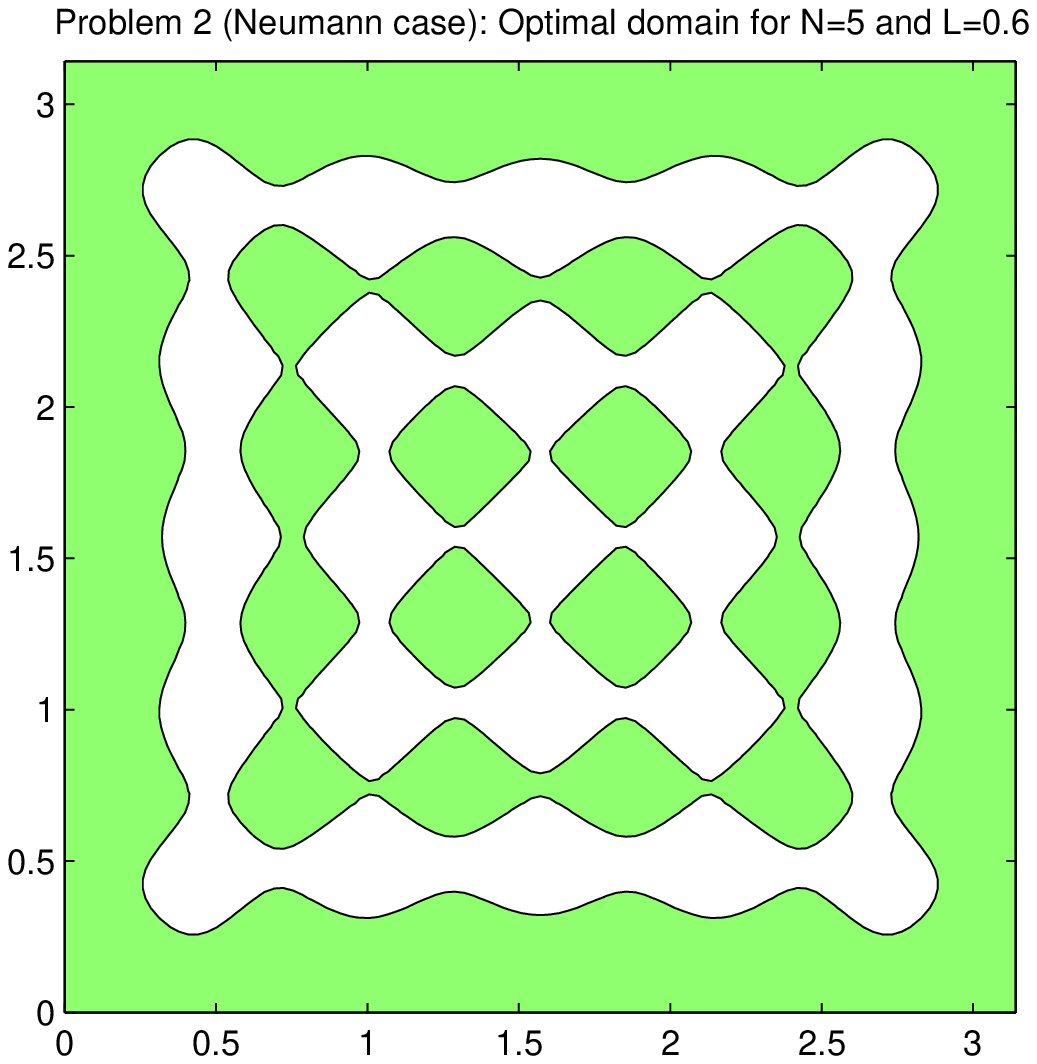}
\includegraphics[width=4.9cm]{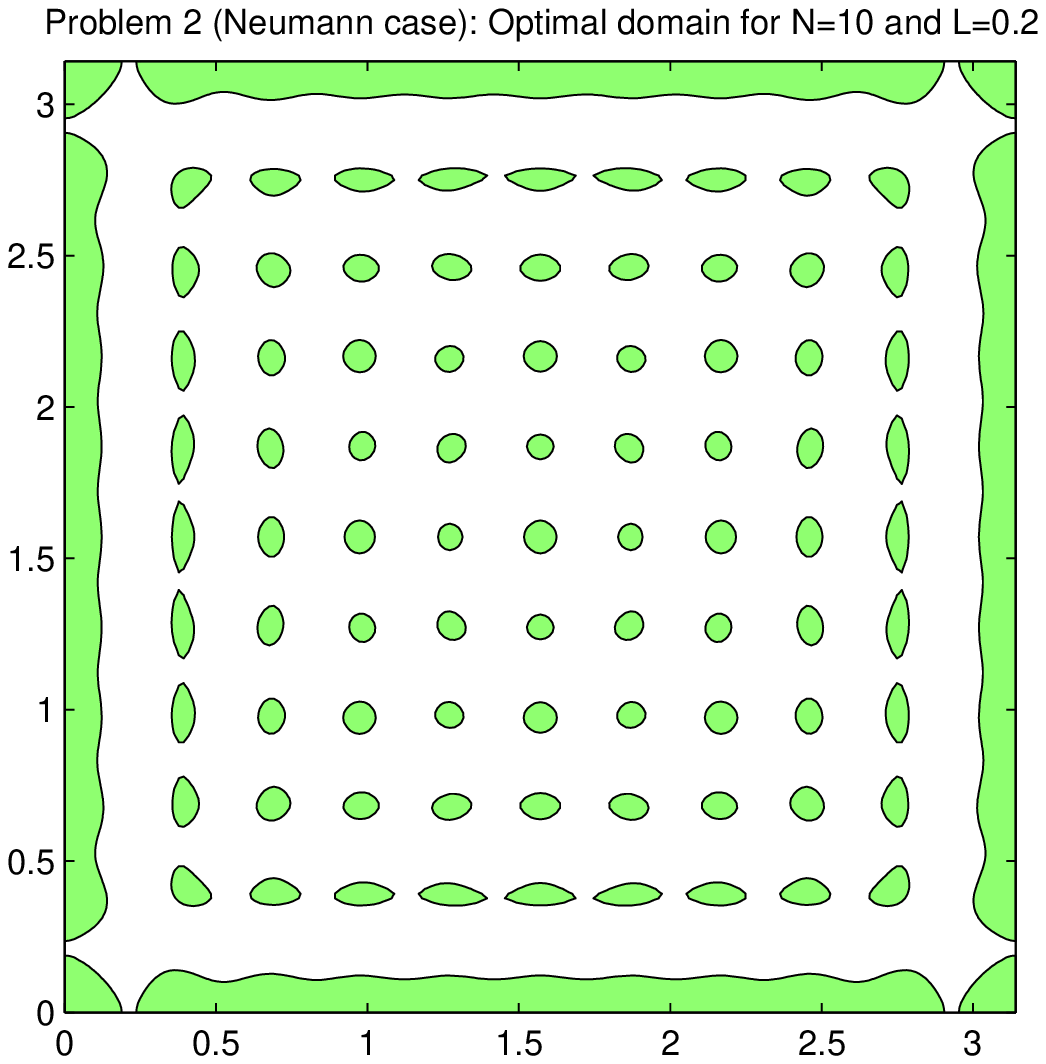}
\includegraphics[width=4.9cm]{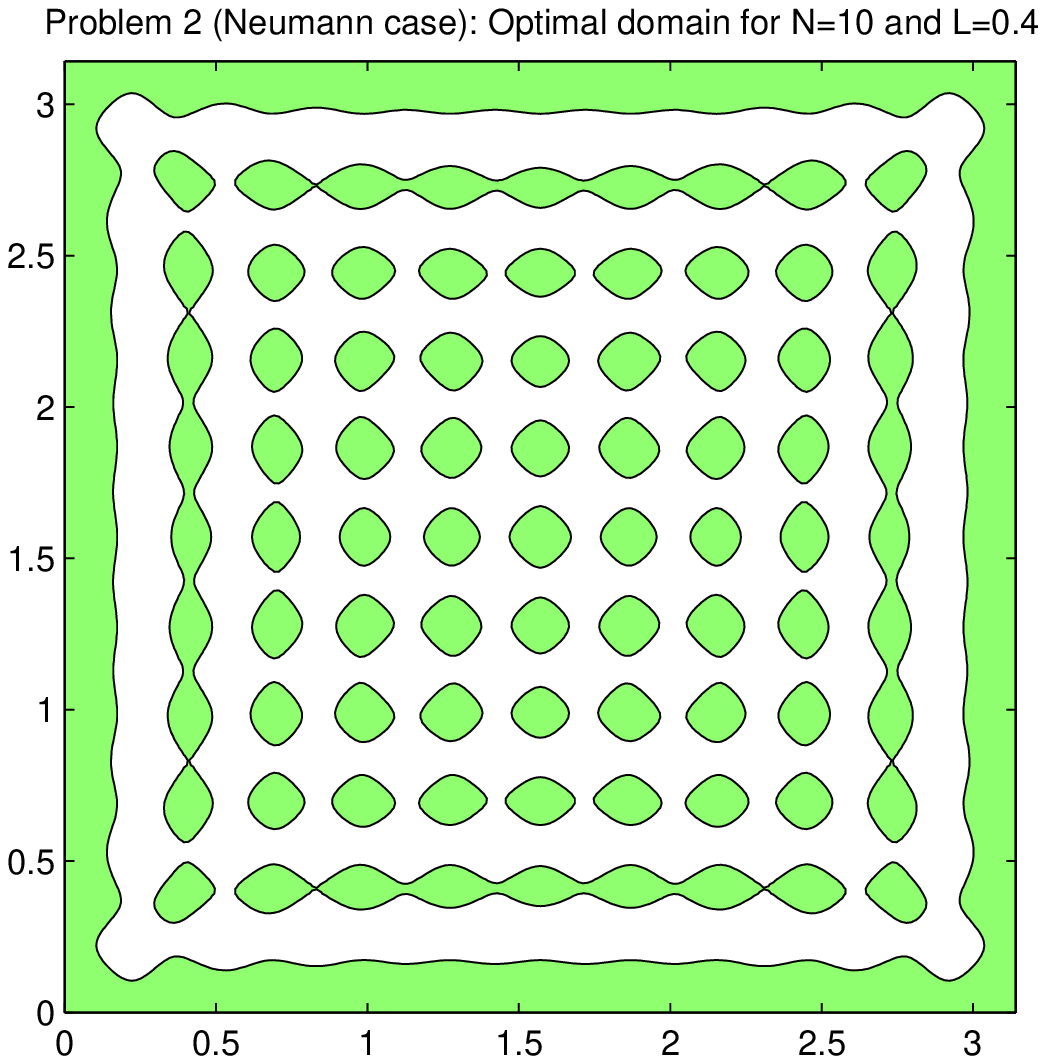}
\includegraphics[width=4.9cm]{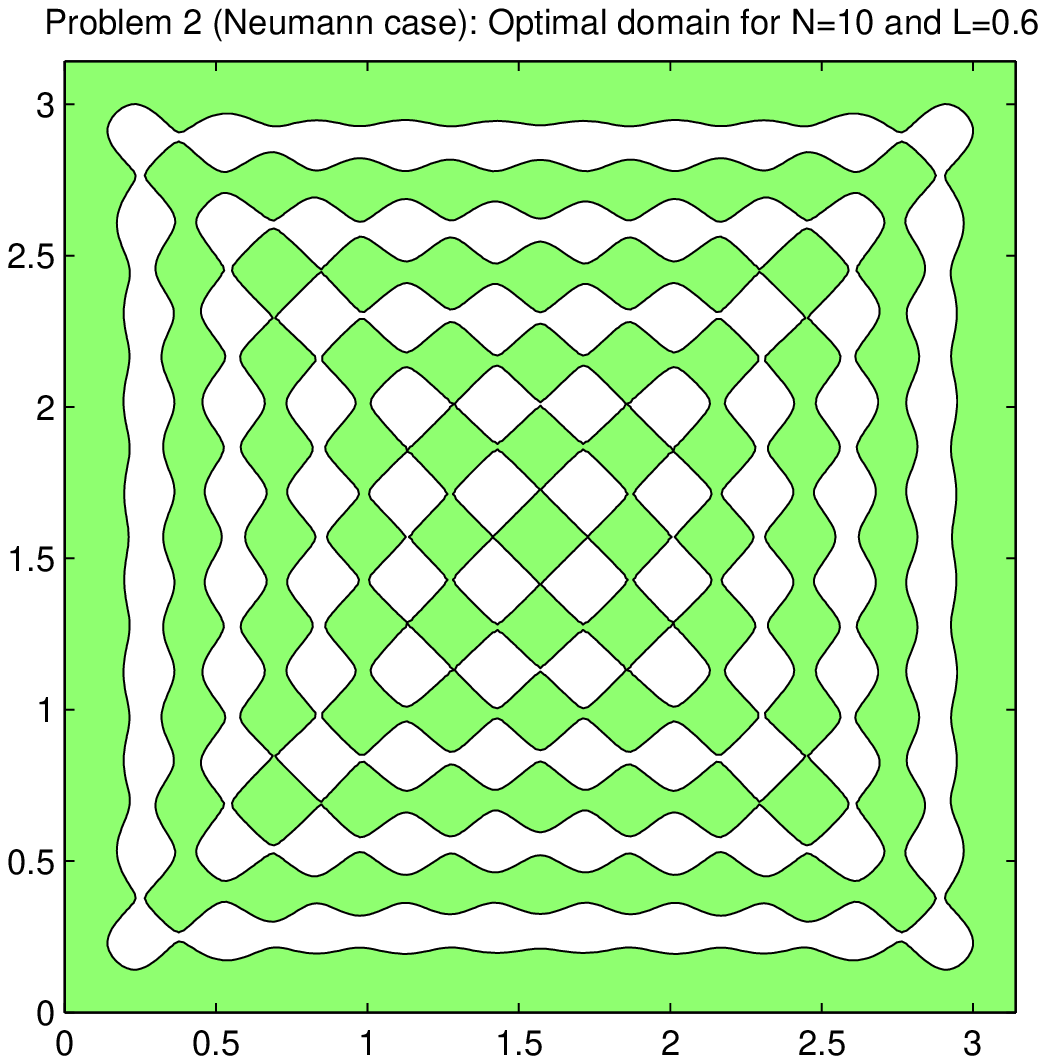}
\includegraphics[width=4.9cm]{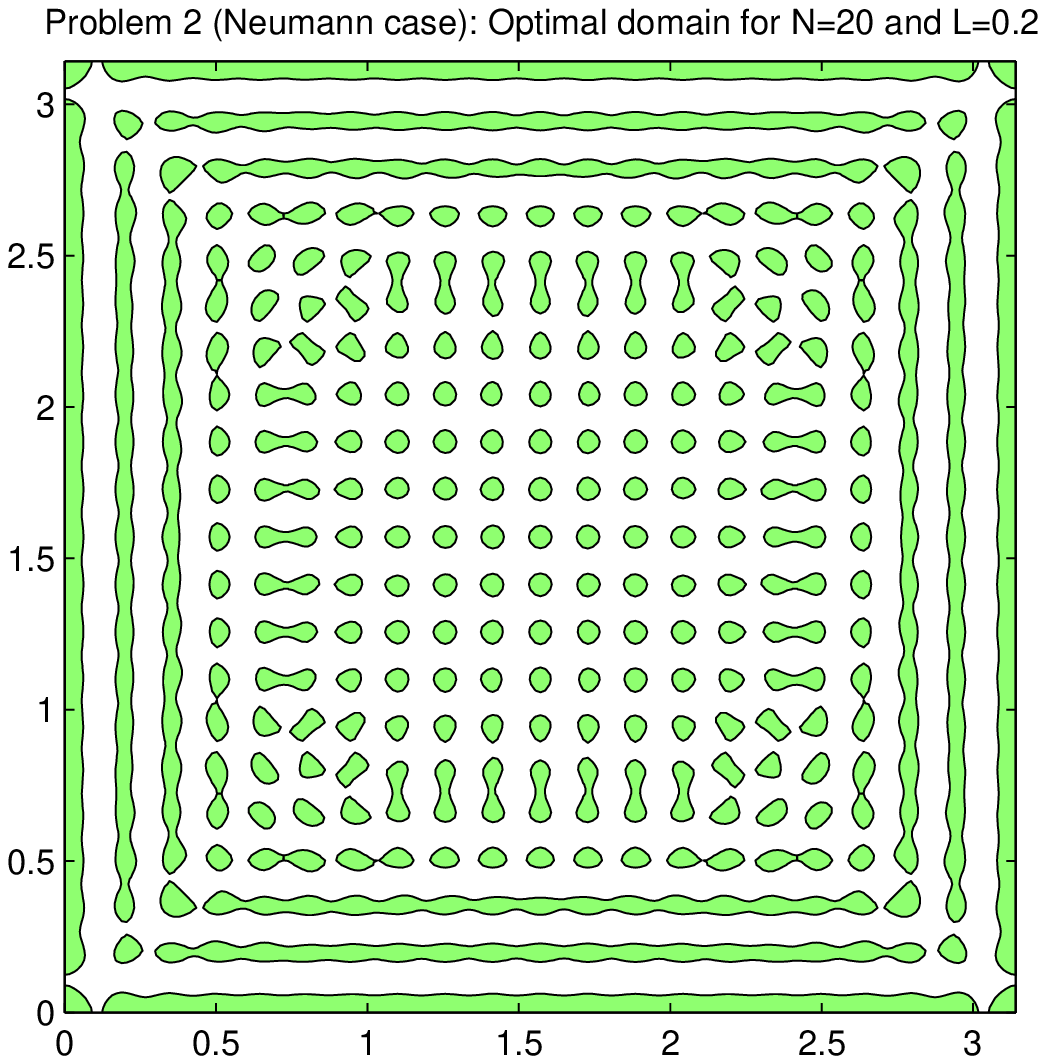}
\includegraphics[width=4.9cm]{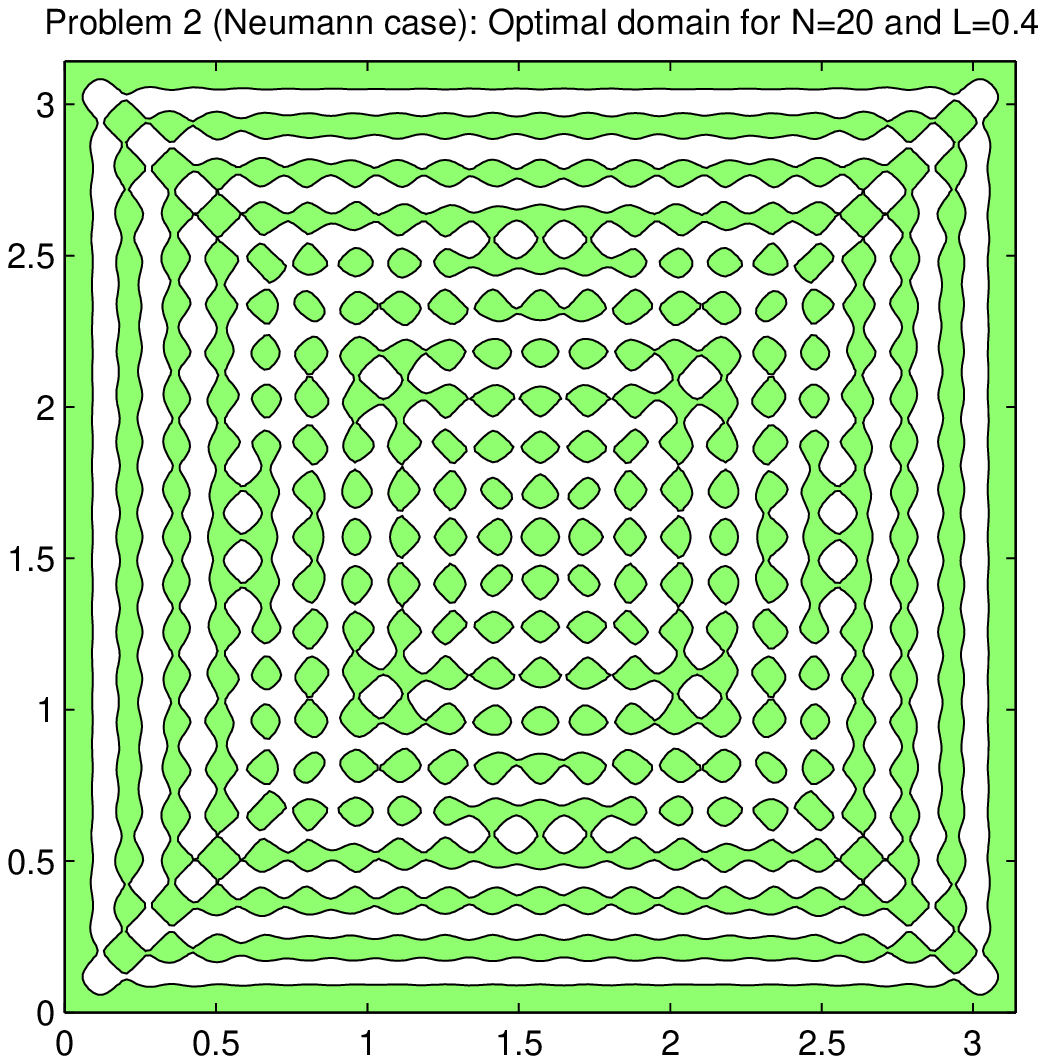}
\includegraphics[width=4.9cm]{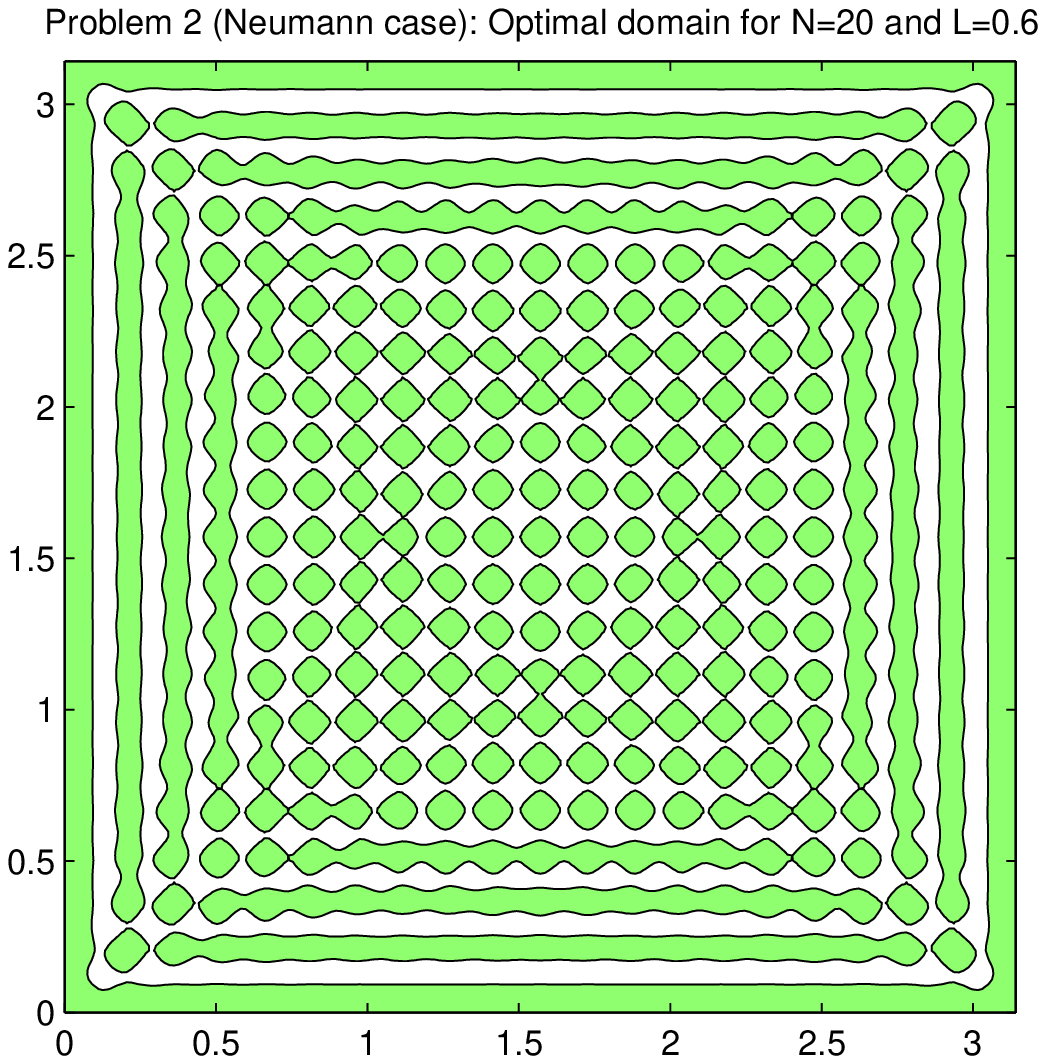}
\caption{On this figure, $\Omega=[0,\pi]^{2}$. Line 1, from left to right: optimal domain (in green) in the Neumann case for $N=2$ (4 eigenmodes) and $L\in \{0.2,0.4,0.6\}$. Line 2, from left to right: optimal domain (in green) for $N=5$ (25 eigenmodes) and $L\in \{0.2,0.4,0.6\}$. Line 3, from left to right: optimal domain (in green) for $N=10$ (100 eigenmodes) and $L\in \{0.2,0.4,0.6\}$. Line 4, from left to right: optimal domain (in green) for $N=20$ (400 eigenmodes) and $L\in \{0.2,0.4,0.6\}$}\label{figpb2N}
\end{center}
\end{figure}

Assume now that $\Omega=\{x\in \R^2\mid \vert x\Vert\leq 1\}$, the unit Euclidian disk of $\R^2$. We consider Dirichlet boundary conditions.
The normalized eigenfunctions of the Dirichlet-Laplacian are a triply indexed sequence given by
\begin{equation*}
\phi_{jkm}(r,\theta) = 
\left\{ \begin{array}{ll}
R_{0k}(r) & \ \textrm{if}\ j=0,\\
R_{jk}(r)Y_{jm}(\theta) & \ \textrm{if}\ j\geq 1,
\end{array} \right.
\end{equation*}
for $j\in\N$, $k\in\N^*$ and $m=1,2$, where $(r,\theta)$ are the usual polar coordinates.
The functions $Y_{jm}(\theta)$ are defined by $Y_{j1}(\theta)=\frac{1}{\sqrt{\pi}}\cos(j\theta)$ and $Y_{j2}(\theta)=\frac{1}{\sqrt{\pi}}\sin(j\theta)$, and the functions $R_{jk}$ are defined by
$$
R_{jk}(r) = \sqrt{2}\,\frac{J_j(z_{jk}r)}{\vert J'_{j}(z_{jk}) \vert},
$$
where $J_j$ is the Bessel function of the first kind of order $j$, and $z_{jk}>0$ is the $k^\textrm{th}$-zero of $J_{j}$.
The eigenvalues of the Dirichlet-Laplacian are given by the double sequence of $-z_{jk}^2$ and are of multiplicity $1$ if $j=0$, and $2$ if $j\geq 1$. In Proposition \ref{propnogapnoQUE}, a no-gap result is stated in this case.
Some simulations are provided on Figure \ref{figpb2disk}.
We observe that optimal domains are radially symmetric. This is actually an immediate consequence of the uniqueness of a maximizer for the modal approximations problem stated in Theorem \ref{thm3} and of the fact that $\Omega$ is itself radially symmetric.

\begin{figure}[h!]
\begin{center}
\includegraphics[width=4.9cm]{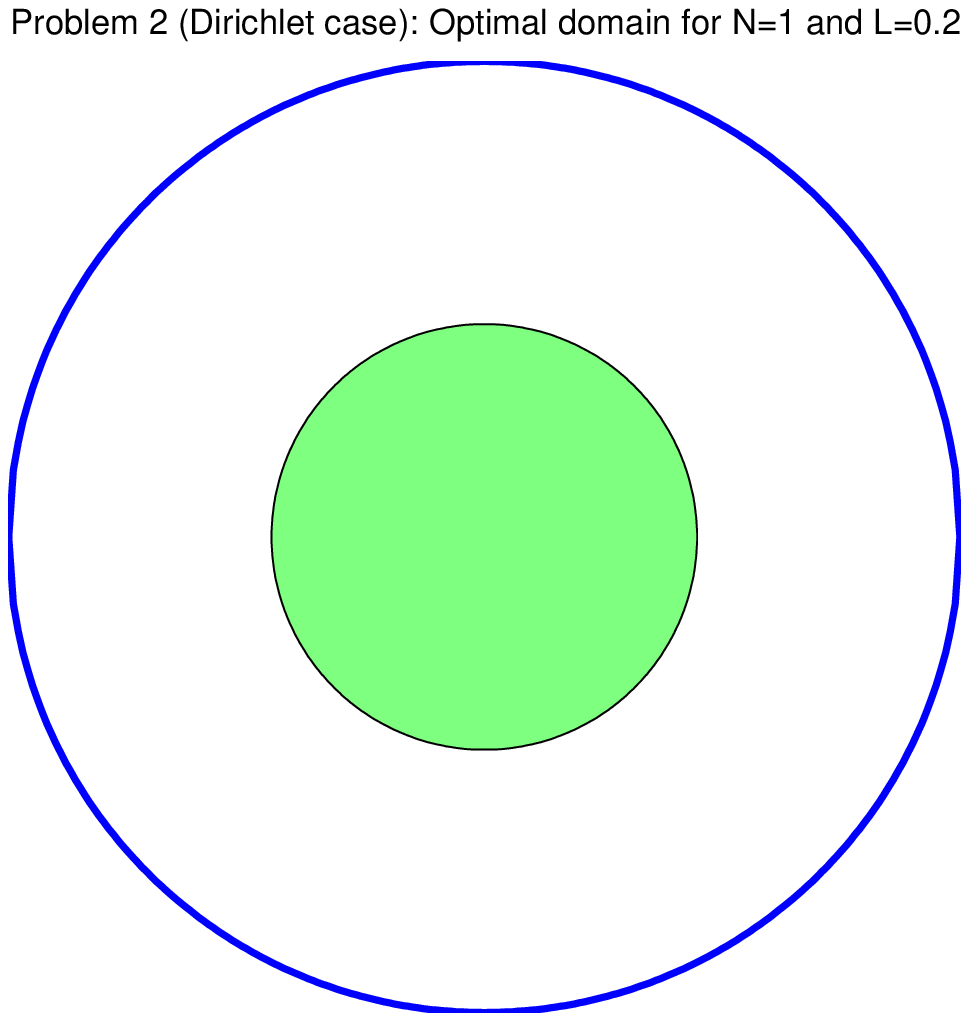}
\includegraphics[width=4.9cm]{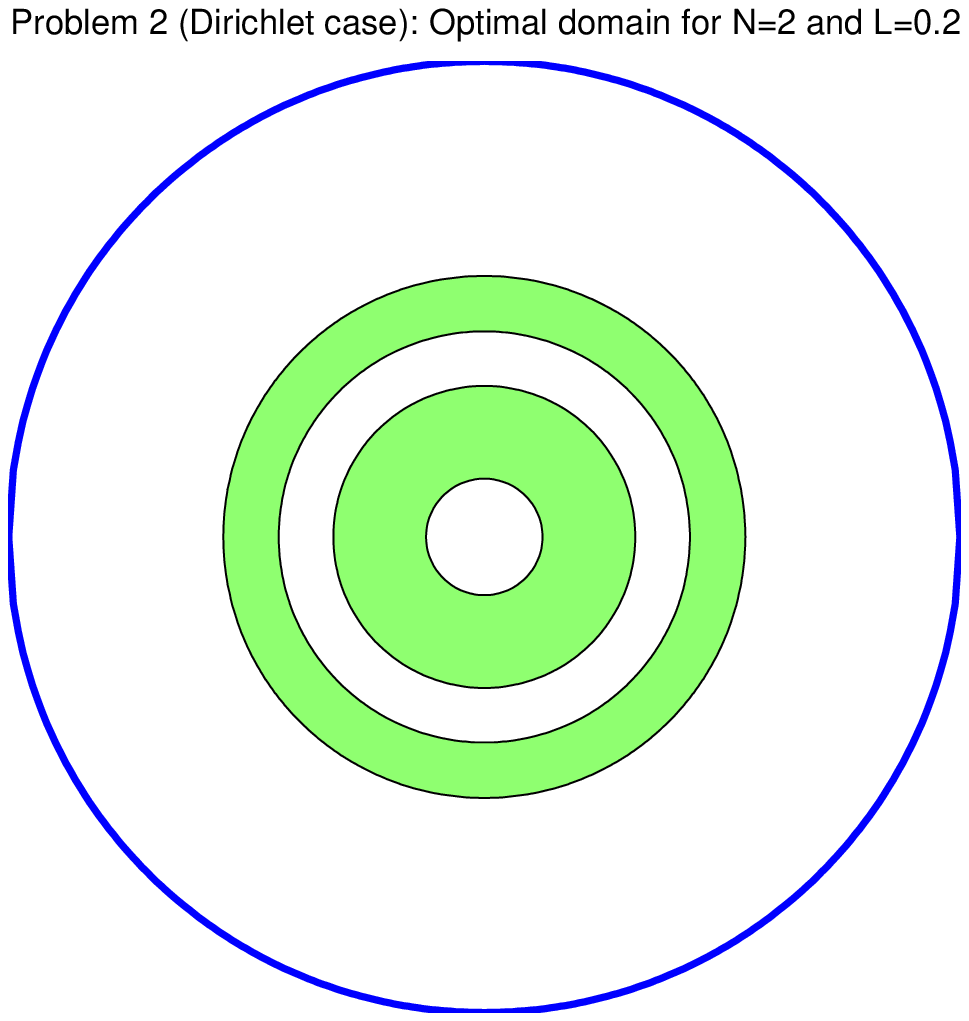}
\includegraphics[width=4.9cm]{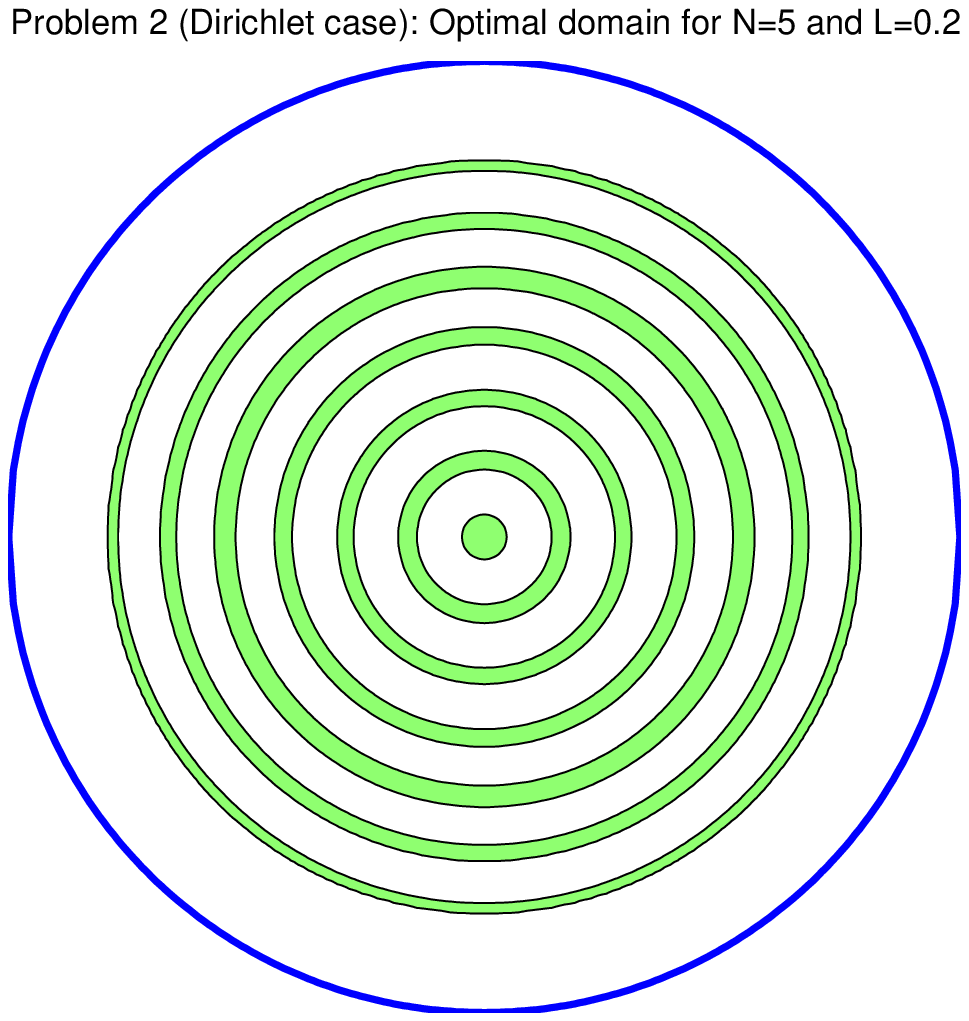}
\includegraphics[width=4.9cm]{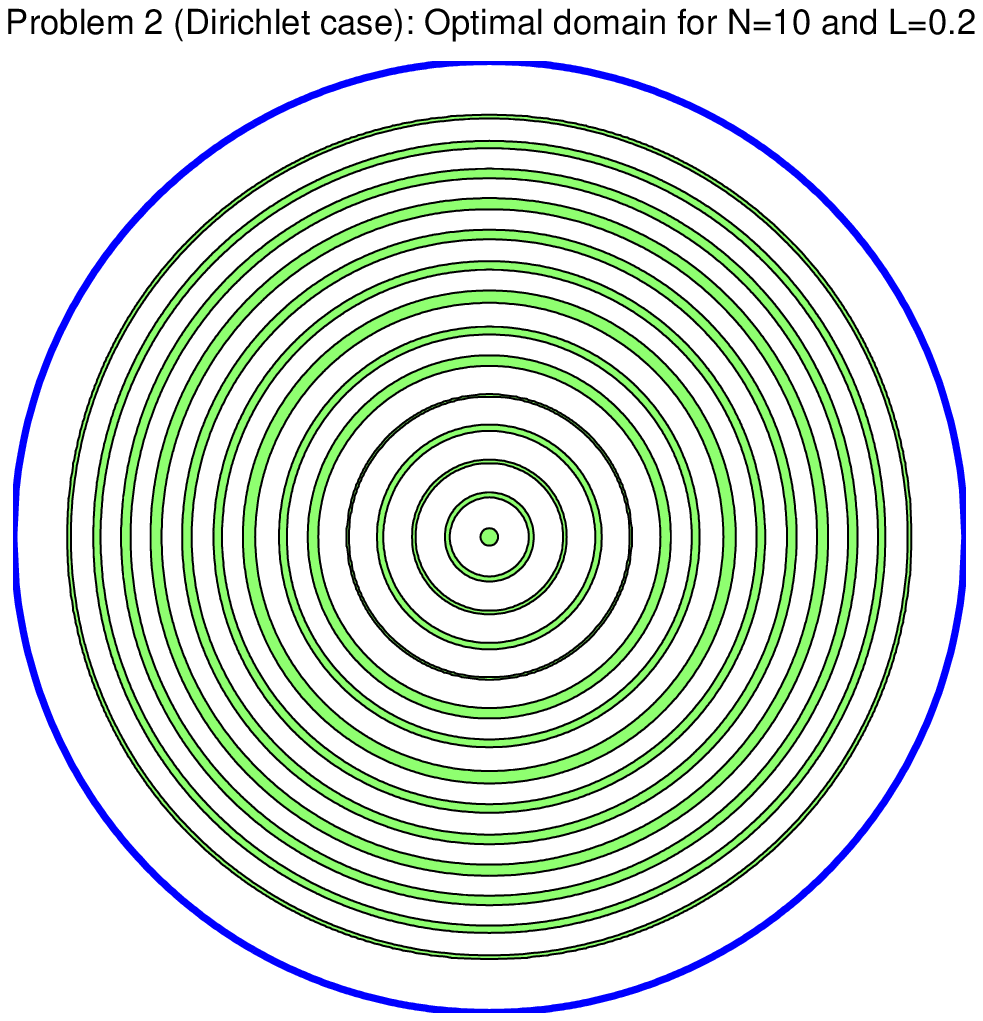}
\includegraphics[width=4.9cm]{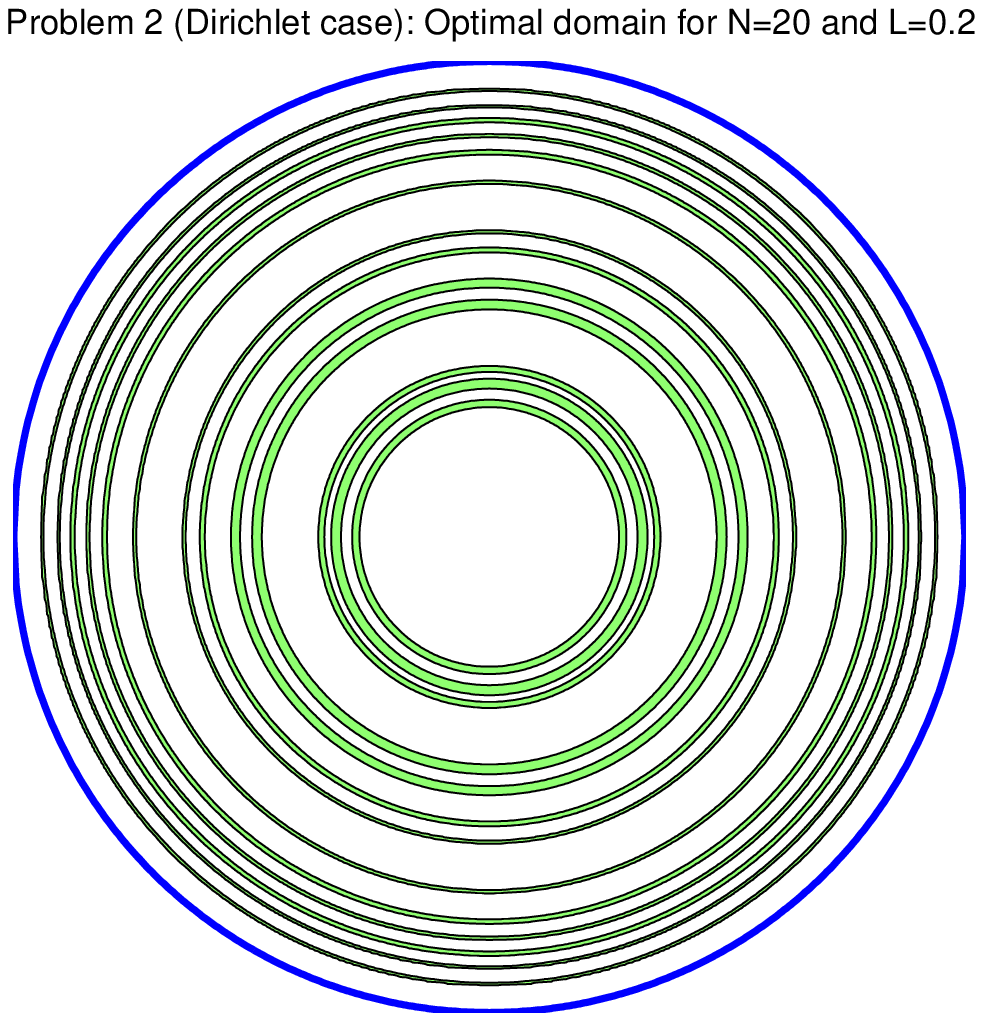}
\caption{On this figure, $\Omega=\{x\in \R^2\mid \vert x\Vert\leq 1\}$ and $L=0.2$. Line 1, from left to right: optimal domain (in green) in the Dirichlet case for $N=1$ (1 eigenmode), $N=2$ (4 eigenmodes) and $N=5$ (25 eigenmodes). Line 2, from left to right: optimal domain (in green) for $N=10$ (100 eigenmodes) and  $N=20$ (400 eigenmodes)}\label{figpb2disk}
\end{center}
\end{figure}

%%%%%%%%%%%%%%%%%%%%%%%%%%%%%%%%%%%%%%%%%%%%%%%%%%
%%%%%%%%%%%%%%%%%%%%%%%%%%%%%%%%%%%%%%%%%%%%%%%%%%

\section{Further comments}\label{sec_furthercomments}
In this section, we first consider in Section \ref{sec6.1} classes of subsets sharing compactness properties, in view of ensuring existence results for the second problem \eqref{reducedsecondpb} (uniform optimal design). In Section \ref{sec6.2}, we show how our results for the second problem can be extended to a natural variant of observability inequality for Neumann boundary conditions or in the boundaryless case. In Section \ref{sec6.3}, we study 
a variant of observability inequality for Dirichlet, mixed Dirichlet-Neumann and Robin boundary conditions, involving a $H^1$ norm, and we show that the criterion $J$ of the second problem has to be slightly modified. We discover that the corresponding second problem has a unique solution (that is, an optimal set), in contrast with the previous results, whenever $L$ is large enough.
Finally, in Section \ref{sec6.5} we show how the problem of maximizing the observability constant is equivalent to an optimal  design of a control problem and, namely, to that of controllability in which solutions are driven to rest in final time by means of a suitable control function.

\subsection{Uniform optimal design on other classes of admissible domains}\label{sec6.1}

According to Lemma \ref{lemprelim}, we know that, in the one-dimensional case, the second problem \eqref{reducedsecondpb} is ill-posed in the sense that it has no solution except for $L=1/2$. In larger dimension, we expect a similar conclusion. One of the reasons is that the set $\mathcal{U}_L$ defined by \eqref{defUL} is not compact for the usual topologies, as discussed in Remark \ref{remtopo}. To overcome this difficulty, a possibility consists of defining a new class of admissible sets, $\mathcal{V}_L \subset \mathcal{U}_L$, enjoying sufficient compactness properties and to replace the problem \eqref{reducedsecondpb} with
\begin{equation}\label{newPb}
\sup_{\chi_\omega\in \mathcal{V}_L}J(\chi_\omega).
\end{equation}
Of course, now, the extremal value is not necessarily the same since the class of admissible domains has been further restricted.

To ensure the existence of a maximizer $\chi_{\omega^*}$ of \eqref{newPb}, it suffices to endow $\mathcal{V}_L$ with a topology, finer than the weak star topology of $L^\infty$, for which $\mathcal{V}_L$ is compact. Of course in this case, one has
$$
J(\chi_{\omega^*})=\max_{\chi_\omega\in \mathcal{V}_L}J(\chi_\omega)\leq \sup_{\chi_\omega\in \mathcal{U}_L}J(\chi_\omega).
$$
This extra compactness property can be guaranteed by, for instance, considering some $\alpha>0$, and then any of the following possibles choices 
\begin{equation}\label{XL1}
\mathcal{V}_L=\{\chi_\omega\in\mathcal{U}_L\ \vert\ P_\Omega(\omega)\leq \alpha\},
\end{equation}
where $P_\Omega (\omega)$ is the relative perimeter of $\omega$ with respect to $\Omega$, 
\begin{equation}\label{XL2}
\mathcal{V}_L=\{\chi_\omega\in\mathcal{U}_L\ \vert\  \Vert \chi_\omega\Vert_{BV(\Omega)}\leq \alpha\},
\end{equation}
where $\Vert \cdot \Vert_{BV(\Omega)}$ is the  $BV(\Omega)$-norm  of all functions of bounded variations on $\Omega$ (see for example \cite{attouch}), or
\begin{equation}\label{XL3}
\mathcal{V}_L=\{\chi_\omega\in\mathcal{U}_L\ \vert\  \omega\textrm{ satisfies the $1/\alpha$-cone property}\},
\end{equation}
(see Section \ref{secproofthmnogap}, footnote \ref{footnoteEpscone}). Naturally, the optimal set then depends on the bound $\alpha$ under consideration, and numerical simulations (not reported here) show that, as $\alpha$ tends to $+\infty$, the family of optimal sets behaves as the maximizing sequence built in Section \ref{sectrunc2}, in particular the number of connected components grows as $\alpha$ is increasing.

\subsection{Further remarks for Neumann boundary conditions or in the boundaryless case}\label{sec6.2}
In the Neumann case, or in the case $\partial\Omega=\emptyset$, there is a problem with the constants, as explained in Footnote \ref{footnote_Neumann}. In this section, let us show that, if instead of considering the observability inequalities \eqref{ineqobsw} and \eqref{ineqobss}, we consider the inequalities
\begin{equation}\label{ineqobswNeu}
{C}_T^{(W)}(\chi_\omega) \Vert (y^0,y^1)\Vert_{H^1(\Omega,\C)\times L^2(\Omega,\C)}^2
\leq \int_0^T\int_\omega \left( \vert \partial_ty(t,x)\vert^2 + \vert  y(t,x)\vert^2\right) \, dV_g \, dt
\end{equation}
in the case of the wave equation, and
\begin{equation}\label{ineqobssNeu}
{C}_T^{(S)}(\chi_\omega) \Vert y^0\Vert_{H^2(\Omega,\C)}^2
\leq \int_0^T\int_\omega \left( \vert \partial_ty(t,x)\vert^2 + \vert  y(t,x)\vert^2\right) \, dV_g \, dt
\end{equation}
in the case of the Schr\"odinger equation (see \cite[Chapter 11]{TucsnakWeiss} for a survey on these problems), then all results remain unchanged.
Accordingly, the functional $G_T$ formerly defined by \eqref{quantity1obsW} is now replaced with
$${G}_T(\chi_\omega)=\int_0^T \int_\omega \left( \vert \partial_ty(t,x)\vert^2 + \vert  y(t,x)\vert^2\right) \, dV_g \, dt.$$

Indeed, consider initial data $(y^0,y^1)\in H^1(\Omega,\C)\times L^2(\Omega,\C)$. The corresponding solution $y$ can still be expanded as \eqref{yDecomp}, except that now $(\phi_j)_{j\in \N^*}$ consists of the eigenfunctions of the Neumann-Laplacian or of the Laplace-Beltrami operator in the boundaryless case, associated with the eigenvalues $(-\lambda_j^2)_{j\in\N^*}$, with $\lambda_1=0$ and $\phi_1$ which is constant, equal to $1/\sqrt{V_g(\Omega)}$.
The relation \eqref{notice7} does not hold any more and is replaced with
\begin{equation}\label{notice7neu}
\Vert (y^0,y^1)\Vert_{H^1(\Omega,\C)\times L^2(\Omega,\C)}^2 = \sum_{j=1}^{+\infty} \left(2\lambda_j^2 \vert a_j \vert ^2+2\lambda_j^2 \vert b_j \vert ^2+|a_j+b_j|^2\right).
\end{equation}
Following Section \ref{secMotiv}, we define the time asymptotic observability constant ${C}_\infty^{(W)}(\chi_\omega)$ as the largest possible nonnegative constant for which the time asymptotic observability inequality
\begin{equation}\label{ineqobswinfty_suppl}
{C}_\infty^{(W)}(\chi_\omega) \Vert (y^0,y^1)\Vert_{H^1(\Omega,\C)\times L^2(\Omega,\C)}^2
\leq \lim_{T\rightarrow+\infty} \frac{1}{T} \int_0^T\int_\omega \left( \vert \partial_ty(t,x)\vert^2 + \vert  y(t,x)\vert^2\right) \, dV_g \,dt
\end{equation}
holds, for all $(y^0,y^1)\in H^1(\Omega,\C)\times L^2(\Omega,\C)$.
Similarly, we define the randomized observability constant ${C}_{T,\textrm{rand}}^{(W)}(\chi_\omega)$ as the largest possible nonnegative constant for which the randomized observability inequality
\begin{equation}\label{ineqobswRandNeu}
{C}_{T,\textrm{rand}}^{(W)}(\chi_\omega) \Vert (y^0,y^1)\Vert_{H^1(\Omega,\C)\times L^2(\Omega,\C)}^2
\leq \mathbb{E}\left(\int_0^T\int_\omega \left( \vert \partial_ty_\nu(t,x)\vert^2 + \vert y_\nu(t,x)\vert^2\right) \, dV_g \,dt\right)
\end{equation}
holds, for all $y^0(\cdot)\in H^1(\Omega,\C)$ and $y^1(\cdot)\in L^2(\Omega,\C)$, where $y_\nu$ is defined as before by \eqref{defynu}.

The time asymptotic and randomized observability constants are defined accordingly for the Schr\"odinger equation.

\begin{theorem}
Let $\omega$ be a measurable subset of $\Omega$.
\begin{enumerate}
\item If the domain $\Omega$ is such that every eigenvalue of the Neumann-Laplacian is simple, then
$$
2\, {C}_\infty^{(W)}(\chi_\omega)  ={C}_\infty^{(S)}(\chi_\omega)  = {J}(\chi_\omega) .
$$
\item There holds
\begin{equation*}
2\, {C}_{T,\textnormal{rand}}^{(W)}(\chi_\omega) = {C}_{T,\textnormal{rand}}^{(S)}(\chi_\omega) = T {J}(\chi_\omega).
\end{equation*}
\end{enumerate}
\end{theorem}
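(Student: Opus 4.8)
The plan is to adapt the two computations already carried out for the standard functional: the proof of Theorem~\ref{theoCTT} and Corollary~\ref{corCTT} for the time asymptotic constants, and the proof of Theorem~\ref{propHazardCst} for the randomized constants. The only difference is that one now works with the augmented functional ${G}_T(\chi_\omega)=\int_0^T\int_\omega\left(|\partial_t y|^2+|y|^2\right)dV_g\,dt$ and with the $H^1\times L^2$ norm whose spectral form is recorded in \eqref{notice7neu} (respectively, with the $H^2$ norm $\Vert y^0\Vert^2=\sum_j(1+\lambda_j^4)|c_j|^2$ for Schr\"odinger). The two genuinely new features are the extra term $\int_0^T\int_\omega|y|^2\,dV_g\,dt$, which attaches a $+1$ to each factor $\lambda_j^2$, and the constant eigenfunction $\phi_1=1/\sqrt{V_g(\Omega)}$ associated with $\lambda_1=0$, for which $\int_\omega\phi_1^2\,dV_g=V_g(\omega)/V_g(\Omega)=L$.

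I would start with the randomized statement (item~2), which is the cleaner one. Expanding $y_\nu$ as in \eqref{defynu}, taking the expectation and using that the Bernoulli variables are independent, centered and of unit variance (so that $\mathbb{E}(\beta_{1,j}^\nu\beta_{2,k}^\nu)=0$), all crossed terms disappear exactly as in Section~\ref{proofpropHazardCst}, and one gets $\mathbb{E}({G}_T)=T\sum_j(1+\lambda_j^2)(|a_j|^2+|b_j|^2)\int_\omega\phi_j^2\,dV_g$ for the wave equation, and the analogous $T\sum_j(1+\lambda_j^4)|c_j|^2\int_\omega\phi_j^2\,dV_g$ for Schr\"odinger. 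Minimizing the ratio of this quantity against the norm \eqref{notice7neu} then rests on two elementary observations: the termwise bound $\tfrac12|a_j+b_j|^2\leq|a_j|^2+|b_j|^2$ yields the lower bound $\geq\tfrac{T}{2}\inf_j\int_\omega\phi_j^2\,dV_g$, while concentrating the data on a single mode $j_0$ with $a_{j_0}=b_{j_0}$ makes the quotient equal to $\tfrac{T}{2}\int_\omega\phi_{j_0}^2\,dV_g$; this is valid also for $j_0=1$, since there the randomization still kills the surviving cross term $a_1\bar b_1$ even though $\lambda_1=0$. Taking the infimum over $j_0$ gives $2\,{C}_{T,\textrm{rand}}^{(W)}={C}_{T,\textrm{rand}}^{(S)}=T{J}$.

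For the time asymptotic statement (item~1) I would recompute $\lim_{T\to+\infty}\tfrac1T{G}_T$ along the lines of lemmas~\ref{lemmaGTT} and \ref{lemm1lim}: simplicity of the spectrum kills the crossed terms between distinct eigenvalues, while the within-mode oscillations kill the $a_j\bar b_j$ terms of the non-constant modes, so that the extra $|y|^2$ term contributes $\sum_{j\geq2}(|a_j|^2+|b_j|^2)\int_\omega\phi_j^2\,dV_g$. For the Schr\"odinger equation this is already conclusive, since the quotient is a weighted average of the numbers $\int_\omega\phi_j^2\,dV_g$ with nonnegative weights, whose infimum is $\inf_j\int_\omega\phi_j^2\,dV_g={J}$, and $\phi_1$ causes no trouble. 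The delicate point, and the main obstacle, is the constant mode for the wave equation: since $\phi_1$ does not oscillate, the cross term $a_1\bar b_1$ survives in the $|y|^2$ term, so mode~$1$ contributes $|a_1+b_1|^2\int_\omega\phi_1^2\,dV_g$ to the numerator against $|a_1+b_1|^2$ in the norm (data with $\langle y^1,\phi_1\rangle\neq0$ being discarded, as they make the time average diverge). The Rayleigh quotient then decouples across modes and evaluates to ${C}_\infty^{(W)}(\chi_\omega)=\min\bigl(L,\tfrac12\inf_{j\geq2}\int_\omega\phi_j^2\,dV_g\bigr)$, which does not obviously equal $\tfrac12{J}$.

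To close this gap I would invoke the integrated local Weyl law, $\tfrac1N\sum_{j=1}^N\int_\omega\phi_j^2\,dV_g\to L$, which forces $\inf_{j\geq2}\int_\omega\phi_j^2\,dV_g\leq\liminf_j\int_\omega\phi_j^2\,dV_g\leq L=\int_\omega\phi_1^2\,dV_g$. Consequently ${J}=\inf_j\int_\omega\phi_j^2\,dV_g=\inf_{j\geq2}\int_\omega\phi_j^2\,dV_g$ and ${C}_\infty^{(W)}(\chi_\omega)=\tfrac12\inf_{j\geq2}\int_\omega\phi_j^2\,dV_g=\tfrac12{J}$, so that $2\,{C}_\infty^{(W)}={C}_\infty^{(S)}={J}$. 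I expect the bookkeeping around this constant mode, reconciling the factor $2$ produced by the oscillating modes with its absence for $\phi_1$ and justifying that the linearly growing $\lambda_1=0$ solutions may be discarded, to be the only real difficulty; everything else is a transcription of the arguments in Sections~\ref{prooftheoCTT} and \ref{proofpropHazardCst}.
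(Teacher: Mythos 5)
Your proof is correct and, for the randomized constants, it is essentially the paper's argument: the paper reduces both constants to a single quotient $\Gamma$ with numerator $\sum_j(1+\lambda_j^2)(|a_j|^2+|b_j|^2)\int_\omega\phi_j^2\,dV_g$ and denominator $\sum_j\bigl(2\lambda_j^2(|a_j|^2+|b_j|^2)+|a_j+b_j|^2\bigr)$, and evaluates it via the polar substitution $a_j=\rho_j\cos\theta_j$, $b_j=\rho_j\sin\theta_j$ with $\theta_j=\pi/4$ --- exactly your choice $a_{j_0}=b_{j_0}$ combined with the termwise bound $\tfrac12|a_j+b_j|^2\leq|a_j|^2+|b_j|^2$. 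Where you genuinely depart from the paper is at the constant mode in the time-asymptotic wave statement. The paper asserts $C_\infty^{(W)}(\chi_\omega)=\Gamma$ without comment, but, as you observe, since $\phi_1$ does not oscillate the cross term $a_1\bar b_1$ in the $|y|^2$ contribution is not averaged out in time, so mode $1$ contributes $|a_1+b_1|^2L$ to the numerator against $|a_1+b_1|^2$ in the norm, and its per-mode Rayleigh quotient is $L$ rather than $L/2$; the decoupled infimum is therefore $\min\bigl(L,\tfrac12\inf_{j\geq2}\int_\omega\phi_j^2\,dV_g\bigr)$, which coincides with $\tfrac12 J(\chi_\omega)$ only when $\inf_{j\geq2}\int_\omega\phi_j^2\,dV_g\leq L$. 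Your appeal to the integrated local Weyl law ($\tfrac1N\sum_{j\leq N}\int_\omega\phi_j^2\,dV_g\to L$, hence $\liminf_j\int_\omega\phi_j^2\,dV_g\leq L$) supplies exactly this missing inequality, and it is an ingredient absent from the paper's one-line proof, which at this point only yields $C_\infty^{(W)}(\chi_\omega)\geq\tfrac12 J(\chi_\omega)$. What your route buys is a complete justification of item 1 for the wave equation; what it costs is an external input (the local Weyl law for arbitrary measurable $\omega$ on a manifold with boundary) that should be given a precise reference. The remaining points you flag --- discarding data with $\langle y^1,\phi_1\rangle\neq0$ since they make the time average diverge, and the fact that the randomization does kill $a_1\bar b_1$ even though $\lambda_1=0$ --- are handled correctly and are consistent with the paper's (implicit) conventions.
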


%\begin{remark}
%The spectrum of the Neumann-Laplacian is known to consist of simple eigenvalues for many choices of $\Omega$. For instance, it is proved in \cite{hillairet-judge} that this property holds for almost every polygon of $\R^2$ having $N$ vertices.
%\end{remark}

\begin{proof}
Following the same lines as those in the proofs of Theorems \ref{theoCTT} and \ref{propHazardCst} (see Sections \ref{prooftheoCTT} and \ref{proofpropHazardCst}), we obtain
$$
{C}_{T,\textrm{rand}}^{(W)}(\chi_\omega) = T {C}_\infty^{(W)}(\chi_\omega) = T \, \Gamma,
$$
with
$$
\Gamma = \inf_{((a_j),(b_j))\in(\ell^2(\C))^2\setminus\{0\}}\frac{\sum_{j=1}^{+\infty}(1+\lambda_j^2)(|a_j|^2+|b_j|^2)\int_\omega \phi_j(x)^2\, dV_g}{ \sum_{j=1}^{+\infty} \left(2\lambda_j^2( \vert a_j \vert ^2+ \vert b_j \vert ^2)+|a_j+b_j|^2\right)} .
$$
Let us prove that $\Gamma=\frac{1}{2}{J}(\chi_\omega)$. First of all, it is easy to see that, in the definition of $\Gamma$, it suffices to consider the infimum over real sequences $(a_j)$ and $(b_j)$.
Next, setting $a_j=\rho_j\cos \theta_j$ and $b_j=\rho_j\sin \theta_j$, since
$\vert a_j+b_j\vert^2 = \rho_j^2 (1+\sin(2\theta_j))$, to reach the infimum one has to take $\theta_j=\pi/4$ for every $j\in\N^*$.
It finally follows that
$$\Gamma  =   \inf_{\substack{(\rho_j)\in\ell^2(\R)\\ \sum_{j=1}^{+\infty}\rho_j^2=1}}\frac{1}{2}\sum_{j=1}^{+\infty}\rho_j^2\int_\omega \phi_j(x)^2\, dV_g
  =  \frac{1}{2}{J}(\chi_\omega).
$$
\end{proof}

\subsection{Variant of observability inequalities and optimal design results}\label{sec6.3}
In this section we consider only the following boundary conditions: Dirichlet, mixed Dirichlet-Neumann (with $\Gamma_0\neq\emptyset$), Robin.
We replace the observability inequalities \eqref{ineqobsw} and \eqref{ineqobss} with the inequalities
\begin{equation}\label{ineqobswRobin}
\tilde{C}_T^{(W)}(\chi_\omega) \Vert (y^0,y^1)\Vert_{H^1(\Omega,\C)\times L^2(\Omega,\C)}^2
\leq \int_0^T\int_\omega \vert \partial_ty(t,x)\vert^2 \, dV_g \, dt
\end{equation}
in the case of the wave equation, and
\begin{equation}\label{ineqobssRobin}
\tilde{C}_T^{(S)}(\chi_\omega) \Vert y^0\Vert_{H^2(\Omega,\C)}^2
\leq \int_0^T\int_\omega  \vert \partial_ty(t,x)\vert^2  \, dV_g \, dt
\end{equation}
in the case of the Schr\"odinger equation. 
Note that here, even in the Dirichlet case, we consider the full $H^1$ norm defined by $\Vert f\Vert_{H^1(\Omega,\C)}=(\Vert f\Vert_{L^2(\Omega,\C)}^2+\Vert \nabla f\Vert_{L^2(\Omega,\C)}^2)^{1/2}$.
% where for every $s\in \N^*$, $\Vert\cdot\Vert_{H^s(\Omega,\C)}$ denotes the standard Sobolev norm defined by
%$$\Vert u\Vert_{H^s(\Omega,\C)}^2=\sum_{k=0}^s\sum_{\substack{\alpha\in \N^k \\ |\alpha|=k}}\Vert D^\alpha u\Vert_{L^2(\Omega,\C)}^2,$$
%for every $u\in H^s(\Omega,\C)$.
These inequalities hold true under GCC, as already mentioned. Of course these inequalities cannot hold in the boundaryless case or for Neumann boundary conditions, due to constants (as  discussed previously).
Since the norm used at the left-hand side is stronger, it follows that $\tilde{C}_T^{(W)}(\chi_\omega)\leq C_T^{(W)}(\chi_\omega)$ and $\tilde{C}_T^{(S)}(\chi_\omega)\leq C_T^{(S)}(\chi_\omega)$.

Accordingly, the functional $G_T$ formerly defined by \eqref{quantity1obsW} is now replaced with
$$\tilde{G}_T(\chi_\omega)=\int_0^T \int_\omega \vert \partial_ty(t,x)\vert^2  \, dV_g \, dt.$$

In contrast to the previous results, the time averaging procedure or the randomization with respect to initial data does not lead to the functional $J$ defined by \eqref{defJ} but to the slightly different functional
$$
\tilde{J}(\chi_\omega)=\inf_{j\in\N^*}\frac{\lambda_j^2}{1+\lambda_j^2}\int_\omega \phi_j(x)^2 \, dV_g .
$$
We will see that the study of the associated optimization problem differs significantly from the one considered previously.

\medskip

Let us first explain how to derive this expression in the case of the wave equation (the arguments being similar for the Schr\"odinger equation).
Consider initial data $(y^0,y^1)\in H^1(\Omega,\C)\times L^2(\Omega,\C)$. One has
\begin{equation}\label{notice7robin}
\Vert (y^0,y^1)\Vert_{H^1(\Omega,\C)\times L^2(\Omega,\C)}^2 = \sum_{j=1}^{+\infty} \left(2\lambda_j^2 \vert a_j \vert ^2+2\lambda_j^2 \vert b_j \vert ^2+|a_j+b_j|^2\right).
\end{equation}
As previously, the time asymptotic observability constant $\tilde{C}_\infty^{(W)}(\chi_\omega)$ is defined as the largest possible nonnegative constant for which the time asymptotic observability inequality
\begin{equation}\label{ineqobswinfty_supplRobin}
\tilde{C}_\infty^{(W)}(\chi_\omega) \Vert (y^0,y^1)\Vert_{H^1(\Omega,\C)\times L^2(\Omega,\C)}^2
\leq \lim_{T\rightarrow+\infty} \frac{1}{T} \int_0^T\int_\omega \vert \partial_t y(t,x)\vert^2 \, dV_g \,dt
\end{equation}
holds, for all $(y^0,y^1)\in H^1(\Omega,\C)\times L^2(\Omega,\C)$.
Similarly, the randomized observability constant $\tilde{C}_{T,\textrm{rand}}^{(W)}(\chi_\omega)$ is the largest possible nonnegative constant for which the randomized observability inequality
\begin{equation}\label{ineqobswRandRobin}
\tilde{C}_{T,\textrm{rand}}^{(W)}(\chi_\omega) \Vert (y^0,y^1)\Vert_{H^1(\Omega,\C)\times L^2(\Omega,\C)}^2
\leq \mathbb{E}\left(\int_0^T\int_\omega \vert \partial_t y_\nu(t,x)\vert^2 \, dV_g \,dt\right)
\end{equation}
holds, for all $y^0(\cdot)\in H^1(\Omega,\C)$ and $y^1(\cdot)\in L^2(\Omega,\C)$, where $y_\nu$ is defined as before by \eqref{defynu}.
The time asymptotic and randomized observability constants are defined accordingly for the Schr\"odinger equation.

\begin{theorem}\label{thmconstantRobin}
Let $\omega$ be a measurable subset of $\Omega$.
\begin{enumerate}
\item If the domain $\Omega$ is such that every eigenvalue of $A$ is simple, then
$$
2\, \tilde{C}_\infty^{(W)}(\chi_\omega)  =\tilde{C}_\infty^{(S)}(\chi_\omega)  = \tilde{J}(\chi_\omega) .
$$
\item There holds
\begin{equation*}
2\, \tilde{C}_{T,\textrm{rand}}^{(W)}(\chi_\omega) = \tilde{C}_{T,\textrm{rand}}^{(S)}(\chi_\omega) = T \tilde{J}(\chi_\omega).
\end{equation*}
\end{enumerate}
\end{theorem}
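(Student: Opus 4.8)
The plan is to follow verbatim the proofs of Theorems \ref{theoCTT} and \ref{propHazardCst}, the only genuinely new ingredient being the treatment of the cross term $|a_j+b_j|^2$ that appears in the spectral form \eqref{notice7robin} of the $H^1\times L^2$ norm. I would treat both items at once, since they differ only in whether one averages in time (item 1, using simplicity of the spectrum) or over randomized data (item 2, no simplicity needed); in either case all off-diagonal Fourier interactions drop out and one is reduced to a Rayleigh quotient that is diagonal in $j$.

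First I would compute the right-hand sides of \eqref{ineqobswinfty_supplRobin} and \eqref{ineqobswRandRobin}. For the wave equation, writing $\partial_t y=\sum_j\lambda_j(a_je^{i\lambda_jt}-b_je^{-i\lambda_jt})\phi_j$, either the time-averaging argument proving Theorem \ref{theoCTT} or the Fubini/independence argument of Section \ref{proofpropHazardCst} (using that the $\beta^\nu$ have mean $0$, variance $1$ and $\mathbb{E}(\beta_{1,j}^\nu\beta_{2,k}^\nu)=0$) annihilates every $j\neq k$ term as well as the diagonal oscillating contributions, leaving
$$
\lim_{T\to+\infty}\frac{1}{T}\int_0^T\!\!\int_\omega|\partial_t y|^2\,dV_g\,dt = \frac{1}{T}\,\mathbb{E}\Big(\int_0^T\!\!\int_\omega|\partial_t y_\nu|^2\,dV_g\,dt\Big) = \sum_{j=1}^{+\infty}\lambda_j^2(|a_j|^2+|b_j|^2)\int_\omega\phi_j(x)^2\,dV_g,
$$
and the same computations for the Schr\"odinger equation give $\sum_j\lambda_j^4|c_j|^2\int_\omega\phi_j^2$. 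Dividing by the norm \eqref{notice7robin} then presents $\tilde C_\infty^{(W)}$ and $\tilde C_{T,\textrm{rand}}^{(W)}/T$ as one and the same infimum over $(a_j),(b_j)$.

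The new step is to evaluate this infimum. As in Section \ref{sec6.2}, I would first reduce to real sequences, and observe that for fixed moduli $|a_j|,|b_j|$ the numerator is unchanged while the denominator term $|a_j+b_j|^2$ is largest when $a_j$ and $b_j$ are aligned; writing $|a_j|=\rho_j\cos\theta_j$, $|b_j|=\rho_j\sin\theta_j$ this forces $\theta_j=\pi/4$, so $|a_j+b_j|^2=2\rho_j^2$ and the denominator becomes $\sum_j 2(1+\lambda_j^2)\rho_j^2$. Setting $\sigma_j^2=(1+\lambda_j^2)\rho_j^2$, the quotient reads $\tfrac12\sum_j\tfrac{\lambda_j^2}{1+\lambda_j^2}\big(\int_\omega\phi_j^2\big)\sigma_j^2\big/\sum_j\sigma_j^2$, an average of the numbers $\tfrac{\lambda_j^2}{1+\lambda_j^2}\int_\omega\phi_j^2$ whose infimum over $(\sigma_j)\in\ell^2$ is exactly $\inf_j\tfrac{\lambda_j^2}{1+\lambda_j^2}\int_\omega\phi_j^2=\tilde J(\chi_\omega)$. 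Hence $\tilde C_\infty^{(W)}=\tfrac12\tilde J$ and $\tilde C_{T,\textrm{rand}}^{(W)}=\tfrac T2\tilde J$. For the Schr\"odinger equation no phase optimization is needed (a single family $c_j$): with $\|y^0\|_{H^2}^2=\sum_j\lambda_j^2(1+\lambda_j^2)|c_j|^2$ the quotient is directly an average of $\tfrac{\lambda_j^2}{1+\lambda_j^2}\int_\omega\phi_j^2$, giving $\tilde C_\infty^{(S)}=\tilde J$ and $\tilde C_{T,\textrm{rand}}^{(S)}=T\tilde J$. Comparing yields $2\tilde C_\infty^{(W)}=\tilde C_\infty^{(S)}=\tilde J$ and $2\tilde C_{T,\textrm{rand}}^{(W)}=\tilde C_{T,\textrm{rand}}^{(S)}=T\tilde J$.

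I expect no serious obstacle, as the argument is a direct transcription of the earlier proofs. The only points requiring care are, on the one hand, the bookkeeping producing the spectral form \eqref{notice7robin} of the $H^1\times L^2$ norm and the corresponding identification of the $H^2$ norm with $\sum_j\lambda_j^2(1+\lambda_j^2)|c_j|^2$, which is precisely what makes the weight $\tfrac{\lambda_j^2}{1+\lambda_j^2}$ surface; and, on the other hand, the phase optimization $\theta_j=\pi/4$, which is the mechanism by which the mismatch between the observed quantity (only $|\partial_t y|^2$, hence weight $\lambda_j^2$) and the normalizing norm (the full $H^1\times L^2$ norm, hence weight $1+\lambda_j^2$) produces the modified criterion $\tilde J$ rather than $J$.
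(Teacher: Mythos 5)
Your proposal is correct and takes essentially the same route as the paper's proof: both reduce, via the time-averaging and randomization arguments of Theorems \ref{theoCTT} and \ref{propHazardCst}, to the diagonal Rayleigh quotient $\Gamma$ with denominator \eqref{notice7robin}, then restrict to real coefficients, optimize the phase via $a_j=\rho_j\cos\theta_j$, $b_j=\rho_j\sin\theta_j$ with $\theta_j=\pi/4$, and rescale by $1+\lambda_j^2$ to identify the infimum with $\tfrac12\tilde J(\chi_\omega)$. The only difference is that you spell out the Schr\"odinger normalization $\sum_j\lambda_j^2(1+\lambda_j^2)\vert c_j\vert^2$ explicitly, which the paper leaves implicit.
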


\begin{proof}
Following the same lines as those in the proofs of Theorems \ref{theoCTT} and \ref{propHazardCst} (see Sections \ref{prooftheoCTT} and \ref{proofpropHazardCst}), we obtain
$$
\tilde{C}_{T,\textrm{rand}}^{(W)}(\chi_\omega) = T \tilde{C}_\infty^{(W)}(\chi_\omega) = T \, \Gamma,
$$
with
$$
\Gamma = \inf_{((a_j),(b_j))\in(\ell^2(\C))^2\setminus\{0\}}\frac{\sum_{j=1}^{+\infty}\lambda_j^2(|a_j|^2+|b_j|^2)\int_\omega \phi_j(x)^2\, dV_g}{ \sum_{j=1}^{+\infty} \left(2\lambda_j^2( \vert a_j \vert ^2+ \vert b_j \vert ^2)+|a_j+b_j|^2\right)} .
$$
Let us prove that $\Gamma=\frac{1}{2}\tilde{J}(\chi_\omega)$. First of all, it is easy to see that, in the definition of $\Gamma$, it suffices to consider the infimum over real sequences $(a_j)$ and $(b_j)$.
Next, setting $a_j=\rho_j\cos \theta_j$ and $b_j=\rho_j\sin \theta_j$, since
$\vert a_j+b_j\vert^2 = \rho_j^2 (1+\sin(2\theta_j))$, to reach the infimum one has to take $\theta_j=\pi/4$ for every $j\in\N^*$.
It finally follows that
$$\Gamma  =   \inf_{\substack{(\rho_j)\in\ell^2(\R)\\ \sum_{j=1}^{+\infty}\rho_j^2=1}}\frac{1}{2}\sum_{j=1}^{+\infty}\rho_j^2\frac{\lambda_j^2}{1+\lambda_j^2}\int_\omega \phi_j(x)^2\, dV_g
  =  \frac{1}{2}\tilde{J}(\chi_\omega).
$$
\end{proof}

We are thus led to introduce the following version of the second problem (uniform optimal design problem), associated with the above observability inequalities.

\begin{quote}
\noindent{\bf Uniform optimal design problem.}
\textit{We investigate the problem of maximizing the functional
\begin{equation}\label{defJRobin}
\tilde{J}(\chi_\omega)=\inf_{j\in\N^*}\gamma_j\int_\omega \phi_j(x)^2\, dV_g,
\end{equation}
over all possible subsets $\omega$ of $\Omega$ of measure $ V_g(\omega) =L V_g(\Omega) $, where the $\gamma_j$'s are defined by
$$
\gamma_j=\frac{\lambda_j^2}{1+\lambda_j^2}.
$$
}

\end{quote}

Note that the sequence $(\gamma_j)_{j\in\N^*}$ is monotone increasing, and that $0<\gamma_1\leq \gamma_j<1$ for every $j\in\N^*$.

As in Section \ref{solvingpb2obssec1}, the convexified version of this problem is defined accordingly by
\begin{equation}\label{quantity2obsAsympconvexifiedRobin}
\sup_{a\in \overline{\mathcal{U}}_L} \tilde{J}(a) ,
\end{equation}
where
\begin{equation}\label{defJrelaxRobin}
\tilde{J}(a)=\inf_{j\in\N^*}\gamma_j\int_\Omega a(x)\phi_j(x)^2\, dV_g.
\end{equation}
As in Sections \ref{solvingpb2obssec1} and \ref{secconvexified2}, under the assumption that there exists a subsequence of $(\phi_j^2)_{j\in \N^*}$ converging to $\frac{1}{ V_g(\Omega) }$\ in weak star $L^\infty$ topology ($L^\infty$-WQE property), the problem \eqref{quantity2obsAsympconvexifiedRobin} has at least one solution, and
$\sup_{a\in \overline{\mathcal{U}}_L} \tilde{J}(a) = L$, and the supremum is reached with the constant function $a=L$.

We will next establish a no-gap result, similar to Theorem \ref{thmnogap}, but only valuable for nonsmall values of $L$. Actually, we will show that the present situation differs significantly from the previous one, in the sense that, if $\gamma_1<L<1$ then the highfrequency modes do not play any role in the problem \eqref{defJRobin}.
Before coming to that result, let us first define the truncated versions of the problem \eqref{defJRobin}.
For every $N\in\N^*$, we define
\begin{equation}\label{defJNRobin}
\tilde J_N(a) = \inf_{1\leq j\leq N} \gamma_j \int_\Omega a (x)\phi_j(x)^2\, dV_g.
\end{equation}
An immediate adaptation of the proof of Theorem \ref{thm3} yields the following result.

\begin{proposition}\label{prop3Robin}
For every $N\in\N^*$, the problem 
\begin{equation}\label{truncPbRobin}
\sup_{a\in\overline{\mathcal{U}}_L}\tilde J_N(a)
\end{equation}
has a unique solution $a^N$ that is the characteristic function of a set $\omega^N$. Moreover, if $M$ is analytic then $\omega^N$ is semi-analytic and has a finite number of connected components.
\end{proposition}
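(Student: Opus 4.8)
The plan is to mirror the proof of the third item of Theorem \ref{thm3}, inserting the positive weights $\gamma_j$ wherever the quantities $\phi_j^2$ occur and verifying that none of the steps is spoiled by this modification. First I would recast the truncated problem \eqref{truncPbRobin} as a minimax problem. Introducing the simplex $\mathcal{A}_N=\{\alpha=(\alpha_j)_{1\le j\le N}\ \vert\ \alpha_j\ge 0,\ \sum_{j=1}^N\alpha_j=1\}$, one has $\tilde{J}_N(a)=\min_{\alpha\in\mathcal{A}_N}\int_\Omega a(x)\sum_{j=1}^N\alpha_j\gamma_j\phi_j(x)^2\,dV_g$ for every $a\in\overline{\mathcal{U}}_L$. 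Since this integrand is bilinear in $(a,\alpha)$, with $\overline{\mathcal{U}}_L$ convex and compact for the weak-star topology of $L^\infty$ and $\mathcal{A}_N$ convex and compact, Sion's minimax theorem yields a saddle point $(a^N,\alpha^N)$, and the value equals $\int_\Omega a^N(x)\,\Phi_N(x)\,dV_g$ where $\Phi_N(x)=\sum_{j=1}^N\alpha_j^N\gamma_j\phi_j(x)^2$; in particular $a^N$ maximizes the linear functional $a\mapsto\int_\Omega a\,\Phi_N\,dV_g$ over $\overline{\mathcal{U}}_L$.

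The crux is to show that $\Phi_N$ is constant on no subset of positive measure. I would argue by contradiction as in Theorem \ref{thm3}: by analyticity of the eigenfunctions in $\Omega$ (analytic hypoellipticity, valid when $M$ is analytic), constancy on a positive-measure set forces $\Phi_N\equiv c$ on all of $\Omega$, and one then invokes the boundary conditions to get $c=0$. For Dirichlet (resp. mixed Dirichlet--Neumann with $\Gamma_0\neq\emptyset$) the $\phi_j$ vanish on $\partial\Omega$ (resp. on $\Gamma_0$), so $\Phi_N=0$ there and hence $c=0$. For Robin I would differentiate instead: $\Phi_N\equiv c$ gives $\partial_n\Phi_N=0$ on $\partial\Omega$, while $\partial_n\Phi_N=2\sum_{j=1}^N\alpha_j^N\gamma_j\phi_j\,\partial_n\phi_j=-2\beta\Phi_N=-2\beta c$ using $\partial_n\phi_j=-\beta\phi_j$, so the hypothesis $\int_{\partial\Omega}\beta>0$ forces $c=0$. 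In each case $c=0$ contradicts the fact that $\Phi_N$ is a nonnegative combination of squares with coefficients $\alpha_j^N\gamma_j\ge 0$ not all zero (since $\sum_j\alpha_j^N=1$ and every $\gamma_j>0$) and the $\phi_j$ are linearly independent.

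Once $\Phi_N$ is known to take no value on a set of positive measure, the maximizer of $a\mapsto\int_\Omega a\,\Phi_N\,dV_g$ over $\overline{\mathcal{U}}_L$ is necessarily the characteristic function of a level set: there is a unique threshold $\lambda^N>0$ with $a^N=\chi_{\omega^N}$, $\omega^N=\{\Phi_N\ge\lambda^N\}$, and $V_g(\omega^N)=LV_g(\Omega)$, whence uniqueness. When $M$ is analytic, $\Phi_N$ is analytic, so $\omega^N$ is semi-analytic and, being a relatively bounded semi-analytic set, has finitely many connected components, exactly as in Theorem \ref{thm3} (see Footnote \ref{footnoteSemiAna}).

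The main obstacle is the Robin case of the non-constancy step, since there the eigenfunctions do not vanish on the boundary and the plain Dirichlet-type argument is unavailable; it is the normal-derivative identity together with the structural assumption $\int_{\partial\Omega}\beta>0$ that plays the role of the Dirichlet vanishing. By contrast, the weights $\gamma_j$ themselves cause no difficulty whatsoever: being strictly positive, they merely rescale a nonnegative combination of squares, preserving both the analyticity used to propagate constancy and the sign information used in every boundary argument.
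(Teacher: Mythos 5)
Your proposal is correct and follows exactly the route the paper intends: the paper's own proof of Proposition~\ref{prop3Robin} is literally ``an immediate adaptation of the proof of Theorem~\ref{thm3}'', i.e.\ the Sion minimax saddle point, the non-constancy of $\sum_j\alpha_j^N\gamma_j\phi_j^2$ via analyticity plus the boundary conditions, and the resulting bang-bang level-set characterization, which is precisely what you carry out with the harmless positive weights $\gamma_j$ inserted. Your explicit normal-derivative argument for the Robin case (using $\partial_n\phi_j=-\beta\phi_j$ and $\int_{\partial\Omega}\beta>0$ to force the constant to vanish) is in fact more careful than the paper, which simply lumps Robin with the ``other boundary conditions'' for which the function is claimed to vanish on $\overline{\Omega}$.
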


The main result of this section is the following.

\begin{theorem}\label{thmnogapRobin}
Assume that the QUE and uniform $L^\infty$-boundedness properties hold.
Let $L\in (\gamma_1,1)$. Then there exists $N_0\in\N^*$ such that
\begin{equation}\label{nogapRobin}
\max_{\chi_\omega\in\mathcal{U}_L}\tilde J(\chi_\omega)=\max_{\chi_\omega\in\mathcal{U}_L}\tilde J_N(\chi_\omega)\leq \gamma_1 <L ,
\end{equation}
for every $N\geq N_0$. In particular, the second problem \eqref{defJRobin} has a unique solution $\chi_{\omega^{N_0}}$, and moreover if $M$ is analytic then the set $\omega^{N_0}$ is semi-analytic and has a finite number of connected components. 
\end{theorem}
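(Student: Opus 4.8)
The plan is to reduce the statement to a \emph{finite stabilization} of the truncated optimal values, and to exploit the two features that distinguish this problem from the one for $J$: the weights $\gamma_j$ are increasing with $\gamma_j\to 1$, and $\gamma_1<L$. First I would record the trivial bounds. For every $N\geq 1$ and every $\chi_\omega\in\mathcal{U}_L$, since $\int_\omega\phi_1^2\leq\int_\Omega\phi_1^2=1$, one has $\tilde J_N(\chi_\omega)\leq\gamma_1\int_\omega\phi_1^2\leq\gamma_1$, hence $\max_{\mathcal{U}_L}\tilde J_N\leq\gamma_1<L$ and a fortiori $\max_{\mathcal{U}_L}\tilde J\leq\gamma_1$. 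As $\tilde J_N\geq\tilde J_{N+1}\geq\tilde J$ pointwise, the values $V_N:=\max_{\overline{\mathcal{U}}_L}\tilde J_N$ are nonincreasing and bounded below by $V:=\max_{\overline{\mathcal{U}}_L}\tilde J$, and the $\Gamma$-convergence argument of Theorem \ref{thm3} (which adapts verbatim) gives $V_N\searrow V$. By Proposition \ref{prop3Robin} each truncated problem is attained at a characteristic function, so $V_N=\max_{\mathcal{U}_L}\tilde J_N$; it then remains to prove that $V_N=V$ for $N$ large and that $V=\max_{\mathcal{U}_L}\tilde J$.

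The heart of the argument would be a minimax/duality analysis. Writing $\psi_\alpha=\sum_j\alpha_j\gamma_j\phi_j^2$ and $\mathcal{B}(\psi)=\max_{a\in\overline{\mathcal{U}}_L}\int_\Omega a\,\psi\,dV_g$ (the ``bathtub'' value, attained on a superlevel set of $\psi$ of measure $LV_g(\Omega)$), Sion's theorem \cite{Sion} applied at each finite level $N$ produces a saddle point $(\chi_{\omega^N},\alpha^N)$, with $\alpha^N$ in the simplex on $\{1,\dots,N\}$, $V_N=\mathcal{B}(\psi_{\alpha^N})$, $\omega^N=\{\psi_{\alpha^N}\geq t^N\}$, and complementary slackness $\operatorname{supp}\alpha^N\subseteq\{j\leq N:\gamma_j\int_{\omega^N}\phi_j^2=V_N\}$ (this is essentially the content of Proposition \ref{prop3Robin}). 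The elementary but decisive estimate is $\mathcal{B}(\psi)\geq L\int_\Omega\psi\,dV_g$ for $\psi\geq 0$, since the top fraction carries at least the average; using $\int_\Omega\psi_{\alpha^N}\,dV_g=\sum_j\alpha_j^N\gamma_j$ it yields $V_N\geq L\sum_j\alpha_j^N\gamma_j$, and with $V_N\leq\gamma_1$ this forces $\sum_j\alpha_j^N\gamma_j\leq\gamma_1/L<1$, i.e.\ $\sum_j\alpha_j^N(1-\gamma_j)\geq 1-\gamma_1/L>0$. Because $1-\gamma_j\to 0$, the dual mass cannot escape to high frequencies, so $(\alpha^N)$ is tight and, up to extraction, converges in $\ell^1$ to some $\alpha^\star$ in the full simplex; then $\psi_{\alpha^N}\to\psi_{\alpha^\star}$ in $L^1$, $\mathcal{B}$ is $L^1$-continuous, and passing to the limit $(\chi_{\omega^\star},\alpha^\star)$ is a saddle of the full problem with $\omega^\star=\{\psi_{\alpha^\star}\geq t^\star\}$, $\tilde J(\chi_{\omega^\star})=V$, and $\operatorname{supp}\alpha^\star\subseteq A^\star:=\{j:\gamma_j\int_{\omega^\star}\phi_j^2=V\}$.

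At this point QUE enters on the \emph{fixed} set $\omega^\star$: since $\omega^\star$ no longer depends on the truncation (and its boundary is negligible, being a superlevel set, the uniform $L^\infty$-bound allowing the limit anyway), the hypotheses give $\gamma_j\int_{\omega^\star}\phi_j^2\to L>V$, so $A^\star$ is finite. Setting $N_0=\max A^\star$, the dual optimizer $\alpha^\star$ is admissible at level $N_0$, whence $V_{N_0}\leq\mathcal{B}(\psi_{\alpha^\star})=V\leq V_{N_0}$, i.e.\ $V_{N_0}=V$; by monotonicity $V_N=V$ for all $N\geq N_0$, which is \eqref{nogapRobin}. Uniqueness and regularity then follow from Proposition \ref{prop3Robin} at level $N_0$: the unique maximizer $\chi_{\omega^{N_0}}$ of $\tilde J_{N_0}$ satisfies $\tilde J(\chi_{\omega^{N_0}})=\tilde J_{N_0}(\chi_{\omega^{N_0}})=V$ (the higher modes being inactive by QUE at $\omega^{N_0}$), so it is the unique solution of \eqref{defJRobin}, semi-analytic with finitely many components when $M$ is analytic.

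The hard part will be the passage to the limit in the minimax: proving that the dual masses stay tight and that the limiting pair is a genuine saddle with a characteristic-function primal optimizer. This is exactly where both hypotheses are essential. The increasing weights with $\gamma_j\to 1$ together with $\gamma_1<L$ produce the tightness of $(\alpha^N)$ through the bathtub inequality, while QUE (and not merely WQE) is needed to ensure, at the fixed limiting optimal set, that only finitely many modes are active. Without the gap $\gamma_1<L$ the dual mass could legitimately spread over all frequencies, no finite $N_0$ would exist, and one would recover the relaxation phenomenon seen for $J$; so the role of the hypothesis $L\in(\gamma_1,1)$ must be made quantitative precisely through the inequality $\sum_j\alpha_j^N\gamma_j\leq\gamma_1/L<1$.
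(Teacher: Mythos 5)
Your overall strategy (minimax duality at each truncation level, then passing the saddle points to the limit) is genuinely different from the paper's argument, and several of your ingredients are sound and do appear in spirit in the paper: the bound $\tilde J_N\leq\gamma_1$, the ``bathtub'' inequality $\mathcal{B}(\psi)\geq L\int_\Omega\psi\,dV_g$ obtained by testing against the constant $a=L$, and the use of QUE at a \emph{fixed} optimizer to deactivate the high modes. However, there is a genuine gap at precisely the step you flag as the hard one. From $V_N\geq L\sum_j\alpha_j^N\gamma_j$ and $V_N\leq\gamma_1$ you deduce $\sum_j\alpha_j^N(1-\gamma_j)\geq 1-\gamma_1/L>0$ and claim that, since $1-\gamma_j\to0$, this forces tightness of $(\alpha^N)$ in $\ell^1$. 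It does not: the inequality only guarantees that a fixed positive \emph{fraction} of the dual mass stays at low frequencies (if $1-\gamma_j<c/2$ for $j>J$, with $c=1-\gamma_1/L$, then $\sum_{j\leq J}\alpha_j^N\geq c/(2(1-\gamma_1))$), but it is perfectly compatible with a complementary fraction escaping to $j\to\infty$; for instance $\alpha^N=\tfrac12\delta_1+\tfrac12\delta_N$ satisfies your lower bound for suitable $\gamma_1$ and $L$. Without tightness the limit $\alpha^\star$ is only a sub-probability vector, $\psi_{\alpha^N}$ does not converge to $\psi_{\alpha^\star}$ in $L^1$, and the identification of the limiting pair as a saddle of the full problem breaks down. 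Repairing this requires working on the compactified simplex (mass at infinity contributing the density $1/V_g(\Omega)$ by $L^\infty$-QUE) and showing separately that a dual minimizer there carries no mass at infinity, using subadditivity of $\mathcal{B}$ and $\gamma_1<L$; that is exactly the missing content. A second, related gap: you take the limiting primal optimizer to be $\chi_{\omega^\star}$ with $\omega^\star$ a superlevel set of $\psi_{\alpha^\star}$, but the analyticity argument excluding level sets of positive measure (Proposition \ref{prop3Robin}) applies only to \emph{finite} combinations $\sum_{j\leq N}\alpha_j\gamma_j\phi_j^2$; for an infinite-support $\alpha^\star$ the sum need not be analytic, so you cannot assert the bang-bang form of the limit before knowing $\alpha^\star$ has finite support --- which is what you are trying to prove.

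For comparison, the paper avoids the dual limit entirely. It takes a maximizer $a^\infty$ of the convexified full problem, uses $L^\infty$-QUE to find $N_0$ with $\gamma_j\int_\Omega a^\infty\phi_j^2\geq L-\varepsilon>\gamma_1\geq\tilde J_{N_0}(a^\infty)$ for all $j>N_0$, so that $\tilde J(a^\infty)=\tilde J_{N_0}(a^\infty)$, and then rules out $\tilde J_{N_0}(a^\infty)<\tilde J_{N_0}(a^{N_0})$ by moving $a^\infty$ slightly toward the truncated optimizer $a^{N_0}$: concavity of $\tilde J_{N_0}$ strictly improves the low modes, while the high modes remain above $\gamma_1$ for $t$ small, contradicting the optimality of $a^\infty$. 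If you replace your dual-limit step by this primal perturbation, you recover the theorem with the machinery you already have.
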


\begin{proof}
Using the same arguments as in Lemma \ref{lemmaExistConv}, it is clear that the problem \eqref{quantity2obsAsympconvexifiedRobin} has at least one solution, denoted by $a^\infty$. Let us first prove that there exists $N_0\in\N^*$ such that $\tilde J(a^\infty)=\tilde J_{N_0}(a^\infty)$. Let $\varepsilon\in (0,L-\gamma_1)$. It follows from the $L^\infty$-QUE property that there exists $N_0\in\N^*$ such that
\begin{equation}\label{defN0Robin}
\gamma_j\int_\Omega a^\infty(x)\phi_j(x)^2\, dV_g\geq L-\varepsilon,
\end{equation}
for every $j > N_0$.
Therefore,
\begin{equation*}
\begin{split}
\tilde J(a^\infty)  &=  \inf_{j\in\N^*}\gamma_j\int_\Omega a^\infty(x) \phi_j(x)^2\, dV_g \\
&= \min \left( \inf_{1\leq j\leq N_0}\gamma_j\int_\Omega a^\infty(x) \phi_j(x)^2\, dV_g ,\inf_{j>N_0}\gamma_j\int_\Omega a^\infty(x) \phi_j(x)^2\, dV_g\right) \\
& \geq   \min \left(\tilde J_{N_0}(a^\infty), L-\varepsilon\right)=\tilde J_{N_0}(a^\infty),
\end{split}
\end{equation*}
since $L-\varepsilon>\gamma_1$ and $\tilde J_{N_0}(a^\infty)\leq \gamma_1$. It follows that $\tilde J(a^\infty)=\tilde J_{N_0}(a^\infty)$.

Let us now prove that  $\tilde J(a^\infty) = \tilde J_{N_0}(a^{N_0})$, where $a^{N_0}$ is the unique maximizer of $\tilde J_{N_0}$ (see Proposition \ref{prop3Robin}). By definition of a maximizer, one has $\tilde J(a^\infty) = \tilde J_{N_0}(a^\infty) \leq \tilde J_{N_0}(a^{N_0})$.
By contradiction, assume that $\tilde J_{N_0}(a^\infty) < \tilde J_{N_0}(a^{N_0})$. Let us then design an admissible perturbation $a_t\in\overline{\mathcal{U}}_L$ of $a^\infty$ such that $\tilde J(a_t)>\tilde J(a^\infty)$, which raises a contradiction with the optimality of $a^\infty$. For every $t\in[0,1]$, set $
a_t = a^\infty + t ( a^{N_0} - a^\infty)$.
Since $\tilde J_{N_0}$ is concave, one gets
$$
\tilde J_{N_0}(a_t) \geq (1-t)\tilde J_{N_0}(a^\infty) + t \tilde J_{N_0}(a^{N_0}) > \tilde J_{N_0}(a^\infty),
$$
for every $t\in(0,1]$, which means that
\begin{equation}\label{lowmodes_Robin}
\inf_{1\leq j\leq N_0} \gamma_j \int_\Omega a_t(x) \phi_j(x)^2\, dV_g > \inf_{1\leq j\leq N_0} \gamma_j \int_\Omega a^\infty(x) \phi_j(x)^2\, dV_g  \geq \tilde J(a^\infty),
\end{equation}
for every $t\in(0,1]$. 
Besides, since $a^{N_0}-a^\infty\in (-2,2)$ almost everywhere in $\Omega$, it follows from \eqref{defN0Robin} that
$$
\gamma_j \int_\Omega a_t(x) \phi_j^2(x)\, dV_g   =   \gamma_j \int_\Omega a^\infty(x)\phi_j(x)^2\, dV_g+t\gamma_j\int_\Omega (a^{N_0}(x)-a^\infty(x)) \phi_j(x)^2 \, dV_g
\geq   L-\varepsilon-2t ,
$$
for every $j\geq N_0$. Let us choose $t$ such that $0<t<\frac{L-\varepsilon-\gamma_1}{2}$, so that the previous inequality yields   
\begin{equation}\label{highmodes_Robin}
\gamma_j \int_\Omega a_t(x) \phi_j(x)^2\, dV_g > \gamma_1  \geq  \gamma_1 \int_\Omega a^\infty (x)\phi_1(x)^2\, dV_g \geq \tilde J(a^\infty).
\end{equation}
for every $j\geq N_0$. % and $t\in (0,\frac{L-\varepsilon-\gamma_1}{2})$.
%Since $\int_\Omega a^\infty \phi_1^2 \leq 1$, one has for every $t\in (0,\frac{L-\varepsilon-\gamma_1}{2})$ and $j\geq N_0$,
%$$ \gamma_j \int_\Omega a_t \phi_j^2 >  \gamma_1  \geq  \gamma_1 \int_\Omega a^\infty \phi_1^2 \geq  J(a^\infty). $$
Combining the low modes estimate \eqref{lowmodes_Robin} with the high modes estimate \eqref{highmodes_Robin}, we conclude that
$$
\tilde J(a_t) = \inf_{j\in\N^*} \gamma_j \int_\Omega a_t(x) \phi_j(x)^2 \, dV_g > \tilde J(a^\infty),
$$
which contradicts the optimality of $a^\infty$.

Therefore $\tilde J_{N_0}(a^\infty) = \tilde J(a^\infty)= \tilde J_{N_0}(a^{N_0})$, and the result follows.
\end{proof}

\begin{remark}
Under the assumptions of the theorem, there is no gap between the second problem \eqref{defJRobin} and its convexified formulation \eqref{quantity2obsAsympconvexifiedRobin}, as well as before. But, contrarily to the previous results, here there always exists a maximizer in the class of characteristic functions whenever $L$ is large enough, and moreover, this optimal set can be computed from a truncated formulation \eqref{defJNRobin} for a certain value of $N$. In other words, the maximizing sequence $(\chi_{\omega^N})_{N\in\N^*}$ resulting from Proposition \ref{prop3Robin} is stationary. Here, the high modes play no role, whereas in the previous results all modes had the same impact. This is due to the fact that, in the left hand-side of the observability inequalities \eqref{ineqobswRobin} and \eqref{ineqobssRobin}, we use the $H^1$-norm instead of the norm of $D(A^{1/2})$. This causes a spectral asymmetry that finally leads to the above result. 
\end{remark}

\begin{remark}
Here, if $L$ is not too small then there exists an optimal set (sharing nice regularity properties) realizing the largest possible time asymptotic and randomized observability constants. The optimal value of these constants is known to be less than $L$ but its exact value is not known. It is related to solving a finite dimensional numerical optimization problem.
\end{remark}

\begin{remark}
In the case where $L\leq\gamma_1$, we do not know whether or not there is a gap between the second problem \eqref{defJRobin} and its convexified formulation \eqref{quantity2obsAsympconvexifiedRobin}. 
Adapting shrewdly the proof of Theorems \ref{thmnogap} or \ref{thmnogap2} does not seem to allow one to derive a no-gap result. Nevertheless, one can prove using these arguments that
$\sup_{\chi_\omega\in\mathcal{U}_L}\tilde J(\chi_\omega)\geq \gamma_1 L$.
\end{remark}

\begin{remark}
We formulate the following two open questions.
\begin{itemize}
\item Under the assumptions of Theorem \ref{thmnogapRobin}, does the conclusion hold true for every $L\in (0,1)$?
\item Does the statement of Theorem \ref{thmnogapRobin} still hold true under weaker ergodicity assumptions, for instance is it possible to weaken QUE into WQE?
\end{itemize}
\end{remark}

\begin{remark}\label{rem_extension_tensorized}
The QUE assumption made in Theorem \ref{thmnogapRobin} is very strong, as already discussed. It is true in the one-dimensional case but up to now no example of a multi-dimensional domain satisfying QUE is known.

Anyway, we are able to prove that the conclusion of Theorem \ref{thmnogapRobin} holds true in a domain which is a tensorized version of a one-dimensional domain. Indeed, consider either the domain $\Omega=\mathbb{T}^n$ (flat torus), or the domain $\Omega=[0,\pi]^n$, the Euclidean $n$-dimensional square, with  Dirichlet boundary conditions, or mixed Dirichlet-Neumann boundary conditions with either Dirichlet or Neumann condition on every full edge of the hypercube. 
The normalized eigenfunctions of $A$ are then
$$
\phi_{j_1 \ldots j_n}(x_1,\ldots,x_n)=\prod_{k=1}^n\phi_{j_k}(x_k),
$$
for all $(j_1,\ldots,j_n)\in(\N^*)^n$, where the $\phi_j$'s are the normalized eigenfunctions of the corresponding one-dimensional case, i.e., $\phi_{j}(x)=\sqrt{\frac{2}{\pi}}\sin (\pi x)$ or $\phi_{j}(x)=\sqrt{\frac{2}{\pi}}\cos (\pi x)$ for every $x\in [0,\pi]$.
Obviously, $\Omega$ does not satisfy QUE (nor QE), but satisfies WQE, and moreover the eigenfunctions $\phi_{j_1 \ldots j_n}$ are uniformly bounded in $L^\infty(\Omega)$.
Let us prove however that the equality \eqref{nogapRobin} holds. More precisely, one has the following result.

\begin{proposition}\label{propnogapPoidsHypercube}
There exists $L_{0}\in (0,1)$ and $N_{0}\in \N^*$ such that 
\begin{equation}\label{nogapPoidsHypercube}
\max_{\chi_\omega\in\mathcal{U}_L}\tilde J(\chi_\omega)=\max_{\chi_\omega\in\mathcal{U}_L}\tilde J_N(\chi_\omega) ,
\end{equation}
for every $L\in [L_0,1)$ and every $N\geq N_{0}$.
\end{proposition}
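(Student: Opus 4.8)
The plan is to follow the structure of the proof of Theorem~\ref{thmnogapRobin}, but to replace the high-frequency control furnished there by the QUE hypothesis (which fails on $\mathbb{T}^n$ and $[0,\pi]^n$) by a cruder bound that only exploits the uniform $L^\infty$-boundedness of the eigenfunctions together with the fact that $L$ is required to be close to $1$. On these tensorized domains the eigenfunctions $\phi_{j_1\ldots j_n}=\prod_k\phi_{j_k}$ are uniformly bounded, so there is $A>0$ with $\|\phi_j\|_{L^\infty(\Omega)}\le A$ for every $j$. The guiding observation is that when the complement of $\omega$ has small measure, excising it from $\Omega$ can lower each $\int\phi_j^2$ only by a uniformly small amount, so that \emph{all} modes (not just a density-one subsequence) capture nearly the full mass.

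First I would establish the following uniform high-frequency estimate: for every $a\in\overline{\mathcal{U}}_L$ and every $j\in\N^*$,
\begin{equation*}
\int_\Omega a(x)\phi_j(x)^2\,dV_g = 1-\int_\Omega\bigl(1-a(x)\bigr)\phi_j(x)^2\,dV_g \ \ge\ 1-A^2(1-L)V_g(\Omega),
\end{equation*}
which follows at once from $0\le 1-a\le 1$, $\int_\Omega(1-a)\,dV_g=(1-L)V_g(\Omega)$ and $\phi_j^2\le A^2$. The decisive feature, absent from the QUE argument, is that this lower bound is uniform both in the mode index $j$ and in the competitor $a\in\overline{\mathcal{U}}_L$.

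Next I would fix the thresholds. Set $c=(1+\gamma_1)/2\in(\gamma_1,1)$. I choose $L_0\in(0,1)$ close enough to $1$ that $A^2(1-L_0)V_g(\Omega)<1-c$ and $L_0>\gamma_1$, so that $1-A^2(1-L)V_g(\Omega)>c$ for every $L\ge L_0$; since $\gamma_1/c<1$ and $\gamma_j\nearrow 1$, I also choose $N_0\ge 2$ with $\gamma_{N_0}>\gamma_1/c$. Combining with the estimate above gives, for every $L\in[L_0,1)$, every $j\ge N_0$ and every $a\in\overline{\mathcal{U}}_L$,
\begin{equation*}
\gamma_j\int_\Omega a(x)\phi_j(x)^2\,dV_g \ \ge\ \gamma_j\bigl(1-A^2(1-L)V_g(\Omega)\bigr) \ >\ \gamma_j\,c \ >\ \gamma_1 .
\end{equation*}
On the other hand the low modes always satisfy $\min_{1\le j\le N_0-1}\gamma_j\int_\Omega a\phi_j^2\,dV_g\le \gamma_1\int_\Omega a\phi_1^2\,dV_g\le\gamma_1$, over a nonempty index set since $N_0\ge 2$. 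Hence the infimum defining $\tilde J(a)$ is attained among the modes $j<N_0$, which yields $\tilde J(a)=\tilde J_N(a)$ for every $a\in\overline{\mathcal{U}}_L$ and every $N\ge N_0$.

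Finally I would conclude: restricting the pointwise identity $\tilde J\equiv\tilde J_N$ on $\overline{\mathcal{U}}_L$ to characteristic functions gives $\tilde J(\chi_\omega)=\tilde J_N(\chi_\omega)$ for all $\chi_\omega\in\mathcal{U}_L$, hence equality of the two suprema, and by Proposition~\ref{prop3Robin} the convexified truncated problem has a unique maximizer which is a characteristic function $\chi_{\omega^N}$, so both suprema are maxima attained at $\chi_{\omega^N}$; this proves \eqref{nogapPoidsHypercube} for $L\in[L_0,1)$ and $N\ge N_0$. The main conceptual hurdle is not a hard estimate but the realization that the strong QUE hypothesis of Theorem~\ref{thmnogapRobin} can be traded for the uniform $L^\infty$-bound at the price of restricting to $L$ near $1$; indeed neither WQE nor the detailed tensor structure enters beyond the uniform boundedness of the $\phi_j$, which is precisely what separates the hypercube and torus from balls or spheres.
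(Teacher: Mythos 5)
Your proof is correct, but it follows a genuinely different route from the paper's. The paper runs the whole machinery of the proof of Theorem~\ref{thmnogapRobin} again, replacing the QUE-based high-frequency estimate \eqref{defN0Robin} by an application of Lemma~\ref{lemmaPTZHUM} to the convexified maximizer $a^\infty$: that lemma, imported from the companion paper by induction on the dimension, gives $\inf_j \gamma_j\int_\Omega a^\infty\phi_j^2\,dV_g\geq F^{[n]}\bigl(\int_\Omega a^\infty\,dV_g\bigr)$ with $F^{[n]}$ tending to full mass as $L\to1$, after which the perturbation/contradiction argument with $a_t=a^\infty+t(a^{N_0}-a^\infty)$ is repeated. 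You instead use only $\Vert\phi_j\Vert_{L^\infty}\leq A$ to get $\int_\Omega a\,\phi_j^2\,dV_g\geq 1-A^2(1-L)V_g(\Omega)$ uniformly in $j$ \emph{and} in $a\in\overline{\mathcal{U}}_L$, which upgrades the conclusion to the pointwise identity $\tilde J\equiv\tilde J_N$ on all of $\overline{\mathcal{U}}_L$ and lets you bypass the perturbation argument entirely, concluding directly from Proposition~\ref{prop3Robin}. What each approach buys: yours is more elementary and self-contained, and applies verbatim to any domain with uniformly $L^\infty$-bounded eigenfunctions (it shows the QUE hypothesis of Theorem~\ref{thmnogapRobin} can be traded for uniform boundedness when $L$ is near $1$); the paper's lemma is sharper for moderate $L$, since $F^{[n]}$ is positive for every $L>0$ whereas your bound is vacuous once $A^2(1-L)V_g(\Omega)\geq 1$ (on $[0,\pi]^n$ this means $L\leq 1-2^{-n}$, a threshold that degrades quickly with the dimension). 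Since the proposition only asserts the result for $L$ in some unspecified neighborhood of $1$, both arguments suffice; the only cosmetic point to fix is that the truncation in the tensorized setting is over the box $\{1,\ldots,N\}^n$ rather than over the first $N$ eigenvalues, so $N_0$ should be chosen so that every multi-index outside the box satisfies $\gamma_{j_1\ldots j_n}\geq N_0^2/(1+N_0^2)>\gamma_1/c$, which is the same one-line adjustment.
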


\begin{proof}
The proof follows the same lines as the one of Theorem \ref{thmnogapRobin}. Nevertheless, the inequality \eqref{defN0Robin} may not hold whenever QUE is not satisfied and has to be questioned. In the specific case under consideration, \eqref{defN0Robin} is replaced with the following assertion: 
there exists $N_{0}\in\N^*$ such that for $\varepsilon>0$, there exists $L_{0}\in (0,1)$ such that
$$
\gamma_{j_1 \ldots j_n}\int_{\Omega}a^\infty (x)\phi_{j_1 \ldots j_n}(x)^2\, dx \geq L-\varepsilon ,
$$
for every $L\in [L_0,1)$ and for all $(j_1,\ldots,j_n)\in(\N^*)^n$ such that $\min(j_1,\ldots,j_n)\geq N_0$. This assertion indeed follows from the following general lemma.

\begin{lemma}\label{lemmaPTZHUM}
Let $\rho\in L^\infty(\Omega,\R_+)$ be such that $\int_\Omega \rho(x)\, dx>0$. Then
$$
\inf_{(j_1,\ldots,j_n)\in(\N^*)^n}\int_\Omega \rho(x)\phi_{j_1 \ldots j_n}(x)^2\, dx\geq F^{[n]} \left(\int_{\Omega} \rho(x)\, dx\right)>0,
$$
where $F(x)=\frac{1}{\pi}(x-\sin x)$ for every $x\in [0,\pi]$ and $F^{[n]}=F\circ\dots\circ F$ ($n$ times).
\end{lemma}

This lemma itself easily follows from \cite[Lemma 6]{PTZ_HUM} (case $n=1$) and from an induction argument.
%
%\begin{proof}
%Let us prove the result for Dirichlet boundary conditions (the other cases being similar).
%The case $n=1$ corresponds to the contents of \cite[Lemma 6]{PTZ_HUM}.
%For all $(j_1,\ldots,j_n)\in(\N^*)^n$, one has
%$$
%\phi_{j_1 \ldots j_n}(x)=\left(\frac{2}{\pi}\right)^{n/2}\prod_{k=1}^n\sin (j_kx_k).
%$$
%Using Fubini's Theorem, one gets
%\begin{eqnarray*}
%\int_\Omega \rho(x)\phi_{j_1 \ldots j_n}(x)^2\, dx & = &\left(\frac{2}{\pi}\right)^{n}\int_0^\pi \sin^2(j_nx_n)\int_{[0,\pi]^{n-1}}\rho(x)\prod_{k=1}^{n-1}\sin ^2(j_kx_k)\, dx_1\ldots\, dx_{n-1}\, dx_n\\
% & \geq & F\left(\left(\frac{2}{\pi}\right)^{n-1}\int_{[0,\pi]^{n-1}}\rho(x)\prod_{k=1}^{n-1}\sin ^2(j_kx_k)\, dx_1\ldots\, dx_{n-1} \right) ,
%\end{eqnarray*}
%and the lemma follows by induction.
%\end{proof}
%
%It follows from this lemma that if $\rho$ fulfills the conditions of Lemma \ref{lemmaPTZHUM} above, thus for $\varepsilon>0$, there exists $L$ close enough to 1 such that $L\geq L_{0}$ implies $F^n \left(\int_{\Omega} \rho(x)\, dx\right)\geq L-\varepsilon$ and thus $\int_\Omega \rho(x)\phi_{j}(x)^2\, dx\geq L-\varepsilon$ for every $j\in\N^*$.
\end{proof}
\end{remark}

We end this section by providing several numerical simulations based on the modal approximation of this problem for $\Omega=[0,\pi]^2$, the Euclidean square, with Dirichlet boundary conditions on $\partial\Omega\cap (\{x_{2}=0\}\cup \{x_{2}=\pi\}$ and Neumann boundary conditions on the rest of the boundary.
Note that we are then in the framework of Remark \ref{rem_extension_tensorized}, and hence the conclusion of Proposition \ref{nogapPoidsHypercube} holds true.
The normalized eigenfunctions of $A$ are then
$$
\phi_{j,k}(x_{1},x_{2})=\frac{2}{\pi}\sin (jx_{1})\cos (kx_{2}),
$$
for all $(x_{1},x_{2})\in [0,\pi]^2$.
The eigenvalue $\lambda_{j,k}$ associated with the eigenfunction $\phi_{j,k}$ is $\lambda_{j,k}=j^2+k^2$.
Let $N\in\N^*$. As in Section \ref{sec5.2}, we use an interior point line search filter method to solve the spectral approximation of the second problem
$\sup_{\chi_{\omega}\in\mathcal{U}_{L}}\tilde J_{N}(\chi_{\omega})$, with
$$\tilde J_{N}(\chi_{\omega})=\min_{1\leq j,k\leq N}\int_{0}^{\pi}\!\int_{0}^{\pi}\chi_{\omega}(x_{1},x_{2})\phi_{j,k}(x_{1},x_{2})^{2}\, dx_{1} dx_{2}.$$
Some numerical simulations are provided on Figures \ref{figpb2DN} and \ref{figpb2DNbis}.
On Figure \ref{figpb2DN}, the optimal domains are represented for $L\in \{0.2,0.4,0.6\}$. In the three first cases, the number of connected components of the optimal set seems to increase with $N$. 
The numerical results provided in the case $L=0.9$ on Figure \ref{figpb2DNbis} illustrate the conclusion of Proposition \ref{propnogapPoidsHypercube}, showing clear evidence of the stationarity feature proved in this proposition.

\begin{figure}[h!]
\begin{center}
\includegraphics[width=4.9cm]{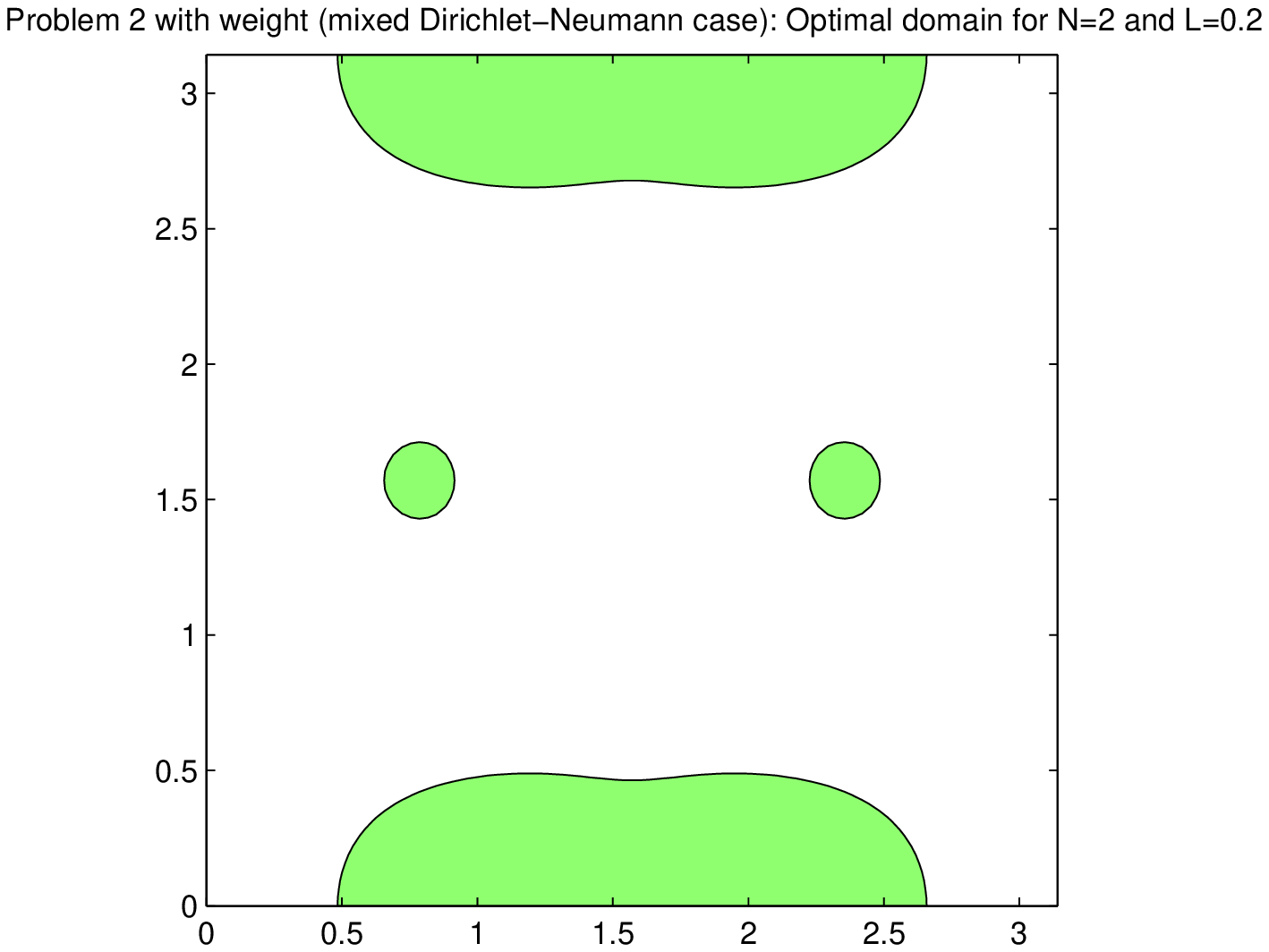}
\includegraphics[width=4.9cm]{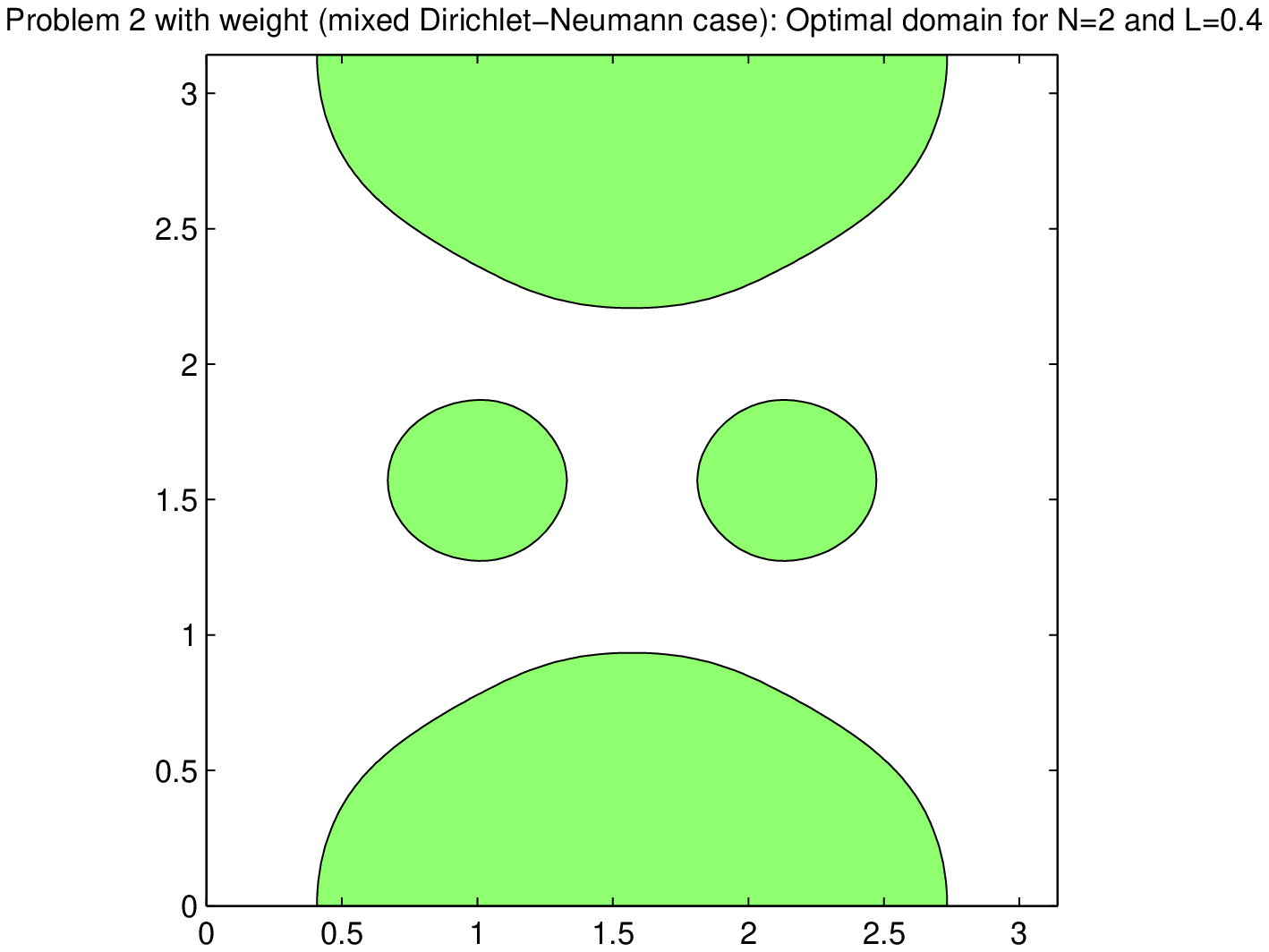}
\includegraphics[width=4.9cm]{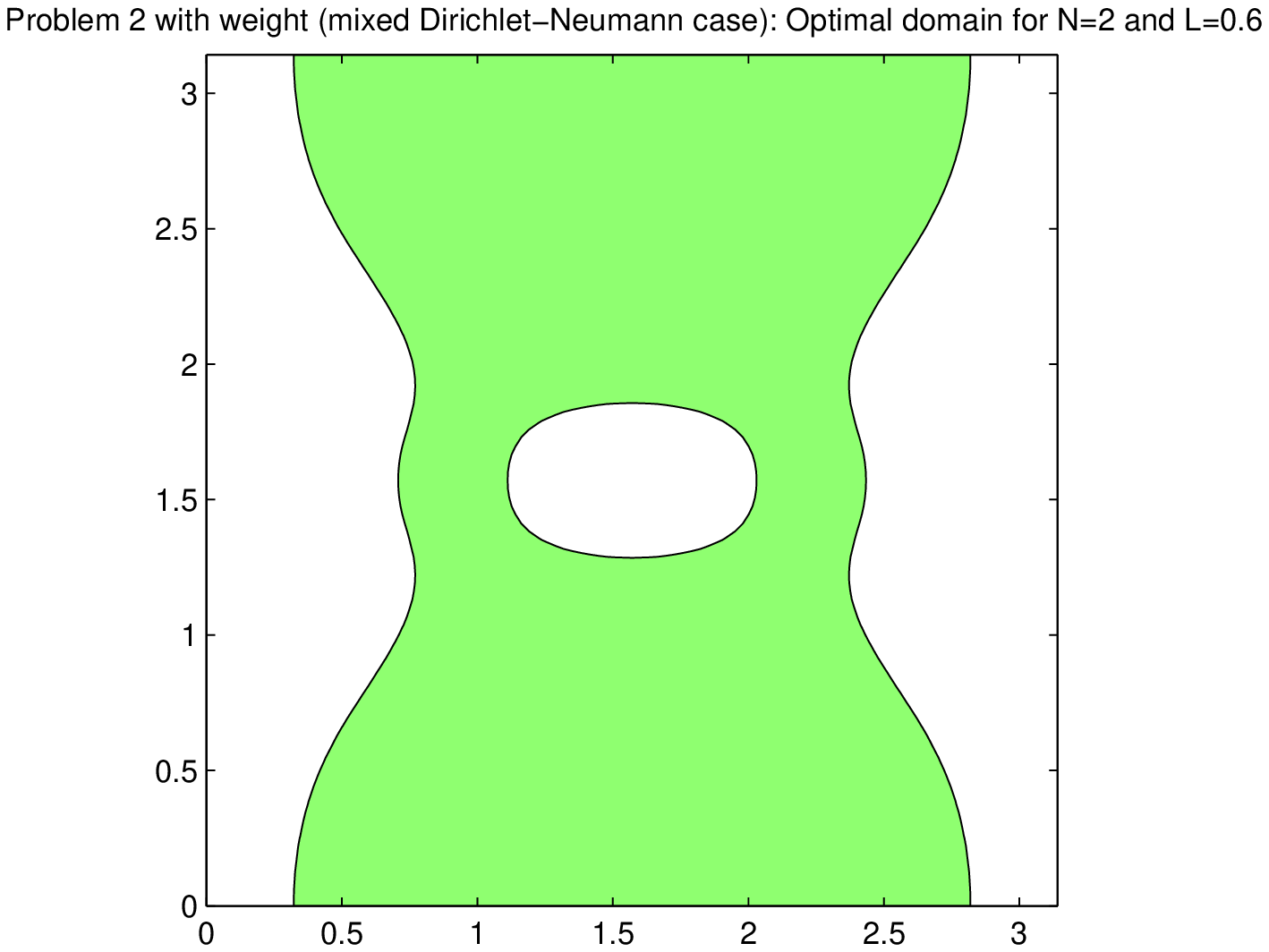}
\includegraphics[width=4.9cm]{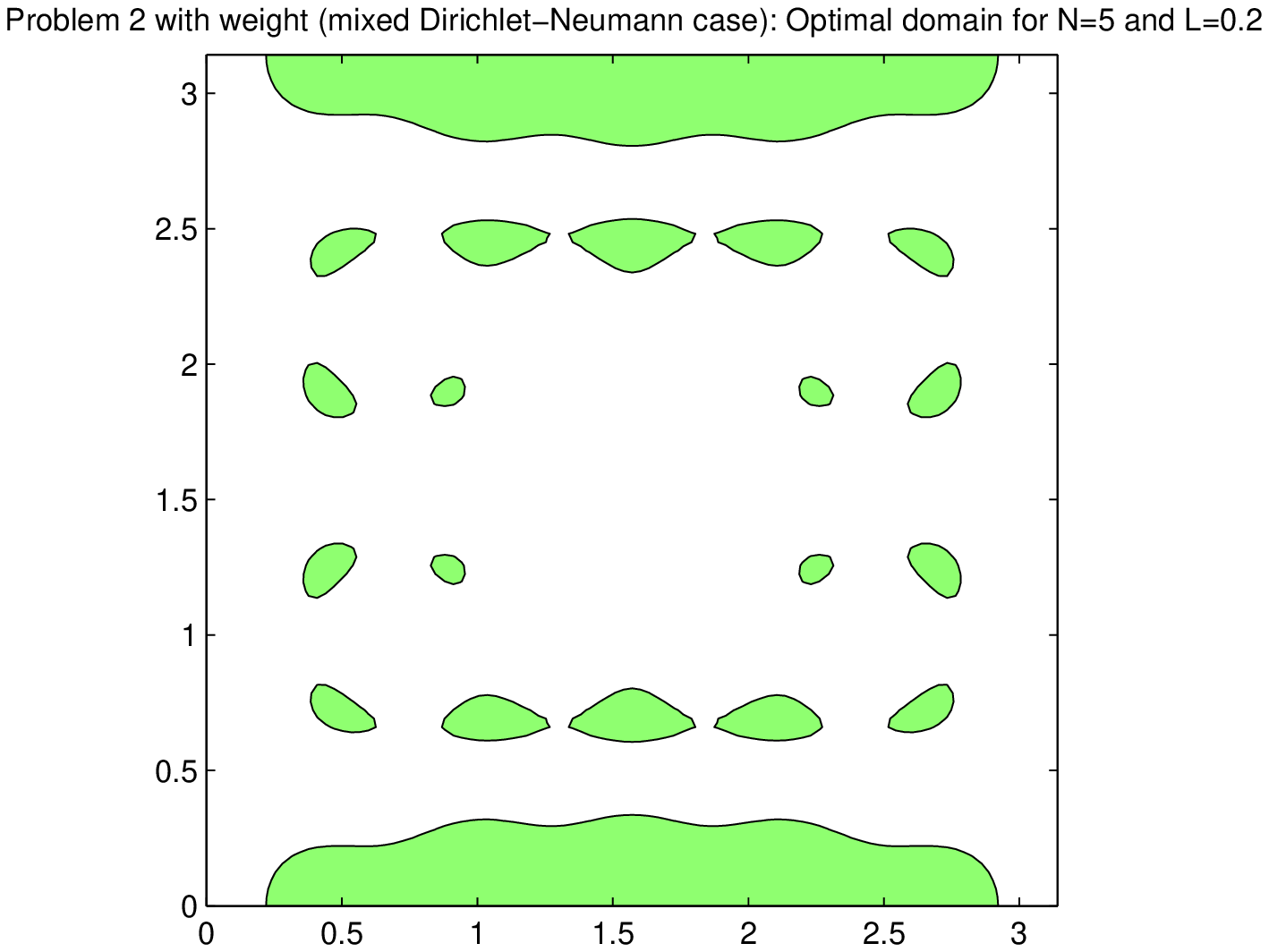}
\includegraphics[width=4.9cm]{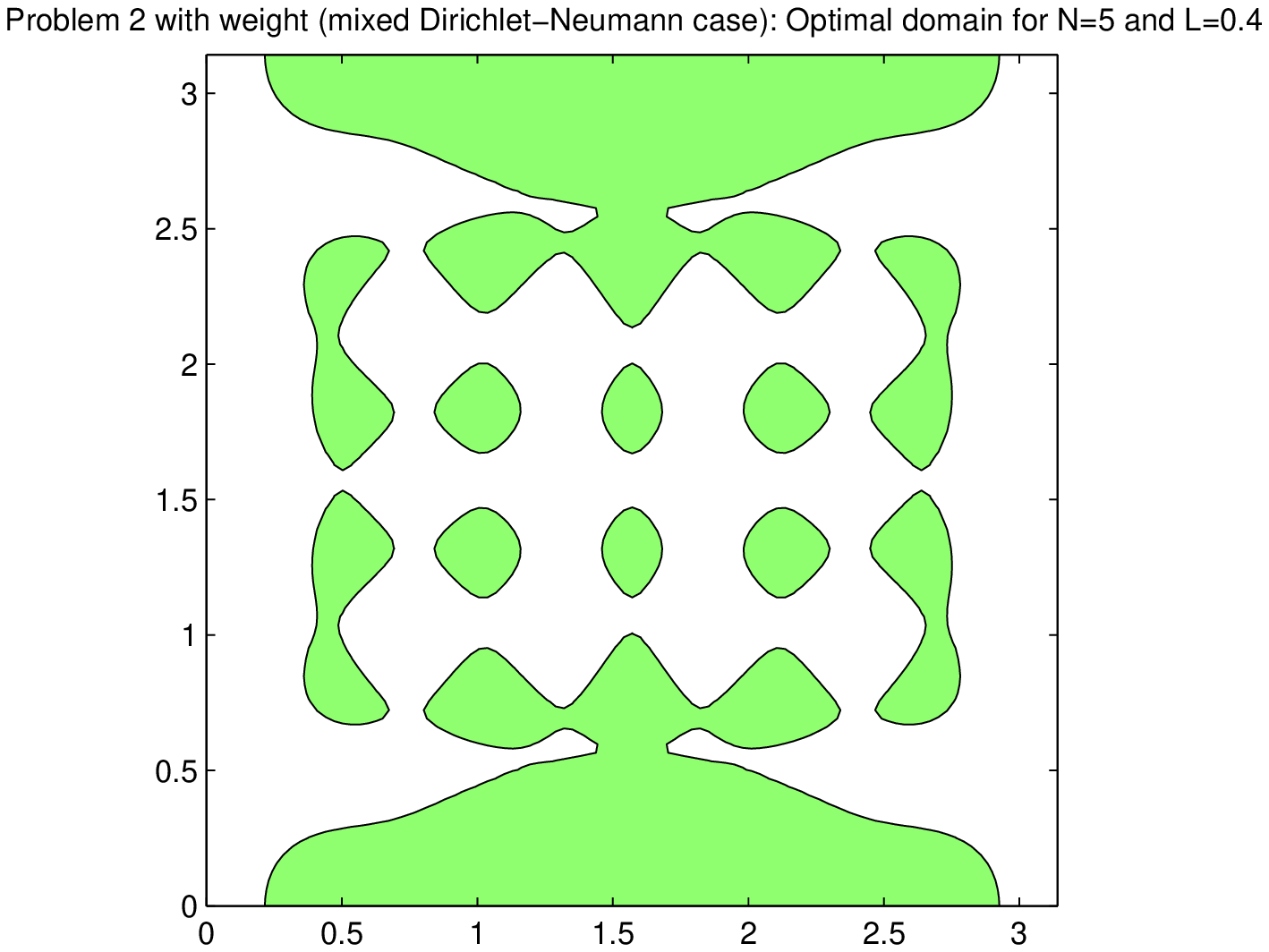}
\includegraphics[width=4.9cm]{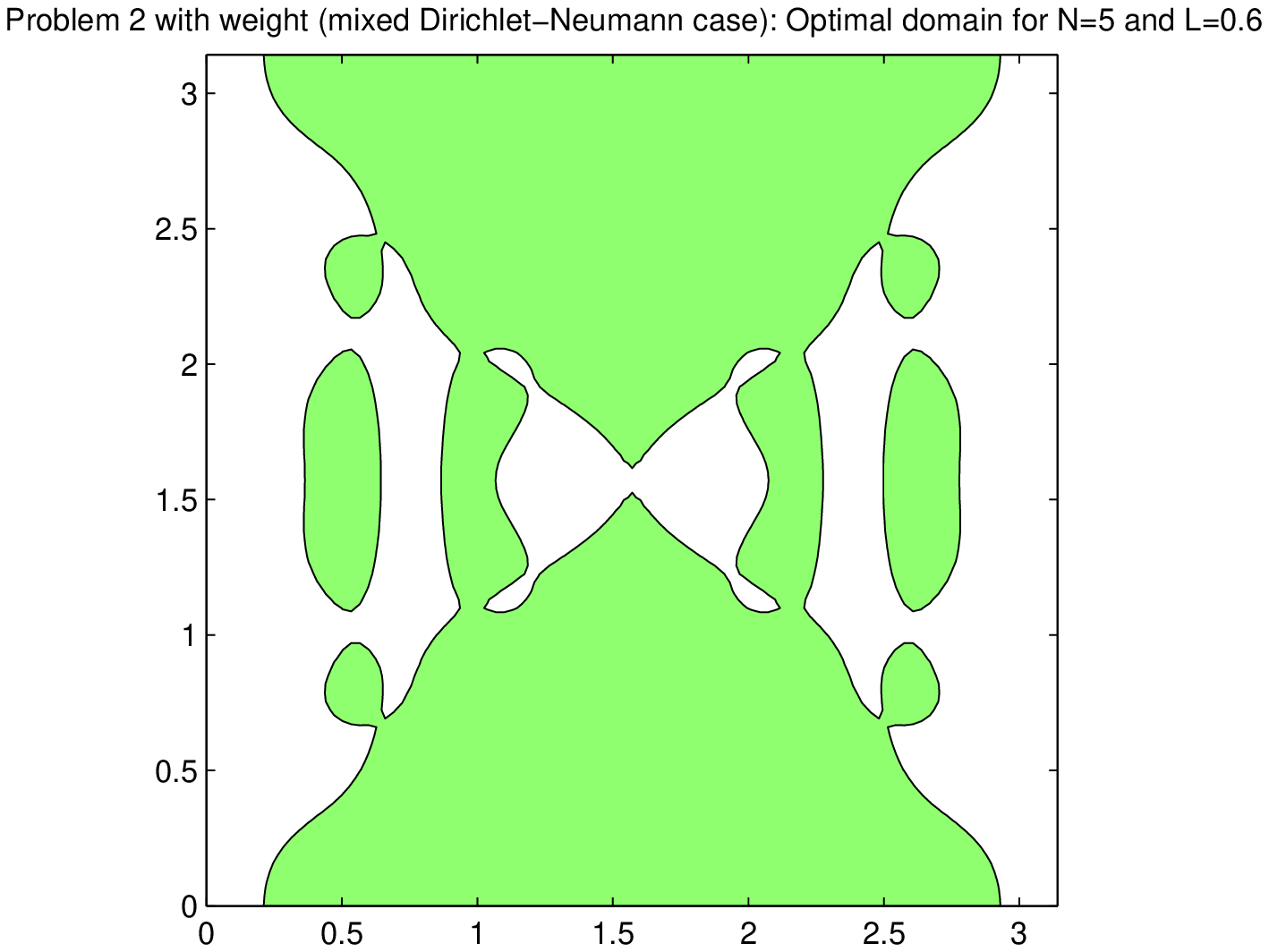}
\includegraphics[width=4.9cm]{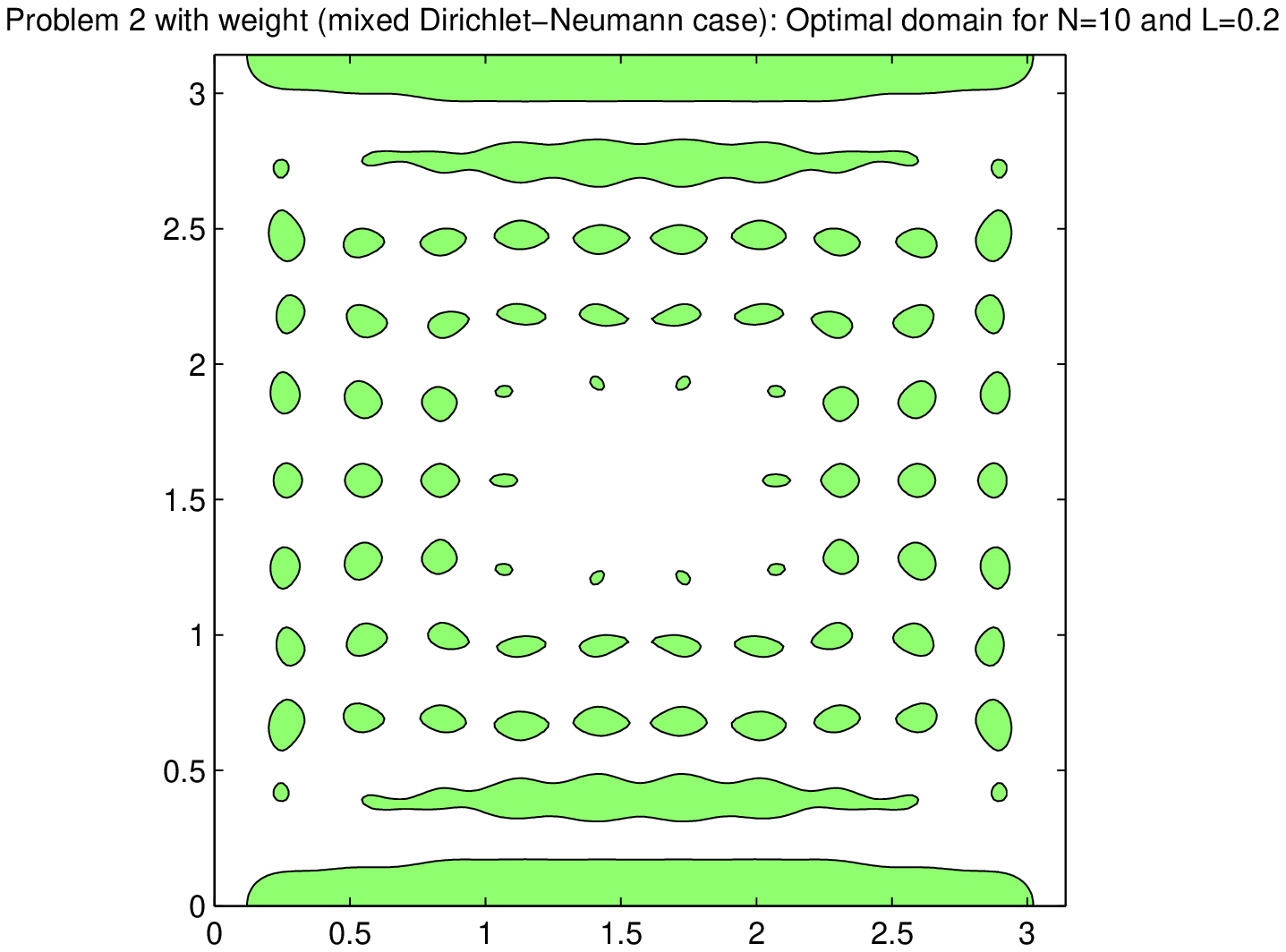}
\includegraphics[width=4.9cm]{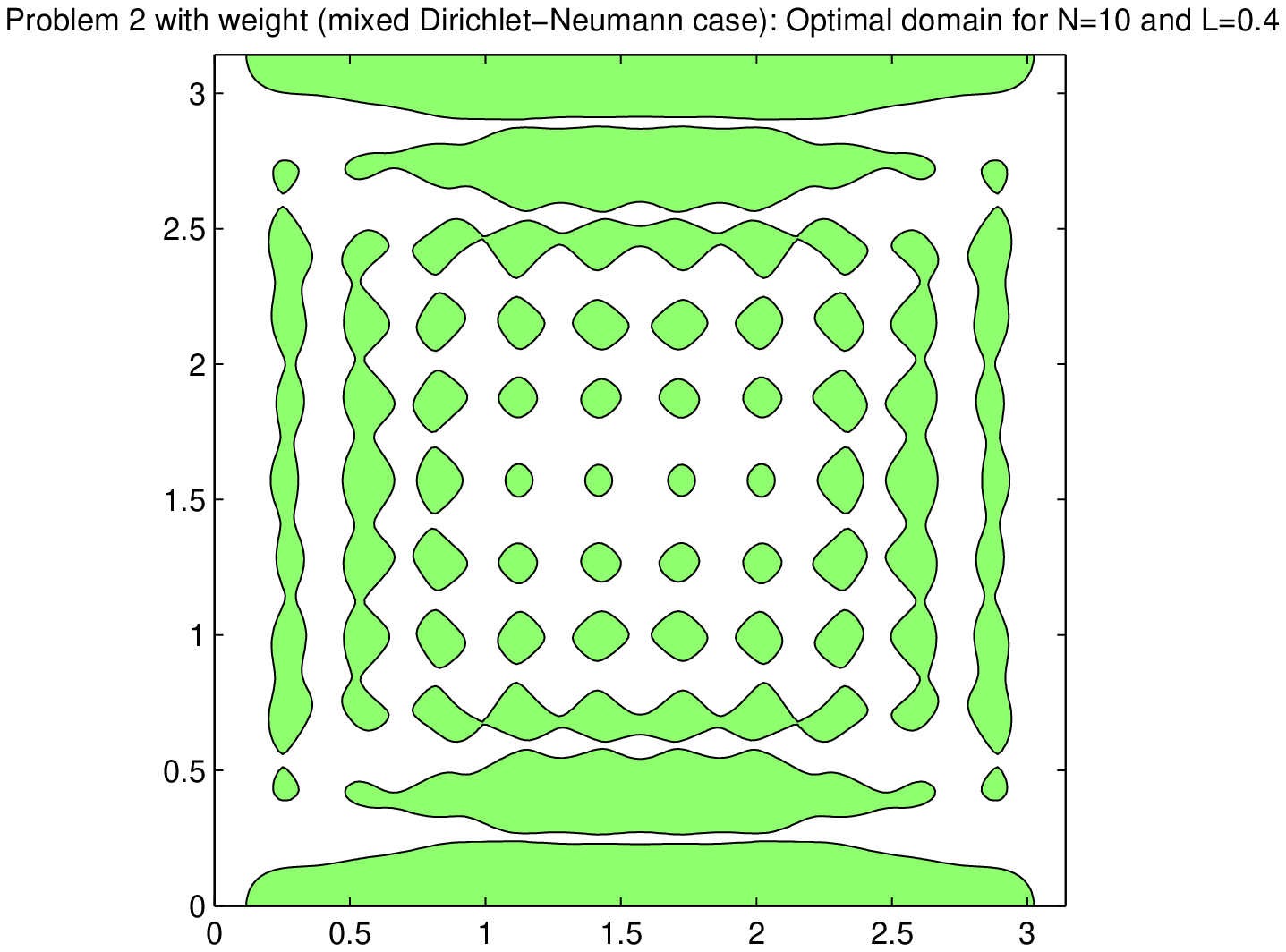}
\includegraphics[width=4.9cm]{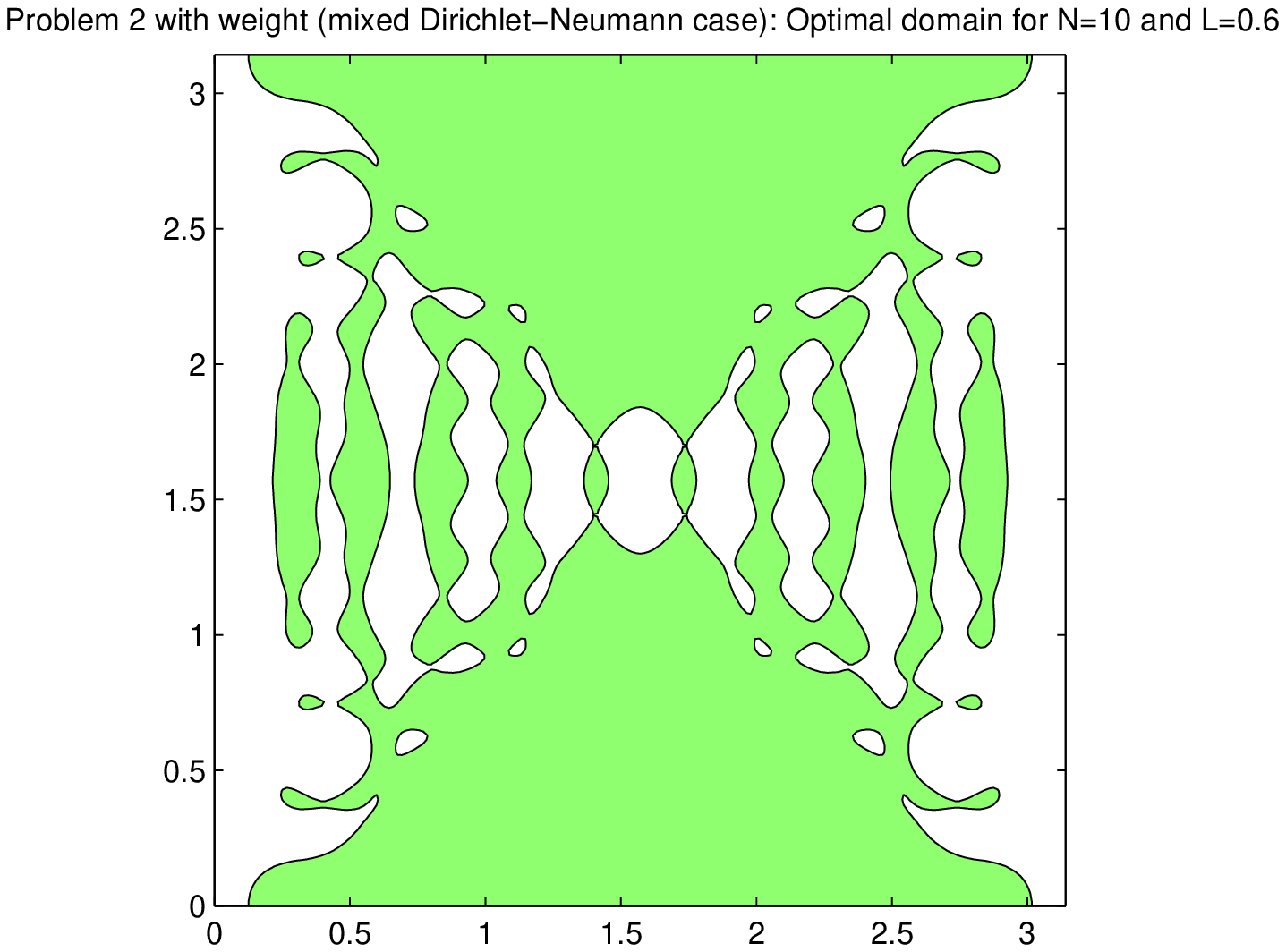}
\includegraphics[width=4.9cm]{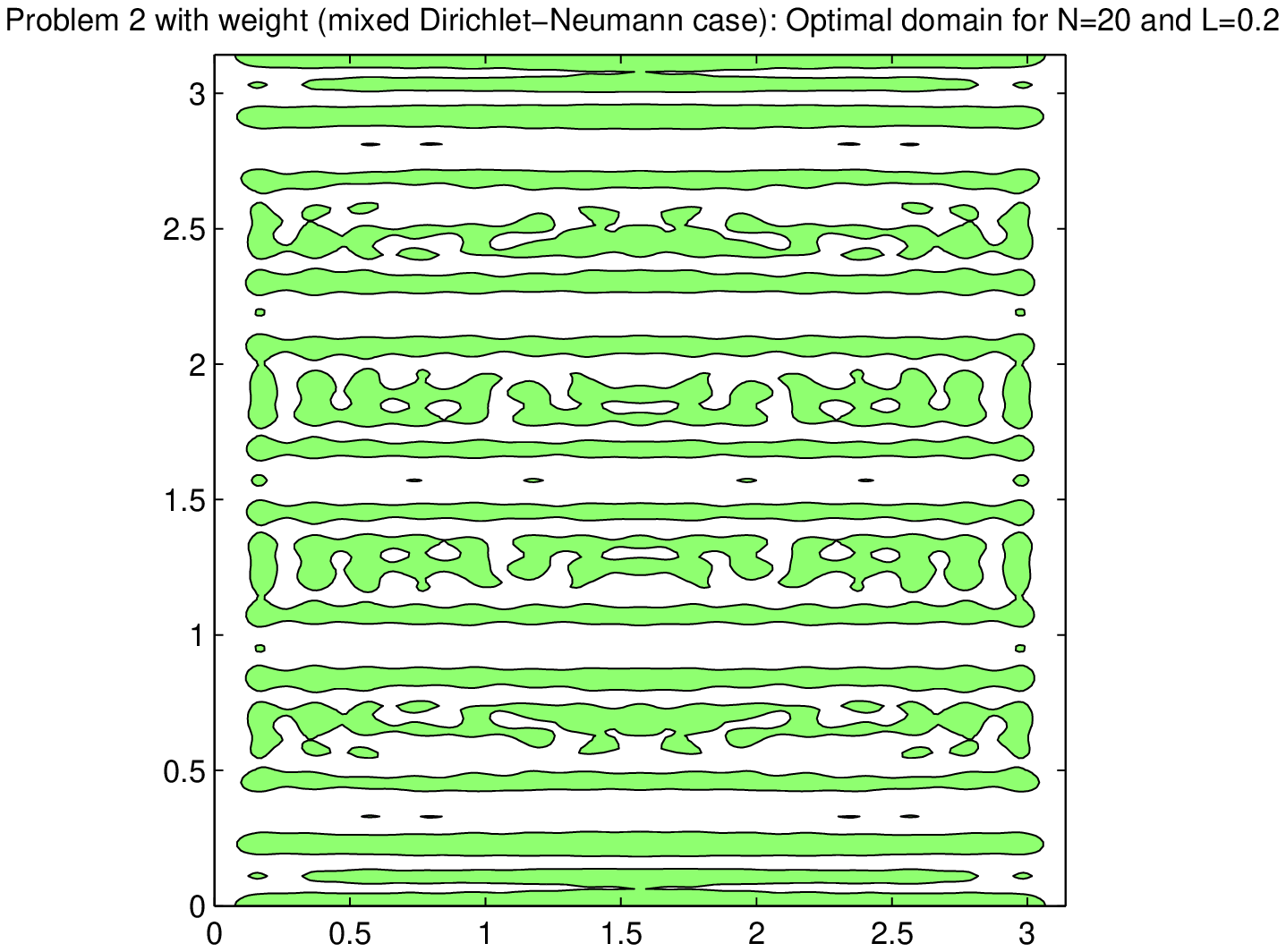}
\includegraphics[width=4.9cm]{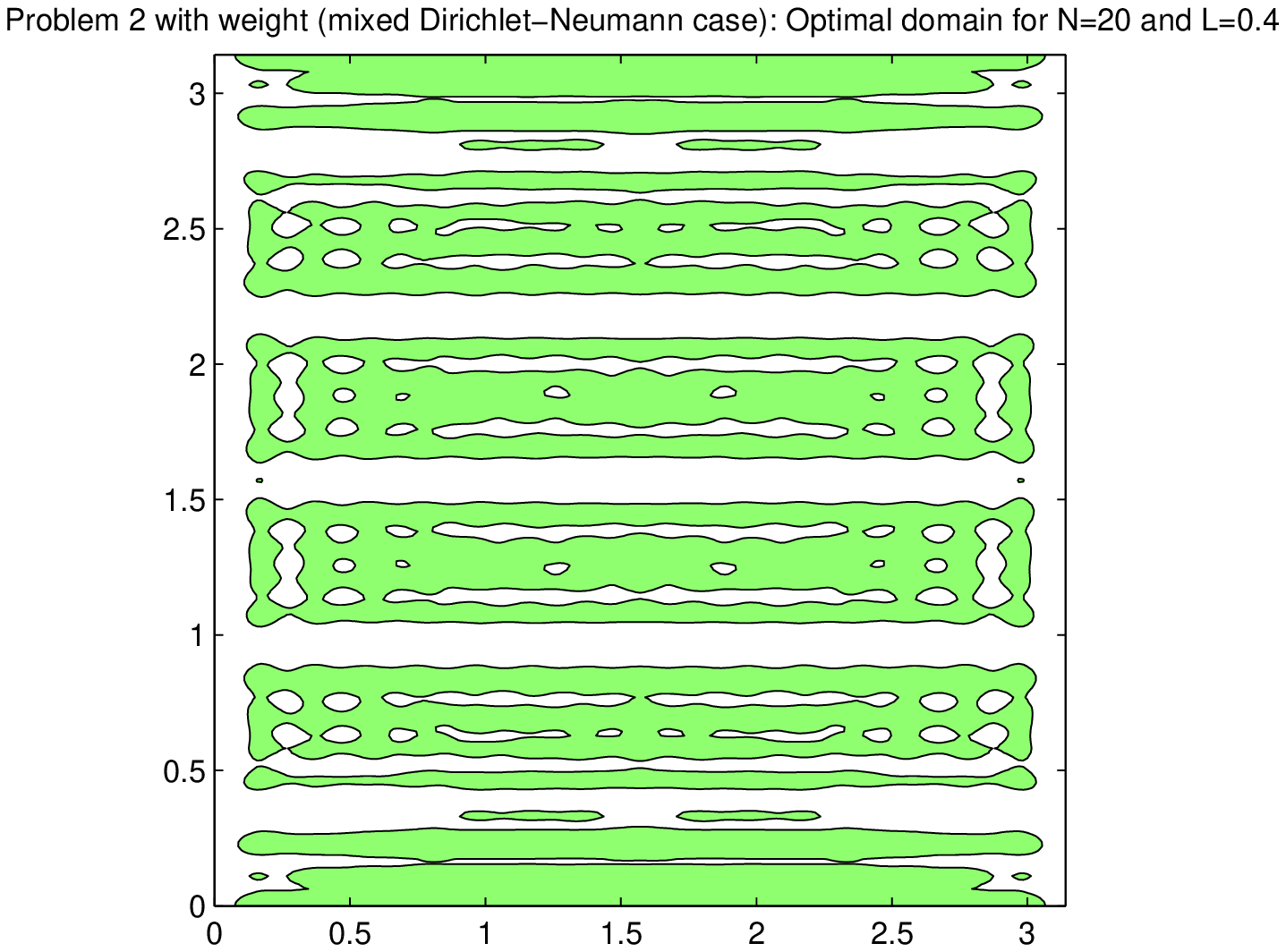}
\includegraphics[width=4.9cm]{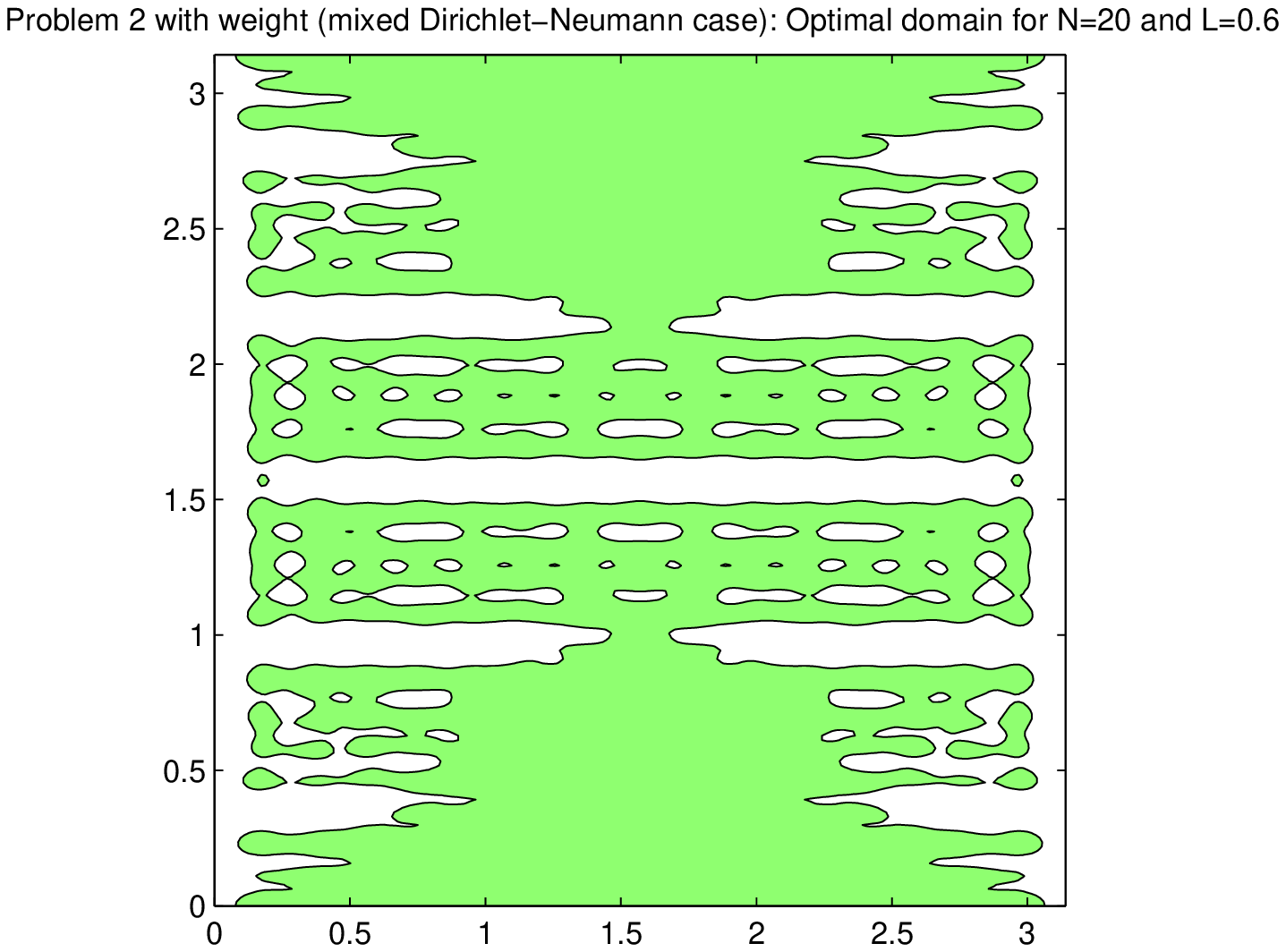}
\caption{On this figure, $\Omega=[0,\pi]^{2}$, with mixed Dirichlet-Neumann boundary conditions. Line 1, from left to right: optimal domain (in green)  for $N=2$ (4 eigenmodes) and $L\in \{0.2,0.4,0.6\}$. Line 2, from left to right: optimal domain (in green) for $N=5$ (25 eigenmodes) and $L\in \{0.2,0.4,0.6\}$. Line 3, from left to right: optimal domain (in green) for $N=10$ (100 eigenmodes) and $L\in \{0.2,0.4,0.6\}$. Line 4, from left to right: optimal domain (in green) for $N=15$ (225 eigenmodes) and $L\in \{0.2,0.4,0.6\}$
}\label{figpb2DN}
\end{center}
\end{figure}

\begin{figure}[h!]
\begin{center}
\includegraphics[width=4.9cm]{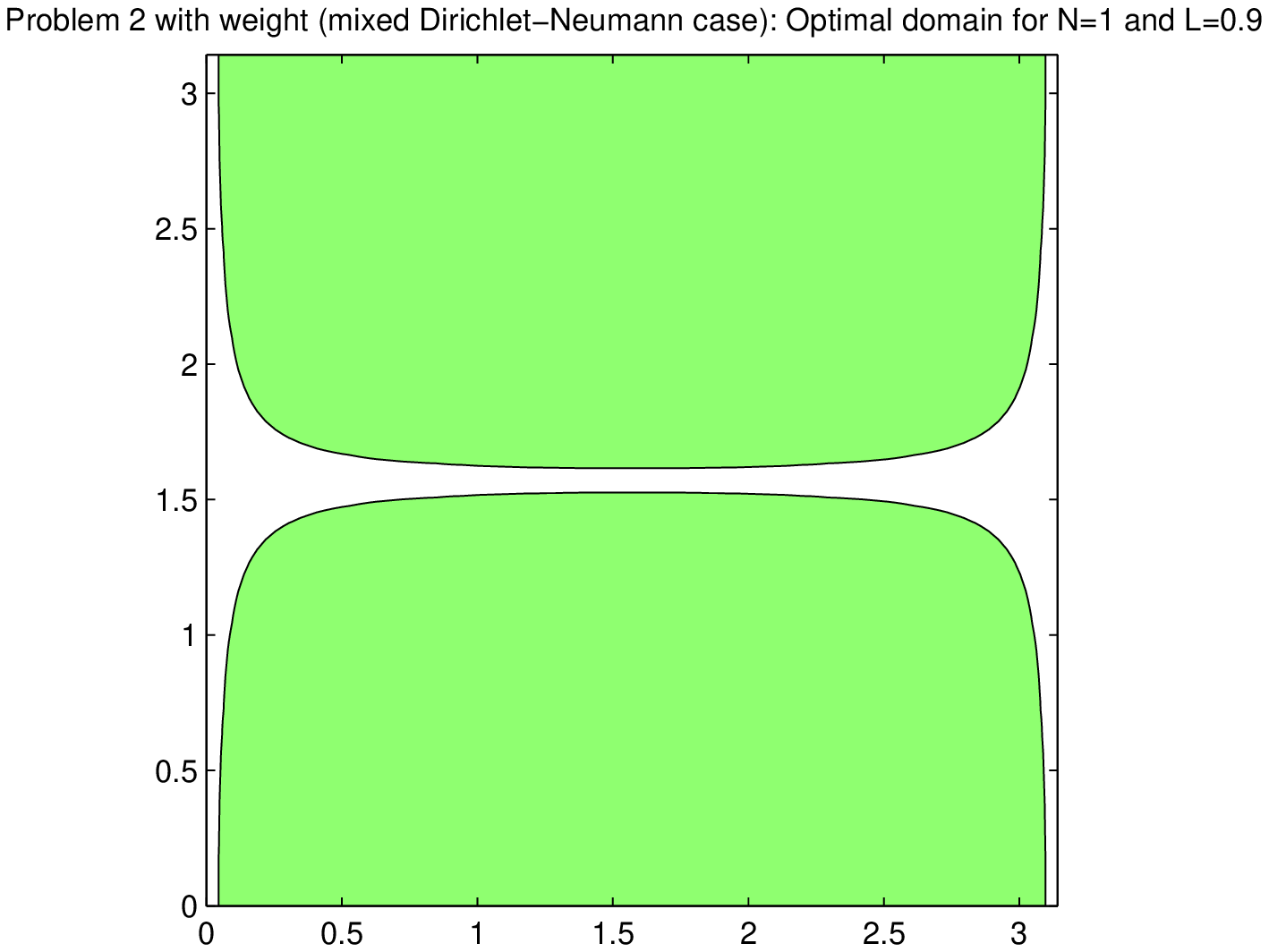}
\includegraphics[width=4.9cm]{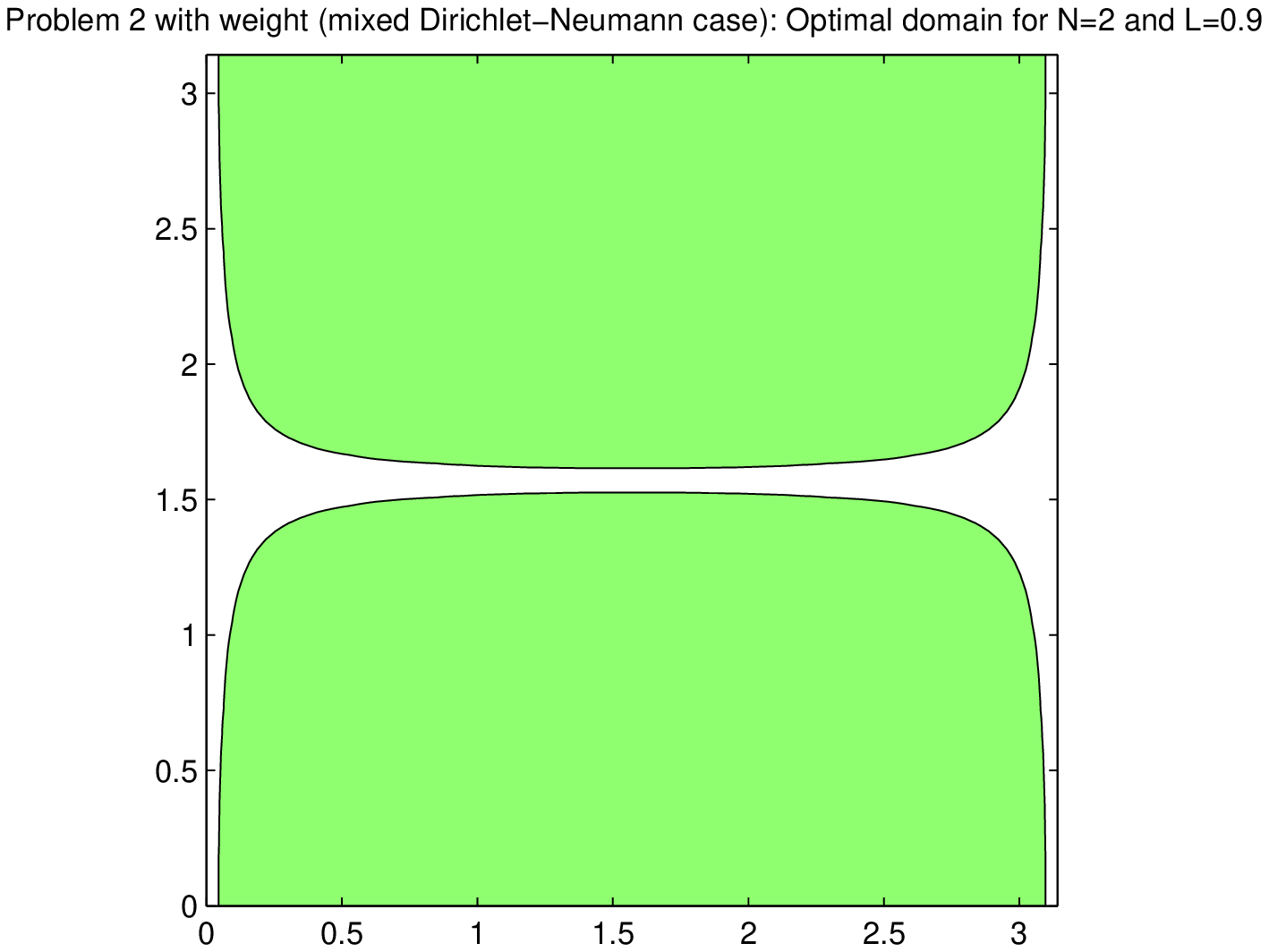}\\
\includegraphics[width=4.9cm]{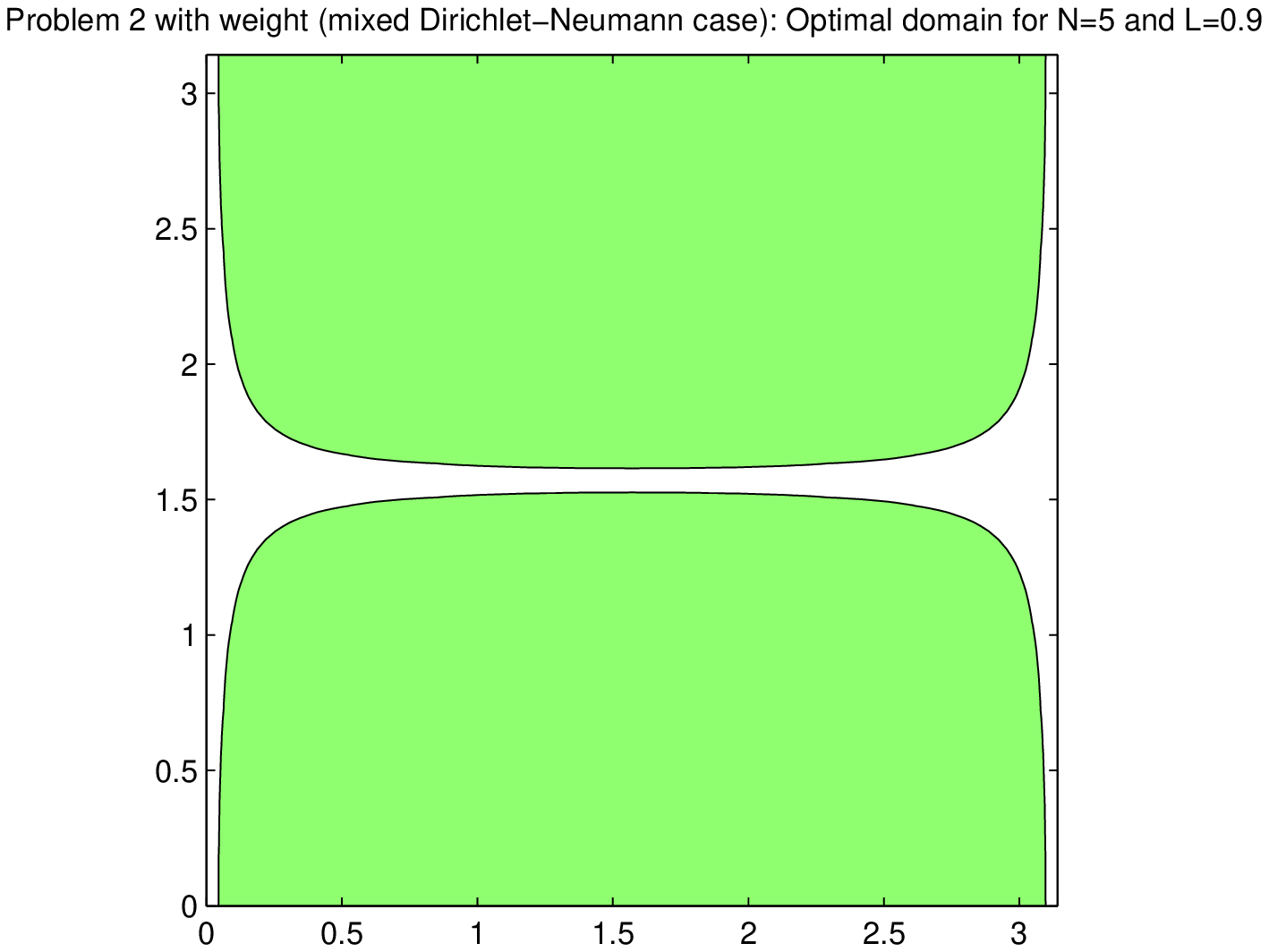}
\includegraphics[width=4.9cm]{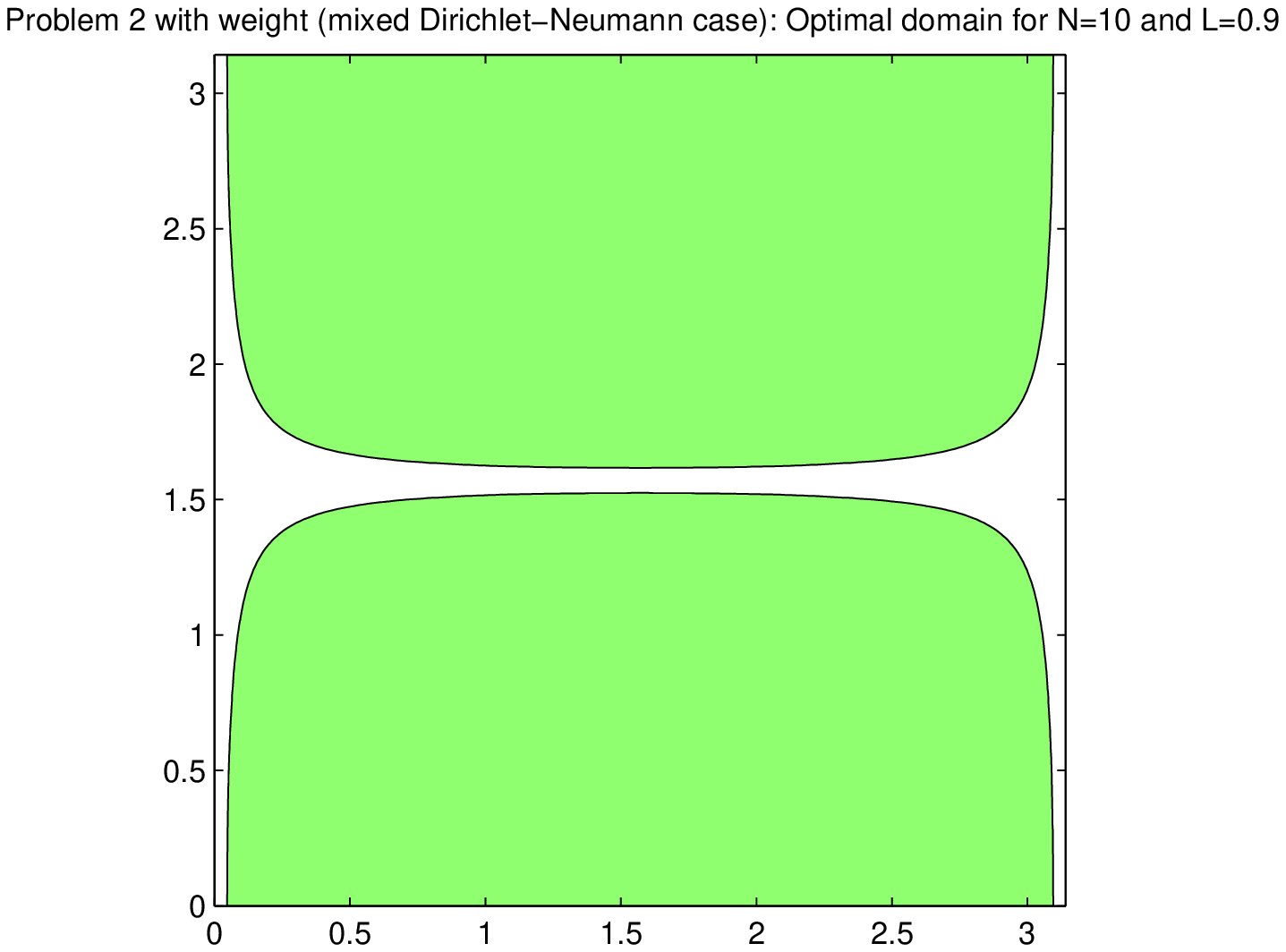}
\caption{On this figure, $\Omega=[0,\pi]^{2}$, with mixed Dirichlet-Neumann boundary conditions, and $L=0.9$. Line 1, from left to right: optimal domain (in green) for $N\in \{1,2\}$. Line 2, from left to right: optimal domain (in green) for $N\in \{5,10\}$}\label{figpb2DNbis}
\end{center}
\end{figure}

\subsection{Optimal location of internal controllers for wave and Schr\"odinger equations}\label{sec6.5}
In this section, we investigate the question of determining the shape and location of the control domain for wave or Schr\"odinger equations that minimizes the $L^2$ norm of the controllers realizing null controllability. In particular, we explain why this optimization problem is exactly equivalent to the problem of maximizing the observability constant. For the sake of simplicity, we will only deal with the wave equation, the Schr\"odinger case being easily adapted from that case. Also, without loss of generality we restrict ourselves to Dirichlet boundary conditions.

Consider the internally controlled wave equation on $\Omega$ with Dirichlet boundary conditions
\begin{equation}\label{waveEqCont}
\begin{array}{ll}
\partial_{tt}y(t,x)-\triangle y(t,x)=h_\omega(t,x) , & (t,x)\in (0,T)\times \Omega,\\
y(t,x)=0, & (t,x)\in [0,T]\times \partial\Omega,\\
y(0,x)=y^0(x), \ \partial_t y(0,x)=y^1(x), & x\in \Omega,
\end{array}
\end{equation}
where $h_\omega$ is a control supported in $[0,T]\times \omega$ and $\omega$ is a measurable subset of $\Omega$.
Note that the Cauchy problem \eqref{waveEqCont} is well posed for all initial data $(y^0,y^1)\in H^1_0(\Omega,\C)\times L^2(\Omega,\C)$ and every $h_\omega\in L^2((0,T)\times\Omega,\C)$, and its solution $y$ belongs to $C^0(0,T;H^1_0(\Omega,\C))\cap C^1(0,T;L^2(\Omega,\C))\cap C^2(0,T;H^{-1}(\Omega,\C))$. The exact null controllability problem settled in these spaces consists of finding a control $h_\omega$ steering the control system \eqref{waveEqCont} to
\begin{equation}\label{controlCondition}
y(T,\cdot)=\partial_t y(T,\cdot)=0.
\end{equation}
It is well known that, for every subset $\omega$ of $\Omega$ of positive measure, the exact null controllability problem is by duality equivalent to the fact that the observability inequality
\begin{equation}\label{ineq1}
C\Vert (\phi^0, \phi^1) \Vert_{L^2(\Omega,\C)\times H^{-1}(\Omega,\C)}^2
\leq \int_0^T\int_\omega |\phi(t,x)|^2 \, dV_g \, dt,
\end{equation}
holds, for all $ (\phi^0, \phi^1)\in L^2(\Omega,\C )\times H^{-1}(\Omega,\C )$, for a positive constant $C$ (only depending on $T$ and $\omega$), where $\phi$ is the (unique) solution of the adjoint system
\begin{equation}\label{AdjointSystem}
\begin{array}{ll}
\partial_{tt}\phi(t,x)-\triangle\phi(t,x)=0,& (t,x)\in (0,T)\times \Omega,\\
\phi(t,x)=0, & (t,x)\in [0,T]\times \partial \Omega,\\
\phi(0,x)=\phi^0(x),\ \partial_{t}\phi(0,x)=\phi^1(x), & x\in \Omega .
\end{array}
\end{equation}
The Hilbert Uniqueness Method (HUM, see \cite{lions2,lions}) provides a way to design the unique control solving the control problem \eqref{waveEqCont}-\eqref{controlCondition} and having moreover a minimal $L^2((0,T)\times\Omega,\C)$ norm. This control is referred to as the HUM control and is characterized as follows. Define the HUM functional $J_\omega$ by
\begin{equation}\label{defJomega}
J_\omega(\phi^0,\phi^1)=\frac{1}{2}\int_0^T\int_\omega\phi(t,x)^2 \, dV_g \, dt - \langle\phi^1,y^0\rangle_{H^{-1},H^1_0} + \langle\phi^0,y^1\rangle_{L^2}.
\end{equation}
The notation $\langle\cdot,\cdot\rangle_{H^{-1},H^1_0}$ stands for the duality bracket between $H^{-1}(\Omega,\C)$ and $H^1_0(\Omega,\C)$, and the notation $\langle\cdot,\cdot\rangle_{L^2}$ stands for the usual scalar product of $L^2(\Omega,\C)$.
If \eqref{ineq1} holds then the functional $J_\omega$ has a unique minimizer (still denoted $(\phi^0,\phi^1)$) in the space $L^2(\Omega,\C)\times H^{-1}(\Omega,\C)$, for all $(y^0,y^1)\in H^1_0(\Omega,\C)\times L^2(\Omega,\C)$. The HUM control $h_\omega$ steering $(y^0,y^1)$ to $(0,0)$ in time $T$ is then given by
\begin{equation}\label{uHUM}
h_\omega(t,x)=\chi_\omega(x)\phi(t,x),
\end{equation}
for almost all $(t,x)\in(0,T)\times \Omega$, where $\phi$ is the solution of \eqref{AdjointSystem} with initial data  $(\phi^0,\phi^1)$ minimizing $J_\omega$. 

The HUM operator $\Gamma_\omega$ is defined by
$$
\Gamma_\omega : \begin{array}[t]{rcl}
H_0^1(\Omega,\C)\times L^2(\Omega,\C) & \longrightarrow & L^2((0,T)\times\Omega,\C)\\
(y^0,y^1) & \longmapsto & h_\omega
\end{array}
$$ 

\begin{quote}
\noindent{\bf Optimal design control problem.}
We investigate the problem of minimizing the norm of the operator $\Gamma_\omega$
\begin{equation}\label{defGammaomega}
\Vert \Gamma_\omega\Vert = \sup \left\{\frac{\Vert h_\omega\Vert_{L^2((0,T)\times\Omega,\C)}}{\Vert (y^0,y^1)\Vert_{H_0^1(\Omega,\C)\times L^2(\Omega,\C)}}\mid (y^0,y^1) \in H_0^1(\Omega,\C)\times L^2(\Omega,\C) \setminus\{(0,0)\} \right\}
\end{equation}
over the set $\mathcal{U}_L$.
\end{quote}

Here, we formulate the optimal design control problem in terms of minimization of the operator norm of $\Gamma_\omega$ in order to discard the dependence with respect to the initial data $(y^0,y^1)$ and improve the  robustness of the cost function.
The next result establishes that the problems \eqref{defJ} and \eqref{defGammaomega} are equivalent, and hence that the approach developed in Sections \ref{sec2} and \ref{solvingpb2obs} is also well adapted to this optimal design control problem. Simultaneously, we generalize \cite{PTZ_HUM} where similar issues were investigated in the one-dimensional case.

\begin{proposition}
Let $T>0$ and let $\omega$ be measurable subset of $\Omega$. If $C_T^{(W)}(\chi_\omega)>0$ then
$$
\Vert \Gamma_\omega\Vert = \frac{1}{C_T^{(W)}(\chi_\omega)},
$$
and if $C_T^{(W)}(\chi_\omega)=0$, then $\Vert \Gamma_\omega\Vert =+\infty$.
\end{proposition}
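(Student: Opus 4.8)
The plan is to establish the identity through the standard Hilbert Uniqueness Method duality, expressing $\Gamma_\omega$ entirely in terms of the HUM Gramian and then reading off its operator norm from the \emph{sharp} observability inequality. First I would invoke Remark \ref{rk1} to rewrite the observability inequality \eqref{ineqobsw} in the lower-regularity spaces adapted to the control problem: in the Dirichlet case one has $X=L^2(\Omega,\C)$, $D(A^{1/2})=H^1_0(\Omega,\C)$ and $(D(A^{1/2}))'=H^{-1}(\Omega,\C)$, so that \eqref{obsrk1} is precisely \eqref{ineq1} with the \emph{same} best constant. Hence $C_T^{(W)}(\chi_\omega)$ is, by definition, the largest constant $C$ for which $C\,\Vert(\phi^0,\phi^1)\Vert^2_{L^2\times H^{-1}}\leq\int_0^T\int_\omega|\phi|^2\,dV_g\,dt$ holds for every solution $\phi$ of the adjoint system \eqref{AdjointSystem}.

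Second, assuming $C_T^{(W)}(\chi_\omega)>0$, I introduce the HUM Gramian $\Lambda_\omega$, the continuous, self-adjoint operator on $L^2(\Omega,\C)\times H^{-1}(\Omega,\C)$ associated with the symmetric bilinear form $a\bigl((\phi^0,\phi^1),(\psi^0,\psi^1)\bigr)=\int_0^T\int_\omega\phi\bar\psi\,dV_g\,dt$. The step to emphasize here is that the observability inequality above says exactly that the bottom of the spectrum of $\Lambda_\omega$ (after Riesz identification) equals $C_T^{(W)}(\chi_\omega)$, while the direct/admissibility inequality furnishes its boundedness, so $\Lambda_\omega$ is boundedly invertible. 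Writing the Euler--Lagrange equation of the strictly convex functional $J_\omega$ from \eqref{defJomega}, the unique minimizer $(\phi^0,\phi^1)$ is characterized by $\Lambda_\omega(\phi^0,\phi^1)=(y^0,y^1)$, where the data are viewed as an element of the dual $(L^2\times H^{-1})'$ through the pairings appearing in \eqref{defJomega}. Since the HUM control \eqref{uHUM} is $h_\omega=\chi_\omega\phi$, one computes $\Vert h_\omega\Vert^2_{L^2((0,T)\times\Omega)}=a\bigl((\phi^0,\phi^1),(\phi^0,\phi^1)\bigr)=\langle u,\Lambda_\omega^{-1}u\rangle$, where $u$ denotes the data seen in the dual. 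As the map $(y^0,y^1)\mapsto u$ is an isometry from $H^1_0\times L^2$ onto $(L^2\times H^{-1})'$, taking the supremum over all data in \eqref{defGammaomega} identifies $\Vert\Gamma_\omega\Vert$ with the reciprocal of the smallest spectral value of $\Lambda_\omega$, namely $1/C_T^{(W)}(\chi_\omega)$.

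Finally, for the degenerate case $C_T^{(W)}(\chi_\omega)=0$, I would argue that the observability inequality fails, so $\Lambda_\omega$ is not coercive and $\inf\operatorname{spec}(\Lambda_\omega)=0$: choosing adjoint data $(\phi^0_n,\phi^1_n)$ of unit norm along which $\int_0^T\int_\omega|\phi_n|^2\to 0$ and feeding the associated dual data into the control problem produces admissible controls whose cost is unbounded, whence $\Vert\Gamma_\omega\Vert=+\infty$. The main obstacle I anticipate is the functional-analytic bookkeeping of the duality: getting the identification of $(y^0,y^1)$ with an element of $(L^2\times H^{-1})'$ right so that norms are genuinely preserved, and rigorously justifying that the best observability constant is \emph{exactly} the bottom of the spectrum of $\Lambda_\omega$ (not merely a lower bound) and that this infimum governs $\Vert\Lambda_\omega^{-1}\Vert$ even when it is not attained.
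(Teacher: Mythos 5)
Your proposal is correct and follows essentially the same route as the paper: both identify $\Vert\Gamma_\omega\Vert$ with $\Vert\Lambda_\omega^{-1}\Vert$ for the HUM Gramian $\Lambda_\omega$ and then read off $\Vert\Lambda_\omega^{-1}\Vert=1/\inf\operatorname{spec}(\Lambda_\omega)=1/C_T^{(W)}(\chi_\omega)$ from the sharp observability inequality in the $L^2\times H^{-1}$ setting of Remark \ref{rk1}. The only difference is presentational: the paper carries out the same computation in Fourier coordinates, realizing $\Lambda_\omega$ as an operator on $(\ell^2(\C))^2$ and passing through $\Lambda_\omega^{\pm 1/2}$, whereas you phrase it abstractly via the duality pairing and the isometry $(y^0,y^1)\mapsto u$.
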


\begin{proof}
Denote by $\phi_{\omega}$ the adjoint state solution of \eqref{AdjointSystem} whose initial data minimize the functional $J_\omega$. Then $\phi_\omega$ can be expanded as
$$
\phi_\omega(t,x)=\sum_{j=1}^{+\infty}\left(A_j^\omega e^{i\lambda_jt}+B_j^\omega e^{-i\lambda_jt}\right)\phi_j (x),
$$
where the sequences $A=(A_j^\omega)_{j\in\N^*}$ and $B=(B_j^\omega)_{j\in\N^*}$ belong to $\ell^2(\C)$ and are determined in function of the initial data $(\phi^0_\omega,\phi^1_\omega)$ minimizing $J_\omega$. Since $J_\omega$ is convex, the first-order optimality conditions for the problem of minimizing $J_\omega$ over $L^2(\Omega,\C )\times H^{-1}(\Omega,\C )$ are necessary and sufficient. In terms of Fourier coefficients, they are written as
\begin{equation}\label{firstoptimcontrol}
\Lambda_\omega (A,B)=C,
\end{equation}
where the operator $\Lambda_\omega:(\ell^2(\C))^2\rightarrow (\ell^2(\C))^2$ is defined by
$$
\Lambda_\omega (A,B)_j=\int_0^T\int_\omega \sum_{k=1}^{+\infty}(A_ke^{i\lambda_kt}+B_ke^{-i\lambda_kt})\phi_k(x)\phi_j(x)\overline{\left(\begin{array}{c}e^{i\lambda_jt}\\ e^{-i\lambda_jt}\end{array}\right)}\, dV_g\, dt ,
$$
for every $j\in\N^*$, with the notation $\Lambda_\omega (A,B)=(\Lambda_\omega (A,B)_j)_{j\in\N^*}$, and where
$$
C_j=\left(\begin{array}{c}-\langle \phi_j,\phi^1\rangle_{L^2,L^2}\\ \lambda_j\langle \phi_j,\phi^0\rangle_{H^{-1},H_0^1}\end{array}\right) ,
$$
for every $j\in\N^*$. For all $(A,B)\in  [\ell^2(\C)]^2$, one has
$$
\langle \Lambda_\omega (A,B),(A,B) \rangle_{ (\ell^2(\C))^2}=\int_0^T\int_\omega \left|\sum_{k=1}^{+\infty}(A_ke^{i\lambda_kt}+B_ke^{-i\lambda_kt})\phi_k(x)\right|^2\, dV_g\, dt,
$$
and it follows that
$$
C_T^{(W)}(\chi_\omega)\leq \frac{\langle \Lambda_\omega (A,B),(A,B) \rangle_{ (\ell^2(\C))^2}}{\Vert (A,B)\Vert^2_{ (\ell^2(\C))^2}}\leq 2T.
$$
Indeed, we obtain the left-hand side inequality by definition of the observability constant. The right-hand side one is easily obtained, writing that the integral of a nonnegative function over $\omega$ is lower than the integral of the same function over $\Omega$, which permits to use the orthogonality properties of the $\phi_j$'s. By duality, we deduce that $\Lambda_\omega$ is a continuous symmetric invertible operator from $(\ell^2(\C))^2$ to $(\ell^2(\C))^2$. 
Note that
\begin{equation*}
\Vert \Gamma_\omega\Vert =  \sup_{C\in (\ell^2(\R))^2\setminus\{0\}}\frac{\langle \Lambda_{\omega}^{-1}(C),C\rangle _{(\ell^2(\R))^2}}{\Vert C\Vert_{(\ell^2(\R))^2}^2}\\
  =  \sup_{C\in (\ell^2(\R))^2\setminus\{0\}}\frac{\Vert \Lambda_{\omega}^{-1/2}(C)\Vert^2 _{(\ell^2(\R))^2}}{\Vert C\Vert_{(\ell^2(\R))^2}^2},
\end{equation*}
where $ \Lambda_{\omega}^{-1/2}$ denotes the square root of the operator $\Lambda_{\omega}^{-1}$. Setting $\varphi=\Lambda_{\omega}^{-1/2}(C)$, one computes
\begin{eqnarray*}
\Vert \Gamma_\omega\Vert  & = & \sup_{\varphi \in (\ell^2(\R))^2\setminus\{0\}}\frac{\Vert \varphi \Vert_{(\ell^2(\R))^2}^2}{\Vert \Lambda_{\omega}^{1/2}(\varphi)\Vert^2 _{(\ell^2(\R))^2}}%\\ & = &
=\frac{1}{\inf \left\{\frac{\Vert \Lambda_{\omega}^{1/2}(\varphi)\Vert^2 _{(\ell^2(\R))^2}}{\Vert \varphi \Vert_{(\ell^2(\R))^2}^2}\ \vert\ \varphi \in (\ell^2(\R))^2\setminus\{0\}\right\}}\\
  & = & \frac{1}{\inf \left\{\frac{\langle \Lambda_{\omega}(\varphi),\varphi\rangle _{(\ell^2(\R))^2}}{\Vert \varphi \Vert_{(\ell^2(\R))^2}^2}\ \vert\ \varphi \in (\ell^2(\R))^2\setminus\{0\}\right\}}=\frac{1}{C_T^{(W)}(\chi_\omega)}.
\end{eqnarray*}
The conclusion follows. 
\end{proof}

It follows from this result that, for the optimal design control problem,
$$
\inf_{\chi_\omega\in\mathcal{U}_L}\Vert \Gamma_\omega\Vert =\left(\displaystyle \sup_{\chi_\omega\in\mathcal{U}_L}C_T^{(W)}(\chi_\omega)\right)^{-1},
$$
and therefore the problem is equivalent to the problem of maximizing the observability constant.
Then, all considerations done in this article can be applied to the optimal design control problem as well.

\appendix
\section{Proof of Proposition \ref{propCantor}} \label{AppendixCantor}
We focus first on   the one-dimensional wave equation.
Whereas the proposition is stated on $[0,\pi]$, we assume hereafter that we are on $[-\pi,\pi]$, in order to facilitate the use of Fourier series.
To avoid any technical problem we assume that we are in the framework of Remark \ref{remTmult2pi}. In particular, we assume that $T$ is an integer multiple of $2\pi$.
According to the characterization of the optimal set in terms of a level set of the function 
$$\varphi(x) = \sum_{j=1}^{+\infty} \lambda_{j}^2\alpha_{jj} \sin^2(jx),$$
with $\alpha_{jj} = p\pi (a_j^2+b_j^2)$, 
and noting that the coefficients $\alpha_{jj}$ are nonnegative and of converging sum,
it suffices to prove the following result.

\begin{proposition}
There exist a measurable open subset $C$ of $[-\pi,\pi]$, of Lebesgue measure $\vert C\vert\in(0,2\pi)$, and a smooth function $f$ on $[-\pi,\pi]$, satisfying the following properties:
\begin{itemize}
\item $C$ is of fractal type, and in particular has an infinite number of connected components;
\item $f(x)>0$ for every $x\in C$, and $f(x)=0$ for every $x\in[-\pi,\pi]\setminus C$;
\item $f$ is even;
\item for every integer $n$,
$$
a_n = \int_{-\pi}^{\pi} f(x)\cos(nx)\, dx > 0;
$$
\item The series $\sum a_n$ is convergent.
\end{itemize}

\end{proposition}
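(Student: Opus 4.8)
The plan is to reduce everything to producing a single function and then to build it by a multiscale Fourier construction. By the reduction already set up (writing $\varphi(x)=\sum_j\lambda_j^2\alpha_{jj}\sin^2(jx)$ with $\alpha_{jj}\ge 0$ and $\sin^2(jx)=\tfrac12-\tfrac12\cos(2jx)$, so that the optimal set is a level set of $\varphi$), it suffices to produce the even, smooth $f\ge0$ of the statement: its zero set $K=[-\pi,\pi]\setminus C$ is the positive–measure fat Cantor set carrying the fractal level set, while its cosine coefficients $a_n$ encode the admissible data $\alpha_{jj}$. Two of the five requirements are then essentially free: if $f$ is genuinely $C^\infty$ (it vanishes identically near $\pm\pi$, hence is $C^\infty$ and $2\pi$–periodic), then $a_n=O(n^{-k})$ for every $k$, so $\sum a_n$ converges; and evenness is imposed by making the whole configuration symmetric about the origin.

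For the core construction I would start from the elementary observation that the triangle function $T_a=\mathbf 1_{[-a,a]}*\mathbf 1_{[-a,a]}$ is even, nonnegative, supported in $[-2a,2a]$ (hence vanishing on the positive–measure set $[-\pi,\pi]\setminus[-2a,2a]$), and has all cosine coefficients $\widehat{T_a}(n)=\bigl(2\sin(an)/n\bigr)^2>0$ strictly positive whenever $a$ is an irrational multiple of $\pi$, since then $\sin(an)\ne0$ for every $n\in\N^*$. This is the germ of a function that is both nonnegative and has a nonnegative spectrum. The strategy is then to carve a Cantor–type zero set out of a fixed interval by a self–similar superposition of rescaled nonnegative bumps lodged in the successive gaps of the forming fat Cantor set $K$, together with a small fixed background $\varepsilon T_a$ with $\operatorname{supp}T_a$ kept away from the carved region. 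Nonnegativity of $f$ and the exact support $\{f>0\}=C$ (open, with infinitely many components, $|C|\in(0,2\pi)$) are then built in, and $C^\infty$ smoothness with flat (infinite–order) vanishing on $K$ is arranged by letting the bump amplitudes decay faster than any power of their widths. The role of the background is that $a_n(f)=\varepsilon\,\widehat{T_a}(n)+a_n(\textrm{bumps})$, so that strict positivity of the whole spectrum reduces to dominating the bump contribution by $\varepsilon\,\widehat{T_a}(n)$ at every $n$.

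The hard part will be exactly this domination. The translations needed to place bumps in off–center gaps destroy the nonnegativity of their individual cosine coefficients, so $a_n(\textrm{bumps})$ is sign–indefinite; worse, $\widehat{T_a}(n)=\bigl(2\sin(an)/n\bigr)^2$ comes arbitrarily close to $0$ along a sparse set of $n$, precisely where the background offers little help. One must therefore tune the Cantor scales, the bump profiles and amplitudes, and the parameter $a$ so that at every frequency the negative part of $a_n(\textrm{bumps})$ stays strictly below $\varepsilon\,\widehat{T_a}(n)$ — a quantitative, generation–by–generation multiscale Fourier estimate. This is genuinely delicate: ``generic'' positive summable coefficient sequences (for instance convex or completely monotone ones) yield functions that are positive almost everywhere, hence with null zero set, so producing an $f$ that vanishes on a set of positive measure forces a genuinely wild yet still all–positive, summable spectrum. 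It is also where Theorem \ref{thmobsopt} makes itself felt: that result forbids an analytic $f$ with these properties, confirming that the construction must be $C^\infty$ but non–analytic, and that the tension between fractal support gaps and a strictly positive spectrum is intrinsic (an uncertainty–principle effect). Carrying out this bookkeeping is the technical heart of the argument and is deferred to the explicit construction.

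Once $f$ is produced, the one–dimensional wave statement follows from the reduction above, and the $n$–dimensional Schr\"odinger statement on the hypercube reduces to it as well: there the solution is again periodic in time, $G_T$ carries no crossed terms, and the relevant density factorizes as a tensor product of one–dimensional profiles, so the fractal one–dimensional datum can be tensorized into a $C^\infty$ datum whose optimal set inherits the fractal (Cantor) structure.
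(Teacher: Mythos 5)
Your reduction and overall architecture match the paper's: reduce to building one even, nonnegative function $f$ vanishing exactly off a fat Cantor-type open set, with all cosine coefficients positive, and note that $C^\infty$ smoothness makes $\sum a_n$ converge for free (this last observation is actually cleaner than the paper's explicit bookkeeping). But the heart of the proposition is precisely the positivity of every $a_n$, and this is the step you defer; moreover, the mechanism you propose for it cannot work as stated. Your background $\varepsilon T_a$ with $a/\pi$ irrational has $\widehat{T_a}(n)=\bigl(2\sin(an)/n\bigr)^2$, which is indeed never zero, but by Dirichlet's theorem there are infinitely many $n$ with $\vert an-m\pi\vert<\pi/n$, so that $\widehat{T_a}(n)\lesssim n^{-4}$ along an infinite sequence of frequencies. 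The translated bumps lodged in the gaps contribute sign-indefinite terms of generic size $n^{-2}$ times their (fixed, $n$-independent) amplitudes, so no choice of $\varepsilon$ and of amplitudes can dominate them at those frequencies. Escaping this requires arithmetically coordinating the bump positions and widths with the zeros (or near-zeros) of the background coefficient --- which is no longer a perturbative domination argument but a structural one, and is exactly what is missing from your proposal.

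The paper resolves this with a hierarchical arithmetic construction rather than a single background. It takes the central peak over $[-\alpha\pi,\alpha\pi]$ with $\alpha=p/q$ \emph{rational} ($p+q$ even), so its coefficient $\propto(1-\cos(n\alpha\pi))/n^2$ is nonnegative and vanishes exactly on the structured set $2q\N^*$; there it admits a uniform lower bound $\sigma_0/n^2$ off that set. The $k$-th gap peak is then centered at $s_k=\pi-\tfrac{\pi}{2^k}(\alpha+1)^k$ with width $\tfrac{\pi}{2^{k-1}}\alpha(1-\alpha)^k$, chosen so that (using $p+q$ even and $\gcd(p,q)=1$) its coefficient is \emph{nonnegative} on $2q^k\N^*$ and bounded below off $2q^{k+1}\N^*$; the amplitudes $b_k$ decay geometrically so the tail of later peaks is absorbed by the lower bound of the active peak at each frequency stratum $(2q^m\N^*)\setminus(2q^{m+1}\N^*)$. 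Smoothness is then recovered at the end by convolving each peak with a mollifier that is a self-convolution (hence with positive Fourier coefficients), preserving positivity of all $a_n$ --- whereas your route to smoothness via flatly decaying amplitudes does nothing to protect coefficient positivity. In short: right skeleton, but the decisive positivity argument is absent and the proposed substitute fails quantitatively.
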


Proposition \ref{propCantor} follows from that result, and the optimal set $\omega$ is then the complement of the fractal set $C$.
By considering cartesian products of this one-dimensional fractal set, it is immediate to generalize the construction to a $n$-dimensional hypercube for the Schr\"odinger equation since the solution remains periodic in this case, which ensures that $G_T$ does not involve any crossed terms.

There are many possible variants of such a construction. We provide hereafter one possible way of proving this result.

\begin{proof}
Let $\alpha\in(0,1/3)$. We assume that $\alpha$ is a rational number, that is, $\alpha=\frac{p}{q}$ where $p$ and $q$ are relatively prime integers, and moreover we assume that $p+q$ is even.
Let us first construct the fractal set $C\subset[-\pi,\pi]$. Since $C$ will be symmetric with respect to $0$, we describe below the construction of $C\cap[0,\pi]$. Set $s_0=0$ and
$$
s_k=\pi-\frac{\pi}{2^k}(\alpha+1)^k,
$$
for every $k\in\N^*$. Around every such point $s_k$, $k\in\N^*$, we define the interval
$$
I_k = \left[ s_k - \frac{\pi}{2^k}\alpha(1-\alpha)^k , s_k + \frac{\pi}{2^k}\alpha(1-\alpha)^k \right]
$$
of length $\vert I_k\vert = \frac{\pi}{2^{k-1}}\alpha(1-\alpha)^k$.

\begin{lemma}\label{lemtechtrain1}
We have the following properties:
\begin{itemize}
\item $\inf I_1>\alpha\pi$;
\item $\sup I_k<\inf I_{k+1}<\pi$ for every $k\in\N^*$.
\end{itemize}
\end{lemma}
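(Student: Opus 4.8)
The plan is to translate both assertions into explicit inequalities in $\alpha$ by writing out the endpoints of the intervals $I_k$ directly from the definitions of $s_k$ and $I_k$, and then to reduce each to an elementary algebraic statement. No analytic input is needed; the whole lemma is a bookkeeping computation once the endpoints are made explicit.

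For the first property I would begin by evaluating $s_1=\pi-\frac{\pi}{2}(1+\alpha)=\frac{\pi(1-\alpha)}{2}$, so that
\[
\inf I_1 = s_1-\frac{\pi}{2}\alpha(1-\alpha)=\frac{\pi(1-\alpha)}{2}\bigl(1-\alpha\bigr)=\frac{\pi(1-\alpha)^2}{2}.
\]
The bound $\inf I_1>\alpha\pi$ is then equivalent to $(1-\alpha)^2>2\alpha$, that is to $\alpha^2-4\alpha+1>0$, whose positive roots are $2\pm\sqrt3$; hence the inequality holds for the (small) values of $\alpha$ at hand, and since the construction only needs the existence of one admissible $\alpha$ this is harmless. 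This step is a one-line quadratic computation and presents no difficulty.

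For the nesting property I would first dispose of the bound $\inf I_{k+1}<\pi$, which is immediate since $\inf I_{k+1}\le s_{k+1}=\pi-\frac{\pi}{2^{k+1}}(1+\alpha)^{k+1}<\pi$. The substantive claim $\sup I_k<\inf I_{k+1}$ I would handle by computing the telescoping gap between consecutive centres,
\[
s_{k+1}-s_k=\frac{\pi}{2^{k+1}}(1+\alpha)^k(1-\alpha),
\]
and comparing it with the combined half-lengths $\frac{\pi}{2^k}\alpha(1-\alpha)^k+\frac{\pi}{2^{k+1}}\alpha(1-\alpha)^{k+1}$ of $I_k$ and $I_{k+1}$. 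After multiplying through by $2^{k+1}/\pi$ and dividing by the positive factor $(1-\alpha)^k$, the inequality $\sup I_k<\inf I_{k+1}$ becomes
\[
\alpha(3-\alpha)<(1-\alpha)\Bigl(\tfrac{1+\alpha}{1-\alpha}\Bigr)^{k}.
\]

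The only point requiring a moment's thought — which I would flag as the (mild) crux — is that the left-hand side is independent of $k$ while the right-hand side is strictly increasing in $k$, since $\frac{1+\alpha}{1-\alpha}>1$ for $\alpha\in(0,1)$. Hence it suffices to verify the binding case $k=1$, where the right-hand side equals $1+\alpha$ and the inequality collapses to $\alpha(3-\alpha)<1+\alpha$, equivalently $(\alpha-1)^2>0$, valid for every $\alpha\in(0,1)$. The nesting therefore holds for all $k\in\N^*$, and I expect no genuine obstacle beyond organising these elementary estimates and recognising the monotonicity in $k$ that reduces the whole family to a single inequality.
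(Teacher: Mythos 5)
Your proposal is correct and follows the same elementary route as the paper: make the endpoints of the $I_k$ explicit and reduce each bullet to a polynomial inequality in $\alpha$. For the second bullet your reduction to $\alpha(1-\alpha)^{k-1}(3-\alpha)<(1+\alpha)^k$ is exactly the paper's; you dispose of the $k$-dependence via the monotonicity of $(1-\alpha)\bigl(\tfrac{1+\alpha}{1-\alpha}\bigr)^k$ in $k$, while the paper uses the cruder bounds $(1-\alpha)^{k-1}\le 1$ and $(1+\alpha)^k\ge 1+\alpha$ together with $\alpha(3-\alpha)<1+\alpha$; both reduce to $(1-\alpha)^2>0$ and are equally valid.

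The one substantive point is the first bullet, where your computation is the correct one and the paper's is not. The paper asserts $\inf I_1=\pi-\frac{\pi}{2}(\alpha+1)$ and deduces the claim from $\alpha<1/3$; but $\pi-\frac{\pi}{2}(\alpha+1)=s_1$ is the \emph{centre} of $I_1$, not its left endpoint. As you compute, $\inf I_1=\frac{\pi}{2}(1-\alpha)^2$, and $\frac{\pi}{2}(1-\alpha)^2>\alpha\pi$ is equivalent to $\alpha^2-4\alpha+1>0$, i.e.\ to $\alpha<2-\sqrt3\approx 0.268$. For $\alpha\in(2-\sqrt3,\tfrac13)$, e.g.\ $\alpha=3/10$, the first bullet is actually false: $\inf I_1=0.245\,\pi<0.3\,\pi=\alpha\pi$, so $I_1$ overlaps $[0,\alpha\pi]$. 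The standing hypothesis $\alpha\in(0,1/3)$ therefore needs to be tightened to $\alpha<2-\sqrt3$. You are right that this is harmless for the appendix, since the construction only requires one admissible rational $\alpha$ with $p+q$ even (e.g.\ $\alpha=1/5$), but you should state the corrected threshold explicitly rather than appeal to ``the small values of $\alpha$ at hand'': as written, the lemma quantifies over all $\alpha\in(0,1/3)$, and your own computation shows that range is too large.
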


\begin{proof}
Since $\alpha<1/3$ it follows that $\inf I_1=\pi-\frac{\pi}{2}(\alpha+1)>\alpha\pi$. For the second property, note that the inequality $\sup I_k<\inf I_{k+1}$ is equivalent to
$$
\alpha(1-\alpha)^{k-1}(3-\alpha)<(\alpha+1)^k,
$$
which holds true for every $k\in\N^*$ since $\alpha(3-\alpha)<\alpha+1$.
\end{proof}

It follows in particular from that lemma that the intervals $I_k$ are two by two disjoint.
Now, we define the set $C$ by
$$
C\cap[0,\pi] = [0,\alpha\pi] \cup \bigcup_{k=1}^{+\infty} I_k.
$$
The resulting set $C$ (symmetric with respect to $0$) is then of fractal type and has an infinite number of connected components (see Figure \ref{figCantor}).

\begin{figure}
\scalebox{0.6} % Change this value to rescale the drawing.
{
\begin{pspicture}(0,-7.179625)(23.156,7.145625)
\psline[linewidth=0.032cm,arrowsize=0.05291667cm 2.0,arrowlength=1.4,arrowinset=0.4]{->}(0.0,-3.1703749)(23.14,-3.1903749)
\psline[linewidth=0.032cm,arrowsize=0.05291667cm 2.0,arrowlength=1.4,arrowinset=0.4]{->}(3.98,-6.730375)(3.94,7.1296253)
\psline[linewidth=0.1cm,tbarsize=0.07055555cm 5.0,rbracketlength=0.15]{(-)}(1.0,-3.2103748)(7.04,-3.2103748)
\usefont{T1}{ptm}{m}{n}
\rput(4.821455,-3.5453749){\Large $s_0=0$}
\psdots[dotsize=0.3](4.0,-3.2703748)
\psdots[dotsize=0.3](22.16,-3.1903749)
\psdots[dotsize=0.3](14.02,-3.2103748)
\psdots[dotsize=0.3](18.48,-3.2103748)
\psline[linewidth=0.1cm,tbarsize=0.07055555cm 5.0,rbracketlength=0.15]{(-)}(12.62,-3.2103748)(15.38,-3.2103748)
\psline[linewidth=0.1cm,tbarsize=0.07055555cm 5.0,rbracketlength=0.15]{(-)}(17.82,-3.1903749)(19.12,-3.1903749)
\psline[linewidth=0.04](1.08,-3.2303748)(3.96,5.6496253)(6.94,-3.1703749)
\psline[linewidth=0.04](12.7,-3.2303748)(14.0,2.0896254)(15.3,-3.2103748)
\psline[linewidth=0.04](17.88,-3.2503748)(18.48,0.10962524)(19.06,-3.1903749)
\usefont{T1}{ptm}{m}{n}
\rput(14,-3.7){\Large $s_1$}
\usefont{T1}{ptm}{m}{n}
\rput(18.5,-3.7){\Large $s_2$}
\usefont{T1}{ptm}{m}{n}
\rput(22.2,-3.7){\Large $\pi$}
\usefont{T1}{ptm}{m}{n}
\rput(7.,-3.7){\Large $\alpha\pi$}
\usefont{T1}{ptm}{m}{n}
\rput(1,-3.7){\Large $-\alpha\pi$}
\usefont{T1}{ptm}{m}{n}
\rput(4.7,5.7546253){\Large $b_0$}
\usefont{T1}{ptm}{m}{n}
\rput(14.6,2.2346253){\Large $b_1$}
\usefont{T1}{ptm}{m}{n}
\rput(19.,0.21462524){\Large $b_2$}
\psline[linewidth=0.02cm,arrowsize=0.05291667cm 2.0,arrowlength=1.4,arrowinset=0.4]{<->}(12.7,-4.550375)(15.32,-4.550375)
\psline[linewidth=0.02cm,arrowsize=0.05291667cm 2.0,arrowlength=1.4,arrowinset=0.4]{<->}(17.88,-4.550375)(19.08,-4.5303745)
\usefont{T1}{ptm}{m}{n}
\rput(14.2014555,-4.8853745){\Large $I_1$}
\usefont{T1}{ptm}{m}{n}
\rput(18.621454,-4.8853746){\Large $I_2$}
\end{pspicture} 
}
\caption{Drawing of the function $f$ and of the set $C$}\label{figCantor}
\end{figure}
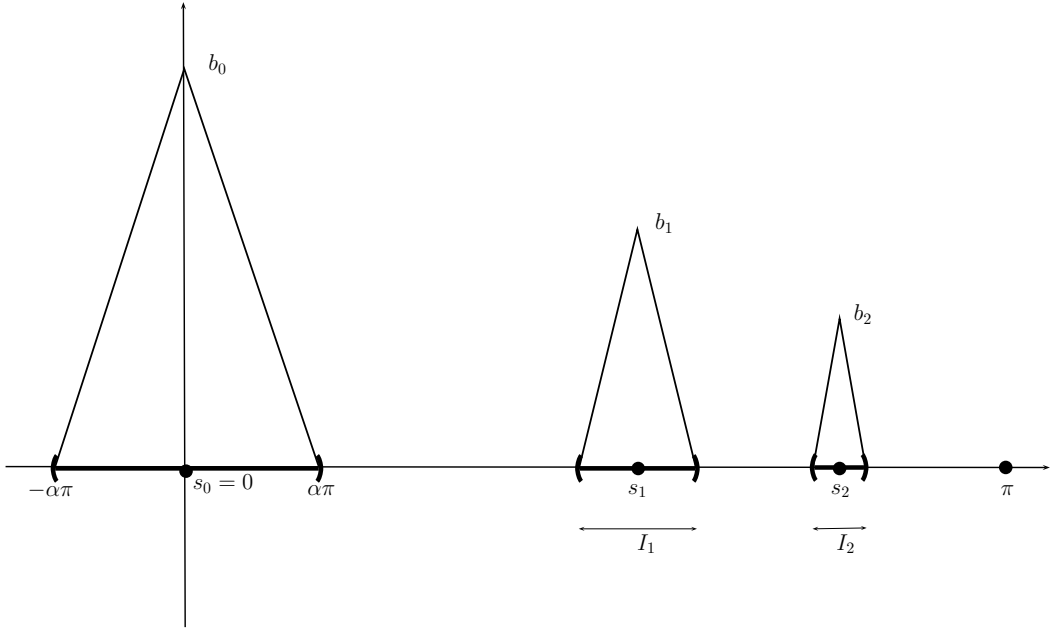

We now define the function $f$ such that $f$ is continuous, piecewise affine, equal to $0$ outside $C$, and such that $f(s_k)=b_k$ for every $k\in\N$, where the $b_k$ are positive real numbers to be chosen (see Figure \ref{figCantor}).

Let us compute the Fourier series of $f$. Since $f$ is even, its sine coefficients are all equal to $0$. In order to compute its cosine coefficients, we will use the following result.

\begin{lemma}\label{lemtech2}
Let $a\in\R$, $\ell>0$ and $b>0$. Let $g$ be the function defined on $\R$ by
\begin{equation*}
g(x) = \left\{ \begin{array}{cl}
\frac{2b}{\ell}(x-a+\frac{\ell}{2}) &\textrm{if}\ a-\frac{\ell}{2}\leq x\leq a,\\
\frac{2b}{\ell}(a+\frac{\ell}{2}-x) &\textrm{if}\ a\leq x\leq a+\frac{\ell}{2},\\
0 & \textrm{otherwise.}
\end{array}\right.
\end{equation*}
In other words, $g$ is a positive triangle of height $b$ above the interval $[a-\frac{\ell}{2},a+\frac{\ell}{2}]$. Then
$$
\int_\R g(x)\cos(nx)\, dx = \frac{4b}{\ell n^2}\cos(na)\left( 1-\cos\frac{n\ell}{2}\right),
$$
for every $n\in\N^*$.
\end{lemma}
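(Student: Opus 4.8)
The plan is to reduce the computation to a standard elementary integral by exploiting the symmetry of the triangle $g$ about its peak at $x=a$. First I would perform the change of variables $u=x-a$, which recenters the triangle at the origin: writing $\tilde g(u)=g(u+a)=\frac{2b}{\ell}\bigl(\frac{\ell}{2}-|u|\bigr)$ for $|u|\leq \ell/2$ and $\tilde g(u)=0$ otherwise, the function $\tilde g$ is even. Expanding $\cos(nx)=\cos(nu)\cos(na)-\sin(nu)\sin(na)$, the product $\tilde g(u)\sin(nu)$ is odd and integrates to zero over the symmetric interval $[-\ell/2,\ell/2]$, so only the cosine term survives and
\[
\int_\R g(x)\cos(nx)\, dx = \cos(na)\int_{-\ell/2}^{\ell/2}\tilde g(u)\cos(nu)\, du.
\]

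Next I would compute the remaining integral. Using again the evenness of $\tilde g$,
\[
\int_{-\ell/2}^{\ell/2}\tilde g(u)\cos(nu)\, du = \frac{4b}{\ell}\int_0^{\ell/2}\Bigl(\frac{\ell}{2}-u\Bigr)\cos(nu)\, du,
\]
and the integral on the right is evaluated by a single integration by parts, differentiating the affine factor $\frac{\ell}{2}-u$ and integrating $\cos(nu)$. The boundary terms vanish, at $u=\ell/2$ because $\frac{\ell}{2}-u=0$ and at $u=0$ because $\sin 0=0$, leaving $\frac{1}{n}\int_0^{\ell/2}\sin(nu)\, du=\frac{1}{n^2}\bigl(1-\cos\frac{n\ell}{2}\bigr)$. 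Assembling the factors yields $\frac{4b}{\ell n^2}\cos(na)\bigl(1-\cos\frac{n\ell}{2}\bigr)$, which is the claimed formula.

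Since every step is an elementary manipulation, there is no genuine obstacle here; the only points requiring a little care are the parity argument that discards the sine contribution, which relies on the symmetry of the triangle about its apex, and the bookkeeping of the boundary terms in the integration by parts. Alternatively, I would note that the same result follows at once from the distributional identity $\tilde g''=\frac{2b}{\ell}\bigl(\delta_{-\ell/2}-2\delta_0+\delta_{\ell/2}\bigr)$, valid since $\tilde g$ is continuous and piecewise affine; pairing this against $\cos(nu)$ and using $-n^2\int\tilde g(u)\cos(nu)\, du=\int\tilde g''(u)\cos(nu)\, du=\frac{2b}{\ell}\bigl(2\cos\frac{n\ell}{2}-2\bigr)$ recovers the formula after multiplying by $\cos(na)$, avoiding the integration by parts altogether.
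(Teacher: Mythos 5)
Your proof is correct; both the symmetry-plus-integration-by-parts computation and the alternative via the distributional identity $\tilde g''=\frac{2b}{\ell}(\delta_{-\ell/2}-2\delta_0+\delta_{\ell/2})$ check out and yield the stated formula. The paper states Lemma \ref{lemtech2} without proof, treating it as a routine Fourier computation, and your argument is exactly the standard verification one would supply.
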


It follows from this lemma that
\begin{equation}\label{techint0}
\int_0^{\alpha\pi} f(x)\cos(nx)\, dx = \frac{b_0}{\alpha\pi n^2}(1-\cos(n\alpha\pi)),
\end{equation}
and
\begin{equation}\label{techintk}
\int_{I_k} f(x)\cos(nx)\, dx = \frac{2^{k+1}b_k}{\alpha(1-\alpha)^k\pi n^2} \cos\left(n\pi-\frac{n\pi}{2^k}(\alpha+1)^k\right) \left(1-\cos\left(\frac{n\pi}{2^{k}}\alpha(1-\alpha)^k\right)\right)  ,
\end{equation}
for every $k\in\N^*$.
Note that
\begin{equation}\label{utilintk}
\left\vert \int_{I_k} f(x)\cos(nx)\, dx \right\vert \leq \frac{4b_k}{\alpha\pi n^2} \left( \frac{2}{1-\alpha}\right)^k  ,
\end{equation}
for every $k\in\N^*$.
Formally, the $n^\mathrm{th}$ cosine Fourier coefficient of $f$ is given by
$$
a_n = \int_{-\pi}^{\pi} f(x)\cos(nx)\, dx 
= 2\int_0^{\alpha\pi} f(x)\cos(nx)\, dx + 2 \sum_{k=1}^{+\infty} \int_{I_k} f(x)\cos(nx)\, dx.
$$
Our next task consists of choosing adequately the positive real numbers $b_k$, $k\in\N$, so that the series appearing in the above formal expression of $a_n$ is convergent, $a_n$ is nonnegative, and the series of general term $a_n$ is convergent.

Let us first consider the integral \eqref{techint0} (first peak). It is clearly nonnegative for every $n\in\N^*$, and is positive except whenever $n$ is a multiple of $2q$. Taking advantage of the rationality of $\alpha$, we can moreover derive an estimate from below, as follows. Set
$$
\sigma_0 = \min\{1-\cos(n\frac{p}{q}\pi)\ \vert\ n=1,\ldots,2q-1\}.
$$
One has $\sigma_0>0$, and there holds
\begin{equation}\label{techineq0}
\int_0^{\alpha\pi} f(x)\cos(nx)\, dx \geq \frac{b_0\sigma_0}{\alpha\pi n^2},
\end{equation}
for every $n\in\N^*\setminus(2q\N^*)$.
At this step, assume that
\begin{equation}\label{techrequir1}
b_k\leq \left(\frac{1-\alpha}{2}\right)^k\frac{1}{2^k}\frac{\sigma_0b_0}{8},
\end{equation}
for every $k\in\N^*$ ($b_0>0$ is arbitrary). Under this assumption, using \eqref{utilintk} it follows that the formal expression of $a_n$ above is well defined, and that
$$
\left\vert  \sum_{k=1}^{+\infty} \int_{I_k} f(x)\cos(nx)\, dx \right\vert \leq \frac{1}{2} \frac{b_0\sigma_0}{\alpha\pi n^2} \leq \frac{1}{2} \int_0^{\alpha\pi} f(x)\cos(nx)\, dx ,
$$
for every $n\in\N^*\setminus(2q\N^*)$, ensuring therefore $a_n>0$ for such integers $n$.

If $n=2rq$, with $r\in\N^*$, then the integral \eqref{techint0} vanishes. We then focus on the second peak, that is, on the integral \eqref{techintk} with $k=1$. Since $n=2rq$, its value is
\begin{equation*}
\int_{I_1} f(x)\cos(nx)\, dx = \frac{4b_1}{\alpha(1-\alpha)\pi n^2} \cos\left(2rq\pi-rq\pi(\frac{p}{q}+1)\right) \left(1-\cos\left( rq\pi\frac{p}{q}(1-\frac{p}{q})\right)\right) .
\end{equation*}
Since $p+q$ is even, it follows that $\cos\left(2rq\pi-rq\pi(\frac{p}{q}+1)\right)=1$. Hence, we have
\begin{equation*}
\int_{I_1} f(x)\cos(nx)\, dx = \frac{4b_1}{\alpha(1-\alpha)\pi n^2}  \left(1-\cos\left( r\pi\frac{p}{q}(q-p)\right)\right) \geq 0 .
\end{equation*}
Moreover, since the integers $p$ and $q$ are relatively prime integers and $q-p$ is even, in this last expression one has $\cos( r\pi\frac{p}{q}(q-p))=1$ if and only if $r$ is multiple of $q$, that is, if and only if $n$ is multiple of $2q^2$. As before we derive an estimate from below, setting
$$
\sigma_1 = \min\left\{1-\cos\left( r\pi\frac{p}{q}(q-p)\right)\ \big\vert\ r=1,\ldots,2q-1\right\}.
$$
One has $\sigma_1>0$, and there holds
\begin{equation}\label{techineq1}
\int_{I_1} f(x)\cos(nx)\, dx \geq \frac{4 b_1\sigma_1}{\alpha(1-\alpha)\pi n^2},
\end{equation}
for every $n\in(2q\N^*)\setminus(2q^2\N^*)$.
At this step, additionally to \eqref{techrequir1} assume that
\begin{equation}\label{techrequir2}
b_k\leq \left(\frac{1-\alpha}{2}\right)^{k-1}\frac{1}{2^{k+1}}b_1 \sigma_1,
\end{equation}
for every $k\geq 2$. Under this assumption, using \eqref{utilintk} it follows that
$$
\left\vert  \sum_{k=2}^{+\infty} \int_{I_k} f(x)\cos(nx)\, dx \right\vert \leq \frac{1}{2} \frac{4 b_1\sigma_1}{\alpha(1-\alpha)\pi n^2} \leq \frac{1}{2} \int_{I_1} f(x)\cos(nx)\, dx ,
$$
for every $n\in(2q\N^*)\setminus(2q^2\N^*)$, ensuring therefore $a_n>0$ for such integers $n$.

The construction can be easily iterated. At iteration $m$, assume that $n=2rq^m$, with $r\in\N^*$. Then the integrals over the $m$ first peaks vanish, that is,
$$
\int_0^{\alpha\pi}f(x)\cos(nx)\, dx = \int_{I_k}f(x)\cos(nx)\, dx = 0
$$
for every $k=1,\ldots,m-1$. We then focus on the $(m+1)^\mathrm{th}$ peak, that is, on the integral \eqref{techintk} with $k=m$. Since $n=2rq^m$, its value is
\begin{equation*}
\begin{split}
\int_{I_m} f(x)\cos(nx)\, dx = \frac{2^{m+1}b_m}{\alpha(1-\alpha)^m\pi n^2} 
& \cos\left(2rq^m\pi-\frac{rq^m\pi}{2^{m-1}}\left(\frac{p}{q}+1\right)^m\right) \\
& \qquad\qquad\qquad
\times  \left(1-\cos\left( \frac{rq^m\pi}{2^{m-1}}\frac{p}{q}\left(1-\frac{p}{q}\right)^m\right)\right) .
\end{split}
\end{equation*}
Since $p+q$ is even, it follows that 
$$
\cos\left(2rq^m\pi-\frac{rq^m\pi}{2^{m-1}}\left(\frac{p}{q}+1\right)^m\right) =1,
$$
and hence,
\begin{equation*}
\int_{I_m} f(x)\cos(nx)\, dx = \frac{2^{m+1}b_m}{\alpha(1-\alpha)^m\pi n^2} 
\left(1-\cos\left( \frac{r\pi}{2^{m-1}}\frac{p}{q}\left(q-p\right)^m\right)\right) \geq 0.
\end{equation*}
Moreover, since the integers $p$ and $q$ are relatively prime integers and $q-p$ is even, it follows easily that $q$ and $(\frac{q-p}{2})^m$ are relatively prime integers, and therefore this last expression vanishes if and only if $r$ is multiple of $q$, that is, if and only if $n$ is multiple of $2q^{m+1}$. Setting
$$
\sigma_m = \min\left\{ 1-\cos\left( \frac{r\pi}{2^{m-1}}\frac{p}{q}\left(q-p\right)^m\right) \ \big\vert\ r=1,\ldots,2q-1\right\},
$$
one has $\sigma_m>0$ and 
$$
\int_{I_m} f(x)\cos(nx)\, dx \geq \frac{2^{m+1}b_m\sigma_m}{\alpha(1-\alpha)^m\pi n^2} ,
$$
for every $n\in(2q^m\N^*)\setminus(2q^{m+1}\N^*)$. Additionally to \eqref{techrequir1}, \eqref{techrequir2} and the following iterative assumptions, we assume that
\begin{equation}\label{techrequirm}
b_k\leq  \left(\frac{1-\alpha}{2}\right)^{k-m}\frac{1}{2^{k-m+2}}b_m \sigma_m,
\end{equation}
for every $k\geq m+1$. Under this assumption, using \eqref{utilintk} it follows that
$$
\left\vert  \sum_{k=m+1}^{+\infty} \int_{I_k} f(x)\cos(nx)\, dV_g \right\vert \leq \frac{1}{2} \frac{2^{m+1}b_m\sigma_m}{\alpha(1-\alpha)^m\pi n^2} \leq \frac{1}{2} \int_{I_m} f(x)\cos(nx)\, dV_g ,
$$
for every $n\in(2q^m\N^*)\setminus(2q^{m+1}\N^*)$, ensuring therefore $a_n>0$ for such integers $n$.

The construction of the function $f$ goes in such a way by iteration. By construction, its Fourier cosine coefficients $a_n$ are positive, and moreover, the series
$
\sum_{n=0}^{+\infty} a_n
$
is convergent.
We have thus constructed a function $f$ satisfying all requirements of the statement except the fact that $f$ is smooth.

Let us finally show that, using appropriate convolutions, we can modify $f$ in order to obtain a smooth function keeping all required properties.
Set $f_0 = f_{[-\alpha\pi,\alpha\pi]}$ and $f_k = f_{I_k}$ for every $k\in\N^*$. 
For every $\varepsilon>0$, let $\rho_\varepsilon$ be a real nonnegative function which is even, whose support is $[-\varepsilon,\varepsilon]$, whose integral over $\R$ is equal to $1$, and whose Fourier (cosine) coefficients are all positive. Such a function clearly exists. Indeed, only the last property is not usual, but to ensure this Fourier property it suffices to consider the convolution of any usual bump function with itself.
Then, for every $k\in\N$, consider the (nonnegative) function $\tilde f_k$ defined by the convolution
$\tilde f_k = \rho_{\varepsilon(k)}\star f_k$, where each $\varepsilon(k)$ is chosen small enough so that the supports of all functions $\tilde f_k $ are still disjoint two by two and contained in $[-\pi,\pi]$ as in Lemma \ref{lemtechtrain1}.
Then, we define the function $\tilde f$ as the sum of all functions $\tilde f_k$, and we symmetrize it with respect to $0$. Clearly, every Fourier (cosine) coefficient of $\tilde f$ is the sum of the Fourier (cosine) coefficients of $\tilde f_k$, and thus is positive, and their sum is still convergent. The function $\tilde f$ is smooth and satisfies all requirements of the statement of the proposition. This ends the proof.
\end{proof}

\bigskip
\noindent{\bf Acknowledgment.}The authors thank Nicolas Burq, Antoine Henrot, Luc Hillairet and Zeev Rudnick for very interesting and fruitful discussions.

The first author was partially supported by the ANR project OPTIFORM.\\
The third author was partially supported by Grant MTM2011-29306-C02-00, MICINN, Spain, ERC Advanced Grant FP7-246775 NUMERIWAVES, ESF Research Networking Programme OPTPDE and Grant PI2010-04 of the Basque Government.

%%%%%%%%%%%%%%%%%%%%%%%%%%%%%%%%%%%%%%%%%%%%%%%%%%
%%%%%%%%%%%%%%%%%%%%%%%%%%%%%%%%%%%%%%%%%%%%%%%%%%
%%%%%%%%%%%%%%%%%%%%%%%%%%%%%%%%%%%%%%%%%%%%%%%%%%
%%%%%%%%%%%%%%%%%%%%%%%%%%%%%%%%%%%%%%%%%%%%%%%%%%
%%%%%%%%%%%%%%%%%%%%%%%%%%%%%%%%%%%%%%%%%%%%%%%%%%
%%%%%%%%%%%%%%%%%%%%%%%%%%%%%%%%%%%%%%%%%%%%%%%%%%
%%%%%%%%%%%%%%%%%%%%%%%%%%%%%%%%%%%%%%%%%%%%%%%%%%

\end{document}